\numberwithin{equation}{section}
\def\row#1/#2!{#1_{\IfStrEq{#2}{}{n}{#2}} & \dynkin{#1}{#2}\\}
\newtheorem{theorem}{Theorem}[section]
\newtheorem{lemma}[theorem]{Lemma}
\newtheorem{proposition}[theorem]{Proposition}
\newtheorem{corollary}[theorem]{Corollary}
\newtheorem{notation}[theorem]{Notation}
\newtheorem{hypothesis}[theorem]{Hypothesis}
\newtheorem{remark}[theorem]{Remark}
\newtheorem{question}{Question}
\theoremstyle{definition}
\newtheorem{Strategy}[theorem]{Strategy}
\def\Om{\Omega}
\def\Ga{\Gamma}
\def\Si{\Sigma}
\def\si{\sigma}
\def\depth{depth\ }
\def\Alt{{\rm Alt}}
\def\Hom{{\rm Hom}}
\def\Aut{{\rm Aut}}
\def\Sym{{\rm Sym}}
\def\soc{{\rm soc}}
\def\Inv{{\rm Inv}}
\def\Core{{\rm Core}}
\def\core{{\rm Core}}
\def\Out{{\rm Out}}
\def\Ker{{\rm Ker}}
\def\Otwo{{(2),\Om}}
\def\Stwo{{(2),\Si}}
\def\hT{\widehat{T}}
\def\hS{\widehat{S}}
\def\ol{\overline}
\def\wt{\widetilde}
\begin{document}
	
	\begin{Frontmatter}
		\title[Totally $2$-closed groups]{Totally $2$-closed finite groups with trivial Fitting subgroup}
		\author[1]{Majid Arezoomand}
		\author[2]{Mohammad A. Iranmanesh}
		\author[3]{Cheryl E Praeger}
		\author[4]{Gareth Tracey}
		
		\authormark{}
		
		\address[1]{\orgname{University of Larestan}, \orgaddress{\city{Larestan}, \postcode{74317-16137}, \country{Iran}};\email{arezoomand@lar.ac.ir}}
		
		\address[2]{\orgname{Yazd University}, \orgaddress{\city{Yazd}, \postcode{89195-741}, \country{Iran}};\email{iranmanesh@yazd.ac.ir}}
		
		\address[3]{\orgname{University of Western Australia}, \orgaddress{\street{35 Stirling Highway}, \city{Crawley}, \postcode{WA 6009}, \country{Australia}};\email{cheryl.praeger@uwa.edu.au}}
		
		\address[4]{\orgname{School of Mathematics, University of Birmingham}, \orgaddress{\street{Edgbaston}, \city{Birmingham}, \postcode{B15 2TT}, \country{United Kingdom}};\email{g.tracey@bham.ac.uk}}

		\keywords{$2$-closed permutation groups, graph representations of groups, simple groups}
		
		\keywords[MSC Codes]{\codes[Primary]{20B05, 20E42}; \codes[Secondary]{05C20}}
		
		\abstract{
			A group $G$ is said to be totally $2$-closed if in each of its faithful permutation representations, say on a set $\Om$, $G$ is the largest subgroup of $\Sym(\Om)$ which leaves invariant each of the $G$-orbits for the induced action on $\Om\times \Om$.
			We prove that there are precisely $47$ finite totally $2$-closed groups with trivial Fitting subgroup. Each of these groups is a direct product of pairwise non-isomorphic sporadic simple groups, with the direct factors coming from the Janko groups $\mathrm{J}_1, \mathrm{J}_3$ and  $\mathrm{J}_4$, together with $\mathrm{Ly}, \mathrm{Th}$ and the Monster $\mathbb{M}$. These are the first known examples of insoluble totally $2$-closed groups.  As a by-product of our methods, we develop several tools for studying $2$-closures of transitive permutation groups -- a vital tool in the study of representations of finite groups as automorphism groups of digraphs. We also prove a dual to a 1939 theorem of Frucht from Algebraic Graph Theory.}

	\end{Frontmatter}
	
	\maketitle

	
	\section{Introduction}\label{intro}
	
	In 1969, Wielandt~\cite[Definition 5.3]{Wielandt} introduced the concept of $2$-closure for a permutation group $G$ on a set $\Om$. The \emph{$2$-closure} $G^{(2)}$ of $G$ is the set of all  $g\in\Sym(\Om)$ such that $g$ leaves invariant each $G$-orbit in the induced $G$-action on ordered pairs from $\Om$. The $2$-closure $G^{(2)}$ is a subgroup of $\Sym(\Om)$ containing $G$ \cite[Theorem 5.4]{Wielandt}, and $G$ is said to be \emph{$2$-closed} if $G^{(2)}=G$. 


	It is important to note that the $2$-closure of a permutation group $G$ on $\Om$ is not intrinsic to the group $G$. It can depend on the nature of the action on $\Om$. For example, the symmetric group $\Sym(3)$ acts faithfully and intransitively on
	$\{1,2,3,4,5\}$ with orbits $\{1,2,3\}$ and $\{4,5\}$, and in this action its $2$-closure is $\Sym(3)\times C_2$; while $\Sym(3)$ is $2$-closed in its natural action on $\{1,2,3\}$. In 2016,
	D.~F.~Holt\footnote{\texttt{mathoverflow.net/questions/235114/2-closure-of-a-permutation-group}}
	suggested a stronger concept independent of the permutation representation.  
	This was adopted and explored by the first author with Abdollahi  in \cite{AA} 
	where it was called  \emph{total $2$-closure}. To explain the concept we first enrich the notation for $2$-closure by writing $G^\Otwo$ for the $2$-closure of a permutation group $G\leq \Sym(\Om)$. We then define a group $G$ to be \emph{totally $2$-closed} if  $G^\Otwo=G$ whenever $G$ is faithfully represented as a permutation group on $\Om$. It was proved in \cite[Theorem 1]{AA} that a finite totally $2$-closed group has cyclic centre, and moreover~\cite[Theorem 2]{AA} that a finite nilpotent group is totally $2$-closed if and only if it is either
	cyclic or a direct product of a generalised quaternion group with a cyclic group of odd order.

	The first and fourth authors, with Abdollahi undertook a general study of total $2$-closure in \cite{AAT}.   They proved \cite[Theorem B]{AAT} that a finite soluble group is totally $2$-closed if and only if it is nilpotent, and hence is one of the groups identified in \cite[Theorem 2]{AA}.  Further they showed in \cite[Theorem A]{AAT} that the Fitting subgroup of a totally $2$-closed finite group is itself totally $2$-closed  (and hence lies in the family just described). At the time of their writing of the paper \cite{AAT}, no examples of insoluble totally $2$-closed groups were known. They studied the structure of an insoluble example $G$ of smallest order, showing that $G$ modulo its cyclic centre $Z(G)$ has a unique minimal normal subgroup which is nonabelian, \cite[Proposition 4.1]{AAT}. 
	
	The purpose of this paper is to seek out finite insoluble totally $2$-closed groups $G$. We focus on the case where the Fitting subgroup $F(G)$ is trivial, or equivalently, the case where $G$ has no nontrivial abelian normal subgroup. We show first that such a group $G$ is a direct product of  nonabelian simple groups, each of which is itself totally $2$-closed (see Theorem \ref{thm:issimple}). 
	In light of this result, we decided that our next challenge should be to find all totally $2$-closed finite simple groups. The $2$-closure condition seemed so strong that we initially believed that we would find no examples. Surprisingly (to us) such groups do indeed exist, but they are rare, and the only examples are six of the sporadic simple groups.
	
	\begin{theorem}\label{thm:J}
		Let $T$ be a nonabelian finite simple group. Then $T$ is totally $2$-closed if and only if $T$ is one of the groups $\mathrm{J}_1$, $\mathrm{J}_3$, $\mathrm{J}_4,\mathrm{Ly}, \mathrm{Th}$ or $\mathbb{M}$.
	\end{theorem}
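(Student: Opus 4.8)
The plan is to reduce the problem to a question about primitive actions and then to invoke the Classification of Finite Simple Groups.

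\emph{Reduction.} I would first establish that a nonabelian finite simple group $T$ is totally $2$-closed if and only if (a) $T^{(2),\,T/H}=T$ for every proper subgroup $H\le T$, and (b) $T$ has no factorisation $T=H_1H_2$ with $H_1,H_2$ proper. For the implication ``(a) and (b) $\Rightarrow$ totally $2$-closed'', take any faithful action of $T$ on a set $\Omega$ with orbits $\Omega_1,\dots,\Omega_r$. Since the diagonal orbital $\{(\omega,\omega):\omega\in\Omega_i\}$ is $T$-invariant, $T^{(2),\Omega}$ preserves each $\Omega_i$, and by (a) its restriction to $\Omega_i$ lies in the image $\bar T_i\cong T$ of $T$; hence $T^{(2),\Omega}\le\bar T_1\times\dots\times\bar T_r$ and contains the diagonal copy of $T$. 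A Goursat-type analysis shows a subgroup of $T^r$ containing the diagonal is a product of straight diagonals over a partition of $\{1,\dots,r\}$, and the projection of $T^{(2),\Omega}$ to a pair of coordinates $i,j$ lies in $T^{(2),\,\Omega_i\sqcup\Omega_j}$, which is either the diagonal or all of $\bar T_i\times\bar T_j$, the latter occurring precisely when $\bigcap_{g,h\in T}(H_i^g)(H_j^h)=T$, i.e. when $T=H_i^gH_j^h$ for all $g,h$. Condition (b) rules this ``uniform factorisation'' out, so $T^{(2),\Omega}$ projects to the diagonal on every pair of coordinates, hence equals $T$. The reverse implication is immediate. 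I would also record that, by (a), a nonabelian simple group with a faithful $2$-transitive representation is never totally $2$-closed, since there every non-diagonal orbital is the complement of the diagonal, so every permutation preserves all orbitals and $T^{(2)}=\Sym(\Omega)\ne T$; by the classification of $2$-transitive simple groups this already eliminates the alternating groups, all $\mathrm{PSL}_n(q)$, $\mathrm{PSp}_{2n}(2)$, $\mathrm{PSU}_3(q)$, the Suzuki and small Ree groups, and many sporadic groups.

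\emph{The ``only if'' direction.} For each remaining simple group $T$ I would exhibit one faithful action witnessing $T<T^{(2)}$. Whenever $\Out(T)\ne 1$ this can be arranged using a suitable primitive action on which an outer automorphism preserves the suborbits. Otherwise one uses a primitive action on a geometry: for the classical groups, an orbit of subspaces yields an orbital graph (polar, dual-polar, or Grassmann-type) whose full automorphism group is $\PGaL$, $\mathrm{PGSp}$, a collineation group of a building, and so on, strictly larger than $T$; for the exceptional groups of Lie type one uses the action on a class of parabolic subgroups; and for each sporadic group not in the list one uses a small-rank action whose orbital graph has automorphism group larger than $T$ (e.g. the Hall--Janko graph for $\mathrm{J}_2$, the McLaughlin graph for $\mathrm{McL}$). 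This leaves precisely $\mathrm{J}_1,\mathrm{J}_3,\mathrm{J}_4,\mathrm{Ly},\mathrm{Th},\mathbb{M}$.

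\emph{The ``if'' direction.} Fix $T\in\{\mathrm{J}_1,\mathrm{J}_3,\mathrm{J}_4,\mathrm{Ly},\mathrm{Th},\mathbb{M}\}$; each has $\Out(T)=1$ and (a check against the known factorisations of simple groups) no proper factorisation, so (b) holds. For (a), let $H\le T$ be proper and set $A:=T^{(2),\,T/H}\ge T$, transitive with the same orbits on ordered pairs. When $H$ is maximal, $T$ and hence $A$ is primitive; an O'Nan--Scott analysis -- using that the simple transitive subgroup $T$ cannot lie in the socle of a primitive group of diagonal, product or twisted-wreath type without being $2$-transitive on a smaller set, which it is not -- forces $A$ to be of almost simple type with socle $S\ge T$. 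If $S=T$ then $A\le\Aut(T)=T$, so $A=T$. To exclude $S\supsetneq T$, note that $S$ is then a simple group properly containing $T$ and acting transitively on $T/H$ with $T$'s orbital structure, so $|T:H|$ is at least the minimal faithful permutation degree of $S$; this already kills all candidates when $|T:H|$ is small (for example, for $T=\mathrm{Th}$ it eliminates $S=\mathbb{M}$, since the least index of a proper subgroup of $\mathbb{M}$ exceeds every maximal-subgroup index of $\mathrm{Th}$), and in the remaining cases, where $S$ could be alternating or classical, one uses the classification of large subgroups of such $S$ and the requirement that $H\le S_\alpha$ reproduce $T$'s suborbits exactly. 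Non-maximal $H$ are handled by induction on $|T:H|$: for a maximal $M\ge H$, the fibres of $T/H\to T/M$ form a $T$-block system, which, being a union of $T$-orbitals, is $A$-invariant, so $A$ acts on $T/M$ as $T$ does by the maximal case, and the kernel of this action restricts on each block to a subgroup of $M^{(2),\,M/H}$ that the structure of $M$ (and the absence of factorisations) forces to be trivial, giving $A=T$. The hard part is excluding $S\supsetneq T$ in the primitive case for the largest groups, above all $\mathbb{M}$, whose faithful permutation representations are far too large for any direct computation of orbital graphs; there one must argue purely structurally that no larger simple group can act on $T/H$ with $T$'s orbital structure, which requires a careful appeal to the classification of maximal subgroups of alternating and classical groups and to minimal permutation degrees.
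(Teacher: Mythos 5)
Your skeleton coincides with the paper's: the reduction to ``no nontrivial factorisation plus $2$-closure of every transitive coset action'' is exactly Corollary~\ref{TransRed}, the $2$-transitive elimination is used in the same way, the primitive case is settled by the fact that the $2$-closure preserves the socle (the paper simply cites \cite{LPS} rather than re-deriving it and then excluding overgroups $S\supsetneq T$, which is itself a paper's worth of work that you defer), and the imprimitive case is set up via block systems and kernels. But there are concrete gaps. First, a factual error: $\Out(\mathrm{J}_3)=2$, so ``$A\le\Aut(T)=T$'' fails for $\mathrm{J}_3$; one must separately rule out $\mathrm{J}_3{:}2\le T^{(2)}$ in each primitive action, which the paper does in Corollary~\ref{BaseCor} by comparing ranks and base sizes. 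Second, in the ``only if'' direction your proposed witnesses do not cover all the groups that must be eliminated: $\mathrm{Co}_2$, $\mathrm{Fi}_{23}$, $\mathbb{B}$, $\mathrm{ON}$ and the untwisted exceptional groups over prime fields admit no nontrivial factorisation, have no $2$-transitive action, and are $2$-closed in every primitive action (several have trivial outer automorphism group), so neither the ``outer automorphism preserving suborbits'' trick nor an orbital-graph argument applies. The only witnesses are imprimitive -- e.g.\ $\mathbb{B}$ on the cosets of $2.{}^2E_6(2)$, or $E_8(p)$ on the cosets of an index-$2$ subgroup of a Borel -- and detecting that the $2$-closure grows there requires a criterion such as Proposition~\ref{MainCor1}(1) (that $(M\cap M^t)H=M$ for all $t\in T$), which you have not formulated. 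You also forgo the cheap elimination of most classical groups via the classification of maximal factorisations, which your own condition (b) hands you.

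The largest gap is the ``if'' direction for non-maximal $H$. You assert that the kernel $N$ of $A=T^{(2),[T:H]}$ on the block system $[T:M]$ is trivial because its restriction to a block lies in $M^{(2),[M:H]}$ and ``the structure of $M$ and the absence of factorisations'' force triviality. This does not follow: $M^{(2),[M:H]}$ contains the transitive group $M^{[M:H]}$, so the containment gives no triviality, and the absence of factorisations of $T$ says nothing direct about $N$. What is actually true (Lemma~\ref{Crucial}) is that $N$ consists of the tuples $(y_1,\dots,y_s)$ all of whose coordinate pairs lie in the $1$-closure of the corresponding two-point stabiliser $M_i\cap M_j$ acting on $\Delta_i\times\Delta_j$; killing $N$ then needs the divisibility constraint that the $N$-orbit lengths on a block divide $\gcd_j|M\cap M_j|$ (Proposition~\ref{MainCor1}(2)), base-size-$2$ cut-offs, and a group-by-group descent through the subgroup lattices of the six groups -- for $\mathrm{Th}$ an argument in the $\mathbb{F}_3$-permutation module, and for $\mathrm{J}_4$ an entire section of involution counting and machine computation. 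Your ``induction on $|T:H|$'' is also not well-founded as stated, since the inductive input you need concerns the $2$-closure of the proper subgroup $M$ on $[M:H]$, not of $T$ on a smaller coset space. In short, the one clause in which you dispose of the imprimitive case is where essentially all of the paper's work lies.
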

	
	Theorem \ref{thm:J} gives us the building blocks of finite totally $2$-closed groups with trivial Fitting subgroup. We were able to use this result as the basis of an inductive approach yielding the following complete classification of all such groups.
	
	\begin{theorem}\label{thm:classification}
		Let $G$ be a non-trivial finite group with trivial Fitting subgroup. Then $G$ is totally $2$-closed if and only if each of the following holds:
		\begin{enumerate}[\upshape(1)]
			\item $G=T_1\times\hdots\times T_r$, where the $T_i$ are nonabelian finite simple groups and $r\le 5$; 
			\item $T_i\not\cong T_j$ for each $i\neq j$; and 
			\item One of the following holds:
			\begin{enumerate}[\upshape(i)]
				\item $T_i\in\{\mathrm{J}_1, \mathrm{J}_3, \mathrm{J}_4, \mathrm{Th}, \mathrm{Ly}\}$ for each $i\le r$; or
				\item $T_i\in \{\mathrm{J}_1, \mathrm{J}_3, \mathrm{J}_4,\mathrm{Ly}, \mathbb{M}\}$ for each $i\le r$.
			\end{enumerate}
		\end{enumerate}
	\end{theorem}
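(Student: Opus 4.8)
\emph{Reduction.} The plan is to bootstrap from Theorems~\ref{thm:issimple} and~\ref{thm:J} by a ``coprimeness'' principle for direct products, and then to finish with a short arithmetic check on the six groups of Theorem~\ref{thm:J}. Write $\mathcal{S}=\{\mathrm{J}_1,\mathrm{J}_3,\mathrm{J}_4,\mathrm{Ly},\mathrm{Th},\mathbb{M}\}$. If $G\neq 1$ has $F(G)=1$ and is totally $2$-closed, then Theorem~\ref{thm:issimple} gives $G=T_1\times\cdots\times T_r$ with each $T_i$ a nonabelian finite simple group that is itself totally $2$-closed, and Theorem~\ref{thm:J} forces $T_i\in\mathcal{S}$ for all $i$; conversely every group of the form in~(1) automatically has trivial Fitting subgroup. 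So the whole statement reduces to deciding which direct products of members of $\mathcal{S}$ are totally $2$-closed.

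\emph{Two product lemmas.} The engine is a complementary pair of facts about a direct product $G=A\times B$, of the kind the $2$-closure tools developed earlier in the paper are designed to provide. \emph{Necessity}: if $A\times B$ is totally $2$-closed, then no composition factor of $A$ is isomorphic to a section of $B$, and symmetrically. One argues by contraposition: starting from a composition factor $S$ of $A$ that is also a section of $B$, build a faithful permutation representation of $A\times B$ on the cosets of a suitable subdirect (``twisted diagonal'') subgroup linking a section of $A$ onto $S$ with a section of $B$ onto $S$; place this on one part of a disjoint union together with an arbitrary faithful representation of the remaining factors on a second part; and use the suborbit analysis for transitive groups together with the fact that the $2$-closure of a direct product acting on a disjoint union of sets splits as the direct product of the $2$-closures on the parts, to conclude $G^\Otwo\neq G$. \emph{Sufficiency}: if $A$ and $B$ are both totally $2$-closed and no composition factor of one is isomorphic to a section of the other, then $A\times B$ is totally $2$-closed; given any faithful representation of $A\times B$ one passes to its orbits and orbitals, and the absence of a common involved simple section is exactly what prevents an element of the $2$-closure from identifying a piece of the $A$-action with a piece of the $B$-action, forcing $G^\Otwo=G$.

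\emph{Arithmetic of $\mathcal{S}$ and assembly.} Since a nonabelian simple section of a direct product of simple groups is a section of one of the factors, it suffices to determine, for $T_i,T_j\in\mathcal{S}$, when $T_i$ is isomorphic to a section of $T_j$. For every pair other than $(\mathrm{Th},\mathbb{M})$ this fails: either some prime divides $|T_i|$ but not $|T_j|$, or else $T_i\in\{\mathrm{J}_1,\mathrm{J}_3,\mathrm{J}_4,\mathrm{Ly}\}$ and $T_j=\mathbb{M}$, in which case $T_i$ is a pariah and hence not involved in the Monster; on the other hand $\mathrm{Th}$ \emph{is} involved in $\mathbb{M}$ (for instance $\mathbb{M}$ has a subgroup $(3\times\mathrm{Th}){:}2$). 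Thus the only involvements among members of $\mathcal{S}$ are the trivial $X\hookrightarrow X$ together with $\mathrm{Th}\hookrightarrow\mathbb{M}$; equivalently, a collection of members of $\mathcal{S}$ is pairwise non-involved precisely when its members are pairwise non-isomorphic and it omits at least one of $\mathrm{Th},\mathbb{M}$. For the ``only if'' direction, apply the Necessity lemma with $A=T_i$ and $B=\prod_{j\neq i}T_j$: the unique composition factor $T_i$ of $A$ is not a section of $B$, hence not of any $T_j$ with $j\neq i$; by the previous sentence the $T_i$ are therefore pairwise non-isomorphic with $\{\mathrm{Th},\mathbb{M}\}\not\subseteq\{T_1,\dots,T_r\}$, giving~(2) and~(3), and then $r\le 5$ since at most five members of $\mathcal{S}$ can occur, giving~(1). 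For the ``if'' direction, under~(1)--(3) the $T_i$ are totally $2$-closed by Theorem~\ref{thm:J} and pairwise non-involved, so iterating the Sufficiency lemma shows $G=T_1\times\cdots\times T_r$ is totally $2$-closed.

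\emph{Main obstacle.} The crux is the Sufficiency lemma: showing that in an \emph{arbitrary} faithful representation of $A\times B$ no permutation in the $2$-closure can ``glue'' the $A$-part of the action to the $B$-part, this needing to hold simultaneously for all representations. This is precisely what the machinery for $2$-closures of transitive and of product actions is built to handle; by comparison the Necessity lemma (one explicit bad representation per involvement) and the arithmetic of $\mathcal{S}$ are comparatively routine once that machinery is available.
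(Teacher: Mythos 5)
Your reduction, your necessity direction, and your analysis of which members of $\mathcal{S}=\{\mathrm{J}_1,\mathrm{J}_3,\mathrm{J}_4,\mathrm{Ly},\mathrm{Th},\mathbb{M}\}$ are involved in which others all match the paper (Theorem~\ref{thm:issimple}, Theorem~\ref{thm:J}, Remark~\ref{rem:section}). The gap is your ``Sufficiency lemma'': the claim that if $A$ and $B$ are totally $2$-closed and no composition factor of one is a section of the other, then $A\times B$ is totally $2$-closed. The paper proves no such general statement, and the heuristic you offer for it --- that the absence of a common simple section prevents the $2$-closure from ``gluing'' the $A$-part of the action to the $B$-part --- misidentifies the mechanism by which $2$-closure actually fails. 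By Proposition~\ref{PreTransRed} the danger lies in transitive actions of $G=A\times B$ on cosets of a subdirect subgroup $H$ of $M=\pi_A(H)\times\pi_B(H)$; there, by Lemma~\ref{Crucial} and Proposition~\ref{RedProp}, the $2$-closure can acquire a nontrivial normal subgroup $N$ sitting inside the base group of a wreath product $Y\wr L$. Such an $N$ does not identify $A$ with $B$ at all, and the ``link'' encoded in the subdirect subgroup $H$ is along the quotient $\pi_A(H)/(H\cap \widehat{A})\cong \pi_B(H)/(H\cap\widehat{B})$, which is a section of the \emph{point stabilizers} and is small and soluble in the hardest cases the paper must treat (e.g.\ $C_3$ for $(\mathrm{Th},{}^3D_4(2).3)$ and $C_2$ for $(\mathrm{J}_4,2^{1+12}.3.\mathrm{L}_3(4).2)$). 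A hypothesis about common nonabelian simple sections of $A$ and $B$ says nothing about gluings along a $C_2$ or $C_3$. Indeed, the failures of total $2$-closure for $\mathrm{ON}$, $\mathrm{Co}_2$, $\mathrm{Fi}_{23}$ and $\mathbb{B}$ in Section~\ref{SporadicSection} are produced by exactly this index-$2$ mechanism, and ruling it out for products of the six surviving groups is what the whole of Section~\ref{DirectProductsSection} does.

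Concretely, the paper's sufficiency argument takes a minimal counterexample, shows $G^{[G:M]}$ is $2$-closed (Lemma~\ref{ProductAction}, via \cite{CGJKKM}), and then bounds the common orbit length $a$ of the relevant normal subgroup of $Y$ from Proposition~\ref{MainCor1}(2) in two independent ways: $a$ divides $\prod_i g(T_i,\mu_i(H))$ (Lemma~\ref{lem:factora}, using the gcd's of two-point stabilizer orders recorded in Table~\ref{tab:basesize}), and $a$ is a product of orders of normal sections of $M_1/H_1$ (Lemma~\ref{lem:divisora}). The contradiction is then extracted case by case from the explicit data of Proposition~\ref{Base2Lemma} --- base sizes, subdegrees from Ivanov's tables, composition factors of the specific maximal subgroups $2.\mathbb{B}$, $3.\mathrm{McL}$, $2^{11}{:}\mathrm{M}_{24}$, ${}^3D_4(2).3$, and so on. None of this is a formal consequence of the two factors being totally $2$-closed together with the absence of common simple sections, so as it stands your proof of the ``if'' direction is missing its central step.
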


\begin{corollary}\label{cor1}
There are precisely $47$ finite totally $2$-closed groups with trivial Fitting subgroup.	
\end{corollary}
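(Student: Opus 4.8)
The plan is to read off Corollary \ref{cor1} from Theorem \ref{thm:classification} by a direct enumeration. The one non-combinatorial ingredient is that a finite group which is a direct product $T_1\times\cdots\times T_r$ of nonabelian simple groups is determined up to isomorphism by the unordered set $\{T_1,\dots,T_r\}$; indeed the $T_i$ are precisely the minimal normal subgroups of such a group (equivalently, apply the Krull--Schmidt theorem). Granting this together with Theorem \ref{thm:classification}, the assignment $G\mapsto\{\text{simple direct factors of }G\}$ is a bijection from the set of isomorphism classes of non-trivial finite totally $2$-closed groups with trivial Fitting subgroup onto the family $\mathcal{C}$ of non-empty subsets $\mathcal{S}$ of $\Lambda:=\{\mathrm{J}_1,\mathrm{J}_3,\mathrm{J}_4,\mathrm{Ly},\mathrm{Th},\mathbb{M}\}$ satisfying part (3) of that theorem: injectivity is the uniqueness just noted, and surjectivity is the ``if'' direction of Theorem \ref{thm:classification}, since for such an $\mathcal{S}$ the group $\prod_{T\in\mathcal{S}}T$ has no non-trivial abelian normal subgroup (so trivial Fitting subgroup) and is totally $2$-closed.

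It then remains to count $\mathcal{C}$. Part (3) of Theorem \ref{thm:classification} says that $\mathcal{S}\subseteq\{\mathrm{J}_1,\mathrm{J}_3,\mathrm{J}_4,\mathrm{Ly},\mathrm{Th}\}$ or $\mathcal{S}\subseteq\{\mathrm{J}_1,\mathrm{J}_3,\mathrm{J}_4,\mathrm{Ly},\mathbb{M}\}$; as $\mathrm{Th}$ belongs only to the first of these two five-element sets and $\mathbb{M}$ only to the second, this is the same as requiring that $\mathcal{S}$ not contain both $\mathrm{Th}$ and $\mathbb{M}$. (Then $|\mathcal{S}|\le 5$ automatically, so the constraint $r\le 5$ of Theorem \ref{thm:classification} is vacuous here; and we exclude $\mathcal{S}=\varnothing$ because, matching the convention of Theorem \ref{thm:classification} and the abstract, the trivial group --- the empty direct product --- is not among the groups counted.) The subsets of the six-element set $\Lambda$ containing both $\mathrm{Th}$ and $\mathbb{M}$ number $2^{4}=16$, hence $|\mathcal{C}|=(2^{6}-1)-16=47$. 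As a check, counting by cases: $2^{4}-1=15$ admissible sets contain neither $\mathrm{Th}$ nor $\mathbb{M}$, another $2^{4}=16$ contain $\mathrm{Th}$ but not $\mathbb{M}$, and a further $2^{4}=16$ contain $\mathbb{M}$ but not $\mathrm{Th}$, giving $15+16+16=47$ in all.

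No genuine difficulty is expected here: the entire substance of the corollary is carried by Theorem \ref{thm:classification}, and what is left is bookkeeping. The only two points meriting a sentence of care are the uniqueness of the decomposition of $G$ into simple direct factors, which rules out any double counting (distinct admissible subsets of $\Lambda$ really do yield non-isomorphic groups), and the bookkeeping decision to omit the trivial group from the count (including it would give $48$).
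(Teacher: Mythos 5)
Your proof is correct and is essentially the argument the paper intends: the corollary is stated without proof as an immediate enumeration from Theorem \ref{thm:classification}, and your count $(2^6-1)-2^4=47$ agrees with the inclusion–exclusion count $31+31-15$ over the two five-element families. Your two points of care — uniqueness of the simple direct factors (so distinct admissible subsets give non-isomorphic groups) and the exclusion of the trivial group — are exactly the right things to flag.
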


	Proving these results involved a wide variety of methods, and we needed to develop several tools which we hope will assist future analyses of finite totally $2$-closed groups. More generally, we also hope that our results will add significantly to the understanding of $2$-closures of transitive permutation groups. In particular, in Section \ref{ImprimitiveSection}, we prove that if $G$ is an imprimitive transitive permutation group, and its action on a set of non-trivial blocks is faithful and $2$-closed, then either $G$ is $2$-closed, or the $2$-closure of $G$ has a very particular structure (see Lemma \ref{Crucial} and Proposition \ref{RedProp}). 
	
The study of  $2$-closures of transitive permutation groups has important applications
in graph theory, the basis of which we discuss in Subsection~\ref{s:graph}. Perhaps because of these applications,  such as to the \emph{Polycirculant conjecture} (that each vertex-transitive graph has an automorphism of prime order with no fixed vertices), the theory of $2$-closures of transitive permutation groups has become a well-developed branch of both group theory and graph theory over the past fifty years (see, for example, \cite[Section 1.3.4]{BG}  and the recent survey \cite{AAS}). We hope that the general theory we develop through this paper, as well as our main results, will play an important role in making further progress in this area.

	
\subsection{Link between $2$-closure and graphs, and an application}\label{s:graph}	
	The concept of  $2$-closure is a very combinatorial notion, and there is indeed a close relationship between $2$-closed permutation groups and digraphs: each subset $A\subseteq \Om\times\Om$ may be viewed as the arc-set of a digraph $\Ga=(\Om,A)$  with vertex-set $\Om$
	and with a directed edge (an arc) from $\alpha$ to $\beta$ for each $(\alpha,\beta)\in A$. The automorphism group $\Aut(\Ga)$ of $\Ga$  is  defined as the subgroup of all permutations of $\Om$ which leave the arc-set $A$ invariant. In particular, each element of $(\Aut(\Ga))^{(2),\Omega}$ leaves $A$ invariant and hence is an automorphism of $\Ga$. Thus  $(\Aut(\Ga))^{(2),\Omega}\subseteq \Aut(\Ga)$, and the reverse inclusion holds by the definition of $2$-closure. Therefore $(\Aut(\Ga))^{(2),\Omega}= \Aut(\Ga)$. The same argument also shows that $(\Aut(\Ga))^{(2),\Omega}= \Aut(\Ga)$ when $\Gamma$ is an undirected graph. Thus, to summarise, \emph{for each (di)graph $\Ga$ with vertex set $\Om$, the automorphism group $\Aut(\Ga)$ is a $2$-closed subgroup of $\Sym(\Om)$.} In  fact, for each permutation group $G\leq\Sym(\Om)$, the $2$-closure $G^{(2),\Omega}$ is equal to the intersection of the groups $\Aut(\Ga)$ over all digraphs $\Ga=(\Om,A)$ for which the arc-set $A$ is a $G$-orbit in $\Om\times\Om$.

	In particular, if $G\leq\Sym(\Om)$ is transitive on $\Om$ and if $A$ is a $G$-orbit in $\Om\times\Om$ not equal to the diagonal $\{(\alpha,\alpha)\mid \alpha\in\Om\}$, then the digraph $\Ga=(\Om, A)$ is called a \emph{$G$-orbital digraph}, and the group $G$ is admitted as a vertex-transitive and  arc-transitive subgroup of $\Aut(\Ga)$. Thus, in this case, the $2$-closure $G^{(2),\Omega}$ is the intersection of the automorphism groups of all the $G$-orbital digraphs. The study of $G$-orbital digraphs goes back to the  work of D. G. Higman~\cite{Higman} at approximately the same time as Wielandt's work on closures in the late 1960s.
	

	

	Before describing our approach to proving Theorems \ref{thm:J} and \ref{thm:classification}, we would like to draw attention to a graph theoretic  consequence of our results, which may be of independent interest. In 1939, Frucht \cite{Frucht} confirmed a conjecture of K\"{o}nig that every finite group $G$ can be written as the automorphism group of a finite simple undirected graph $\Gamma$. 
	We will say that a faithful permutation representation $G\hookrightarrow G^{\Omega}\le \Sym(\Omega)$ of $G$ on a finite set $\Omega$ is a \emph{Frucht representation} of $G$ if there exists a set $E$ of unordered pairs from $\Omega$ such that $G^{\Omega}$ is the automorphism group of the finite simple undirected graph $\Gamma=\Gamma(\Omega,E)$. Since $\Aut(\Gamma)$ is $2$-closed, Theorem \ref{thm:classification} implies the following, which can be viewed as a dual to Frucht's theorem in the case where $F(G)=1$.

	\begin{corollary}\label{cor:Frucht}
		Let $G$ be a non-trivial finite group with trivial Fitting subgroup, and assume that every  faithful permutation representation of $G$ is a Frucht representation. Then $G=T_1\times\hdots\times T_r$, where the $T_i$ are pairwise non-isomorphic finite simple groups; $r\le 5$; 
		and either
		\begin{enumerate}[\upshape(i)]
			\item for each $i\le r$, $T_i\in\{\mathrm{J}_1, \mathrm{J}_3, \mathrm{J}_4, \mathrm{Th}, \mathrm{Ly}\}$; or
			\item for each $i\le r$, $T_i\in \{\mathrm{J}_1, \mathrm{J}_3, \mathrm{J}_4,\mathrm{Ly}, \mathbb{M}\}$.
		\end{enumerate}
	\end{corollary}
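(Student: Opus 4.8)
The plan is to deduce Corollary \ref{cor:Frucht} almost immediately from Theorem \ref{thm:classification}, using only the elementary observation already recorded in Subsection \ref{s:graph} that automorphism groups of graphs are $2$-closed. So the real content is bookkeeping: one must check that the hypothesis ``every faithful permutation representation of $G$ is a Frucht representation'' forces $G$ to be totally $2$-closed, and then simply quote the structural conclusion of Theorem \ref{thm:classification}.

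Here is how I would carry it out. First, suppose $G$ is a non-trivial finite group with $F(G)=1$ all of whose faithful permutation representations are Frucht representations. Fix an arbitrary faithful permutation representation $G\hookrightarrow G^{\Omega}\le\Sym(\Omega)$. By hypothesis there is a set $E$ of unordered pairs from $\Omega$ with $G^{\Omega}=\Aut(\Gamma)$ for $\Gamma=\Gamma(\Omega,E)$. As observed in Subsection \ref{s:graph}, $(\Aut(\Gamma))^{(2),\Omega}=\Aut(\Gamma)$, so $(G^{\Omega})^{(2),\Omega}=G^{\Omega}$. Since the representation was arbitrary, $G$ is totally $2$-closed by definition. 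Now apply Theorem \ref{thm:classification}: $G=T_1\times\cdots\times T_r$ with the $T_i$ nonabelian finite simple groups, $r\le 5$, the $T_i$ pairwise non-isomorphic, and either every $T_i\in\{\mathrm{J}_1,\mathrm{J}_3,\mathrm{J}_4,\mathrm{Th},\mathrm{Ly}\}$ or every $T_i\in\{\mathrm{J}_1,\mathrm{J}_3,\mathrm{J}_4,\mathrm{Ly},\mathbb{M}\}$. This is exactly the asserted conclusion, so the proof is complete.

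There is essentially no obstacle here, since the corollary is strictly weaker than Theorem \ref{thm:classification}: the Frucht hypothesis is a special case of (indeed is implied by, via the $2$-closure remark) total $2$-closure, and we are only extracting the already-established structural description. The one point worth stating carefully is the logical direction: we are \emph{not} claiming a converse, i.e.\ we do not here assert that each of the $47$ groups of Corollary \ref{cor1} actually has all faithful representations equal to Frucht representations. That stronger (and true, given Theorem \ref{thm:classification} together with the fact that for these groups every faithful $2$-closed representation can be realised by an undirected graph rather than merely a digraph) statement would require the graph-versus-digraph refinement, but it is not needed for the corollary as stated, which is purely a necessary condition. Thus the only care required is to phrase the deduction as a one-way implication, which matches the wording of the statement.
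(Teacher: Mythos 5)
Your proposal is correct and is essentially the paper's own argument: the Frucht hypothesis makes every faithful representation $2$-closed (since automorphism groups of graphs are $2$-closed, as noted in Subsection \ref{s:graph}), hence $G$ is totally $2$-closed, and Theorem \ref{thm:classification} then gives the structural conclusion. Your remark about the one-way direction is also accurate; nothing further is needed.
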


We mention a few open questions about total closure. Firstly, since we have considered only groups with trivial Fitting subgroup in this paper, we ask:

\begin{question}\label{Q1}
Are there any finite insoluble totally $2$-closed groups with nontrivial Fitting subgroup? If so, can we classify them?
\end{question}

\noindent
One of the ultimate goals of this programme of work is to classify all finite totally $2$-closed groups. Given the results in the current paper, this goal would be realised with a complete answer to Question~\ref{Q1}. See Section~\ref{sec:concluding} for some remarks on the obstacles involved in answering this question. 

The analogous concept of total $k$-closure may be defined for arbitrary positive integers $k$, namely $G$ is \emph{totally $k$-closed} if, for every faithful representation $G\leq \Sym(\Omega)$,
$G$ is equal to the set of all elements $g\in\Sym(\Om)$ such that $g$ leaves invariant each $G$-orbit in the induced $G$-action on ordered $k$-tuples from $\Om$. This concept  has been explored by Churikov and the third author in \cite{CP}. For each $k\geq 2$,
it follows from \cite[Theorem 5.8]{Wielandt} that, if $G$ is totally $(k-1)$-closed then $G$ is also totally $k$-closed. On the other hand,  by \cite[Theorem 1.2]{CP}, there are infinitely many finite totally $k$-closed groups which are not totally $(k-1)$-closed, and this is true even for abelian $p$-groups for each prime $p$. The paper \cite{CP} gives a discussion of the topic and poses three open problems about  total $k$-closure (where $k\geq3$) for finite nilpotent groups, soluble groups, and sporadic simple groups  -- all problems which have been resolved  for $k=2$, in \cite{AA, AAT} and the current paper. It follows from \cite[Theorem 5.11]{Wielandt} that an alternating group $A_n$ (with $n\geq5$) is not totally $k$-closed for any $k\leq n-2$, so we further ask:

\begin{question}
Are there any finite simple groups of Lie type which are totally $3$-closed? If so, find them all.
\end{question}

Moreover, for any faithful representation of a finite simple group $G$, there is a $G$-orbit on which $G$ acts nontrivially and hence faithfully; applying \cite[Theorem 5.12]{Wielandt} to the $G$-action on this orbit we deduce that $G$ is totally $n$-closed, where $n=|G|$. Thus we ask: 

\begin{question}
Is there a fixed integer $k$ such that all finite simple groups are totally $k$-closed? 
Indeed, for a given finite simple group $G$, what is the least integer $k$ such that $G$ is totally $k$-closed? 
\end{question}

\subsection{Structure of the paper}\label{sub:structure}

In the remained of this section, we outline our approach to the proofs of Theorems~\ref{thm:J} and \ref{thm:classification}, and detail the structure of the paper. The fundamental starting point is Wielandt's Dissection Theorem \cite[Dissection Theorem 6.5]{Wielandt} which for convenience we state as Theorem~\ref{thm:W2}. It is an invaluable tool, and in particular it underpins our proof of Theorem~\ref{thm:issimple}, which we present in Section~\ref{sec:WDT}.
	
	As mentioned above, Theorem~\ref{thm:issimple} reduces our classification of finite totally $2$-closed groups $G$ with trivial Fitting subgroup to the case where $G$ is a direct product of nonabelian simple groups,
	  which are `independent' of each other in the sense that each of the simple direct factors is not isomorphic to a section of the direct product of the others. (We comment on this condition in relation to the sporadic simple groups in Remark~\ref{rem:section}.)  Our next step is to reduce further: in Proposition \ref{PreTransRed}, we show that such a group $G$ is totally $2$-closed if and only if no simple direct factor of $G$ admits a non-trivial factorisation, and each (not necessarily faithful) \emph{transitive} permutation representation of $G$ is $2$-closed. 
	
	This suggests two natural avenues to pursue. The first is to use
	the classification of the maximal factorisations of the nonabelian simple groups in~\cite{LPS2} to eliminate many families of simple groups from being direct factors of $G$. We do this in Section \ref{sub:factns}, where we show, in particular, that the simple direct factors of $G$ are either sporadic groups or exceptional groups of Lie type.
	
	Our second natural avenue to pursue concerns transitive actions of such groups $G$. Strong restrictions on the $2$-closures of primitive almost simple groups were obtained in \cite{LPS}. With these restrictions in mind for the case where $G$ is simple, we develop powerful reduction theorems in Section \ref{ImprimitiveSection} for general transitive imprimitive permutation groups $G$ which induce a faithful $2$-closed action on a set of non-trivial blocks of imprimitivity: we show that either $G$ itself is $2$-closed, or the $2$-closure of $G$ has a very particular structure. This provides further criteria to determine whether or not such permutation groups are $2$-closed (see, in particular, Proposition \ref{MainCor1}).

	The tools developed in Sections \ref{sec:WDT} and \ref{ImprimitiveSection} are sufficient  to begin our analysis of the $2$-closures of permutation representations of the finite simple groups. In Sections \ref{SporadicSection},  and \ref{sec:ExGroups}, we deal with the sporadic groups other than $\mathrm{J}_4$, and the exceptional groups of Lie type, respectively. The case of $G=\mathrm{J}_4$ is delicate, and requires a nuanced approach, see Section \ref{J4Section}. These three sections complete the proof of Theorem \ref{thm:J}.
	
	Section \ref{DirectProductsSection} is reserved for the study of $2$-closures of direct products of finite simple groups, and the proof of Theorem \ref{thm:classification}.
	
	Finally, in Section \ref{sec:concluding}, we make some concluding remarks on the question of classifying the finite insoluble totally $2$-closed groups with non-trivial Fitting subgroup. We also prove a useful reduction theorem for this problem.
	
	\vspace{0.5cm}
	\noindent\textbf{Notation}: Most of our notation is standard. For a finite group $G$, we write $Z(G)$, $F(G)$, $\Phi(G)$ and $[G,G]$ for the centre, Fitting subgroup, 
	Frattini subgroup, and derived subgroup of $G$, respectively.
	
	For a set $\Omega$, we write $\Sym(\Omega)$ and $\Alt(\Omega)$ for the symmetric and alternating group on $\Omega$ respectively. If a finite group $G$ acts on $\Omega$, we write $G^{\Omega}$ for the image of the induced action of $G$ on $\Omega$. 
	
	For group names, we use conventions from the Atlas \cite[especially page xx]{Atlas}, except that we write $\Alt(n)$ and $\Sym(n)$ for the alternating and symmetric groups of degree $n$.

	\vspace{0.5cm}
	\noindent\textbf{Acknowledgements}:
	The authors thank Rob Wilson for useful discussions.
	The third author thanks Yazd University for hospitality during a visit in January 2019 when this project began, and the second author thanks the Research Deputy of Yazd University for some financial support.
	The third and fourth authors thank the Isaac Newton Institute for Mathematical Sciences, Cambridge, for support and hospitality during the programme \emph{Groups, representations and applications: new perspectives}  (supported by EPSRC grant number EP/R014604/1), when significant work on this paper was undertaken. In particular the third author acknowledges a Kirk Distinguished Fellowship during this program.  
	The fourth author would also like to express his thanks to EPSRC for their support via the grant EP/T017619/1.

	\section{Applying Wielandt's Dissection Theorem}\label{sec:WDT}
	
	As we mentioned in Section~\ref{intro}, Wielandt's Dissection Theorem~\cite[Dissection Theorem 6.5]{Wielandt} is a fundamental tool for studying $2$-closure. For convenience we state it here. Note that,
	for $G\leq\Sym(\Om)$ and $\delta\in\Om$, we denote by $G_\delta$  the stabiliser of $\delta$ in $G$, and by $\delta^G$ the $G$-orbit in $\Om$ containing $\delta$. Also if $\Ga\subseteq\Om$ and $\Ga$ is $G$-invariant, then $G^\Ga$ denotes the permutation group on $\Ga$ induced by $G$.
	
	\begin{theorem}\label{thm:W2} {\rm [Wielandt's Dissection Theorem]}\quad
		Let $G\leq\Sym(\Omega)$, and suppose that $\Om$ is the disjoint union $\Omega=\Gamma\cup\Delta$ such that both $\Gamma$ and $\Delta$ are $G$-invariant.
		Then the following are equivalent.
		\begin{enumerate}[\upshape(a)]
			\item $G^\Gamma\times G^\Delta\leq G^{\Otwo}$;
			\item $G=G_\gamma G_\delta$ for all $\gamma\in\Gamma, \delta\in\Delta$;
			\item $G_\delta$ is transitive on $\gamma^G$ for all $\gamma\in\Gamma, \delta\in\Delta$.
		\end{enumerate}
	\end{theorem}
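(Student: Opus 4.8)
The plan is to prove the cycle of implications by separating the purely group-theoretic equivalence (b)$\Leftrightarrow$(c) from the part that genuinely involves the $2$-closure, namely (a)$\Leftrightarrow$(b). First I would dispatch (b)$\Leftrightarrow$(c) by an orbit count, valid pair-by-pair. Fix $\gamma\in\Ga$, $\delta\in\De$ and write $G_{\gamma\delta}=G_\gamma\cap G_\delta$. Then $G_\delta$ is transitive on $\gamma^G$ iff $|\gamma^{G_\delta}|=|\gamma^G|$, i.e.\ iff $|G_\delta|/|G_{\gamma\delta}|=|G|/|G_\gamma|$, i.e.\ iff $|G_\gamma||G_\delta|/|G_{\gamma\delta}|=|G|$; since the left-hand side is exactly $|G_\gamma G_\delta|$, this says $G=G_\gamma G_\delta$. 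Hence (b)$\Leftrightarrow$(c).

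For (a)$\Leftrightarrow$(b), I would unwind the definition of $G^{\Otwo}$ relative to the $G$-invariant decomposition $\Om\times\Om=(\Ga\times\Ga)\cup(\Ga\times\De)\cup(\De\times\Ga)\cup(\De\times\De)$. Since $\Ga$ and $\De$ are $G$-invariant, the ``diagonal over $\Ga$'', $\{(\gamma,\gamma):\gamma\in\Ga\}$, is a union of $G$-orbits on $\Om\times\Om$ and so is preserved by every element of $G^{\Otwo}$; consequently every element of $G^{\Otwo}$ preserves $\Ga$, and likewise $\De$, so $G^{\Otwo}\le\Sym(\Ga)\times\Sym(\De)$ and any such element may be written $h=(h^\Ga,h^\De)$, with $h\in G^{\Otwo}$ precisely when it preserves each $G$-orbit inside each of the four pieces above. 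Now the general element of $G^\Ga\times G^\De$ has the form $(g_1^\Ga,g_2^\De)$ with $g_1,g_2\in G$; it acts on $\Ga\times\Ga$ exactly as $g_1$ does and on $\De\times\De$ exactly as $g_2$ does, so it automatically preserves the $G$-orbits there, while its action on $\De\times\Ga$ is carried to its action on $\Ga\times\De$ by the coordinate swap (which also permutes the $G$-orbits of the one set onto those of the other). Hence (a) is equivalent to the statement: for all $g_1,g_2\in G$ and all $\gamma\in\Ga$, $\delta\in\De$, the pair $(\gamma^{g_1},\delta^{g_2})$ lies in $(\gamma,\delta)^G$.

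Finally I would convert this last condition into a factorisation identity. The pair $(\gamma^{g_1},\delta^{g_2})$ lies in $(\gamma,\delta)^G$ iff there is $g\in G$ with $g\in G_\gamma g_1$ and $g\in G_\delta g_2$, i.e.\ iff $G_\gamma g_1\cap G_\delta g_2\ne\emptyset$; and a direct check (solving $xg_1=yg_2$ with $x\in G_\gamma$, $y\in G_\delta$) shows this is equivalent to $g_2 g_1^{-1}\in G_\delta G_\gamma$. As $g_1,g_2$ range over $G$, the product $g_2 g_1^{-1}$ ranges over all of $G$, so the condition holds for all $g_1,g_2$ iff $G=G_\delta G_\gamma$, which on taking inverses is the same as $G=G_\gamma G_\delta$. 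Putting the two quantifiers together, (a) holds iff $G=G_\gamma G_\delta$ for every $\gamma\in\Ga$, $\delta\in\De$, which is (b); this closes the cycle.

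The one genuinely substantive point is the reduction in the second paragraph: membership in $G^\Ga\times G^\De$ already forces the correct behaviour on $\Ga\times\Ga$ and on $\De\times\De$, so the only real constraint comes from the two ``mixed'' blocks, and these two blocks impose the same constraint, which then collapses to a single double-coset identity. Everything after that is careful but routine coset bookkeeping; the main pitfall to watch is keeping the left/right placement of cosets straight and checking that the ``for all $g_1,g_2$'' quantifier really does collapse to the factorisation $G=G_\gamma G_\delta$.
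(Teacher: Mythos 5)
Your argument is correct. Note first that the paper does not prove this statement at all: it is quoted verbatim from Wielandt's lecture notes (his Dissection Theorem 6.5), so there is no in-paper proof to compare against; what you have written is a legitimate self-contained verification. Your reduction of (a) to the single condition ``$(\gamma^{g_1},\delta^{g_2})\in(\gamma,\delta)^G$ for all $g_1,g_2$'' is sound: the diagonal over $\Ga$ is a union of $G$-orbits, so $G^{\Otwo}$ preserves $\Ga$ and $\De$; the blocks $\Ga\times\Ga$ and $\De\times\De$ impose no constraint on $(g_1^\Ga,g_2^\De)$; and the coordinate swap conjugates the action on $\De\times\Ga$ to that on $\Ga\times\De$ while carrying $G$-orbits to $G$-orbits, so the two mixed blocks give the same condition. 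The coset computation $G_\gamma g_1\cap G_\delta g_2\neq\emptyset\iff g_2g_1^{-1}\in G_\delta G_\gamma$ and the collapse to $G=G_\gamma G_\delta$ are also right. One small caveat: your (b)$\Leftrightarrow$(c) step via $|G_\gamma G_\delta|=|G_\gamma||G_\delta|/|G_{\gamma\delta}|$ uses finiteness of $G$, which is harmless in the context of this paper but is not needed --- the direct translation ($G_\delta$ transitive on $\gamma^G$ iff for every $g\in G$ there is $h\in G_\delta$ with $gh^{-1}\in G_\gamma$, iff $G=G_\gamma G_\delta$) gives the equivalence with no counting and no finiteness hypothesis.
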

	
	In this section we discuss some ways in which this result can be applied. To begin, we state the following helpful application given in \cite{AAT}. Here, for a subgroup $H$ of a group $G$, we denote by
	$\Core_G(H)$ the \emph{core} of $H$ in $G$, that is, $\Core_G(H)=\cap_{g\in G} H^g$, the largest normal subgroup of $G$ contained in $H$. By a \emph{nontrivial factorisation} of $G$ we mean a factorisation $G=HK$ with both $H$ and $K$ proper subgroups of $G$.
	
	\begin{lemma}\label{lem:AAT31} {\rm \cite[Lemma 3.1]{AAT}} \quad
		Let $G$ be a finite totally $2$-closed group such that $G$ admits a nontrivial factorisation $G=HK$
		with
		$$
		\Core_G(H)\cap \Core_G(K)=1.
		$$
		Then $G = \Core_G(H)\times \Core_G(K)$ and both $\Core_G(H)$ and $\Core_G(K)$ are totally $2$-closed.
		In particular, if $H \cap K = 1$, then $G = H \times K$ and both $H$ and $K$ are totally $2$-closed.
	\end{lemma}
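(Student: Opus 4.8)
The plan is to feed one carefully chosen faithful permutation representation of $G$ into Wielandt's Dissection Theorem (Theorem~\ref{thm:W2}), and then to transfer total $2$-closedness to the resulting direct factors by a routine argument. Put $N=\Core_G(H)$ and $M=\Core_G(K)$. Let $\Ga$ be the transitive $G$-set of cosets of $H$, whose kernel is $N$, and let $\De$ be the transitive $G$-set of cosets of $K$, whose kernel is $M$. By hypothesis $N\cap M=1$, so $G$ acts faithfully on the disjoint union $\Om=\Ga\cup\De$; both $\Ga$ and $\De$ are $G$-invariant, so Theorem~\ref{thm:W2} applies to this decomposition of $\Om$.

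The first step is to check condition~(b) of Theorem~\ref{thm:W2}. A point stabiliser in $\Ga$ is a $G$-conjugate of $H$ and a point stabiliser in $\De$ is a $G$-conjugate of $K$, so the condition reduces to the fact that the factorisation $G=HK$ propagates to all conjugates, i.e.\ $G=H^aK^b$ for all $a,b\in G$ (indeed, writing $a=h_0k_0\in HK$ gives $H^a=H^{k_0}$, hence $H^aK=k_0^{-1}Hk_0K=k_0^{-1}HK=G$, and symmetrically on the right). Hence Theorem~\ref{thm:W2}(b) holds and $G^\Ga\times G^\De\le G^\Otwo$. Since $G$ is totally $2$-closed and acts faithfully on $\Om$ we have $G^\Otwo=G$; and since $G$ sits in $\Sym(\Ga)\times\Sym(\De)\le\Sym(\Om)$ as the subdirect product $\{(g^\Ga,g^\De):g\in G\}$, the inclusion $G^\Ga\times G^\De\le G$ is forced to be an equality $G=G^\Ga\times G^\De$. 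In $\Sym(\Om)$ the factor $G^\Ga$ is precisely the subgroup of $G$ acting trivially on $\De$, which by faithfulness is the kernel of the $G$-action on $\De$, namely $M$; likewise $G^\De=N$. Therefore $G=N\times M$.

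It remains to see that $N$ and $M$ are totally $2$-closed, and this follows from the general fact that a direct factor of a finite totally $2$-closed group is totally $2$-closed: given any faithful $N$-set $\Si$ and any faithful $M$-set $\Theta$, let $G=N\times M$ act faithfully on $\Si\cup\Theta$ with $N$ acting on $\Si$ (and trivially on $\Theta$) and $M$ conversely; if $h\in N^{(2),\Si}$, then extending $h$ by the identity on $\Theta$ gives an element of $G^{(2),\Si\cup\Theta}$, since it preserves every $G$-orbital (including the mixed ones, which are products of an $N$-orbit with an $M$-orbit), hence an element of $G^{(2),\Si\cup\Theta}=G$; its $\Theta$-component must be trivial by faithfulness of the $M$-action, so $h\in N$ and $N^{(2),\Si}=N$. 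Finally, when $H\cap K=1$ we have $\Core_G(H)\cap\Core_G(K)\le H\cap K=1$, so the first part gives $G=\Core_G(H)\times\Core_G(K)$; comparing $|\Core_G(H)|\,|\Core_G(K)|=|G|=|H|\,|K|$ (the last equality because $G=HK$ with $H\cap K=1$) against $\Core_G(H)\le H$ and $\Core_G(K)\le K$ forces $\Core_G(H)=H$ and $\Core_G(K)=K$, so $G=H\times K$ with both factors totally $2$-closed. The only point that needs any care is the uniform verification of Theorem~\ref{thm:W2}(b) over all pairs of point stabilisers; everything else is bookkeeping once the representation on $\Ga\cup\De$ has been chosen, which is the essential idea of the proof.
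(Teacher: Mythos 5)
Your proof is correct, and it follows exactly the route the paper indicates for this result: the paper does not reprove the lemma but cites it from \cite[Lemma 3.1]{AAT} and presents it as ``the following helpful application'' of Wielandt's Dissection Theorem, which is precisely what you do -- verify condition (b) of Theorem~\ref{thm:W2} for the action on $[G:H]\cup[G:K]$ via $G=H^aK^b$, deduce $G=G^\Gamma\times G^\Delta=\Core_G(K)\times\Core_G(H)$ from total $2$-closedness, and transfer total $2$-closedness to the direct factors. All the individual steps (the conjugate factorisation, the identification of the factors with the two cores, the direct-factor argument, and the order count in the case $H\cap K=1$) check out.
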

	
	Lemma~\ref{lem:AAT31} has the following important consequence for simple groups.
	
	\begin{corollary}\label{lem:W2}
		Let $T$  be a finite nonabelian simple group. If $T$ is totally $2$-closed, then $T$ admits no nontrivial factorisation.
	\end{corollary}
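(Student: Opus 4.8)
The plan is to derive Corollary~\ref{lem:W2} directly from Lemma~\ref{lem:AAT31}. Suppose for contradiction that $T$ is a finite nonabelian simple group that is totally $2$-closed and admits a nontrivial factorisation $T=HK$, with $H$ and $K$ both proper subgroups of $T$. First I would observe that, since $T$ is simple and $H,K$ are proper, we have $\Core_T(H)=\Core_T(K)=1$: the core of any proper subgroup is a proper normal subgroup, hence trivial. In particular the hypothesis $\Core_T(H)\cap\Core_T(K)=1$ of Lemma~\ref{lem:AAT31} is automatically satisfied.

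Now I would apply Lemma~\ref{lem:AAT31} to this factorisation. The conclusion of the lemma gives $T=\Core_T(H)\times\Core_T(K)=1\times 1=1$, which contradicts the fact that $T$ is nonabelian (hence nontrivial). Therefore no such nontrivial factorisation exists, which is exactly the assertion of the corollary. Alternatively, one could phrase the final step using the ``in particular'' clause of Lemma~\ref{lem:AAT31}: if in addition $H\cap K=1$ then $T=H\times K$ with both factors proper, again contradicting simplicity; but since in general $H\cap K$ need not be trivial, the cleaner route is the core-based statement, which applies verbatim here.

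There is essentially no obstacle: the argument is a one-line deduction once Lemma~\ref{lem:AAT31} is in hand, the only point requiring (trivial) care being the remark that proper subgroups of a simple group have trivial core, so that the hypothesis of the lemma holds. The substantive content all sits inside Lemma~\ref{lem:AAT31}, whose proof in turn rests on Wielandt's Dissection Theorem (Theorem~\ref{thm:W2}): a factorisation $G=HK$ with $H\cap K$ (or more generally the intersection of cores) trivial lets one build a faithful intransitive action of $G$ on the disjoint union of the coset spaces $G/H$ and $G/K$ whose $2$-closure is strictly larger than $G$ unless $G$ splits as the asserted direct product, so a totally $2$-closed $G$ is forced into that product structure. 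For the corollary itself, nothing further is needed.
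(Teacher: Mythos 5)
Your proof is correct and follows the same route as the paper: both apply Lemma~\ref{lem:AAT31} after noting that the simplicity of $T$ forces $\Core_T(H)=\Core_T(K)=1$, so the lemma's conclusion $T=\Core_T(H)\times\Core_T(K)$ would make $T$ trivial. You merely spell out the contradiction slightly more explicitly than the paper does.
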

	
	\begin{proof}
		Suppose that $T$  is totally $2$-closed. If $T=HK$ is a nontrivial factorisation, then the cores $\Core_T(H)=\Core_T(K)=1$ since $T$ is simple, and this contradicts Lemma~\ref{lem:AAT31}. Hence $T$ has no nontrivial factorisations.
	\end{proof}

	The following result from Wielandt~\cite{Wielandt} is also useful when making a detailed
	analysis of $2$-closures.
	
	\begin{theorem}\label{thm:W}\cite[Theorem 5.6]{Wielandt}\quad Let $G\leq \Sym(\Omega)$ and $x\in\Sym(\Omega)$. Then $x\in G^\Otwo$ if and only if, for all $\alpha, \beta\in\Omega$, there exists $g\in G$ such that $\alpha^x = \alpha^g$ and $\beta^x=\beta^g$.
	\end{theorem}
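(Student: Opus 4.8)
The plan is to read the statement off directly from the definition of the $2$-closure recalled in Section~\ref{intro}. By that definition, $G^\Otwo$ consists of exactly those $g\in\Sym(\Om)$ that leave invariant each $G$-orbit of the induced diagonal action of $G$ on $\Om\times\Om$, where $(\alpha,\beta)^g=(\alpha^g,\beta^g)$; note that the $G$-orbit containing a pair $(\alpha,\beta)$ is $\{(\alpha^g,\beta^g):g\in G\}$. So the condition in the statement — that for all $\alpha,\beta\in\Om$ there is some $g\in G$ with $\alpha^x=\alpha^g$ and $\beta^x=\beta^g$ — says precisely that $(\alpha,\beta)^x$ lies in the same $G$-orbit on $\Om\times\Om$ as $(\alpha,\beta)$, for every pair $(\alpha,\beta)$.

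For the forward implication, suppose $x\in G^\Otwo$ and fix $\alpha,\beta\in\Om$. Writing $\mathcal{O}$ for the $G$-orbit of $(\alpha,\beta)$, invariance of $\mathcal{O}$ under $x$ gives $(\alpha^x,\beta^x)=(\alpha,\beta)^x\in\mathcal{O}=\{(\alpha^g,\beta^g):g\in G\}$, which furnishes the desired $g$.

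For the converse, assume the pairwise condition and let $\mathcal{O}\subseteq\Om\times\Om$ be any $G$-orbit. For each $(\alpha,\beta)\in\mathcal{O}$ the hypothesis gives $g\in G$ with $(\alpha,\beta)^x=(\alpha^g,\beta^g)\in\mathcal{O}$, so $\mathcal{O}^x\subseteq\mathcal{O}$. To promote this to $\mathcal{O}^x=\mathcal{O}$, note that the $G$-orbits partition $\Om\times\Om$, that $x$ induces a bijection of $\Om\times\Om$, and that $\bigsqcup_{\mathcal{O}}\mathcal{O}^x=(\Om\times\Om)^x=\Om\times\Om=\bigsqcup_{\mathcal{O}}\mathcal{O}$; since the orbits $\mathcal{O}$ are pairwise disjoint and each $\mathcal{O}^x\subseteq\mathcal{O}$, every such inclusion is in fact an equality. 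Hence $x$ leaves every $G$-orbit on $\Om\times\Om$ invariant, i.e.\ $x\in G^\Otwo$.

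There is no genuine obstacle here: the argument is a formal unwinding of definitions. The only step requiring a moment's thought is the last one, upgrading the inclusions $\mathcal{O}^x\subseteq\mathcal{O}$ to equalities — immediate by counting when $\Om$ is finite (the case relevant throughout this paper), and valid in general by the disjointness argument above. Alternatively, one could simply invoke \cite[Theorem 5.6]{Wielandt} verbatim, since it is exactly this statement.
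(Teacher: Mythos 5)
Your proof is correct. The paper itself gives no argument for this statement, simply citing \cite[Theorem 5.6]{Wielandt}; your direct unwinding of the definition of $G^{\Otwo}$ — including the careful upgrade of the inclusions $\mathcal{O}^x\subseteq\mathcal{O}$ to equalities via disjointness and bijectivity — is a complete and self-contained substitute for that citation.
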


	\subsection{Reduction to direct products of nonabelian finite simple groups}
	
	In this section, we reduce our proof of Theorem \ref{thm:classification} to particular direct products of finite simple groups. Before we state our main result, we remind the reader of some standard terminology: a nontrivial group $X$ is said to be a \emph{section} of a group $G$ if there are subgroups $L \unlhd H \leq G$ such that $H/L\cong X$. Our reduction theorem can now be stated as follows.
	\begin{theorem}\label{thm:issimple}
		Let $G$ be a non-trivial finite totally $2$-closed group such that $F(G)=1$. Then $G=T_1\times\dots\times T_r$ for some $r\geq1$, where for each $i\leq r$,  $T_i$ is a finite nonabelian simple group  which is totally $2$-closed, and $T_i$ is not a section of $\prod_{j\ne i} T_j$.
	\end{theorem}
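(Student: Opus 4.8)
The plan is to exploit the structure theory of groups with trivial Fitting subgroup together with Lemma~\ref{lem:AAT31} (via its corollary) and the hereditary behaviour of total $2$-closure under direct factors. First I would recall that since $F(G)=1$, the generalized Fitting subgroup satisfies $F^*(G)=E(G)$, the layer, and $E(G)$ is a (central, hence direct) product of the components of $G$, which are quasisimple; since $F(G)=1$ these components have trivial centre, so $E(G)=T_1\times\cdots\times T_r$ with each $T_i$ nonabelian simple. Moreover $C_G(F^*(G))=Z(F^*(G))=1$, so $G$ embeds into $\Aut(E(G))=\prod\Aut(T_i)$ (up to permutation of isomorphic factors). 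The goal is to upgrade this to $G=E(G)$, i.e.\ to show $G$ acts as inner automorphisms on each factor and permutes no isomorphic factors, and then to show each $T_i$ is itself totally $2$-closed and not a section of the product of the others.

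The key engine is the following: a totally $2$-closed group has \emph{no} nontrivial factorisation $G=HK$ with $\Core_G(H)\cap\Core_G(K)=1$ that is not already a direct product decomposition (Lemma~\ref{lem:AAT31}), and quotients/subgroups behave appropriately. The main step I would carry out is: \emph{show that $G$ cannot induce a nontrivial outer automorphism on any $T_i$, nor nontrivially permute the set of components.} For the permutation part, suppose $G$ transitively permutes a block $\{T_1,\dots,T_k\}$ of pairwise isomorphic components with $k\ge 2$; then $N:=N_G(T_1)$ is a proper subgroup of index $k$, and one builds a second proper subgroup $K$ (for instance a complement-type subgroup, or $T_1\cdots T_k$ together with a suitable piece, or use that $G$ acting on the $k$ cosets gives a transitive action in which the point stabiliser $N$ together with a Sylow or a well-chosen transitive subgroup factorises $G$) so that $G=NK$ with trivial core intersection but $N\cap K\ne$ the whole of either core — contradicting Lemma~\ref{lem:AAT31}, since the resulting direct factorisation is incompatible with the wreath-type embedding. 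More cleanly: total $2$-closure passes to the quotient $G/\Core_G(N)$ only in controlled ways, so I would instead argue that if $G$ permutes components nontrivially, then $G$ has a transitive permutation representation (e.g.\ on cosets of a point stabiliser in the natural product action, restricted suitably) which is not $2$-closed — using Wielandt's Dissection Theorem~\ref{thm:W2} and the wreath structure to exhibit an element of the $2$-closure outside $G$. Similarly, if $G$ induces an outer automorphism on some $T_i$, one produces a factorisation or a non-$2$-closed transitive representation; here one can combine the diagonal-type subgroups of $T_i\times\operatorname{Inn}(T_i)$-flavour with Dissection to get the contradiction, or cite that an almost simple group $T_i.\langle\text{outer}\rangle$ already fails total $2$-closure by the later simple-group analysis — but to keep the argument self-contained at this point I would do it directly via factorisations.

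Once $G=T_1\times\cdots\times T_r$ with the $T_i$ nonabelian simple, the remaining two assertions are comparatively routine. That each $T_i$ is totally $2$-closed: any faithful permutation representation of $T_i$ on a set $\Delta$ extends to a faithful intransitive representation of $G$ on $\Delta\sqcup\Omega_0$ where the other factors act faithfully on $\Omega_0$ and $T_i$ acts trivially there; applying Wielandt's Dissection Theorem~\ref{thm:W2} (the hypothesis $G=G_\gamma G_\delta$ holds because $T_i$ and $\prod_{j\ne i}T_j$ are normal with trivial intersection, so stabilisers of points in the two orbits cover $G$) gives $G^\Delta\times G^{\Omega_0}\le G^{(2)}=G$, whence $T_i^{(2),\Delta}\le G^\Delta=T_i$, so $T_i$ is $2$-closed in this arbitrary faithful action. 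That $T_i$ is not a section of $\prod_{j\ne i}T_j$: if it were, then some $T_j$ ($j\ne i$) would have $T_i$ as a section, and since these are simple groups one can build a faithful permutation representation of $G$ in which the Dissection Theorem hypothesis fails, i.e.\ $G_\gamma G_\delta\ne G$ for points $\gamma,\delta$ in the $T_i$-orbit and a cleverly chosen $T_j$-related orbit, producing an element of $G^{(2)}\setminus G$ and contradicting total $2$-closure; alternatively phrase this as: the section condition is exactly what is needed to guarantee all relevant transitive representations of the product are $2$-closed, which is Proposition~\ref{PreTransRed} territory.

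\medskip

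\noindent\textbf{Anticipated main obstacle.} The delicate point is ruling out that $G$ nontrivially permutes a set of isomorphic simple direct factors — a wreath-product-type embedding $G\le S\wr C_k$ is not obviously obstructed by the existence of \emph{some} bad factorisation, and one must produce the right pair $(H,K)$ or the right faithful non-$2$-closed transitive action. Handling isomorphic factors (and hence the hypothesis ``$T_i$ not a section of $\prod_{j\ne i}T_j$'') is where the real content lies; the extraction of the direct-product shape and the per-factor total $2$-closedness are then formal consequences of Wielandt's Dissection Theorem and Lemma~\ref{lem:AAT31}.
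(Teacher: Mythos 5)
There is a genuine gap at the centre of your argument, and you have in fact flagged it yourself. The step ``$G$ induces no outer automorphisms on the components and does not permute isomorphic components'' is exactly where all the content lies, and your proposal offers only a menu of possible constructions (``a complement-type subgroup, or $T_1\cdots T_k$ together with a suitable piece, or \dots''), none of which is carried out. The paper closes this gap with one specific device that your sketch is missing: take a minimal normal subgroup $M=S_1\times\dots\times S_s$ of $G$, let $P$ be a Sylow $2$-subgroup of $M$ (nontrivial by Feit--Thompson), and apply the Frattini argument to get the factorisation $G=N_G(P)M$. Since $N_M(P)$ projects properly into each $S_i$, the core $X=\Core_G(N_G(P))$ meets $M$ trivially, so Lemma~\ref{lem:AAT31} forces $G=X\times M$ with both factors totally $2$-closed; minimality of $M$ inside the direct product then forces $s=1$, and recursion gives the whole decomposition. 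This single factorisation simultaneously kills outer automorphisms, component permutations, and non-simplicity of $M$, which is why no wreath-product analysis is needed. Without the Frattini step (or an equivalent source of a concrete factorisation with trivial core intersection), your plan does not go through.

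Two further points. First, in your treatment of the condition that $T_i$ is not a section of $\prod_{j\ne i}T_j$, the logic of the Dissection Theorem is inverted: if $G_\gamma G_\delta\ne G$, then Theorem~\ref{thm:W2} says $G^\Gamma\times G^\Delta\not\le G^{\Otwo}$, which makes the $2$-closure \emph{smaller} than the full product; it does not produce an element of $G^{\Otwo}\setminus G$. The paper instead takes an epimorphism $\varphi:H\to T_i$ with $H\le T_j$, forms the graph subgroup $J=\{(\varphi(h),h,1,\dots,1)\}$ and $K=1\times T_2\times\dots\times T_r$, checks $G=JK$ with $\Core_G(J)=1$, and lets Lemma~\ref{lem:AAT31} force the contradiction $G=1\times K$; a separate minimality argument then reduces ``section of the product'' to ``section of a single factor.'' Your alternative appeal to Proposition~\ref{PreTransRed} would be circular, since that proposition is proved using Theorem~\ref{thm:issimple}. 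Second, in your argument that each $T_i$ is totally $2$-closed, the inference ``$G^\Delta\times G^{\Omega_0}\le G^{\Otwo}=G$, whence $T_i^{(2),\Delta}\le G^\Delta$'' is a non sequitur as written; the correct route is to lift an element of $T_i^{(2),\Delta}$ to a permutation of $\Delta\sqcup\Omega_0$ acting trivially on $\Omega_0$ and verify via Theorem~\ref{thm:W} that it lies in $G^{\Otwo}$ (this is exactly the content of the final clause of Lemma~\ref{lem:AAT31}, which the paper simply invokes). These latter issues are repairable; the missing Frattini-type factorisation is not.
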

	\begin{proof}
		%
		Since $G$ is finite, nontrivial, and $F(G)=1$, a minimal normal subgroup $M$ of $G$ is a direct product of nonabelian finite simple groups. If $M=G$ then, as $M$ is minimal normal, the group $G$ would itself be a nonabelian simple group which is totally $2$-closed, and the conclusion holds with $r=1$. Thus we may assume that $M\ne G$, and $M=S_1\times\dots\times S_s$ with each $S_i$ a finite nonabelian simple group.
		We note that the minimal normal subgroups of $M$ are precisely the subgroups
		\[
		\hS_i=\{(x_1,\dots,x_s)\mid x_j=1\ \mbox{for $j\ne i$, and } x_i\in S_i\}\cong S_i,
		\]
		for $i=1,\dots,s$ (see, for example, \cite[Theorem4.16(iv)]{PS}).
		By the `Odd Order Theorem', since each of the $\hS_i$ is a finite nonabelian simple group, each $\hS_i$ has a nontrivial Sylow $2$-subgroup, $P_i$ say, and hence $P=P_1\times\dots\times P_s$ is a (nontrivial)  Sylow $2$-subgroup of $M$. By the Frattini argument (see, for example, \cite[5.2.14]{R}), $G=N_G(P)M$. Now $N_M(P)= N_{\hS_1}(P_1)\times\dots\times N_{\hS_s}(P_s)$, and $N_M(P)$ projects to a proper subgroup $N_{\hS_i}(P_i)$ of $\hS_i$, for each $i$. In particular $M\not\leq N_G(P)$. Hence the core $X=\Core_G(N_G(P))$ does not contain $M$, and so $X\cap M=1$ since $M$ is a minimal normal subgroup of $G$. Also since $M$ is normal in $G$, $\Core_G(M) = \cap_{g\in G} M^g = M$. Then, since $G$ is totally $2$-closed, it follows from Lemma~\ref{lem:AAT31} that $G=X\times M$ and both $X$ and $M$ are totally $2$-closed. Since $M\ne G$ it follows that $X\ne 1$ and since $M$ is a minimal normal subgroup of $G$ we conclude that $M=\hS_1$ is a finite nonabelian simple group.
		
		Thus each minimal normal subgroup of $G$ is a totally $2$-closed, finite nonabelian simple group and is a direct factor of $G$.
		Applying this argument recursively we conclude that $G=T_1\times T_2\times\dots\times T_r$ for some $r\geq2$, where each $T_i$ is a totally $2$-closed, finite nonabelian simple group. For $1\leq j\leq r$ let $\pi_j$ denote the projection  $\pi_j:G\to T_j$ given by $(t_1,\dots, t_r)\to t_j$, and let $\hT_j$ denote the minimal normal subgroup of $G$ defined similarly to the $\hS_i$ above.

		Let $i,j$ be distinct integers between $1$ and $r$. We  show next that $T_i$ is not a section of $T_j$. Suppose that this is false, so there exist subgroups $L\unlhd H\leq T_j$ such that $H/L\cong T_i$, or equivalently, there exists an epimorphism $\varphi:H\to T_i$ with kernel $L$.
		Without loss of generality assume that $i=1$ and $j=2$, and let $J=\{ (\varphi(h),h,1,\dots,1) \mid h\in H\}$ and $K=1\times T_2\times\dots\times T_r$. Clearly both $J$ and $K$ are proper subgroups of $G$. Moreover,  since $\varphi$ is an epimorphism it follows that $G=JK$ is a nontrivial factorisation, and since  $K$ is normal in $G$ we have $\Core_G(K)=K$. We claim that $\Core_G(J) = 1$. Now $\Core_G(J)\leq J < \hT_1\times\hT_2\times 1\dots\times 1$, and if $\Core_G(J)\ne 1$ then $\Core_G(J)$ would contain at least one of the minimal normal subgroups of $G$, namely one of $\hT_1,\dots,\hT_r$, so $\Core_G(J)$ would contain $\hT_1$ or $\hT_2$.
		Let $x=(\varphi(h),h,1,\dots,1)\in\Core_G(J)$. If $x_1=\varphi(h)\ne 1$ then $h\ne 1$, so $\Core_G(J)\cap\hT_1=1$. Also if $x\in\hT_2$, then $x_1=\varphi(h)=1$ so $h\in \Ker(\varphi)=L$.  Since $L$ is a proper subgroup of $T_2$ it follows that $\Core_G(J)$ does not contain $\hT_2$ either. Hence $\Core_G(J)=1$ as claimed.
		It now follows from Lemma~\ref{lem:AAT31} that $G=1\times K$, which is a contradiction. Thus none of the $T_i$ is a section of any of the other $T_j$ (and in particular the $T_i$ are pairwise non-isomorphic).
		
		Finally, suppose that, for some $i\leq r$, $T_i$ is a section of $K:=\prod_{j\ne i}T_j$. Then there are subgroups $L\unlhd H\leq K$ such that $H/L\cong T_i$. Let $H, L$ be  subgroups of $K$ with this property  such that $|H|$ is as small as possible. Identify $K$ with the normal subgroup $\prod_{j\ne i}\hT_j$ of $G$. Let $1\leq j\leq r$ with $j\ne i$, and consider  $H_j :=H\cap (\prod_{k\ne j}\hT_k)$, the kernel of the restriction  $\pi_j|_{H}$  to $H$ of the projection
		$\pi_j$. We will identify $\pi_j(H)$ with the quotient $H/H_j$ so that, for each subgroup $J\leq H$, the image $\pi_j(J)$ is identified with $JH_j/H_j$. In particular $\pi_j(L)= LH_j/H_j$. Then
		\[
		Q:=\pi_j(H)/\pi_j(L)=(H/H_j)/(LH_j/H_j)\cong H/(LH_j)\cong (H/L)/(LH_j/L),
		\]
		a quotient of the simple group $H/L\cong T_i$. Thus $Q\cong T_i$ or $1$. Now $\pi_j(L)\unlhd \pi_j(H)\leq T_j$, and by the previous paragraph $T_i$ is not a section of $T_j$, and hence the quotient $\pi_j(H)/\pi_j(L)\not\cong T_i$.
		Thus $Q$ must be trivial, and hence $H=LH_j$. This, however, implies that $T_i\cong H/L=(LH_j)/L\cong
		H_j/(L\cap H_j)$, and so, by the minimality of $|H|$, we conclude that $H=H_j$, that is to say, $\pi_j(H)=1$.  Since this holds for each $j\ne i$, and since $H\leq \prod_{j\ne i}\hT_j=K$, it follows that $H=1$, which is a contradiction. Thus $T_i$ is not a section of $K$, and the proof is complete.
	\end{proof}
	
	\begin{remark}\label{rem:section}{\rm
	We make some comments about the condition in Theorem~\ref{thm:issimple} that each $T_i$ is not isomorphic to a section of the product $\prod_{j\ne i}T_j$. Since the $T_i$ are simple groups, this condition is equivalent to requiring that, for all distinct $i, j$, the group $T_i$ is not isomorphic to a section of $T_j$.  Deciding precisely when this condition holds was not easy, but in the case where both $T_i$ and $T_j$ are sporadic simple groups (which is the case with the examples in Theorem~\ref{thm:classification}), an almost complete answer is given in \cite[p.\,238]{Atlas}, which provides a beautiful diagram showing exactly which sporadic simple groups are involved in this way in other sporadic simple groups. The only unresolved case in this diagram is whether or not the small Janko group $J_1$ occurs as a section of the Monster $\mathbb{M}$, but it is now known that $J_1$ is not a section of the Monster, see \cite[Section 5.8]{W} for a discussion. 
	Considering this diagram (and the situation for $J_1, \mathbb{M}$) we see that, for $T_i, T_j$ any distinct pair of sporadic groups occurring in the statement of Theorem~\ref{thm:classification}, if $T_i$ is a section of $T_j$ then $T_i=Th$ and 
	$T_j=\mathbb{M}$ (which explains the need for two parts in Theorem~\ref{thm:classification}(3)).
}	\end{remark}

	\subsection{Totally $2$-closed direct products of simple groups}\label{sub:directprods}
	
	By Theorem~\ref{thm:issimple}, we need to analyse direct products of totally $2$-closed simple groups. With this in mind we introduce the following hypothesis and notation.

	\begin{hypothesis}\label{not1}
		{\rm
			Let $G=T_1\times T_2\times\dots\times T_r$ for some $r\geq1$, where each $T_i$ is a  finite nonabelian simple group, and suppose that, for  $1\leq i\leq r$, $T_i$ is not a section of $\prod_{j\ne i} T_j$.  For $1\leq i\leq r$, let $\pi_i:G\to T_i$ denote the projection map $\pi_i:(t_1,\dots, t_r)\to t_i$, and define
			\[
			\hT_i=\{g\in G\mid \pi_j(g)=1\ \mbox{for all $j\ne i$}\} \cong T_i.
			\]
			As mentioned earlier, (see, for example, \cite[Theorem 4.16(iv)]{PS}),  $\{\hT_1, \dots, \hT_r\}$ is the complete set of minimal normal subgroups of $G$.
		}
	\end{hypothesis}
	
	This notation helps avoid possible confusion by distinguishing between the abstract group $T_i$ and the simple normal subgroup $\hT_i$ of $G$. Note that Hypothesis~$\ref{not1}$ holds, by Theorem~\ref{thm:issimple}, if $G$ is totally $2$-closed with $F(G)=1$.  We extend the argument in the last paragraph of the proof of Theorem~\ref{thm:issimple} to obtain the following characterisation of certain subgroups of $G$.
	
	\begin{lemma}\label{lem:dp1}
		Let $G, r$ be as in Hypothesis~$\ref{not1}$, and let $i\leq r$. If $H\leq G$ has $T_i$ as a section, then $H$ contains $\hT_i$.
	\end{lemma}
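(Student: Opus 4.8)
The plan is to adapt the argument in the final paragraph of the proof of Theorem~\ref{thm:issimple}, which already establishes essentially this statement in the special case $H = K = \prod_{j\ne i}\hT_j$; here we need it for an arbitrary subgroup $H\le G$. First I would fix $i\le r$ and a subgroup $H\le G$ having $T_i$ as a section, so there exist $L\unlhd H_0 \le H$ with $H_0/L \cong T_i$; among all such pairs choose one with $|H_0|$ minimal. The goal is to show $\hT_i \le H$, and the natural way to get a handle on $H$ is to control its images under the projections $\pi_j$ for $j \ne i$ and its intersection with $\hT_i$.

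The key step is to run the quotient computation from the proof of Theorem~\ref{thm:issimple} on $H_0$ rather than on all of $\prod_{j\ne i}\hT_j$. For each $j\ne i$, set $(H_0)_j = H_0 \cap \bigl(\prod_{k\ne j}\hT_k\bigr) = \Ker(\pi_j|_{H_0})$. Then $\pi_j(H_0)/\pi_j(L)$ is a quotient of the simple group $H_0/L \cong T_i$, hence isomorphic to $T_i$ or trivial; but $\pi_j(H_0) \le T_j$ and, by the ``not a section'' hypothesis in Hypothesis~\ref{not1} (equivalently, by the pairwise non-section statement proved inside Theorem~\ref{thm:issimple}), $T_i$ is not a section of $T_j$, forcing the quotient to be trivial, i.e. $H_0 = L(H_0)_j$. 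This gives $T_i \cong H_0/L \cong (H_0)_j/(L\cap (H_0)_j)$, so minimality of $|H_0|$ yields $H_0 = (H_0)_j$, i.e. $\pi_j(H_0) = 1$. Since this holds for every $j\ne i$, we get $H_0 \le \hT_i$. But $H_0 \unrhd L$ with $H_0/L\cong T_i \cong \hT_i$, and $\hT_i$ is simple, so $H_0 \cong \hT_i$ forces $L = 1$ and $H_0 = \hT_i$. Therefore $\hT_i = H_0 \le H$, as required.

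I do not expect a serious obstacle here: the whole argument is a near-verbatim reprise of the last paragraph of Theorem~\ref{thm:issimple}'s proof, with $\prod_{j\ne i}\hT_j$ replaced by the given $H$ and the internal ``section'' subgroup renamed $H_0$ to avoid clashing with the ambient $H$. The one point that needs a moment's care is that the ``not a section'' input we invoke for the pair $(T_i, T_j)$ is legitimately available: it is part of the conclusion of Theorem~\ref{thm:issimple} (and is built into Hypothesis~\ref{not1}), namely that $T_i$ is not a section of any single $T_j$ with $j\ne i$, not merely that it is not a section of the full product. A second small point is the final identification: once $\pi_j(H_0)=1$ for all $j\ne i$ we have $H_0\le\hT_i$, and then simplicity of $\hT_i$ together with $H_0/L\cong\hT_i$ forces equality; alternatively one notes $|H_0|\ge|\hT_i|$ while $H_0\le\hT_i$. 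Either way the conclusion $\hT_i\le H$ follows, completing the proof.
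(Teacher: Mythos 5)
Your proof is correct and follows essentially the same route as the paper's: the paper also chooses the internal subgroup $K$ (your $H_0$) minimal with respect to having $T_i$ as a section, shows $\pi_j(K)/\pi_j(L)$ is a trivial quotient of $T_i$ using that $T_i$ is not a section of $T_j$, deduces $\pi_j(K)=1$ for all $j\ne i$, and concludes $K=\hT_i\le H$. The only cosmetic difference is that you minimise $|H_0|$ whereas the paper assumes no proper subgroup of $K$ has $T_i$ as a section; these serve the same purpose.
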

	
	\begin{proof}
		Suppose that $H\leq G$ and $H$ has a section $T_i$, that is to say, there exist $L\unlhd K\leq H$ such that $K/L\cong T_i$. We need to show that $\hT_i\leq H$. Without loss of generality we may assume that no proper subgroup of $K$ has $T_i$ as a section.
		Let $1\leq j\leq r$ with $j\ne i$, and let $J=K\cap (\prod_{k\ne j}\hT_k)$, the kernel of the restriction $\pi_j|_{K}$ of $\pi_j$ to $K$. Then $\pi_j(L)\unlhd \pi_j(K)\leq T_j$ and $\pi_j(K)/\pi_j(L)\cong(K/J)/(LJ/J)\cong K/(LJ)\cong (K/L)/(LJ/L)$, a quotient of the simple group $K/L\cong T_i$. By Hypothesis~$\ref{not1}$, $T_i$ is not a section of $\prod_{k\ne i}T_k$, and in particular,
		$T_i$ is not a section of $T_j$. Hence $\pi_j(K)/\pi_j(L)\not\cong T_i$. This implies that
		$\pi_j(K)=\pi_j(L)$, or equivalently, $K=LJ$. Thus $T_i\cong K/L=(LJ)/L\cong
		J/(L\cap J)$, and since $T_i$ is not a section of any proper subgroup of $K$,
		we conclude that $K=J$, that is, $\pi_j(K)=1$. Since this holds for each $j\ne i$, it follows that $K\leq \hT_i$, and since $T_i$ is a section of $K$ this means that $K=\hT_i$. Hence $\hT_i\leq H$.
	\end{proof}

	Lemma~\ref{lem:dp1} has an important consequence for factorisations of groups satisfying Hypothesis~\ref{not1}. The proof uses the notion of a strip. For $G=T_1\times T_2\times\dots\times T_r$ as in Hypothesis~\ref{not1}, a subgroup $D$ is called a \emph{strip} if $D\ne 1$, and for each $i\leq r$, either $\pi_i(D)=1$ or $D\cap\Ker(\pi_i)=1$. Note that  $D\cap\Ker(\pi_i)=1$ if and only if the restriction $\pi_i|_D$ of $\pi_i$ to $D$ is an isomorphism from $D$ to $\pi_i(D)$. The \emph{support} of a strip is the set $\{i\mid \pi_i(D)\ne 1\}$, and two strips are said to be \emph{disjoint} if their supports are disjoint. A strip $D$ is \emph{full} if $\pi_i(D)=T_i$ for all $i$ in the support of $D$.
	A subgroup $H$ of a group $G$ with a given direct decomposition $G=T_1\times T_2\times\dots\times T_r$ is called a \emph{subdirect subgroup} if $\pi_i(H)=T_i$ for each $i=1,\dots,r$.

	\begin{lemma}\label{lem:dp2}
		Let $G$ be as in Hypothesis~$\ref{not1}$, and such that each $T_i$ admits no nontrivial factorisation. If $G=HK$ is a nontrivial factorisation, then $\Core_G(H)\ne 1$ and $\Core_G(K)\ne 1$.
	\end{lemma}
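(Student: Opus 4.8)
The plan is to show that if $G = HK$ is a nontrivial factorisation with, say, $\Core_G(H) = 1$, then $H$ is forced to be a direct product of full strips whose supports partition a subset of $\{1,\dots,r\}$, and then to use the hypothesis that no $T_i$ factorises, together with the counting/order constraints coming from $|G| = |H||K|/|H\cap K|$, to derive a contradiction. First I would invoke Lemma~\ref{lem:dp1}: for each $i$, the simple group $T_i$ is a section of $G$ and $G = HK$, so one of $H$, $K$ must have $T_i$ as a section (since the composition factors of $G$ are distributed among those of $H$ and $K$ — more precisely, $T_i$ appears as a composition factor of $G = HK$ with multiplicity one, forcing it to be a composition factor, hence a section, of $H$ or of $K$). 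By Lemma~\ref{lem:dp1}, whenever $T_i$ is a section of $H$ we get $\hat T_i \leq H$, and similarly for $K$. If $\hat T_i \leq H$ for \emph{every} $i$ then $H = G$, contradicting that the factorisation is nontrivial; likewise for $K$. So we may partition $\{1,\dots,r\} = I \sqcup J$ (possibly with overlap a priori, but see below) where $\hat T_i \leq H$ for $i \in I$ and $\hat T_j \leq K$ for $j \in J$, with both $I$ and $J$ nonempty.

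Next I would pin down the structure more tightly using the cores. Set $N_H = \prod_{i \in I}\hat T_i \leq H$ and $N_K = \prod_{j\in J}\hat T_j \leq K$; these are normal in $G$. Since $\Core_G(H) = 1$ and $N_H \unlhd G$ with $N_H \leq H$, we get $N_H \leq \Core_G(H) = 1$, forcing $I = \emptyset$ — but that contradicts $I$ nonempty. Wait: this is exactly the crux, so let me re-route. The point is that if $\Core_G(H) = 1$, then $H$ contains \emph{no} nontrivial normal subgroup of $G$, in particular no $\hat T_i$; so by the contrapositive of Lemma~\ref{lem:dp1}, $H$ has \emph{no} $T_i$ as a section, i.e.\ $H$ has trivial intersection with the composition-factor set $\{T_1,\dots,T_r\}$. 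But then every composition factor $T_i$ of $G$ must be a composition factor of $K$ (again by multiplicity-one and $G = HK$), so by Lemma~\ref{lem:dp1} applied to $K$ we obtain $\hat T_i \leq K$ for all $i$, whence $K = G$, contradicting that $K$ is a proper subgroup. This symmetric argument shows $\Core_G(H) \neq 1$ and $\Core_G(K) \neq 1$, completing the proof.

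The one technical point that needs care — and which I expect to be the main obstacle — is the assertion that \emph{each} $T_i$ must be a composition factor of $H$ or of $K$ when $G = HK$. This is not automatic from a set-theoretic factorisation; I would establish it via the order equality $|G| = |H|\,|K|\,/\,|H\cap K|$ together with the fact that, under Hypothesis~\ref{not1}, the $T_i$ are pairwise non-isomorphic and none is a section of the product of the others, so $T_i$ occurs in the composition series of $G$ exactly once. Since $|G|$ divides $|H|\,|K|$, the full multiplicity of $T_i$ in $|G|$ must be supplied by $|H|\,|K|$, and as $T_i$ is a simple group it cannot be ``split'' across the two factors; hence $T_i \mid |H|$ or $T_i \mid |K|$, and a standard argument (e.g.\ Jordan--Hölder applied to a chief series of $G$ refined through $H\cap K \unlhd H$) upgrades divisibility to being a genuine composition factor, hence a section. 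The hypothesis that each $T_i$ admits no nontrivial factorisation is in fact not needed for this lemma as stated — it will be used in the sequel — so I would remark that the proof uses only Hypothesis~\ref{not1} and Lemma~\ref{lem:dp1}.
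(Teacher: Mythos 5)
There is a genuine gap, and it is exactly at the point you flagged as ``the main obstacle'': the assertion that in a factorisation $G=HK$ each composition factor $T_i$ of $G$ must be a composition factor (hence a section) of $H$ or of $K$. This is false for factorisations in general: $\Alt(5)=\Alt(4)\cdot C_5$, yet $\Alt(5)$ is not a section of either factor. The order-theoretic repair you propose does not work either. From $|G|\mid |H|\,|K|$ you cannot conclude $|T_i|\mid|H|$ or $|T_i|\mid|K|$ --- the order of a simple group is just an integer and its prime factorisation can be split between the two factors (as $60=12\cdot 5$ shows) --- and even if $|T_i|$ did divide $|H|$, divisibility of orders never upgrades to having $T_i$ as a composition factor. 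Consequently your closing remark that the hypothesis ``each $T_i$ admits no nontrivial factorisation'' is not needed is also wrong: with $r=1$ and $T_1=\Alt(5)$ (allowed by Hypothesis~\ref{not1}), the factorisation $\Alt(5)=\Alt(4)\cdot C_5$ has both cores trivial, so the lemma fails outright without that hypothesis. It must enter the proof in an essential way.

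The paper's argument shows where it enters. Apply the coordinate projections: $G=HK$ gives $T_i=\pi_i(H)\pi_i(K)$ for each $i$. Core-freeness of $H$ together with Lemma~\ref{lem:dp1} gives $\pi_i(H)<T_i$ (this part of your argument is fine), and \emph{now} the hypothesis that $T_i$ has no nontrivial factorisation forces $\pi_i(K)=T_i$. Hence $K$ is a subdirect subgroup of $T_1\times\dots\times T_r$, so by Scott's Lemma it is a direct product of pairwise disjoint full strips; since $K$ is proper, some strip has support of size at least $2$, which makes $T_i$ a section of $T_j$ for some $i\ne j$ and contradicts Hypothesis~\ref{not1}. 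Your first paragraph gestures at strips but then abandons them for the composition-factor route; it is the projection step $T_i=\pi_i(H)\pi_i(K)$, not a Jordan--H\"older count, that makes the argument go through.
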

	
	\begin{proof}
		Suppose that $G=HK$ with proper subgroups $H, K$, and suppose that one of $H, K$ is core-free, say $\Core_G(H)=1$. By the assumption on the $T_i$, we must have $r\geq2$. Let $i\leq r$.
		If $\pi_i(H)=T_i$, then  $T_i$ is a quotient of $H$, and hence by Lemma~\ref{lem:dp1}, $H$ contains $\hT_i$, contradicting the fact that $\Core_G(H)=1$. Thus $\pi_i(H)< T_i$. Now the factorisation $G=HK$ implies that $T_i=\pi_i(H)\pi_i(K)$, and since $\pi_i(H)<T_i$, and by assumption the group $T_i$ has no nontrivial factorisation, it follows that $\pi_i(K)=T_i$. Thus $K$ is a subdirect subgroup of $G=T_1\times T_2\times\dots\times T_r$. By `Scott's Lemma' \cite[Theorem 4.16(iii)]{PS}, $K$ is a direct product of pairwise disjoint full strips of $G$. Since $K$ is a proper subgroup of $G$, at least one of these strips, say $D$, has support of size greater than $1$. Let $i,j$ be distinct elements of the support of $D$. By the definition of a strip, $D\cap\Ker(\pi_i)=1$ and  $D\cap\Ker(\pi_j)=1$, and as we noted above, this implies that $D\cong \pi_i(D)=T_i$ and $D\cong \pi_j(D)=T_j$. This implies that $T_i$ is a section of $T_j$, and hence also of $\prod_{k\ne i}T_k$, contradicting Hypothesis~\ref{not1}.
	\end{proof}

	The next proposition assists us to decide which groups satisfying Hypothesis~$\ref{not1}$ are in fact totally $2$-closed by reducing consideration to transitive (not necessarily faithful) permutation representations.

	\begin{proposition}\label{PreTransRed}
		Let $G$ be as in Hypothesis~$\ref{not1}$. Then $G$ is totally $2$-closed if and only if, the following two conditions hold.
		\begin{enumerate}[\upshape(a)]
			\item For $i=1,\dots,r$, the group $T_i$ admits no nontrivial factorisation; and
			
			\item for each permutation representation $\varphi:G\to\Sym(\Om)$ with $|\Om|>1$ and $\varphi(G)$ transitive,  we have $\varphi(G)^\Otwo=\varphi(G)$.
			
		\end{enumerate}
	\end{proposition}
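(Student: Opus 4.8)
The plan is to prove the two implications separately, using Wielandt's Dissection Theorem (Theorem~\ref{thm:W2}), Theorem~\ref{thm:W}, and Lemma~\ref{lem:dp1}; I will use repeatedly that, by Lemma~\ref{lem:dp1} together with the fact that $\pi_i(N)\in\{1,T_i\}$ for every $N\unlhd G$, each normal subgroup of $G$ — in particular each kernel of a permutation representation of $G$ — is a product of some of the $\hT_i$.

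\emph{Necessity.} Suppose first that $G$ is totally $2$-closed. For (a), if some $T_j$ (say $T_1$) factorised nontrivially as $T_1=AB$, I would take the faithful representation of $G$ on $\Om=(T_1/A)\sqcup(T_1/B)\sqcup\Si_2\sqcup\cdots\sqcup\Si_r$, with $G$ acting on $T_1/A$ and $T_1/B$ through $\pi_1$ and on each $\Si_i$ ($i\ge2$) through $\pi_i$ followed by a faithful transitive $T_i$-action. Using the elementary fact that $T_1=AB$ forces $T_1=A^tB^s$ for all $t,s$, the hypothesis $G=G_\gamma G_\delta$ of Theorem~\ref{thm:W2} holds with $\Gamma=T_1/A$ and $\Delta$ its complement, giving $G^{\Gamma}\times G^{\Delta}\le G^{\Otwo}$; since $G^{\Delta}$ is already faithful and $G^{\Gamma}\cong T_1\neq1$, this contradicts total $2$-closure. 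For (b), given a transitive $\varphi\colon G\to\Sym(\Om)$ with $|\Om|>1$, write $K=\Ker\varphi=\prod_{i\in I}\hT_i$ (assume $I\neq\emptyset$, else $\varphi$ is faithful). Adjoining a faithful transitive $T_i$-set $\Si_i$ (acted on through $\pi_i$) for each $i\in I$ gives a faithful representation on $\Om^*=\Om\sqcup\bigsqcup_{i\in I}\Si_i$, and since $K\le G_\gamma$ for $\gamma\in\Om$ while $\prod_{j\neq i}\hT_j\le G_\delta$ for $\delta\in\Si_i$, we get $G=G_\gamma G_\delta$, hence $G$ is transitive on $\gamma^G\times\delta^G$. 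Given $y\in\varphi(G)^{\Otwo}$, the permutation of $\Om^*$ acting as $y$ on $\Om$ and trivially elsewhere then preserves every $G$-orbit on $\Om^*\times\Om^*$ (the cross ones by the above transitivity, the ones in $\Om$ since $y\in\varphi(G)^{\Otwo}$, the rest trivially), so it lies in the $2$-closure of the faithful representation, which equals $G$; restricting back to $\Om$ gives $y\in\varphi(G)$.

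\emph{Sufficiency.} Now assume (a) and (b), and let $G\le\Sym(\Om)$ be faithful with orbits $\Om_1,\dots,\Om_m$; set $N=G^{\Otwo}$. Each $\Om_l$ is $N$-invariant, and $N^{\Om_l}$ preserves the $G$-orbits on $\Om_l\times\Om_l$, so $N^{\Om_l}\le(G^{\Om_l})^{(2),\Om_l}=G^{\Om_l}$ by (b). Hence $N$ embeds via $g\mapsto(g^{\Om_l})_l$ as a subdirect subgroup of $\prod_l G^{\Om_l}$, a direct product of nonabelian simple groups that (by Hypothesis~\ref{not1}) are pairwise non-isomorphic. By Scott's Lemma \cite[Theorem~4.16(iii)]{PS}, $N$ is a direct product of full strips; since two members of one strip are isomorphic, each strip involves only copies of a single $T_i$, so $N=\prod_i(N\cap B_i)$ where $B_i$ is the product of the copies of $T_i$ among the $G^{\Om_l}$. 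As $N\cap B_i$ contains the diagonal copy of $T_i$ coming from $\hT_i\le G$ and is a product of full strips, it is a product of \emph{straight} diagonals over a partition $\mathcal{P}_i$ of $\{\,l : \hT_i\text{ acts nontrivially on }\Om_l\,\}$; thus $N=G$ exactly when every $\mathcal{P}_i$ is trivial. If instead $\mathcal{P}_{i_0}$ has two distinct parts, I would pick orbits $\Om_l,\Om_{l'}$ in different parts, and for each $t\in\hT_{i_0}$ the element of $N\cap B_{i_0}$ equal to $t$ on the part containing $l$ and to $1$ elsewhere; this is an $x\in N$ acting on $\Om_l$ as $t$ and trivially on $\Om_{l'}$. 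Fixing $\alpha\in\Om_l$, $\beta\in\Om_{l'}$, Theorem~\ref{thm:W} yields $g\in G$ with $\alpha^g=\alpha^t$ and $\beta^g=\beta$, so $t\in G_\alpha G_\beta$; letting $t$ vary gives $\hT_{i_0}\subseteq G_\alpha G_\beta$, and applying $\pi_{i_0}$ gives $T_{i_0}=\pi_{i_0}(G_\alpha)\,\pi_{i_0}(G_\beta)$. Both factors are proper — otherwise, by Lemma~\ref{lem:dp1} in $G^{\Om_l}$ (resp.\ $G^{\Om_{l'}}$), the stabiliser of $\alpha$ (resp.\ $\beta$) in that faithful transitive group would contain its minimal normal subgroup $\cong T_{i_0}$, which is impossible — so $T_{i_0}$ factorises nontrivially, contradicting (a). Hence all $\mathcal{P}_i$ are trivial, $N=G$, and $G$ is totally $2$-closed.

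\emph{Main obstacle.} I expect necessity to be routine given the Dissection Theorem; the real work is in sufficiency, where one must pin down the (possibly intransitive) $2$-closure $N$ globally. The decisive step is to combine Scott's Lemma — reducing $N$ to a product of straight diagonals controlled by the partitions $\mathcal{P}_i$ — with the factorisation argument above that forces each $\mathcal{P}_i$ to be trivial. This is exactly where hypothesis (a) is unavoidable: a non-trivial $\mathcal{P}_{i_0}$ would manufacture a nontrivial factorisation of $T_{i_0}$.
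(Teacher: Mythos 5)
Your proof is correct, and it takes a genuinely different route from the paper's in both directions. For necessity, the paper obtains (a) by citing Theorem~\ref{thm:issimple} together with Corollary~\ref{lem:W2}, and obtains (b) by writing $G=\Ker(\varphi)\times G_0$ and invoking Lemma~\ref{lem:AAT31} to conclude that the complement $G_0$ is totally $2$-closed; you instead manufacture explicit faithful witness representations (using that $T_1=AB$ forces $T_1=A^tB^s$, and that $G_\gamma G_\delta=G$ across the adjoined constituents) and apply the Dissection Theorem, resp.\ extend an element of $\varphi(G)^{\Otwo}$ by the identity and check it preserves all orbitals. For sufficiency, the paper argues by induction on the number of orbits, peels off $\Omega_1$, locates a ``doubled'' factor $\hT_1$ in the subdirect subgroup $X\le G_1\times Y$, and then derives a factorisation $\hT_1=S_1S_2$ via a rather delicate analysis of the block systems of $K$-orbits on $\Omega_1$ and $\Omega_2$; you instead apply Scott's Lemma once, globally, to $N=G^{\Otwo}$ as a subdirect subgroup of $\prod_l G^{\Omega_l}$, observe that each full strip lives over copies of a single $T_i$ (since the $T_i$ are pairwise non-isomorphic), and convert a non-trivial strip partition $\mathcal{P}_{i_0}$ directly into the factorisation $T_{i_0}=\pi_{i_0}(G_\alpha)\pi_{i_0}(G_\beta)$ by applying Theorem~\ref{thm:W} to a strip element that moves $\alpha$ but fixes $\beta$, with properness of both factors coming from Lemma~\ref{lem:dp1}. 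Your sufficiency argument is shorter and avoids the block-system machinery entirely, at the cost of invoking Scott's Lemma over the full orbit decomposition rather than a two-set split; the paper's version stays within the two-orbit induction that it reuses elsewhere. Two cosmetic points: the factors of $\prod_l G^{\Omega_l}$ are of course not pairwise non-isomorphic (the same $T_i$ recurs across orbits) --- what you need, and what you actually use, is only that factors of distinct types $T_i\not\cong T_j$ cannot share a strip; and singleton orbits fall outside hypothesis (b) but contribute trivial constituents, so they cause no harm. Neither affects correctness.
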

	
	\begin{proof}
		Suppose first that $G$ is totally $2$-closed. Then condition (a) holds  by Theorem~\ref{thm:issimple}.
		Let $\varphi:G\to\Sym(\Om)$ with $|\Om|>1$ and $\varphi(G)$ transitive. By \cite[Theorem 4.16(iv)]{PS}, there is a proper subset $I\subseteq\{1,\dots,r\}$ such that $\Ker(\varphi)=\prod_{i\in I}\hT_i$, and so $\varphi(G)\cong G_0$ where $G_0:= \prod_{j\in J}\hT_j$ with $J=\{1,\dots,r\}\setminus I$. Hence $G=\Ker(\varphi)\times G_0$ and by Lemma~\ref{lem:AAT31}, $G_0$ is totally $2$-closed, whence $\varphi(G)^\Otwo=\varphi(G)$ holds. Thus condition (b) holds also.
		
		Conversely suppose that  conditions (a) and (b) hold for $G$. In particular each nontrivial transitive permutation representation of $G$ is $2$-closed. In order to prove that $G$ is totally $2$-closed we must therefore show that each faithful intransitive permutation representation of $G$ is also $2$-closed. Suppose that this is not true. Then we may suppose that $G\leq \Sym(\Om)$ with orbits $\Om_1,\dots, \Om_m$ and $m\geq2$, such that $X:=G^\Otwo$ contains $G$ as a proper subgroup. Suppose moreover that $m$ is minimal, over all faithful non-$2$-closed intransitive representations of all groups $G$ satisfying Hypothesis~\ref{not1} and satisfying conditions (a) and (b).
		
		For each $i\leq m$, let $G_i=G^{\Om_i}$, the transitive permutation group on $\Om_i$ induced by $G$.
		It follows from the minimality of $m$ that each of the $|\Om_i|\geq2$ and hence each of the $G_i$ is a nontrivial transitive permutation representation of $G$. Thus by condition (b), each $G_i$ is $2$-closed, that is $G_i^{(2),\Om_i}=G_i$.
		Let $\Delta=\cup_{i=2}^m\Om_i$, and let $Y=G^\Delta$, the permutation group on $\Delta$ induced by $G$. Then, arguing as in the first paragraph, $G_1\cong \prod_{i\in I}T_i$ and $Y\cong \prod_{j\in J}T_j$ for some non-empty subsets $I, J\subseteq\{1,\dots,r\}$, and since $G$ is faithful on $\Om$, $I\cup J=\{1,\dots,r\}$. Further, Hypothesis~\ref{not1} holds for $Y$, as does condition (a) since it holds for $G$. Also,  condition (b) on nontrivial transitive permutation representations for $Y$ holds since each such representation for $Y$ is also a nontrivial transitive permutation representation of $G$. Since $Y$ has only $m-1$ orbits in $\Delta$, it follows that
		$Y^{(2),\Delta}=Y$ (either from the minimality of $m$ if $m\geq3$, or from the condition on nontrivial transitive permutation representations of $G$ if $m=2$).  Now $\Om=\Om_1\cup\Delta$, and by Theorem~\ref{thm:W},  $X=G^\Otwo\leq G_1^{(2),\Om_1}\times Y^{(2),\Delta} = G_1\times Y\cong (\prod_{i\in I}T_i)\times (\prod_{j\in J}T_j)$. If $I\cap J=\emptyset$, then since $I\cup J=\{1,\dots,r\}$ it would follow that $X\cong G$, which is a contradiction. Hence $I\cap J\ne\emptyset$, $X$ is a subdirect subgroup of $G_1\times Y$, and
		\[
		G_1\times Y\cong \prod_{i=1}^r T_i^{\delta_i},\ \mbox{where $\delta_i\in\{1,2\}$ and
			$\delta_i=2$ if and only if $i\in I\cap J$}.
		\]
		Hence $X\cong \prod_{k=1}^rT_k^{\alpha_k}$, where for each $k$, $1\leq \alpha_k\leq \delta_k$, and since $G<X$ there exists $i$ such that $\alpha_i=\delta_i=2$. Such an $i$ must lie in $I\cap J$, and without loss of generality, we may assume that $i=1$ so $1\in I\cap J$. Hence $\hT_1$ acts nontrivially on both $\Om_1$ and $\Delta$, and $X$ contains $\hT_1^{\Om_1}\times \hT_1^\Delta\cong T_1^2$. Again, without loss of generality, $\hT_1$ acts nontrivially on $\Om_2$, and $\hT_1^{\Om_2}\cong \hT_1^\Delta$.
		
		Let $K=\prod_{j\ne 1}\hT_j$, so that $G=\hT_1\times K$. If $K$ is transitive on $\Om_1$ then $G=KG_\omega$, where $\omega\in\Om_1$, and $G_\omega/(K\cap G_\omega)\cong G_\omega K/K=G/K\cong T_1$. Then, by Lemma~\ref{lem:dp1},  $\hT_1\leq G_\omega$, which implies that $\hT_1$ fixes $\Om_1$ pointwise, a contradiction.
		Hence $K$ is intransitive on $\Om_1$, and the same argument shows that $K$ is also intransitive on $\Om_2$. Let $\Sigma_i$ denote the set of $K$-orbits in $\Om_i$, for $i=1, 2$. Since $G=\hT_1\times K$,
		the group $\hT_1$ is transitive on both $\Si_1$ and $\Si_2$, and since $X$ contains  $\hT_1^{\Om_1}\times \hT_1^\Delta\cong T_1^2$, it follows that $X$ is transitive on $\Si_1\times\Si_2$ in product action, that is, $x:(\si_1,\si_2)\to (\si_1^x,\si_2^x)$ for $\si_1\in\Si_1, \si_2\in\Si_2, x\in X$.
		
		We claim that the subgroup $\hT_1$ is also transitive on $\Si_1\times\Si_2$. Choose $(\si_1,\si_2)\in\Si_1\times\Si_2$, and let $\beta_1\in\si_1, \beta_2\in\si_2$. Since $X=G^{\Otwo}$, the $G$-orbit $(\beta_1,\beta_2)^G$ is equal to the $X$-orbit $(\beta_1,\beta_2)^X$. First we note that, since   $X$ is transitive on $\Si_1\times\Si_2$, the stabiliser $X_{\si_1}$ is transitive on $\Si_2$. Now $\si_1$ is a $K$-orbit in $\Om_1$ and so $X_{\si_1}=KX_{\beta_1}$. It follows that the  $X_{\beta_1}$-orbit containing $\beta_2$ must contain points from each element of $\Si_2$. On the other hand, $(\beta_1,\beta_2)^G=(\beta_1,\beta_2)^X$ implies that the $X_{\beta_1}$-orbit containing $\beta_2$ is the same as the $G_{\beta_1}$-orbit containing $\beta_2$, and therefore this $G_{\beta_1}$-orbit contains points from each element of $\Si_2$, and hence $G_{\beta_1}$ is transitive on $\Sigma_2$. Since $G_{\beta_1}<G_{\si_1}$, also  $G_{\si_1}$ is transitive on $\Si_2$ and hence $G$ is transitive on $\Si_1\times\Si_2$. Since
		$G=\hT_1\times K$ and $K$ acts trivially on  $\Si_1\times\Si_2$, we conclude that $\hT_1$ is transitive on  $\Si_1\times\Si_2$, proving the claim.
		
		Finally, for $i=1, 2$, let $S_i=(\hT_1)_{\si_i}$. Then since  $\hT_1$ is transitive on  $\Si_1\times\Si_2$, $S_1$ is transitive on $\Si_2$ and hence $\hT_1=S_1S_2$, a nontrivial factorisation, which is a  contradiction to condition (a).
	\end{proof}
	
	For proving that a simple group is totally $2$-closed, the following immediate corollary of Proposition~\ref{PreTransRed} gives a reduction to consideration of transitive actions.
	
	\begin{corollary}\label{TransRed}
		Let $T$ be a finite nonabelian simple group.  Then $T$ is totally $2$-closed if and only if $T$ admits no nontrivial factorisations and, for each permutation representation $\varphi:G\to\Sym(\Om)$ with $|\Om|>1$ and $\varphi(G)$ transitive,  we have $\varphi(G)^\Otwo=\varphi(G)$.
	\end{corollary}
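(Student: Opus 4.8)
The plan is simply to specialise Proposition~\ref{PreTransRed} to the case $r = 1$. First I would observe that any finite nonabelian simple group $T$ satisfies Hypothesis~\ref{not1} when we take $G = T_1 = T$ with $r = 1$: the only substantive requirement in that hypothesis is that $T_1$ be not a section of $\prod_{j \ne 1} T_j$, and the latter product is empty, hence trivial, so the nontrivial group $T$ is certainly not a section of it. In this case the projection $\pi_1$ is the identity map and $\widehat{T}_1 = T$, so Hypothesis~\ref{not1} holds and Proposition~\ref{PreTransRed} is applicable with no further hypotheses to check.

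Given that, Proposition~\ref{PreTransRed} asserts that $T$ is totally $2$-closed if and only if (a) $T$ (the sole factor) admits no nontrivial factorisation, and (b) every permutation representation $\varphi : T \to \Sym(\Om)$ with $|\Om| > 1$ and $\varphi(T)$ transitive satisfies $\varphi(T)^\Otwo = \varphi(T)$. These are exactly the two conditions appearing in the statement of Corollary~\ref{TransRed}, so nothing further is required. One may additionally remark that, since $T$ is simple, any transitive representation with $|\Om| > 1$ is automatically faithful (its kernel is a proper normal subgroup of $T$, hence trivial), so condition (b) is in effect a statement about faithful transitive representations only; but this observation is not needed for the deduction. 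There is no real obstacle here: the entire content of the corollary is housed in Proposition~\ref{PreTransRed}, and the only point requiring even a moment's attention is the trivial verification that the `not a section' clause of Hypothesis~\ref{not1} is vacuous when $r = 1$.
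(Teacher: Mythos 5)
Your proposal is correct and matches the paper exactly: the paper states Corollary~\ref{TransRed} as an immediate consequence of Proposition~\ref{PreTransRed}, which is precisely your specialisation to $r=1$, and your check that Hypothesis~\ref{not1} is vacuous for a single simple factor is the only detail needed.
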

	
	\subsection{Simple groups, factorisations, and rank}\label{sub:factns}
	
	In Proposition~\ref{lem:simple1}, we consider the totally $2$-closed simple groups  $G$ in the light of Corollary~\ref{TransRed}, applying the classification of their maximal factorisations \cite{LPS2}, and considering their low rank permutation representations. For example, if a simple group $G$ is $2$-transitive on $\Om$ then $G^\Otwo = \Sym(\Om)$, by \cite[Theorem 5.11]{Wielandt}, and hence $G$ is not totally $2$-closed.

	We denote the \emph{socle} of $G$ by $\soc(G)$, that is, the product of the minimal normal
	subgroups of $G$.  We denote the set of right cosets of a subgroup $H$ in $G$ by $[G:H]=\{ Hg\mid g\in G\}$, and note that $G$ induces by right multiplication a transitive
	action on $[G:H]$ with kernel $\Core_G(K)$, and such that $H$ is the stabiliser of the `point' $H$.
	
	\begin{proposition}\label{lem:simple1}
		Let $G$ be a  finite non\-abelian simple group that is totally $2$-closed. Then one of the following is true.
		
		\begin{enumerate}[\upshape(a)]
			\item $G$ lies in one of the following infinite families: ${}^3D_4(q)$, $E_6(q), {}^2E_6(q)$,
			$E_7(q), E_8(q)$, $F_4(q)$ (with $q$ odd), ${}^2F_4(q)'$ (with $q=2^{2c+1}$); or
			\item $G$ is one of $\mathrm{J}_1, \mathrm{J}_3, \mathrm{J}_4,\mathrm{Ly},\mathrm{ON}, \mathrm{Co}_2, \mathrm{Fi}_{23}, \mathrm{HN}, \mathrm{Th}, \mathbb{B}, \mathbb{M}$.
		\end{enumerate}
	\end{proposition}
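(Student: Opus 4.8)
The plan is to apply Corollary~\ref{TransRed} together with Corollary~\ref{lem:W2}. Since $G$ is a finite nonabelian simple group that is totally $2$-closed, Corollary~\ref{lem:W2} shows that $G$ admits no nontrivial factorization, and Corollary~\ref{TransRed} shows in addition that $\varphi(G)^{\Otwo}=\varphi(G)$ for every permutation representation $\varphi$ of $G$ with transitive image of degree $>1$. I would convert the latter into two concrete obstructions. First, by \cite[Theorem~5.11]{Wielandt}, a $2$-transitive group on $\Om$ has $2$-closure $\Sym(\Om)$, so $G$ has no faithful $2$-transitive permutation representation. Second, if $G$ acts faithfully, primitively and with permutation rank $3$ on $\Om$, then $G^{\Otwo}$ equals the automorphism group $\Aut(\Ga)$ of an orbital graph $\Ga$ of this action, so the representation is $2$-closed only if $\Aut(\Ga)=G$.

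The first main step is to use the classification of maximal factorizations of the finite simple groups in \cite{LPS2} to list precisely the finite simple groups admitting \emph{no} nontrivial factorization (equivalently, no maximal factorization). This removes from consideration every finite simple alternating group and every finite simple classical group, since each of these possesses a geometric factorization (or, in particular, a faithful $2$-transitive action on a projective or polar space). The survivors among the remaining simple groups --- the exceptional groups of Lie type and the sporadic groups --- form a short explicit list, read off from the relevant tables of \cite{LPS2} and the Atlas.

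The second step is to prune that list using the two obstructions above together with the $2$-transitive permutation group classification and the rank-$3$ (and other low-rank) data recorded in the Atlas. The Suzuki groups ${}^2B_2(q)$ and the Ree groups ${}^2G_2(q)$ are eliminated by their $2$-transitive actions of degrees $q^2+1$ and $q^3+1$; the Mathieu groups, $\mathrm{Co}_3$ and $\mathrm{HS}$ are eliminated by their $2$-transitive actions; and $\mathrm{J}_2$, $\mathrm{McL}$ and $\mathrm{Suz}$ are eliminated because their faithful rank-$3$ actions (on $100$, $275$ and $1782$ points, respectively) have orbital graph $\Ga$ with $\Aut(\Ga)\cong G.2>G$. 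The remaining candidates not appearing in (a) or (b) --- namely $G_2(q)$, $F_4(q)$ with $q$ even, $\mathrm{Co}_1$, $\mathrm{He}$, $\mathrm{Fi}_{22}$, $\mathrm{Fi}_{24}'$ and $\mathrm{Ru}$ --- are eliminated by a case-by-case check: each either admits a nontrivial factorization or has a faithful low-rank action whose $2$-closure is strictly larger. What remains is exactly the list in (a) and (b).

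The main obstacle is the bookkeeping in the first step: the tables of \cite{LPS2} are extensive, and one must extract from them the complete list of simple groups with no nontrivial factorization, paying particular attention to the dependence on the field size $q$ in the exceptional families (for instance $F_4(q)$ is factorizable when $q$ is even but not when $q$ is odd), and then cross-reference the survivors against the $2$-transitive and rank-$3$ data. I emphasise that this proposition only narrows the list of candidates: the groups $\mathrm{ON}$, $\mathrm{Co}_2$, $\mathrm{Fi}_{23}$, $\mathrm{HN}$, $\mathbb{B}$ appearing in part~(b), together with all the infinite families in part~(a), are in fact \emph{not} totally $2$-closed, and are ruled out only by the more delicate arguments of Sections~\ref{SporadicSection}, \ref{J4Section} and~\ref{sec:ExGroups}.
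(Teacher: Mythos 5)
Your overall strategy---no nontrivial factorisation via Corollary~\ref{lem:W2}/\ref{TransRed}, then pruning by the classification of maximal factorisations in \cite{LPS2}, $2$-transitivity, and low-rank actions with larger automorphism groups---is the same as the paper's. However, there is a genuine gap in your first main step. You claim that the factorisation classification ``removes from consideration \ldots every finite simple classical group, since each of these possesses a geometric factorization (or, in particular, a faithful $2$-transitive action on a projective or polar space).'' This is false. By \cite[Theorem A]{LPS2} there are two infinite families of classical simple groups admitting \emph{no} nontrivial factorisation at all: ${\rm PSU}_n(q)$ with $n$ odd, $n\geq 5$, $(n,q)\neq(9,2)$, and ${\rm P}\Omega^-_{2m}(q)$ with $m$ even, $m\geq 4$, $q\neq 2,4$. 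Neither family has a faithful $2$-transitive representation (the $2$-transitive classical groups are essentially ${\rm PSL}_n(q)$, ${\rm PSU}_3(q)$, ${\rm Sp}_{2n}(2)$); their natural actions on isotropic $1$-spaces have rank $3$, not $2$. So your argument as written does not eliminate these groups, and they do not appear in your conclusion (a)--(b), so the proof is incomplete.

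To close the gap you need the additional argument the paper uses: by \cite[Theorem 1.1]{KanLieb} these groups act with rank $3$ on the set $\Omega$ of totally isotropic (singular) $1$-spaces, and the proper overgroup ${\rm P\Gamma U}_n(q)$, respectively ${\rm P}\Gamma{\rm O}^-_{2m}(q)$, also has rank $3$ on $\Omega$, hence preserves each $G$-orbital and lies in $G^{(2),\Omega}$ by Theorem~\ref{thm:W}, contradicting total $2$-closure. (The hypotheses $q\neq 2,4$ in the orthogonal case and $(n,q)\neq(9,2)$ in the unitary case are exactly why one must quote \cite{LPS2} carefully rather than asserting a blanket geometric factorisation.) A second, smaller, imprecision: for $G_2(q)$ with $q>2$, $q\neq 4,3^c$ there is no nontrivial factorisation either, so it is not eliminated by your step one; the paper kills it using \cite[Theorem 1]{LPS}, which shows its primitive action of degree $q^3(q^3-1)/2$ has $2$-closure with socle $\Omega_7(q)$. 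Your case-by-case list at the end mentions $G_2(q)$, but ``admits a nontrivial factorization or has a faithful low-rank action whose $2$-closure is strictly larger'' needs this specific citation to be made rigorous, since the relevant action here has rank larger than $3$ and the enlargement of the $2$-closure is not visible from rank data alone.
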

	
	\begin{proof}
		By Corollary~\ref{TransRed}, $G$ admits no nontrivial factorisation.
		We treat the simple groups by families. First, if $G=A_n$, then $G$ is $2$-transitive of 
		degree $n$ and so, as we observed above, $G$ is not totally $2$-closed, a contradiction. Thus $G\not\cong A_n$.
		
		If $G$ is a totally $2$-closed sporadic simple group, then this implies, 
		by \cite[Theorem C]{LPS2}, that either $G$ is as in part (b), or $G\in\{ \mathrm{Co}_3, \mathrm{McL}, \mathrm{Fi}_{24}'\}$. 
		Now $G\ne \mathrm{Co}_3$ since $\mathrm{Co}_3$ has a $2$-transitive representation of degree $276$ (see, for example \cite[Section 7.7]{DM}). If  $G$ is $\mathrm{McL}$ or $\mathrm{Fi}_{24}'$, then
		$A=\Aut(G)= G.2$, and the coset action  of $A$ on the set $\Omega=[A:H]$ of right cosets of
		$H=PSU(4,3):2_3$ or  $\mathrm{Fi}_{23}\times 2$, respectively, is such that both $A$ and $G$ have rank 3 (see
		\cite[Pages 100, 207]{Atlas}).  It follows that
		$A \leq G^\Otwo$ by Theorem~\ref{thm:W}, so that $G\ne G^\Otwo$, which is a contradiction.
		
		Thus we may assume that $G$ is a totally $2$-closed simple group of Lie type. Suppose first that $G$ is a classical simple group. Then $G\ne {\rm PSL}_n(q)$ or ${\rm PSU}_3(q)$ since these groups have a $2$-transitive representation, (see, for example \cite[Section 7.7]{DM}). Among the other simple classical groups, the only ones which have no nontrivial factorisations are, by \cite[Theorem A]{LPS2},
		\begin{itemize}
			\item $G={\rm PSU}_n(q)$ with $n$ odd, $n\geq 5$, and $(n,q)\ne (9,2)$; and
			\item $G={\rm P}\Omega^-_{2m}(q)$ with $m$ even, $m\geq4$, $q\ne 2, 4$.
		\end{itemize}
		For $G$ one of these groups, it follows from \cite[Theorem 1.1 and Table IV on page 45]{KanLieb}, that $G$ has a rank $3$ action on the set $\Omega$ of totally singular (or isotropic) $1$-spaces of the natural module, of degree
		\[
		\mbox{$\frac{(q^n+1)(q^{n-1}-1)}{q^2-1}$\quad or\quad $\frac{(q^m+1)(q^{m-1}-1)}{q-1}$,\quad respectively}.
		\]
		However the group $A= {\rm P\Gamma U}_n(q)$ or ${\rm P}\Gamma{\rm O}^-_{2m}(q)$, respectively, contains $G$ as a proper subgroup  (see \cite[Table 2.1]{LPS2}), and also acts on $\Omega$ with rank $3$. Hence
		$A \leq G^\Otwo$ by Theorem~\ref{thm:W}, and so $G\ne G^{(2),\Omega}$, which is a contradiction.
		
		Finally suppose that $G$ is an exceptional simple group of Lie type. Then $G\ne {\rm Sz}(q)$ or ${\rm Ree}(q)$ since each of these groups has a $2$-transitive representation, (see, for example \cite[Section 7.7]{DM}). Among the other exceptional simple groups of Lie type, the only ones which do not appear in part (a), and which have no nontrivial factorisations are, by \cite[Theorem B]{LPS2},  the groups  $G=G_2(q)$
		(with $q>2, q\ne 4, 3^c$). However it follows from \cite[Theorem 1]{LPS}, that such a group $G$
		has a primitive permutation representation on a set $\Omega$ of size $q^3(q^3-1)/2$ such that $G^\Otwo$ has socle $\Omega_7(q)$, so $G\ne G^\Otwo$, which is a contradiction.
	\end{proof}
	
	To summarise this section, we have proved that to determine the finite totally $2$-closed groups $G$ with $F(G)=1$, we need only consider the groups $G$ which are a direct product of a subset of the nonabelian finite simple groups listed in Proposition \ref{lem:simple1}. Furthermore, the direct factors must satisfy the conditions listed in Theorem \ref{thm:issimple}. Finally, by Corollary \ref{TransRed}, such a group is totally $2$-closed if and only if it is $2$-closed in each of its (not necessarily faithful) transitive permutation representations. We investigate the transitive representations of these, and other more general, groups in Section \ref{ImprimitiveSection}.

	\section{Tools for analysing imprimitive actions}\label{ImprimitiveSection}
	
	In this section we introduce some group theoretic and combinatorial tools for studying $2$-closures of transitive group actions.
	
	Let $\Omega$ be a finite set, and let $G$ be a subgroup of $\Sym(\Omega)$. A partition $\Sigma$ of $\Omega$ is \emph{$G$-invariant} if each element of $G$ permutes the parts of $\Sigma$ among themselves. A partition is said to be \emph{trivial} either if it consists of a single part, namely $\Omega$, or if each of its parts consists of a single point. All other partitions are nontrivial. Each transitive permutation group $G$ leaves invariant these two trivial partitions, and $G$ is called \emph{primitive} if the only $G$-invariant partitions are the two trivial partitions; otherwise $G$ is said to be \emph{imprimitive}.
	
	Higman's landmark result  \cite[(1.12)]{Higman} in this area (which we will use) is that a finite transitive permutation group $G$ is primitive if and only if all the nondiagonal $G$-orbital digraphs are connected.

	\begin{lemma}\label{lem:imprim}
		Let $G\leq \Sym(\Omega)$ be transitive on $\Omega$,  and let $X=G^{(2),\Omega}$. Suppose that $\Sigma$ is a non-trivial $G$-invariant partition of $\Omega$, and let $\overline{G}=G^\Sigma$ denote the subgroup of $\Sym(\Sigma)$ induced by $G$.

		\begin{enumerate}[\upshape(a)]
			\item Then $\Sigma$ is also $X$-invariant, and the induced group $\overline{X}=X^\Sigma$ satisfies
			$\overline{X}\leq  \overline{G}^{(2),\Sigma}$.
			\item  Also, for $B\in\Sigma$, $X_B^B\leq (G_B^B)^{(2),B}$.
		\end{enumerate}
	\end{lemma}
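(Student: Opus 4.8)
The plan is to deduce both parts from Wielandt's criterion, Theorem~\ref{thm:W}, together with the elementary fact that two parts of a partition are either equal or disjoint.

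For part (a), I would first establish that $\Sigma$ is $X$-invariant. Consider the binary relation
\[
R=\{(\alpha,\beta)\in\Omega\times\Omega \mid \alpha\ \text{and}\ \beta\ \text{lie in a common part of}\ \Sigma\}.
\]
Since $\Sigma$ is $G$-invariant, $R$ is a $G$-invariant subset of $\Omega\times\Omega$, hence a union of $G$-orbits on ordered pairs. By the definition of the $2$-closure, every element of $X=G^{(2),\Omega}$ preserves each of these orbits and therefore preserves $R$; as $R$ is an equivalence relation whose classes are exactly the parts of $\Sigma$, it follows that $X$ permutes the parts of $\Sigma$, so $\Sigma$ is $X$-invariant and $\overline{X}=X^\Sigma$ is defined. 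To obtain $\overline{X}\leq\overline{G}^{(2),\Sigma}$, I would verify Theorem~\ref{thm:W} for $\overline{G}\leq\Sym(\Sigma)$: given $\bar x=x^\Sigma\in\overline{X}$ and $B_1,B_2\in\Sigma$, choose $\alpha_i\in B_i$. Since $x\in G^{(2),\Omega}$, Theorem~\ref{thm:W} supplies $g\in G$ with $\alpha_i^x=\alpha_i^g$ for $i=1,2$. Writing $\bar g=g^\Sigma$, the point $\alpha_i^x=\alpha_i^g$ lies in both $B_i^{\bar x}$ and $B_i^{\bar g}$, which forces $B_i^{\bar x}=B_i^{\bar g}$; thus $\bar g$ is the required witness for $\bar x$, and Theorem~\ref{thm:W} gives $\bar x\in\overline{G}^{(2),\Sigma}$.

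For part (b), I would fix $B\in\Sigma$, take $\bar x\in X_B^B$, say $\bar x=x|_B$ with $x\in X$ and $B^x=B$, and again verify Theorem~\ref{thm:W}, this time for $G_B^B\leq\Sym(B)$. Given $\alpha,\beta\in B$, since $x\in G^{(2),\Omega}$ there is $g\in G$ with $\alpha^x=\alpha^g$ and $\beta^x=\beta^g$. The key observation is that any such $g$ automatically lies in $G_B$: from $B^x=B$ we get $\alpha^g=\alpha^x\in B$, so $\alpha\in B\cap B^{g^{-1}}$, whence $B^{g^{-1}}=B$ and $g\in G_B$. Then $g|_B\in G_B^B$ witnesses Theorem~\ref{thm:W} for $\bar x$, giving $\bar x\in(G_B^B)^{(2),B}$.

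There is no substantial obstacle here; the only point requiring care is the bookkeeping step in each part — ensuring that the element $g\in G$ produced by Theorem~\ref{thm:W} can be taken inside the relevant block stabiliser (in (b)) or induces the correct element of $\Sym(\Sigma)$ (in (a)). Both times this is forced by the partition property that distinct parts are disjoint, applied to the common point $\alpha^x=\alpha^g$.
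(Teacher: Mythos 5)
Your argument is correct, but it runs along a different track from the paper's. The paper works directly with the definition of the $2$-closure as the stabiliser of all $G$-invariant binary relations: for (a) it quotes Wielandt's theorem that $G$ and $G^{(2),\Omega}$ have the same invariant partitions to get $X$-invariance of $\Sigma$, and then shows $\overline{X}\leq\overline{G}^{(2),\Sigma}$ by pulling each $\overline{G}$-invariant subset of $\Sigma\times\Sigma$ back to a $G$-invariant subset of $\Omega\times\Omega$; for (b) it saturates a $G_B^B$-invariant subset of $B\times B$ to a $G$-invariant subset $\Delta'$ of $\Omega\times\Omega$ and checks $\Delta'\cap(B\times B)=\Delta$. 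You instead use the pointwise reformulation (Theorem~\ref{thm:W}) throughout, pushing two-point witnesses forward rather than pulling relations back. The two approaches are dual, and the crucial input is the same in both: the disjointness of distinct parts, which in the paper appears as the identity $\Delta'\cap(B\times B)=\Delta$ (and the $G$-invariance of the ``same part'' relation), and in your proof as the observation that a witness $g$ agreeing with $x$ at a point of $B$ is forced into $G_B$ (respectively induces the correct permutation of $\Sigma$). What your route buys is self-containedness — in particular you reprove, via the relation $R$, the fact that block systems are preserved by the $2$-closure, rather than citing Wielandt's Theorem 4.11 — and slightly lighter bookkeeping in (b), since you never need to verify that the saturated relation restricts correctly. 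What the paper's route buys is that it exhibits the invariant relations explicitly, which is the form in which these facts get reused elsewhere (e.g.\ in the orbital-digraph arguments of Section~\ref{ImprimitiveSection}).
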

	
	\begin{proof}
		(a) For $\alpha\in\Omega$, let $B_\alpha$ denote the (unique) part of $\Sigma$ that
		contains $\alpha$, and let $\varphi:\Omega\times\Omega\to\Sigma\times\Sigma$ be the map
		$(\alpha,\beta)\to (B_\alpha, B_\beta)$.
		By \cite[Theorem 4.11]{Wielandt}, $G$ and $X$ have the same invariant partitions in $\Omega$, so $\Sigma$ is  $X$-invariant. This implies that $\varphi$ is $X$-invariant in the sense that, for all $\alpha,\beta\in\Omega$ and $x\in X$, $\varphi(\alpha^x,\beta^x)= (B_\alpha^x, B_\beta^x)$.  We show that $\overline{X}$ leaves invariant each $\overline{G}$-invariant subset of $\Sigma\times \Sigma$:
		let $\Delta\subseteq \Sigma\times \Sigma$ be $\overline{G}$-invariant, and let $\Delta'\subseteq \Omega\times\Omega$ be the set of all ordered pairs $(\alpha, \beta)$ such that $(B_\alpha, B_\beta)\in\Delta$.
		Then $\Delta'$ is the full pre-image of $\Delta$ under $\varphi$, that is, $\varphi(\Delta')=\Delta$, and
		it is straightforward to check that $\Delta'$ is a $G$-invariant subset of $\Omega\times\Omega$.
		Hence $\Delta'$ is also preserved by $X=G^{(2),\Omega}$, and it follows that $\Delta=\varphi(\Delta')$ is preserved by $\overline{X}$.  Since this holds for each $\overline{G}$-invariant $\Delta$, it follows that $\overline{X}\leq \overline{G}^{(2),\Sigma}$.


		(b) Let $\Delta$ be a subset of $B\times B$ preserved by $G_B^B$, and let $\Delta'$ be the set of all pairs $(\alpha,\beta)\in\Omega\times\Omega$ such that $(\alpha^g,\beta^g)\in\Delta$ for some $g\in G$. Then $\Delta'$ is a  $G$-invariant subset of $\Omega\times\Omega$. Moreover,  from the definition of $\Delta'$ the set $\Delta\subseteq \Delta'\cap(B\times B)$, and  to see  that equality holds let $(\alpha,\beta)\in\Delta'\cap(B\times B)$,
		so $\alpha, \beta\in B$ and $(\alpha^g,\beta^g)\in\Delta$ for some $g\in G$. Then, since  $\Delta\subseteq B\times B$, also $\alpha^g\in B$ so $\alpha^g\in B\cap B^g$ and hence $B^g=B$, since $\Sigma$ is a $G$-invariant partition. Thus $g\in G_B$, so also $g^{-1}\in G_B$. Since $\Delta$ is $G_B$-invariant it follows that $(\alpha,\beta)=(\alpha^g,\beta^g)^{g^{-1}}\in\Delta^{g^{-1}}=\Delta$.
		So we have proved that
		$\Delta'\cap(B\times B)=\Delta$.

		Since $X=G^{(2),\Omega}$, $\Delta'$ is also $X$-invariant, and hence $\Delta'\cap(B\times B)=\Delta$ is $X_B$-invariant. Thus $X_B^B\leq (G_B^B)^{(2),B}$, as required.
	\end{proof}
	
	\begin{remark}\label{•rem:imprim}
		{\rm
			We note that the inequalities in Lemma~\ref{lem:imprim} may be strict. For example, let $G=\Gamma{\rm L}_1(16)$ of degree $15$ on $\Omega = \mathbb{F}_{16}^\times$. By \cite[Corollary 4.1]{PX}, the $2$-closure $X=G^{(2),\Omega}$ is equal to $G$. Moreover, $G$ preserves two non-trivial partitions of $\Omega$, one of them $\Sigma$ with $5$ blocks of size $3$, and the other  $\Sigma'$ with $3$ blocks of size $5$. (The partitions $\Sigma, \Sigma'$ are the orbit sets of cyclic  normal subgroups of $G$ of orders $3, 5$, respectively.) Now $G^\Sigma\cong F_{20}$ is a sharply $2$-transitive Frobenius group of order $20$. Hence  $\overline{G}^{(2),\Sigma}=\Sym(5)$, while $\overline{X}=\overline{G}=F_{20}$. Also, for $B\in\Sigma'$, $G^B_B\cong F_{20}$ is sharply $2$-transitive, and so $(G_B^B)^{(2),B}=\Sym(5)$, while $X_B^B=G_B^B=F_{20}$.
			
			For a more general example of inequality in part (a), suppose that $G$ is a finite transitive permutation group on a set $\Sigma$ such that $G$ is not $2$-closed, that is, $G^{\Stwo}\ne G$. Let $B\in\Si$ and let $H=G_B$, so that we may identify $\Si$ with $[G:H]=\{Hg\mid g\in G\}$ and $G$ acts by right multiplication. Consider the regular action of $G$ on $\Om=G$ by right multiplication. Then $X=G^{\Otwo}=G$ by \cite[Theorem 5.12]{Wielandt}, and $\Si=[G:H]$ is a $G$-invariant partition of $\Om$, so $\overline{G}=G^\Si\cong G$.  Thus $\overline{X}=\overline{G} <\overline{G}^{\Stwo}$.
		}
	\end{remark}
	
	By Theorem~\ref{thm:issimple} and Proposition \ref{PreTransRed}, the classification of finite totally $2$-closed groups with trivial Fitting subgroup is reduced to the study of nontrivial transitive permutation representations of groups $G$ satisfying Hypothesis~\ref{not1} such that the simple direct factors $T_i$ of $G$ admit no nontrivial factorisations.
	Although it would be desirable to reduce further to primitive actions, this seems difficult (and perhaps not possible). Loosely speaking, our goal in this section is to prove that if such  groups $G$ are $2$-closed in all of their primitive permutation representations, then it is `likely' that they are totally $2$-closed.
	
	More precisely, we will explicitly construct the $2$-closure of a finite group $G$ acting imprimitively on a set $\Omega$, with the property that its induced action on a nontrivial $G$-invariant partition $\Sigma$ of $\Omega$ is faithful and $2$-closed.
	To state our results, we require some notation.

	\begin{notation}\label{not2}
		{\rm
			Assume that $G$ is a finite group, and that:
			\begin{enumerate}
				\item $G$ acts faithfully, transitively, and imprimitively on a finite set $\Omega$.
				\item $\Sigma=\{\Delta_1,\Delta_2,\hdots,\Delta_s\}$ is a nontrivial $G$-invariant partition of $\Omega$, and $L=G^\Sigma$, the  (transitive) subgroup of $\Sym(\Sigma)$ induced by $G$, where $1<s<|\Omega|$.
				\item $\Delta:=\Delta_1\in\Sigma$ and $\omega\in\Delta$.
				\item  $M=G_\Delta$ (setwise stabiliser, so that $|G:M|=s$), $H=G_\omega$ (so $H<M$ with $|M:H|=|\Delta|$),  and $R=M^\Delta$, the (transitive) subgroup of $\Sym(\Delta)$  induced by $M$ (so $R_\omega= H^\Delta$).
				\item By the Imprimitive Wreath Embedding Theorem~\cite[Theorem 5.5]{PS}, we may assume  that $G\leq W=R\wr L$ acting on $\Omega=\Delta\times\{ 1, 2,\dots, s\}$; here we identify $\Sigma$ with $\{1, 2,\dots,s\}$, and $\Delta_i$ with $\Delta\times\{ i\}$ for each $i\leq s$;
				
				\item $W=B_R\rtimes L$ is a semidirect product, where $B_R=\{(x_1,\hdots,x_s)\text{ : }x_i\in R\}\cong R^s$ is called \emph{the base group} of $W$, and the subgroup $L$ is called the \emph{top group} of $W$.  Typical elements $(x_1, x_2,\dots,x_s)\in B_R$ and $\sigma\in L$ act on a point $(\delta,i)\in\Omega$ by
				\begin{center}
					$(\delta,i)^{(x_1, x_2, \dots, x_s)}=
					(\delta^{x_i},i)$ and $(\delta,i)^\sigma=(\delta,i^\sigma)$.
				\end{center}
				
				\item  We write $B_R= R_1\times\dots\times R_s$, where
				$$
				R_i=\{(x_1,\hdots,x_s)\text{ : $x_j=1$ if $j\ne i$ } \} \cong R,
				$$
				so $R_i$ induces the group $R$ on $\Delta_i$ and fixes $\Delta_j$ pointwise for all $j\ne i$. We extend this notation so that, for any subgroup $A\leq \Sym(\Delta)$, we  write $B_A=A_1,\times\dots\times A_s \cong A^s$ for the naturally embedded subgroup of $\Sym(\Delta)\wr \Sym(s)$. We refer to such subgroups $B_A$ as `base groups'.
				
				\item For $A\le \Sym(\Delta)$, and for each $i\leq s$, the (setwise) stabiliser of $\Delta_i$ in
				$A\wr L = B_A\rtimes L$ is the subgroup $W_i:=B_A\rtimes L_i$ (where $L_i$ is the stabiliser of $i$  in $L$) and the map
				\begin{center}
					$\rho_i:W_i\to A$ given by $\rho_i: (x_1, x_2,\dots,x_s)\sigma\to x_i$
				\end{center}
				is a group epimorphism; similarly, for distinct $i, j$ the map
				\begin{center}
					$\rho_{i,j}:W_i\cap W_j\to A\times A$ given by $\rho_{i,j}: (x_1, x_2,\dots,x_s)\sigma\to
					(x_i, x_j)$
				\end{center}	
				is a group epimorphism, and its image $\rho_{i,j}(W_i\cap W_j)=\rho_{i,j}(A_i\times A_j)$ is the group induced by $W_i\cap W_j$ (and by $A_i\times A_j$) on $\Delta_i\cup\Delta_j$.
				
			\end{enumerate}
			
			The group $A_i\times A_j$ fixes $\Delta_\ell$ pointwise, for all $\ell\not\in\{i,j\}$, and so acts faithfully on $\Delta_i\cup\Delta_j$; it also has a faithful \emph{product action} on $\Delta_i\times\Delta_j$ given by $(\delta_i,\delta_j)^{(x_i,x_j)}=(\delta_i^{x_i}, \delta_j^{x_j})$. In our proofs we will meet the \emph{$1$-closure} of a subgroup $S\leq A_i\times A_j$ in this product action: the $1$-closure of $S$ is denoted $S^{(1), \Delta_i\times\Delta_j}$, and is the largest subgroup of $\Sym(\Delta_i\times\Delta_j)$ with the same orbits as $S$.  To work with the group $G$ we consider restrictions of the maps in Notation~\ref{not2}.8 to the corresponding subgroups of $G$.    We also impose an additional condition on $M$.
			
			\begin{itemize}
				
				\item[9.] For $i=1,\dots,s$, let $M_i=W_i\cap G$, the setwise stabiliser in $G$ of $\Delta_i$. Note that the subgroups $M_i, 1\leq i\leq s$ form the complete $G$-conjugacy class of $M=M_1$. It follows from parts 4 and 5 that $\rho_i(M_i)= M_i^{\Delta_i} =
				R_i\cong R$, for each $i$, while $\rho_{i,j}(M_i\cap M_j)\leq R_i\times R_j$ is the subgroup
				induced by $G$ on $\Delta_i\cup\Delta_j$, for distinct $i,j$.  We will meet the $1$-closure
				$(\rho_{i,j}(M_i\cap M_j))^{(1), \Delta_i\times\Delta_j}$ of the product action of this subgroup (as described immediately above).
				
				\item[10.] $M$ is assumed to be core-free in $G$, (equivalently $G\cong G^\Sigma = L$).
			\end{itemize}
		}
	\end{notation}
	
	Condition 10 is vacuously true if $G$ is simple. But it will also suffice (for us) to study $2$-closures of groups acting faithfully on a nontrivial invariant partition, as we will see in Section \ref{DirectProductsSection}.
	Note also that if $\Delta$ is chosen to be a maximal block of imprimitivity, then $M$ is a maximal subgroup of $G$ (we will often be interested in this case in our analysis below). Sometimes we will identify $\Omega$ with $[G:H]$; $\Sigma$ with $[G:M]$; and $\Delta=\Delta_1$ with $[M:H]$.
	
	
	
	In the following series of results, we will study the cases where $G^{\Sigma}$ is $2$-closed, but $G^{\Omega}$ is not $2$-closed. We will see that this seems like quite an unusual phenomenon.
	We begin with a straightforward lemma from representation theory.

	\begin{lemma}\label{InducedModules}
		Let $G, \Sigma, M, s$ be as in Notation~$\ref{not2}$, let $p$ be a prime, and let
		$$
		V=\{(y_1,\hdots,y_s)\text{ : }y_i\in\mathbb{F}_p\},
		$$
		be the permutation module for $G$ on $\Sigma$ over the field $\mathbb{F}_p$.
		Suppose that $M$ is not contained in a proper normal subgroup of $G$.
		Then the only $1$-dimensional $\mathbb{F}_p[G]$-submodule of $V$ is the trivial submodule $\{(y,\hdots,y)\text{ : }y\in\mathbb{F}_p\}$.
	\end{lemma}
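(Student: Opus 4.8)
The plan is to use the standard dictionary between $1$-dimensional $\mathbb{F}_p[G]$-submodules of $V$ and linear characters $G\to\mathbb{F}_p^\times$, and to show that the hypothesis on $M$ forces any such character to be trivial, after which transitivity pins down the submodule.

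Concretely, I would let $U\leq V$ be a $1$-dimensional $\mathbb{F}_p[G]$-submodule, write $U=\langle v\rangle$ with $0\ne v=(y_1,\dots,y_s)$, and recall that $G$ acts on $V$ by permuting coordinates according to its (transitive) action on $\Sigma$, which we identify with $\{1,\dots,s\}$ with $\Delta$ the point $1$, so that $M=G_\Delta$ is the stabiliser of $1$. Since $U$ is $G$-invariant and $1$-dimensional there is a homomorphism $\chi\colon G\to\mathbb{F}_p^\times$ with $v\cdot g=\chi(g)\,v$ for all $g\in G$. The first step is to observe that no coordinate of $v$ vanishes: if $y_j=0$ for some $j$, then for an arbitrary $i$ choose, by transitivity, $g\in G$ moving $j$ to $i$; comparing $i$-th coordinates in $v\cdot g=\chi(g)v$ gives $y_j=\chi(g)\,y_i$, and since $\chi(g)\ne 0$ we deduce $y_i=0$; as $i$ was arbitrary this contradicts $v\ne 0$. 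Hence every $y_i\ne 0$, and in particular $y_1\ne 0$.

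Next I would show $\chi$ is trivial. Every $g\in M$ fixes the coordinate $1$, so the first coordinate of $v\cdot g$ equals $y_1$; comparing with the first coordinate $\chi(g)\,y_1$ of $\chi(g)v$ and using $y_1\ne 0$ gives $\chi(g)=1$. Thus $M\leq\ker\chi$. Since $\ker\chi$ is a normal subgroup of $G$ and, by hypothesis, $M$ is contained in no proper normal subgroup of $G$, we conclude $\ker\chi=G$, i.e. $\chi$ is trivial. Then $v$ is fixed by $G$, and transitivity of $G$ on $\Sigma$ forces all coordinates of $v$ to coincide; hence $U$ is the diagonal submodule $\{(y,\dots,y):y\in\mathbb{F}_p\}$, which is indeed a $1$-dimensional $\mathbb{F}_p[G]$-submodule. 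This proves the statement.

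I do not expect a serious obstacle here. The only points requiring care are the bookkeeping with the chosen convention for the permutation action on $V$ (so that ``$g\in M$ fixes the first coordinate of $v$'' and the transitivity arguments are run with the correct indices), and noting the division of labour between the two hypotheses: transitivity alone gives that all $y_i$ are nonzero (hence, if $\chi$ is trivial, that $U$ is the diagonal), while the condition that $M$ lies in no proper normal subgroup is used solely to kill the character $\chi$. Equivalently one could phrase the character computation via Frobenius reciprocity, since $V\cong\operatorname{Ind}_M^G\mathbf{1}$ contains a copy of a linear character $\chi$ precisely when $\chi|_M=\mathbf{1}$, i.e. when $M\leq\ker\chi$; but the direct coordinate computation above is quicker and self-contained.
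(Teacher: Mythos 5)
Your proof is correct, but it takes a more elementary route than the paper. The paper argues via Frobenius reciprocity: since $V=(1_M)\!\uparrow^G$ and $\Hom_G(U,V)\neq 0$, it deduces $\Hom_M(U\!\downarrow_M,1_M)\neq 0$, hence (as $\dim U=1$) $M$ acts trivially on $U$, so the kernel of the $G$-action on $U$ is a normal subgroup containing $M$ and must be all of $G$; the identification of the trivial submodule with the diagonal is then left as an exercise. You instead unwind this entirely in coordinates: a generator $v$ of $U$ determines a character $\chi\colon G\to\mathbb{F}_p^\times$, transitivity forces every coordinate of $v$ to be nonzero, the stabiliser computation at the coordinate fixed by $M$ gives $M\leq\ker\chi$, and the normal-subgroup hypothesis kills $\chi$; transitivity then forces $v$ to be diagonal. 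The two arguments are logically the same (your closing remark correctly identifies the reciprocity statement your computation is unwinding), but yours is self-contained and avoids citing module-theoretic machinery, at the cost of some index bookkeeping, while the paper's is shorter given the reference to \cite{Benson}. Both correctly isolate where each hypothesis is used: transitivity for the nonvanishing/diagonal steps, and the condition on $M$ solely to trivialise the character.
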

	
	\begin{proof}
		Let $U$ be a $1$-dimensional $\mathbb{F}_p[G]$-submodule of $V$. Then there is an injective homomorphism from $U$ to $V$, that is, the $\mathbb{F}_p$-space $\Hom_G(U,V)$  of $\mathbb{F}_p[G]$-homomorphisms from $U$ to $V$ is non-zero. Since $V$ is the induced module $(1_M)\!\uparrow^G$, where $1_M$ denotes the trivial $\mathbb{F}_p[M]$-module, it follows from Frobenius reciprocity \cite[Proposition 3.3.1]{Benson} that
		$\Hom_M(U\!\downarrow_M,1_M)$ is also non-zero. Thus $U$ has a non-zero quotient $\mathbb{F}_p[M]$-module acted upon trivially by $M$, and since $\dim(U)=1$, it follows that $M$ acts trivially on $U$. Thus $M$ is contained in the kernel of the action of $G$ on $U$. Since $M$ is not contained in a proper normal subgroup of $G$, we deduce that $G$ acts trivially on $U$. It is then an elementary exercise to prove that $\{(y,\hdots,y)\text{ : }y\in\mathbb{F}_p\}$ is the only non-zero trivial $\mathbb{F}_p[G]$-submodule of $V$. This completes the proof.
	\end{proof}

	The next lemma is  a crucial tool in our treatment of imprimitive groups $G$ as in Notation~\ref{not2}. It makes use of Lemma~\ref{lem:imprim}. Recall in particular the definition of base groups in Notation~\ref{not2}.7.
	
	\begin{lemma}\label{Crucial} Let $G, \Omega, \Sigma, \Delta, L, R, M,   M_i, s, \rho_{i,j}$ be as in Notation~$\ref{not2}$, let $X:=G^{(2),\Omega}$,  and $Y:=X^{\Delta}_{\Delta}$. Suppose that $G\ne X$ but $L=L^{(2),\Sigma}$. Then the following hold.
		\begin{enumerate}
			\item $X\leq Y\wr L = B_Y\rtimes L$, and $R\leq Y\leq  R^{(2),\Delta}$;
			
			\item $X=N\rtimes G$, where $N = B_Y\cap X>1$, and $N$ consists precisely of all the elements $(y_1,\hdots,y_s)\in B_Y$ such that
			\begin{align}\label{Cruc}
				(y_i,y_j)\in (\rho_{i,j}(M_i\cap M_j))^{(1), \Delta_i\times\Delta_j}\text{ for all }1\le i<j\le s.
			\end{align}
			
			\item If $p:=|R|$ is prime then $Y=R^{(2),\Delta}=R=C_p$ and, for $2\le j\le s$,
			\begin{align}\label{Claim}
				(\rho_{1,j}(M\cap M_j))^{\Delta_1\times \Delta_j}=(Y\times Y)\cap (\rho_{1,j}(M\cap M_j))^{(1),\Delta_1\times \Delta_j}
			\end{align}
			is equal to either $Y\times Y$ or a diagonal subgroup of $Y\times Y$. In particular,  there exists
			$(y_1,\dots,y_s)\in N$ with both $y_1$ and $y_j$ nontrivial.
		\end{enumerate}
	\end{lemma}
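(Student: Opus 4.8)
The plan is to set up the ambient wreath product carefully, then bootstrap from the hypotheses $G\neq X$, $L=L^{(2),\Sigma}$ using Wielandt's criterion (Theorem~\ref{thm:W}) together with Lemma~\ref{lem:imprim}. For part~(1), I would first apply Lemma~\ref{lem:imprim}(a): since $\Sigma$ is $G$-invariant it is $X$-invariant, and $\overline{X}=X^\Sigma\leq\overline{G}^{(2),\Sigma}=L^{(2),\Sigma}=L$; combined with $G\leq X$ this forces $X^\Sigma=L$, so the top group is unchanged. Next, Lemma~\ref{lem:imprim}(b) gives $X_\Delta^\Delta\leq (G_\Delta^\Delta)^{(2),\Delta}=R^{(2),\Delta}$, and since $G_\Delta^\Delta=R\leq X_\Delta^\Delta$ we get $R\leq Y\leq R^{(2),\Delta}$. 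By conjugacy of the blocks all the induced groups on the $\Delta_i$ coincide, so via the wreath embedding (Notation~\ref{not2}.5) we obtain $X\leq Y\wr L=B_Y\rtimes L$.

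For part~(2): set $N:=B_Y\cap X=\Ker(X\to X^\Sigma)\cap X$. Since $X^\Sigma=L$ and $G^\Sigma=L$ with $G\leq X\leq B_Y\rtimes L$, a standard Dedekind/modular-law argument gives $X=N\rtimes G$; and $N>1$ precisely because $G<X$. The substantive claim is the description of $N$ by the system of congruences~\eqref{Cruc}. For the inclusion ``$\subseteq$'': given $x=(y_1,\dots,y_s)\in N$, I must show $(y_i,y_j)$ lies in the $1$-closure of $\rho_{i,j}(M_i\cap M_j)$ acting on $\Delta_i\times\Delta_j$. The point is that $x\in X=G^{(2),\Omega}$, so by Theorem~\ref{thm:W}, for any $\alpha\in\Delta_i$, $\beta\in\Delta_j$ there is $g\in G$ agreeing with $x$ on $\{\alpha,\beta\}$; since $x$ fixes $\Sigma$ pointwise, $g$ must fix $\Delta_i$ and $\Delta_j$, i.e.\ $g\in M_i\cap M_j$, and $\rho_{i,j}(g)$ then sends $(\alpha,\beta)$ to $(\alpha^{y_i},\beta^{y_j})$. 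As $\alpha,\beta$ range over $\Delta_i\times\Delta_j$ this says exactly that $(y_i,y_j)$ preserves every $\rho_{i,j}(M_i\cap M_j)$-orbit, i.e.\ lies in the $1$-closure. For ``$\supseteq$'': conversely, if $(y_1,\dots,y_s)\in B_Y$ satisfies all the congruences~\eqref{Cruc}, then for any two points of $\Omega$ — lying in blocks $\Delta_i,\Delta_j$ (possibly $i=j$, where the single-block case follows from $Y\leq R^{(2),\Delta}$ and Theorem~\ref{thm:W} applied to $R$ on $\Delta$) — one produces the required $g\in M_i\cap M_j\leq G$ matching the action on those two points, so by Theorem~\ref{thm:W} the element lies in $X$, hence in $N$.

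For part~(3), assume $p=|R|$ is prime. Then $R\cong C_p$ is regular on $\Delta$, and any group with the same orbits as $R$ on $\Delta$ is $R$ itself (a transitive group of prime degree containing a $p$-cycle... more simply, the $1$-closure of a regular group is itself, and $R^{(2),\Delta}$ sits between $R$ and this), so $Y=R^{(2),\Delta}=R=C_p$. The identity~\eqref{Claim}: the left side is the subgroup of $R\times R$ induced by $G$ on $\Delta_1\cup\Delta_j$, which is a subgroup of $C_p\times C_p$; since $Y=R$, its intersection with the $1$-closure in the product action is itself — but one must check the $1$-closure in the product action of a subgroup of $C_p\times C_p$ reintersected with $Y\times Y$ collapses correctly; the subgroups of $C_p\times C_p$ are $1$, the two factors, the diagonals, and the whole thing, and $\rho_{1,j}(M\cap M_j)$ projects onto each factor (by Notation~\ref{not2}.9, $\rho_i(M_i)=R_i$), so it is either all of $Y\times Y$ or a diagonal subgroup. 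In either case it is not contained in a single factor. Feeding this back into~\eqref{Cruc} with $i=1$: by Lemma~\ref{InducedModules} (applied with $p$ as here, noting $M$ is core-free hence not in a proper normal subgroup — condition~10), $N$, viewed inside the permutation module $V\cong\mathbb{F}_p^s$, is an $\mathbb{F}_p[G]$-submodule that is \emph{not} the trivial diagonal one (since $N\neq 1$ would force, if it were only the diagonal, that $N\leq$ the ``all-equal'' line, but one argues this is impossible as $G$ already acts on $\Omega$ and that line would centralise too much — more directly, use that $N\cap(\text{diagonal line})$ being all of $N$ contradicts $G<X$ via Lemma~\ref{lem:imprim} type reasoning), hence $N$ contains an element with two distinct nontrivial coordinates $y_1,y_j$.

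\medskip
\noindent\textbf{Main obstacle.} I expect the genuine difficulty to be in part~(3): pinning down that $N$ is not contained in the ``all-equal'' diagonal line of $V$, i.e.\ producing an element of $N$ with two distinct nontrivial coordinates rather than one that is either trivial or constant. Parts~(1) and~(2) are a careful but essentially formal unwinding of Lemma~\ref{lem:imprim} and Theorem~\ref{thm:W}; the congruence characterisation~\eqref{Cruc} is the heart of part~(2) but its proof is a direct two-point argument. In part~(3), the combinatorics of subgroups of $C_p\times C_p$ is easy, but the module-theoretic step — ruling out $N$ being exactly the trivial submodule of $V$ — is where Lemma~\ref{InducedModules} and the core-freeness hypothesis~10 must be deployed with care, since a priori $N$ could be a sum of a trivial and a nontrivial submodule, and one needs the constraint~\eqref{Cruc} (which ties coordinates pairwise through the diagonal subgroups) to upgrade "$N$ contains a nontrivial, non-diagonal vector" into the stated conclusion.
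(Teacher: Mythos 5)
Parts (1) and (2) of your proposal are correct and follow essentially the same route as the paper: Lemma~\ref{lem:imprim} for the containments, faithfulness of $G$ on $\Sigma$ (Notation~\ref{not2}.10) for $N\cap G=1$, and the two-point criterion of Theorem~\ref{thm:W}, split according to whether the two points lie in the same block or in different blocks, for the characterisation \eqref{Cruc}.

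Part (3) has genuine gaps. First, your justification of $Y=R^{(2),\Delta}=R$ is wrong as stated: the $1$-closure of a transitive group is the full symmetric group, not the group itself; the fact needed is that the $2$-closure of a regular group is itself (\cite[Theorem 5.12]{Wielandt}), which is what the paper cites. Second, and more seriously, you assert that $\rho_{1,j}(M\cap M_j)$ projects onto each factor of $R_1\times R_j$ ``by Notation~\ref{not2}.9''; but that item only gives $\rho_i(M_i)=R_i$, not $\rho_i(M_i\cap M_j)=R_i$, and whether $(M_i\cap M_j)^{\Delta_i}=R_i$ is precisely the substantive condition appearing in Proposition~\ref{MainCor1}(1)(ii), whose proof depends on this lemma -- so as written your argument is circular. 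The correct order is the reverse of yours: one first produces $y=(y_1,\dots,y_s)\in N$ with $y_1,y_j$ both nontrivial, and then the fact that $(y_1,y_j)$ lies in the $1$-closure of $\rho_{1,j}(M\cap M_j)$ (from part (2)) forces both projections to be nontrivial, since an element of the $1$-closure preserves every orbit, and if $M\cap M_j$ acted trivially on $\Delta_1$ its orbits would all lie in fibres $\{\alpha\}\times\Delta_j$, forcing $y_1=1$; subdirectness in $C_p\times C_p$ then yields the dichotomy. Third, your route to the existence of such a $y$ -- showing via Lemma~\ref{InducedModules} that $N$ is not contained in the all-equal diagonal line -- attacks a claim that is both false in general and unnecessary: Lemma~\ref{PrimeCor} and Proposition~\ref{RedProp}(c) show that $N$ frequently \emph{is} exactly that diagonal, and when it is, every nontrivial element of $N$ already has all coordinates nontrivial. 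The argument actually needed is elementary: $N\ne 1$ is normalised by $X$ and $X^\Sigma=L$ is transitive, so the support of $N$ is all of $\{1,\dots,s\}$; choosing $u,v\in N$ with $u_1\ne 0$ and $v_j\ne 0$, and replacing them by $u+v$ if necessary, gives the required element. Your identification of the ``main obstacle'' therefore points at the wrong target.
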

	
	\begin{proof}
		(a) By Lemma~\ref{lem:imprim}(a), $L\leq X^\Sigma\leq L^{(2), \Sigma}=L$, and hence $X^\Sigma=L$.
		By Lemma~\ref{lem:imprim}(b), for each $\Delta_i\in\Sigma$, $R=G_{\Delta_i}^{\Delta_i}\leq X_{\Delta_i}^{\Delta_i}\leq (G_{\Delta_i}^{\Delta_i})^{(2),\Delta_i}=R_i^{(2),\Delta_i}$, and the last subgroup is $Y$: here we identify the set $\Delta_i=\Delta\times\{i\}$ with $\Delta$ as in Notation~\ref{not2}.5. Thus it follows from  Lemma~\ref{lem:imprim} that $X\leq Y\wr L  = B_Y\rtimes L$ and $R\leq Y\leq R^{(2),\Delta}$.
		
		(b)    Let $N:=B_Y\cap X$. Then $N\unlhd X$, and it follows from Notation~\ref{not2}.10 that $G\cong G^\Sigma$, equivalently, $N\cap G=1$. Thus  $|X/N|=|X^{\Sigma}|= |L|=|G^{\Sigma}|=|G|$, and therefore $X=N\rtimes G$. Since $X> G$ we have $N>1$.
		
		Let $y\in B_Y$, so $y=(y_1,\hdots,y_s)$, where each $y_i\in Y$.  Then $y\in N$ if and only if
		$y\in X=G^{(2),\Omega}$ and, by Theorem~\ref{thm:W}, this holds if and only if, for all $\alpha,\beta\in\Omega$, there exists $g\in G$ such that $(\alpha,\beta)^y= (\alpha,\beta)^g$. Suppose first that $\alpha, \beta$ lie in the same part $\Delta_i$ of $\Sigma$. Then $(\alpha,\beta)^y= (\alpha,\beta)^{y_i}$, and since $y_i\in Y=R^{(2),\Delta}$ it follows that there exists $r\in R$ with $(\alpha,\beta)^{y_i}=(\alpha,\beta)^{r}$,
		and as $R=G_{\Delta_i}^{\Delta_i}$ there exists $g\in G_{\Delta_i}$ such that $g^{\Delta_i}=r$. Hence
		$(\alpha,\beta)^y= (\alpha,\beta)^g$. Thus we may assume that $\alpha\in\Delta_i, \beta\in\Delta_j$ with $1\le i< j\le s$. Then $(\alpha,\beta)^y= (\alpha^{y_i},\beta^{y_j})\in\Delta_i\times\Delta_j$, and a suitable element $g$ would need to lie in $M_i\cap M_j$ (the setwise stabiliser in $G$ of $\Delta_i$ and $\Delta_j$), and the image  $(\alpha,\beta)^g$ would therefore be equal to $(\alpha,\beta)^a$ for some $a\in \rho_{i,j}(M_i\cap M_j)$. By the definition of the $1$-closure of $\rho_{i,j}(M_i\cap M_j)$ in its action on $\Delta_i\times\Delta_j$ (see Notation~\ref{not2}.9), such an $a=a(\alpha,\beta)$ can be found, for all $(\alpha, \beta)\in\Delta_i\times\Delta_j$, if and only if $(y_i,y_j)\in (\rho_{i,j}(M_i\cap M_j))^{(1), \Delta_i\times\Delta_j}$.
		
		(c) Suppose now that $|R|=p$  is a prime, so $|\Delta|=p$ and $R=C_p$ acting regularly. Then, by \cite[Theorem 5.12]{Wielandt}, $R^{(2),\Delta}=R$, and hence by part (a), also $Y=C_p$.
		Fix an integer $j$ with $2\leq j\leq s$, and recall that $M=M_1$. By part (b), $N\ne 1$, and hence, since $N$ is normal in $X$, there exists
		$y=(y_1,\dots,y_s)\in N$ such that  $y^{\Delta_1\cup\Delta_j}=(y_1,y_j)\ne 1$ with both $y_1$ and $y_j$ nontrivial.  In particular, $1\ne (\rho_{1,j}(y))^{\Delta_1\times \Delta_j}\in (\rho_{1,j}(M\cap M_j))^{\Delta_1\times \Delta_j}$. On the other hand $\rho_{1,j}(M\cap M_j)\leq R_1\times R_j$ by Notation~\ref{not2}.9, and in this case $R_1\times R_j=C_p\times C_p$. Thus $(\rho_{1,j}(M\cap M_j))^{\Delta_1\times \Delta_j}$ is either $C_p\times C_p$ or a diagonal subgroup $C_p$. In the first case $\rho_{1,j}(M\cap M_j)$ is transitive on   $\Delta_1\times \Delta_j$, and hence its $1$-closure is
		$(\rho_{1,j}(M\cap M_j))^{(1),\Delta_1\times \Delta_j}=\Sym(p^2)$, and \eqref{Claim} holds with both sides equal to $C_p\times C_p$. Suppose then that  $(\rho_{1,j}(M\cap M_j))^{\Delta_1\times \Delta_j}$ is a diagonal subgroup of $C_p\times C_p$, hence of order $p$ and intransitive on  $\Delta_1\times \Delta_j$.  It follows that $(Y\times Y)\cap (\rho_{1,j}(M\cap M_j))^{(1),\Delta_1\times \Delta_j}$ is also intransitive, and hence is a proper subgroup of  $Y\times Y= C_p\times C_p$. Since this group contains $(\rho_{1,j}(y))^{\Delta_1\times \Delta_j}\ne 1$, it follows that \eqref{Claim} holds with both sides equal to $C_p$. \end{proof}

	First we derive a useful consequence of Lemma \ref{Crucial}.
	\begin{lemma}\label{PrimeCor} Let $G, \Omega, \Sigma, M, R, L, \Delta$, etc. be as in
		Notation~$\ref{not2}$, and let $X:=G^{(2),\Omega}$ and $Y=X^{\Delta}_{\Delta}$. Suppose also that $G\ne X$, that $L=L^{(2),\Sigma}$, that $p:=|R|$ is prime, and that $M$ is not contained in a proper normal subgroup of $G$. Let $J$ consist of representative elements $j\in\{2, 3,\dots,s\}$, where $s=|\Sigma|$, such that
		\[
		(\rho_{1,j}(M\cap M_j))^{\Delta_1\times\Delta_j}\cong C_p,
		\]
		and such that, for distinct $j, j'\in J$, the parts $\Delta_j, \Delta_{j'}\in \Sigma$ lie in distinct $M$-orbits in $\Sigma\setminus\{\Delta\}$.
		If $\sum_{j\in J} |M:M\cap M_j|\geq s/2$, then
		the subgroup $N$ from Lemma~$\ref{Crucial}$ is the diagonal subgroup $\{(x,\dots,x) \text{ : } x\in R \}$ of $B_R=R^s\cong C_p^s$.
	\end{lemma}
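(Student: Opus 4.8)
The plan is to combine Lemma~\ref{Crucial} with a module-theoretic analysis. Under our hypotheses Lemma~\ref{Crucial} applies and gives $Y = R^{(2),\Delta} = R = C_p$, so $B_Y = B_R = R^s\cong C_p^{\,s}$ is elementary abelian, and $X = N\rtimes G$ where $N = B_R\cap X$ is a nontrivial subgroup of $B_R$ characterised by \eqref{Cruc}. Since $B_R$ is abelian, the conjugation action of $G\le W = B_R\rtimes L$ on $B_R$ factors through the quotient $W/B_R\cong L$, i.e.\ through the permutation action $G\to G^\Sigma = L$, and there it simply permutes the $s$ coordinates the way $G$ permutes $\Sigma$; as $N\unlhd X$ is $G$-invariant, $N$ is thus an $\mathbb F_p[G]$-submodule of the permutation module $V=\mathbb F_p^{\,s}$ on $\Sigma$. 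The goal will be to prove $\dim_{\mathbb F_p} N = 1$; then, since $M$ lies in no proper normal subgroup of $G$, Lemma~\ref{InducedModules} identifies $N$ with the unique $1$-dimensional submodule of $V$, namely $\{(x,\dots,x):x\in R\}$, which is the assertion.

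Two preliminary observations make $N$ tractable. First, since $G$ is transitive on $\Sigma$ and $N\neq 0$ is $G$-invariant, each coordinate projection $\pi_k\colon N\to\mathbb F_p$ is onto (the set of $k$ with $\pi_k(N)\neq 0$ is $G$-invariant and nonempty, hence everything, and each $\pi_k(N)\le C_p$ is then nonzero). Second, by \eqref{Cruc} every $(y_1,\dots,y_s)\in N$ has $(y_i,y_j)\in(\rho_{i,j}(M_i\cap M_j))^{(1),\Delta_i\times\Delta_j}$ for all $i<j$; since $(y_i,y_j)$ already lies in $Y\times Y = C_p\times C_p$, the identity \eqref{Claim} shows $(y_i,y_j)\in(\rho_{i,j}(M_i\cap M_j))^{\Delta_i\times\Delta_j}$. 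I then introduce on $\{1,\dots,s\}$ the relation $k\sim l$ meaning: there is $c\in\mathbb F_p^{\times}$ with $y_k = c\,y_l$ for every $(y_1,\dots,y_s)\in N$. By the surjectivity of the $\pi_k$ this is an equivalence relation, and it is $G$-invariant (being defined purely from $N$ and the coordinate labels), so $G$ permutes its classes transitively and all classes have equal size.

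The heart of the argument is to show one, hence every, $\sim$-class has more than $s/2$ elements. Fix $j\in J$. By hypothesis $(\rho_{1,j}(M\cap M_j))^{\Delta_1\times\Delta_j}$ has order $p$, so by the second observation the image of $N$ under the projection to coordinates $1$ and $j$ is contained in this order-$p$ subgroup of $C_p\times C_p$; by the first observation that image surjects onto each factor, so it equals the whole order-$p$ subgroup, and an order-$p$ subgroup of $C_p\times C_p$ surjecting onto both coordinates is the graph $\{(x,c_jx):x\in\mathbb F_p\}$ of multiplication by a unique $c_j\in\mathbb F_p^{\times}$. (This is exactly where surjectivity of the $\pi_k$ is needed, to rule out the subgroups $C_p\times 1$ and $1\times C_p$.) Hence $y_j = c_j y_1$ for all $(y_\bullet)\in N$, i.e.\ $j\sim 1$. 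If in addition $\Delta_k\in\Delta_j^M$, say $\Delta_k = \Delta_j^m$ with $m\in M = M_1$, then conjugation by $m$ fixes coordinate $1$, carries coordinate $j$ to coordinate $k$, and stabilises $N$, so transporting the relation $y_j = c_j y_1$ by $m$ gives $y_k = c_j y_1$ for all $(y_\bullet)\in N$, i.e.\ $k\sim 1$. Therefore the $\sim$-class of $1$ contains $\{1\}$ together with every $k$ for which $\Delta_k$ lies in $\bigcup_{j\in J}\Delta_j^M$, a disjoint union by the defining property of $J$, of total size $1 + \sum_{j\in J}|M:M\cap M_j|\geq 1 + s/2 > s/2$.

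Since the $\sim$-classes partition the $s$-element set and all have the same size, which exceeds $s/2$, there is exactly one $\sim$-class. Hence there exist $c_2,\dots,c_s\in\mathbb F_p^{\times}$ with $y_k = c_k y_1$ for all $k$ and all $(y_1,\dots,y_s)\in N$, so $N$ is contained in the line $\mathbb F_p\cdot(1,c_2,\dots,c_s)$, giving $\dim_{\mathbb F_p} N\leq 1$; with $N\neq 0$ this yields $\dim_{\mathbb F_p} N = 1$, and Lemma~\ref{InducedModules} completes the proof as above. I expect the main obstacle to be the crux step: extracting from Lemma~\ref{Crucial}(b)--(c) that the constraint linking $y_1$ and $y_j$ for $j\in J$ is a genuine proportionality rather than a degenerate relation, and then propagating it first along $M$-orbits and then, via the transitivity of $G$ on $\Sigma$, to all coordinates, so as to turn the counting hypothesis $\sum_{j\in J}|M:M\cap M_j|\geq s/2$ into ``exactly one equivalence class''.
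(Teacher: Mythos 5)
Your proof is correct and follows essentially the same route as the paper's: both view $N$ as a nonzero $\mathbb{F}_p[G]$-submodule of the permutation module on $\Sigma$, use Lemma~\ref{Crucial}(c) together with the hypothesis on $J$ to force $y_j$ to be a fixed scalar multiple of $y_1$ for every $j$ indexing a part in $\bigcup_{j\in J}\Delta_j^M$ (a set of size at least $s/2$), and then invoke Lemma~\ref{InducedModules} to identify the resulting one-dimensional submodule. The only difference is in the endgame and is cosmetic: where the paper derives a contradiction by comparing the number of zero and nonzero entries of a vector and a suitable $G$-conjugate of it, you note that the $G$-invariant proportionality classes all have equal size exceeding $s/2$, hence there is exactly one class --- an equally valid (and arguably cleaner) way to finish.
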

	
	\begin{proof}
		Write the cyclic group $R\cong C_p$ additively, and let $N$ be the normal subgroup of $X$ from Lemma \ref{Crucial}. By Lemma~\ref{Crucial} parts~(b) and (c), $X=N\rtimes G$, $N$ is an elementary abelian $p$-subgroup of $B_Y=B_R=R^s=C_p^s$, and by Notation~\ref{not2}.10, $G\cong G^\Sigma=L$. Thus $B_R$  may be viewed as the permutation module $V$ for the $G$-action on $\Sigma$  over the field $\mathbb{F}_p$, and $N$ as a non-zero $G$-submodule. Note that $V$ is the induced module $1\!\uparrow^G_M$, and by assumption
		$M$ is not contained in a proper normal subgroup of $G$. Hence, by Lemma \ref{InducedModules}, it suffices to prove that $|N|=p$.
		
		Assume to the contrary that $|N|\geq p^2$. Then there exists a nontrivial element $y=(y_1,\hdots,y_s)\in N$ with the property that
		$y_e=0$ for some $e\le s$. Since $G^\Sigma$ is transitive, we may assume, without loss of generality, that $y_1=1$.
		Let $\mathcal{O}$ be the union of those $M$-orbits in ${\Sigma}$ containing some $\Delta_j$ with $j\in J$. Since $M\cap M_j$ is the setwise stabiliser in $M$ of $\Delta_j$, the set $\mathcal{O}$ has length $\sum_{j\in J}|M:M\cap M_j|\geq s/2$.
		By Lemma \ref{Crucial}(c) and the definition of $J$, $\rho_{1,i}(N)$ is a full diagonal subgroup of $R\times R\cong C_p^2$, for all $i\in \mathcal{O}$. Thus, there exist automorphisms $\theta_i\in\Aut(R)$ for $i\in \mathcal{O}$ such that, if $z=(z_1,\hdots,z_s)\in N$, then $z_i=z_1^{\theta_i}$ for all $i\in \mathcal{O}$.
		
		Since $y_1\neq 0$, the number of nonzero entries in $y$ is therefore at least $1+|\mathcal{O}|>s/2$, that is, strictly more than half the entries of $y$ are non-zero. On the other hand, since $y$ also contains zero entries, and $G^\Sigma$ is transitive, it follows that, for some $g\in G$, the conjugate $y^g$ has first entry $(y^g)_1=0$. Then, since $y^g\in N$, it follows from the previous paragraph that $(y^g)_i=0$ for all $i\in \mathcal{O}$, that is, strictly more than half the entries of $y^g$ are zero. However, since $y^g$ is conjugate to $y$, this implies that strictly more than half the entries of $y$ are zero. This is a contradiction. Hence $|N|=p$ and the result follows.
	\end{proof}
	
	We next note a particular example for which the condition of Lemma \ref{PrimeCor} on $J$ holds. This result will be crucial in our proof that the Thompson group $Th$ is totally $2$-closed (see Propositions~\ref{ThPrelim} and~\ref{ThProp}).

	\begin{proposition}\label{ThPropCount} Suppose that $G=\mathrm{Th}$, $M={}^3D_4(2).3$ and $H=[M,M]$, and note that all the conditions of Notation~$\ref{not2}$ hold with $\Omega=[G:H]$ and $\Sigma=[G:M]$.  Let $J$ be a
		subset consisting of representative elements $j\in\{2, 3,\dots,s\}$, where $s=|\Sigma|$, such that
		\[
		(\rho_{1,j}(M\cap M_j))^{\Delta_1\times\Delta_j}\cong C_3,
		\]
		and such that, for distinct $j, j'\in J$, the parts $\Delta_j, \Delta_{j'}\in \Sigma$ lie in distinct $M$-orbits in $\Sigma\setminus\{\Delta\}$.
		Then $\sum_{j\in J} |M:M\cap M_j|> s/2$.
	\end{proposition}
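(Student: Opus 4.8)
The plan is to carry out an explicit computation inside $G=\mathrm{Th}$ using the subgroup structure of $M={}^3D_4(2).3$, exploiting the fact that $s=|\Sigma|=|G:M|$ is known and that the $M$-orbits on $\Sigma\setminus\{\Delta\}$ correspond to the $M$-conjugacy classes of the subgroups $M\cap M_j$ (equivalently, to the double cosets $MgM$ in $G$). First I would identify the double-coset structure $M\backslash G/M$: since $M$ is maximal in $G$, this is the permutation character $1_M\!\uparrow^G$, whose decomposition is available in the literature (the Atlas and the work on maximal subgroups of $\mathrm{Th}$), and the $M$-orbits on $\Sigma$ have lengths $|M:M\cap M_j|$ summing to $s$. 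For each nontrivial double coset representative $g$, the group $M\cap M_j = M\cap M^g$ is a subgroup of $M\cong {}^3D_4(2).3$, and I would compute the image $(\rho_{1,j}(M\cap M_j))^{\Delta_1\times\Delta_j}\le R\times R\cong C_3\times C_3$: this is $C_3\times C_3$ precisely when $M\cap M_j$ surjects onto $R=M^{\Delta}\cong C_3$ under \emph{both} $\rho_1$ and $\rho_j$ independently with the two surjections "uncorrelated", and it is (a diagonal) $C_3$ otherwise, i.e. when $M\cap M_j$ is contained in $H=[M,M]$ on at least one side, or the two projections to $C_3$ agree up to an automorphism.

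The key reformulation is: $(\rho_{1,j}(M\cap M_j))^{\Delta_1\times\Delta_j}\cong C_3$ if and only if $M\cap M_j$ is \emph{not} a subdirect-but-nondiagonal subgroup of $R_1\times R_j$; since $R=C_3$ has no proper nontrivial subgroups, the only alternative to $C_3\times C_3$ is a graph-of-isomorphism diagonal $C_3$ or the trivial/partial cases, all of which are $\cong C_3$ as abstract groups but what matters is the order $3$ versus $9$. So $j\in J$ (up to the choice of one representative per $M$-orbit) exactly when $|(\rho_{1,j}(M\cap M_j))^{\Delta_1\times\Delta_j}|=3$, and I must show $\sum_{j\in J}|M:M\cap M_j|>s/2$. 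Equivalently, writing $\mathcal{O}_9$ for the union of $M$-orbits on $\Sigma\setminus\{\Delta\}$ where the image is the full $C_3\times C_3$, I must show that $|\mathcal{O}_9| < s/2 - 1$, i.e. fewer than half of the blocks $\Delta_j$ (together with $\Delta$ itself) have $M\cap M_j$ surjecting fully onto $C_3\times C_3$. The cleanest way to bound $|\mathcal{O}_9|$ is: if $\Delta_j\in\mathcal{O}_9$ then $M\cap M_j$ has order divisible by $9$ and maps onto $C_3\times C_3$, so in particular $|M\cap M_j|\ge 9$, forcing $|M:M\cap M_j|\le |M|/9$; but more usefully, the union of orbits $\mathcal{O}_9$ is contained in the set of $\Delta_j$ for which the stabiliser in $M$ is large, and these are few. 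I would compute the orbit lengths explicitly from the list of $M\cap M^g$ over double coset representatives (this is a finite, bounded check once the maximal subgroups of ${}^3D_4(2).3$ relevant to the intersections are in hand, or via a direct computation in \textsc{GAP}/\textsc{Magma} using the known permutation representation of $\mathrm{Th}$ on $[G:M]$).

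The main obstacle will be the explicit determination of the subgroups $M\cap M_j$ and their projections to $C_3\times C_3$ — this requires either a careful hands-on analysis of how ${}^3D_4(2).3$ meets its $G$-conjugates (using the structure of ${}^3D_4(2)$ and its automorphism of order $3$), or a machine computation with the primitive permutation representation of $\mathrm{Th}$ of degree $s=|G:M|$. I expect the proof in the paper to split $J$ into the "diagonal" cases and bound the complementary set $\mathcal{O}_9$, observing that $\mathcal{O}_9$ consists only of orbits on which $M$ acts with a comparatively large point stabiliser (of order divisible by $9$ and containing a specified $C_3\times C_3$ section), and then verifying numerically — from the double-coset data — that the total length of those few orbits is strictly less than $s/2-1$, so that $\sum_{j\in J}|M:M\cap M_j| = s - 1 - |\mathcal{O}_9| > s/2$. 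Once this inequality is in hand, Lemma~\ref{PrimeCor} (with $p=3$) applies directly to force $N$ to be the diagonal subgroup, which is what the subsequent propositions on $\mathrm{Th}$ need.
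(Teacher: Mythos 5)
Your reformulation is exactly the one the paper uses: a representative $j$ lies in $J$ precisely when the two\hyphenation{-}point stabiliser $M\cap M_j$ fails to surject onto $R_1\times R_j\cong C_3\times C_3$, equivalently when $M\cap M_j$ has no elementary abelian quotient of order $9$ compatible with the two projections; and the paper, like you, reads the suborbit data for $\mathrm{Th}$ acting on $[G:M]$ out of the literature (Mazurov's determination of the rank-$11$ action, with the ten nontrivial stabilisers $2^2.[2^9].(\Sym(3)\times 3)$, $2^{1+8}.9.3$, $7^2.(3\times \mathrm{SL}_2(3))$, $[3^5].2$, $2\times\Alt(4)\times\Alt(4)$, $(7.6)\times 3$, $13.6$, $3\times\mathrm{SL}_2(3)$, $3^2.6$, $\Sym(3)$). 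So the strategy is the same. However, your write-up stops at the plan and defers the entire content of the proof --- the identification of which orbits actually contribute --- to an unexecuted ``finite, bounded check''. The paper carries it out: the stabilisers $2^2.[2^9].(\Sym(3)\times 3)$, $13.6$ and $\Sym(3)$ visibly have no elementary abelian quotient of order $9$, and for $3^2.6$ one needs the additional structural fact (also from Mazurov) that the normal $3^2$ is self-centralising with the complement of order $6$ acting as the unique order-$6$ subgroup of $\mathrm{GL}_2(3)$, which kills any $C_3\times C_3$ quotient. These four orbits alone have total length exceeding $s/2$ (indeed the orbit with stabiliser $\Sym(3)$ already does), which finishes the proof.

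Two smaller points on your framing. First, your proposed complementary count --- bounding $|\mathcal{O}_9|$ by the orbits whose stabiliser has ``order divisible by $9$'' --- is far too weak a criterion to be useful: most of the ten stabilisers above have order divisible by $9$, and deciding membership in $\mathcal{O}_9$ genuinely requires analysing quotients, not orders. Moreover the complementary count forces you to settle \emph{every} orbit, whereas the direct lower bound on the $J$-side only requires certifying enough orbits to pass $s/2$, which is why the paper can ignore the harder stabilisers such as $2^{1+8}.9.3$ and $[3^5].2$ entirely. Second, your identity $\sum_{j\in J}|M:M\cap M_j|=s-1-|\mathcal{O}_9|$ tacitly assumes every orbit outside $\mathcal{O}_9$ has image exactly $C_3$ rather than trivial; this is harmless in the context where the proposition is applied (Lemma~\ref{Crucial}(c) has already forced the dichotomy full versus diagonal), but it is not something the bare statement gives you for free.
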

	
	\begin{proof}
		The group $G^{\Sigma}$ has rank $11$. We may assume that, for $i=2,\dots,11$, the subgroups $M\cap M_i$ are the stabilisers in $M$ of parts $\Delta_i\in\Sigma$ from pairwise distinct $M$-orbits in $\Sigma\setminus\{\Delta\}$.  These point stabilisers  $(M\cap M_i)^{\Sigma}$ are given in \cite{Mazurov}, and  are (in some order) $M\cap M_2:=2^2.[2^9].(\Sym(3)\times
		3)$, $M\cap M_3:=2^{1+8}.9.3$, $M\cap M_4:=7^2.(3\times \mathrm{SL}_2(3))$, $M\cap M_5:=[ 3^5].2$, $M\cap M_6:=2 \times \Alt(4)\times \Alt(4)$, $M\cap M_7:=(7.6)\times 3$, $M\cap M_8:= 13.6$, $M\cap M_9:=3\times \mathrm{SL}_2(3)$, $M\cap M_{10}:=3^2.6$, and $M\cap M_{11}:=\Sym(3)$.
		The groups $M\cap M_i$, for $i\in\{2,8,11\}$ clearly have no elementary abelian quotient of order $9$.
		Furthermore, it is proved in \cite{Mazurov} that the $3^2$ in $M\cap M_{10}:=3^2.6$ is self-centralising, and the `$6$ on top' acts as the unique  subgroup of $GL_2(3)$ of order $6$ (up to conjugacy). In particular $M\cap M_{10}$ has  no elementary abelian quotient of order $9$.
		Since $\sum_{i\in\{2,8,10,11\} }|M:M\cap M_i|>|\Sigma|/2$, the result is proved.
	\end{proof}

	Our next result is useful for reducing consideration of all transitive actions of a given group to the consideration of  primitive ones. We summarise some concepts used in the statement and proof:
	For a finite group $E$ and a positive integer $f$, a subgroup $D\leq E^f$ is called a \emph{diagonal subgroup} if $D$ is of the form $D=\{(e,e^{\theta_2},\hdots,e^{\theta_f})\text{ : }e\in F\}$ for some subgroup $F$ of $E$ and automorphisms $\theta_i$ of $F$; $D$ is called a \emph{full diagonal subgroup} if $F=E$. If $f=2$ and $D$ is a subdirect subgroup of $E_1\times E_2$, for finite groups $E_1, E_2$, then by  \cite[Theorem 4.8]{PS} (sometimes called `Goursat's Lemma'), $K_i := D\cap \Ker(\pi_{3-i})\unlhd E_i$ for $i=1, 2$, and
	there is a well defined homomorphism
	$\varphi_D: E_1\to E_2/K_2$ with kernel $K_1$ such that
	$D=\{(e_1, e_2) \text{ : } e_1\in E_1, e_2\in \varphi_D(e_1)\}$.
	In particular $D$ contains $K_1\times K_2$ as a normal subgroup and
	$D/(K_1\times K_2)\cong E_1/K_1\cong E_2/K_2$; and moreover $D$ is a full diagonal subgroup if and only if $K_1\times K_2=1$.

	Finally, we will write $\Omega^{(2)}$ for the set of ordered pairs of distinct points of a set $\Omega$, and, for a finite group $Y$, we will write $\mathcal{S}(Y)$ for the intersection of all nontrivial subnormal subgroups of $Y$. Note that $\mathcal{S}(Y)$ is nontrivial only if $Y$ has a unique minimal normal subgroup, which is simple.
	
	\begin{proposition}\label{RedProp}
		Let $G, \Omega, \Sigma, M, R, L, \Delta, s, \rho_{i,j}$, etc. be as in
		Notation~$\ref{not2}$, and let $X:=G^{(2),\Omega}$ and $Y=X^{\Delta}_{\Delta}$.
		Suppose that  $G\ne X$ but $L=L^{(2),\Sigma}$.
		
		\begin{enumerate}[\upshape(a)]
			\item Then $X=N\rtimes G$ with $N=X\cap B_Y\ne 1$ as in Lemma~$\ref{Crucial}$, and  $N$ is a subdirect subgroup of a base group $B_A=A^s$ for some $A\unlhd Y$ (see Notation~$\ref{not2}.7$).
			
			\item If $M$ is maximal in $G$, then either
			\begin{enumerate}[\upshape(i)]
				\item $N$ is a full diagonal subgroup of $B_A$; or
				\item for all distinct $i, j\leq s$, $\rho_{i,j}(N)$ is not a full diagonal subgroup of $A\times A$; moreover, either $N$ contains the base group $B_{\mathcal{S}(Y)}$, or  $Y$ has a unique minimal normal subgroup which is isomorphic to $C_p$, for some prime $p$.
			\end{enumerate}
			\item Furthermore if $M$ is maximal but not normal in $G$, and if in addition $p=|R|$ is prime and (b)(i) holds, then $Y=R$ and $N$ is the diagonal subgroup $\{(x,\dots,x)  \text{ : } x\in R \}$ of $B_R=R^s\cong C_p^s$.
		\end{enumerate}
	\end{proposition}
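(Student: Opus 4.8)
The plan is to build everything on top of Lemma~\ref{Crucial}, which already gives us $X = N\rtimes G$ with $N = X\cap B_Y \ne 1$, together with $R\leq Y\leq R^{(2),\Delta}$ and the characterisation~\eqref{Cruc} of the elements of $N$. The first task, for part~(a), is to identify the ``width'' of $N$ inside $B_Y = Y^s$. Since $G^\Sigma = L$ is transitive on $\Sigma$ and $N$ is normalised by $G$ (as $N\unlhd X$), the projections $\pi_i(N)\leq Y$ are permuted transitively by $G$ through the $L$-action on the coordinates; hence they are all equal to a common subgroup, call it $A$. I would then check $A\unlhd Y$: indeed $A = \pi_1(N)$ is normalised by $\pi_1(M_1) = R\leq Y$ and, using~\eqref{Cruc} together with the fact that $Y\leq R^{(2),\Delta}$ centralises no proper part of the structure it shouldn't, one pushes this up to normality in all of $Y$ (this is the routine part: $N$ being $G$-invariant and $\rho_{i,j}(N)$ a subdirect subgroup of $A\times A$ forces $A$ to be normalised by the full local stabiliser, which by Notation~\ref{not2}.9 induces $R$, and then by the closure $Y$). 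So $N$ is a subdirect subgroup of $B_A = A^s$, giving~(a).

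For part~(b), assume $M$ is maximal in $G$. The dichotomy is exactly the full-diagonal/not-full-diagonal split applied to the subdirect subgroup $N\leq A^s$. If some $\rho_{i,j}(N)$ is a full diagonal subgroup of $A\times A$, then by transitivity of $L$ on coordinates and the standard structure of subdirect subgroups of $A^s$ (iterating Goursat/\cite[Theorem 4.8]{PS}), $N$ is a full diagonal subgroup of $B_A$, which is case~(i). Otherwise, for all distinct $i,j$, $\rho_{i,j}(N)$ is subdirect in $A\times A$ but not full diagonal, so by the Goursat description recalled just before the statement, the kernels $K_i = \rho_{i,j}(N)\cap\Ker(\pi_{3-i})$ are nontrivial normal subgroups of $A$; transitivity of $L$ makes these coordinate-kernels all conjugate, and their common ``intersection-of-nontrivial-subnormals'' behaviour is what $\mathcal{S}(Y)$ measures. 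Here I would argue: either $\mathcal{S}(Y)\ne 1$ — i.e. $Y$ has a unique minimal normal subgroup which is simple — in which case the nontrivial normal subgroups $K_i\unlhd A\unlhd Y$ all contain $\mathcal{S}(Y)$, and one shows $B_{\mathcal{S}(Y)}\leq N$ by exhibiting, for each coordinate $i$, an element of $N$ supported (nontrivially) only there, built from the kernels $K_{i,j}$; or $\mathcal{S}(Y)=1$, which (combined with the maximality of $M$ forcing $N$, as a $G$-module-like object, to be fairly rigid) forces $Y$'s unique minimal normal subgroup to be $C_p$ — here Lemma~\ref{InducedModules} is the natural tool, since with $M$ maximal and non-normal, $N$ sits inside the permutation module $1\!\uparrow^G_M$ and a $1$-dimensional submodule must be the trivial one. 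That gives~(ii).

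Part~(c) is then a clean specialisation: $M$ maximal and not normal, $p = |R|$ prime, and case~(b)(i) in force. By Lemma~\ref{Crucial}(c), $|R|$ prime already forces $Y = R^{(2),\Delta} = R = C_p$ acting regularly, so $A\leq Y = C_p$ and $A = C_p$ (as $N\ne 1$ is subdirect). A full diagonal subgroup of $B_{C_p} = C_p^s$ is, up to the automorphisms $\theta_i$, the ``twisted diagonal'' $\{(x,x^{\theta_2},\dots,x^{\theta_s}): x\in C_p\}$; but $N$ must be $G$-invariant with $G\cong L$ acting on the coordinates through $\Sigma$, and since $M$ is not normal in $G$ the $L$-action is not trivial, which forces all the twists $\theta_i$ to agree with the identity (a nontrivial permutation of coordinates commuting with the twisted-diagonal structure collapses the twists) — equivalently, apply Lemma~\ref{InducedModules} to conclude $|N| = p$ and that $N$ is the genuine diagonal $\{(x,\dots,x):x\in R\}$.

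The main obstacle I anticipate is the non-full-diagonal branch of part~(b)(ii): extracting from ``$\rho_{i,j}(N)$ subdirect but never full diagonal, for all $i,j$, with $L$ transitive on coordinates'' the sharp conclusion that $N\supseteq B_{\mathcal{S}(Y)}$ or $Y$ has socle $C_p$. This requires careful bookkeeping with Goursat kernels across all coordinate pairs simultaneously and a genuine use of maximality of $M$ (via Lemma~\ref{InducedModules}) to rule out intermediate module structures; the rest of the argument is structural and should go through smoothly once~\eqref{Cruc} and the $L$-transitivity on coordinates are in hand.
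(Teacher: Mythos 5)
Your overall architecture matches the paper's (build on Lemma~\ref{Crucial}, identify $N$ as subdirect in $B_A$, split on the diagonal dichotomy, use the Schreier conjecture and Scott's Lemma for the socle, and Lemma~\ref{InducedModules} for part~(c)), and your part~(c) is essentially the paper's argument. But there are two genuine gaps in part~(b).

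First, in (b)(i) you propagate ``$\rho_{i,j}(N)$ is full diagonal for one pair $(i,j)$'' to all pairs using only transitivity of $L$ on the coordinates plus Goursat. That is not enough: transitivity only tells you that the set of pairs on which $\rho_{i,j}(N)$ is full diagonal is a union of $L$-orbitals, and an imprimitive transitive $L$ could leave $N$ as a product of independent diagonal subgroups, one per block of an $L$-invariant partition. The missing ingredient is exactly where the hypothesis that $M$ is maximal enters: $L$ is then primitive, so by Higman's theorem the nondiagonal orbital digraph through $(i,j)$ is connected, and since full-diagonality composes along paths ($\rho_{1,k}(N)$ and $\rho_{k,j}(N)$ full diagonal imply $\rho_{1,j}(N)$ full diagonal), connectivity forces full-diagonality on every pair. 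You state the maximality hypothesis but never use it here, so this step does not go through as written. (A smaller fixable slip of the same flavour occurs in~(a): to get $A\unlhd Y$ rather than merely $A\unlhd R$ you should use that $N$ is normalised by $X$ and $X_{\Delta}^{\Delta}=Y$, not that it is normalised by $G$, whose local action is only $R$.)

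Second, your case division in (b)(ii) is wrong. You claim that $\mathcal{S}(Y)\ne 1$ always yields $B_{\mathcal{S}(Y)}\le N$, and you attach the $C_p$ alternative to the case $\mathcal{S}(Y)=1$ (where the conclusion $B_{\mathcal{S}(Y)}=1\le N$ is vacuous and nothing needs proving). In fact the $C_p$ alternative in the statement exists precisely because the argument you sketch \emph{fails} when $\mathcal{S}(Y)\cong C_p$: the step ``$N/(N\cap B_{\mathcal{S}(Y)})$ embeds in $\Out(\mathcal{S}(Y))^s$, which is soluble by Schreier, contradicting insolubility of $N$'' and the subsequent application of Scott's Lemma both require $\mathcal{S}(Y)$ to be nonabelian simple. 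For $A\cong C_p$ the conclusion $B_{C_p}\le N$ is genuinely false in general --- e.g.\ the zero-sum hyperplane of $C_p^s$ is subdirect with $\rho_{i,j}$-images equal to $C_p\times C_p$ for all $i\ne j$, yet is proper in $B_{C_p}$. So as structured, your proof of (b)(ii) asserts a false statement in the abelian case; the correct division is: if $\mathcal{S}(Y)$ is nonabelian simple run the Schreier--Scott argument to get $B_{\mathcal{S}(Y)}\le N$, if $\mathcal{S}(Y)\cong C_p$ stop and record that alternative, and if $\mathcal{S}(Y)=1$ there is nothing to prove.
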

	
	\begin{proof}
		(a) Since $N$ is a subgroup of the base group $B_Y$ of $Y\wr L$ and $N$ is normalised by $X$, it
		follows that $N$ is a subdirect subgroup of a base subgroup $B_A= A^s$ for some subgroup $A\unlhd Y$, as in part (a).
		Note that each element $y\in Y$ acting by conjugation induces an automorphism of $A$, and we denote this automorphism by $\iota(y)$.
		For the rest of the proof assume that $M$ is maximal in $G$, or equivalently, $L=G^\Sigma$ is primitive. This means, by Higman's theorem \cite[(1.12)]{Higman} discussed above, that every nondiagonal $L$-orbital digraph is connected.
		
		(b)  Since $N$ is a subdirect subgroup of $B_A=A^s$, for any distinct $i, i'$, the image $\rho_{i,i'}(N)$
		is a subdirect subgroup of $A\times A$. Suppose first that there exist distinct $i,i'$ such that
		$\rho_{i,i'}(N)$ is a full diagonal subgroup of $A\times A$, that is to say, $\rho_{i, i'}(N)
		=\{(x,x^\theta) \text{ : } x\in A\}$ for some $\theta\in\Aut(A)$. If $g=y\sigma\in G<B_Y\rtimes L$
		with $y\in B_Y, \sigma\in L$ and $(j,j')=(i,i')^\sigma$, and if $z\in N$ such that $\rho_{i,i'}(z)=(x,x^\theta)$,  then $z^g\in N$ (since $G$ normalises $N$) and
		\[
		\rho_{j,j'}(z^g) = (x^{y_i}, (x^\theta)^{y_{i'}})= (u,u^{\theta'})\ \text{ where $u=x^{y_i}$ and $\theta'=\iota(y_i^{-1})\,\theta\, \iota(y_{i'})$}.
		\]
		Since $A\unlhd Y$, we have $u\in A$ and $\theta'\in\Aut(A)$, and since this holds for each $z\in N$, and hence for each $(x, x^\theta)\in\rho_{i,i'}(N)$, it follows that $\rho_{j,j'}(N)$ is a full diagonal subgroup of $A\times A$.
		
		Let $\Gamma$ be the (nondiagonal) $L$-orbital digraph containing $(i,i')$ as an edge. As mentioned above $\Gamma$ is a connected digraph, and so, since $L=G^\Sigma$ is transitive on $\Sigma$, there exists an edge of the form $(1,k)$ for some $k>1$.  We claim that $\rho_{1,j}(N)$ is a full diagonal subgroup of $A\times A$, for all $j>1$. We prove this claim by induction on the distance $d_\Gamma(1,j)$, the length of the shortest path from $1$ to $j$ in $\Gamma$.  By the argument in the previous paragraph the claim holds for all points $j$ with $d_\Gamma(1,j)=1$. Suppose that $d>1$, that the claim holds for all  $j$ with $d_\Gamma(1,j)\leq d-1$, and that $d_\Gamma(1,j)=d$. There is a path $i_0,i_1,\dots, i_d$ in $\Gamma$ with $i_0=1, i_d=j$, and set $k=i_{d-1}$. Then by the previous paragraph $\rho_{k, j}(N)
		=\{(x,x^\theta) \text{ : } x\in A\}$ for some $\theta\in\Aut(A)$, and by induction,  $\rho_{1, k}(N)
		=\{(x,x^{\theta'}) \text{ : } x\in A\}$ for some $\theta'\in\Aut(A)$, and it follows that
		$\rho_{1, j}(N)=\{(x,x^{\theta\theta'}) \text{ : } x\in A\}$. Thus the claim is proved by induction.
		It now follows that there exist automorphisms $\theta_2,\dots,\theta_s\in\Aut(A)$ such that $N=\{ (x, x^{\theta_2},\dots,x^{\theta  _s}) \text{ : } x\in A\}$ and (b)(i) holds.
		
		Thus we may assume that, for all distinct $i, j$, the subdirect subgroup $\rho_{i,j}(N)$  of $A\times A$ is not a full diagonal subgroup. This is the first assertion of part (b)(ii). As mentioned before the statement of the proposition, $\mathcal{S}(Y)$ is trivial unless $Y$ has a unique minimal normal
		subgroup, which is simple. Thus, to complete the proof of (b)(ii), we may assume that $Y$ is almost simple with simple socle $\mathcal{S}(Y)$. Hence also $A$ is almost simple with socle $\mathcal{S}(Y)$.
		If $\mathcal{S}(Y)=C_p$, for some prime $p$, then part (b)(ii) holds, so we may assume further that $\mathcal{S}(Y)$ is a nonabelian simple group.  Let $N_1:=X\cap B_{\mathcal{S}(Y)}= N\cap B_{\mathcal{S}(Y)}$. Then $N_1$ is normalised by $X$, and as in the proof of part (a),  $N_1$ is a subdirect subgroup of $B_{A_1}=A_1^s$ for some $A_1\unlhd Y$. By the definition of $N_1$, $A_1\subseteq \mathcal{S}(Y)$, so either $A_1=1$ or $A_1=\mathcal{S}(Y)$.  If $A_1=1$ then $N_1=1$, and so the insoluble group $N\cong N/N_1$ is isomorphic to a subgroup of $B_A/B_{\mathcal{S}(Y)} \leq {\rm Out}(\mathcal{S}(Y))^s$, which by the `Schreier Conjecture' is soluble, a contradiction. Thus  $A_1=\mathcal{S}(Y)$.
		%
		Then for distinct $i, j$, the image $\rho_{i,j}(N_1)= A_1\times A_1$ (since $\rho_{i,j}(N)$ is not a diagonal subgroup of $A\times A$). Further, the subdirect subgroup $N_1$ of $B_{A_1}$ is, by `Scott's Lemma' \cite[Theorem 4.16(iii)]{PS},  a direct product of pairwise disjoint full strips of $B_{A_1}$; and by our observation about the $\rho_{i,j}(N_1)$ it follows that each of the strips has support of size $1$, that is, it is one of the simple direct factors of $B_{A_1}$. Thus in this case $N_1=B_{A_1}$, and part (b)(ii) holds.
		
		(c)  Finally suppose that $M$ is maximal but not normal in $G$, that $|R|=p$ is prime, and (a)(i) holds.
		Then $Y=R$ since $R\leq Y\leq R^{(2),\Delta}=R$. Also $M$ is not contained in a proper normal subgroup of $G$, and so  the stated assertion follows from Lemma \ref{InducedModules}.
	\end{proof}

	The next result is needed to prove some important consequences of Proposition \ref{RedProp}.
	
	\begin{lemma}\label{1CLemma} For $i=1, 2$, let $\Delta_i$ be a finite set and $E_i$ a transitive subgroup of $\Sym(\Delta_i)$. Let $\Omega=\Delta_1\times\Delta_2$, and consider the product action of  $E_1\times E_2$ on $\Omega$. Suppose that $K$, a finite group, acts on $\Delta_i$ with $K^{\Delta_i}\leq E_i$, for each $i$, such that the induced $K$-action on $\Omega$ is faithful.  Let $\Gamma_i$ be the set of $K$-orbits in $\Delta_i$, and let $F_i$ be the (largest) subgroup of $E_i$  fixing setwise each $K$-orbit in $\Gamma_i$.
		Then
		\[
		(E_1\times E_2) \cap K^{(1),\Omega}\leq F_1\times F_2,
		\]
		and moreover, if $K^{\Delta_i}=F_i$, for $i=1$, $2$, then $K^{(1), \Omega}$ is a subdirect subgroup of $F_i\times F_2$.
	\end{lemma}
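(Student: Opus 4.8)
\textbf{Proof proposal for Lemma~\ref{1CLemma}.}

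The plan is to work directly with orbits, since both the $1$-closure $K^{(1),\Omega}$ and the groups $F_i$ are defined purely in terms of orbits. First I would record the key structural observation: the $K$-orbits on $\Omega=\Delta_1\times\Delta_2$ are \emph{not} simply products of $K$-orbits on the two factors (the action is the diagonal one, not the independent product action of $K\times K$), but each $K$-orbit on $\Omega$ \emph{does} project onto a single $K$-orbit in $\Delta_1$ and a single $K$-orbit in $\Delta_2$. Equivalently, writing $p_i:\Omega\to\Delta_i$ for the two projections, each $p_i$ is $K$-equivariant, so $p_i$ sends $K$-orbits to $K$-orbits; hence for a $K$-orbit $\mathcal{O}\subseteq\Omega$ we have $p_i(\mathcal{O})\in\Gamma_i$.

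For the containment $(E_1\times E_2)\cap K^{(1),\Omega}\leq F_1\times F_2$, take $g=(g_1,g_2)\in E_1\times E_2$ lying in $K^{(1),\Omega}$, i.e. $g$ fixes setwise every $K$-orbit on $\Omega$. I would show each $g_i$ fixes setwise every $K$-orbit in $\Gamma_i$; by symmetry do $i=1$. Given $\gamma\in\Gamma_1$, pick any $\delta_1\in\gamma$ and any $\delta_2\in\Delta_2$, and let $\mathcal{O}$ be the $K$-orbit of $(\delta_1,\delta_2)$; then $p_1(\mathcal{O})=\gamma$ since $\gamma$ is a single $K$-orbit and $p_1$ is equivariant and surjective onto it. Because $g$ preserves $\mathcal{O}$, we get $g_1(\gamma)=g_1(p_1(\mathcal{O}))=p_1(g\cdot\mathcal{O})=p_1(\mathcal{O})=\gamma$, using that $p_1(g\cdot(\alpha_1,\alpha_2))=g_1\cdot\alpha_1$. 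So $g_1$ stabilises $\gamma$; as $\gamma$ was arbitrary, $g_1\in F_1$, proving the containment.

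For the second assertion, assume $K^{\Delta_i}=F_i$ for $i=1,2$. A subdirect subgroup of $F_1\times F_2$ is one projecting onto each factor; so I would show $p_i$ (here the projection $F_1\times F_2\to F_i$ restricted to $K^{(1),\Omega}$) is surjective. Again do $i=1$. We certainly have $K^{\Omega}\leq K^{(1),\Omega}$ (the $1$-closure contains $K$), and the image of $K^{\Omega}$ under the first projection is exactly $K^{\Delta_1}=F_1$ by hypothesis; hence the first projection of $K^{(1),\Omega}$ already contains $F_1$, and by part one it is contained in $F_1$, so it equals $F_1$. The same for the second projection. Therefore $K^{(1),\Omega}$ projects onto both factors of $F_1\times F_2$, i.e. it is subdirect.

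The only mild subtlety — and the one point I would be careful to state cleanly rather than a genuine obstacle — is the equivariance bookkeeping: one must be sure that ``$K$ acts on $\Omega$'' means the diagonal action $(\alpha_1,\alpha_2)^k=(\alpha_1^k,\alpha_2^k)$ (this is what makes each $p_i$ $K$-equivariant), and that the faithfulness hypothesis on the $K$-action on $\Omega$ is not actually needed for either conclusion — it is presumably included only because in the application $K$ will be identified with a permutation group on $\Omega$. So I would simply note that faithfulness is not used in the proof. Everything else is a direct orbit-chase with no calculation.
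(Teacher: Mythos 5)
Your proof is correct and is essentially the same orbit-chasing argument as the paper's: the paper packages it by noting that the orbits of $K^{\Delta_1}\times K^{\Delta_2}$ on $\Omega$ are the rectangles $\Lambda\times\Pi$ (for $\Lambda\in\Gamma_1$, $\Pi\in\Gamma_2$) and that $K^{\Omega}\le K^{\Delta_1}\times K^{\Delta_2}$ forces $K^{(1),\Omega}$ to preserve each such rectangle, whereas you chase the projections of individual $K$-orbits directly — the content is identical, and your observations that faithfulness is unused and that the subdirect claim should really be read for $(E_1\times E_2)\cap K^{(1),\Omega}$ both match the paper's proof.
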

	
	\begin{proof} Write $\Gamma_1:=\{\Lambda_1,\hdots,\Lambda_r\}$ and $\Gamma_2:=\{\Pi_1,\hdots,\Pi_s\}$. Then the orbits of $K^{\Delta_1}\times K^{\Delta_2}$ in its action on $\Omega$ are the sets $\Lambda_i\times \Pi_j$, for $1\le i\le r$, $1\le j\le s$. Since the $1$-closure of a permutation group is the stabiliser of each of its orbits in the associated symmetric group, it follows that $(E_1\times E_2)\cap (K^{\Delta_1}\times K^{\Delta_2})^{(1),\Omega}=F_1\times F_2$. Then since $K^{\Omega}\le (K^{\Delta_1}\times K^{\Delta_2})^{\Omega}$, we deduce that $(E_1\times E_2) \cap  K^{(1),\Omega}\leq F_1\times F_2$. Finally, if $K^{\Delta_i}=F_i$, for $i=1$, $2$, then clearly $(E_1\times E_2) \cap  K^{(1),\Omega}$ is a subdirect subgroup of $F_1\times F_2$.
	\end{proof}

	We now derive a series of consequences of Proposition \ref{RedProp}. They show that the question of whether or not $G^{\Omega}$ is $2$-closed often reduces to the same question for the induced action
	of $G$ on a nontrivial block system. We use the notation and assumptions in Notation~\ref{not2}, and recall in particular that this includes the assumption that $G\cong G^\Sigma = L$.
	The \emph{base size} of a fnite transitive permutation group $L$ is the minimum size of a subset $S$ of points such that $\cap_{i\in S} L_i = 1$.

	\begin{proposition}\label{MainCor1}
		Let $G, \Omega, \Sigma, M, R, L, \Delta, s, \rho_{i,j}$, etc. be as in
		Notation~$\ref{not2}$, let $X:=G^{(2), \Omega}$ and $Y=X^{\Delta}_{\Delta}$, and
		suppose that  $L=L^{(2),\Sigma}$.
		
		\begin{enumerate}[\upshape(1)]
			\item If $|\Delta|=2$, so $R=M^\Delta = \mathbb{Z}/2\mathbb{Z}$ (written additively), then  the following are equivalent:
			\begin{enumerate}[\upshape(i)]
				\item $X\ne G$ (that is, $G^\Omega$ is not $2$-closed);
				\item  for all $i, j\leq s$, $(M_i\cap M_j)G_{\alpha_i}=M_i$, where $\alpha_i\in\Delta_i$;
				\item the element $z= (1,1,\dots,1)$ in the base group $B_R=R^s$ of $R\wr L=B_R\rtimes L$ lies in $X$.
			\end{enumerate}
			\item If $X\ne G$, then $X=N\rtimes G$ with $N$ a subdirect subgroup of a base group $B_A$ for some nontrivial $A\unlhd Y$, and all $A$-orbits in $\Delta$ have the same length, say $a$, where $a$ divides $|L_{ij}|$ for all distinct $i, j\leq s$ (where $L_{ij}$ is the stabiliser in $L$ of $\Delta_i$ and $\Delta_j$).
			
			\item If $L$ has base size $2$, then $X=G$.
			\item Suppose that $|\Delta|=p$, where $p$ is prime, so that $R=M^\Delta = \mathbb{Z}/p\mathbb{Z}$. Suppose also that $X\ne G$ (that is, $G^\Omega$ is not $2$-closed). Then for all $i, j\leq s$, and all $\alpha_i\in\Delta_i$, we have $(M_i\cap M_j)G_{\alpha_i}=M_i$.
		\end{enumerate}
	\end{proposition}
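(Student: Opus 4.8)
The plan is to derive all four parts from the explicit description of $X$ given by Lemma~\ref{Crucial} (and Proposition~\ref{RedProp}), combined with Wielandt's criterion (Theorem~\ref{thm:W}) and the analysis of $1$-closures of the relevant product actions afforded by Lemma~\ref{1CLemma}. Recall that the standing hypothesis $L=L^{(2),\Sigma}$, together with $G\ne X$, lets us write $X=N\rtimes G$ with $N=X\cap B_Y\ne 1$ equal to the set of all $(y_1,\dots,y_s)\in B_Y$ satisfying the compatibility relation~\eqref{Cruc} (Lemma~\ref{Crucial}(b)). Recall also that $G\cong G^\Sigma=L$ by Notation~\ref{not2}.10, so that $G\cap B_R=1$ (hence $B_R$ contains no non-identity element of $G$) and, for distinct $i,j$, the subgroup $M_i\cap M_j$ maps isomorphically onto $L_{ij}$, giving $|M_i\cap M_j|=|L_{ij}|$.

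For part (4) I would first record that, since $R=M^\Delta\cong C_p$ acts regularly on $\Delta$, the point stabiliser $G_{\alpha_i}$ equals the pointwise stabiliser $\mathrm{Ker}(\rho_i|_{M_i})$, which is normal in $M_i$ with quotient $C_p$; hence $(M_i\cap M_j)G_{\alpha_i}=M_i$ is equivalent to $\rho_i(M_i\cap M_j)\ne 1$, independently of the choice of $\alpha_i$. Now if $X\ne G$ then $N\ne 1$, and since $N$ is $L$-invariant and $L$ is transitive on $\Sigma$, for every ordered pair $i\ne j$ there is some $y\in N$ with $y_i\ne 1$. A subgroup of $1\times R_j$ acting on $\Delta_i\times\Delta_j$ has all orbits with constant first coordinate, so no element $(y_i,y_j)$ with $y_i\ne 1$ lies in its $1$-closure; feeding $y$ into~\eqref{Cruc} therefore forces $\rho_{i,j}(M_i\cap M_j)\not\le 1\times R_j$, i.e. $\rho_i(M_i\cap M_j)\ne 1$, as required. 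Part (1) then falls out: (iii)$\Rightarrow$(i) is immediate since $z\in B_R\setminus G$; (i)$\Rightarrow$(ii) is the case $p=2$ of part (4); and for (ii)$\Rightarrow$(iii) I would verify the Wielandt criterion for $z$ directly — pairs of points in a common block are covered because $R=\mathrm{Sym}(2)$ is transitive on $\Delta$, while for $\alpha\in\Delta_i$, $\beta\in\Delta_j$ with $i\ne j$, condition (ii) applied to both $(i,j)$ and $(j,i)$ produces elements of $M_i\cap M_j$ swapping $\Delta_i$, resp. $\Delta_j$, forcing $\rho_{i,j}(M_i\cap M_j)$ to be subdirect in $C_2\times C_2$ and hence to contain the element swapping both blocks, which matches $z$ on $(\alpha,\beta)$.

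Part (3) I would prove by contradiction: if $X\ne G$, then by Proposition~\ref{RedProp}(a) the group $N$ is a subdirect subgroup of a base group $B_A=A^s$ with $1\ne A\unlhd Y$, so $\rho_i(N)=A\ne 1$ for each $i$; but base size $2$ means $L_{ij}=1$ for some distinct $i,j$, whence $M_i\cap M_j=1$, so $(\rho_{i,j}(M_i\cap M_j))^{(1),\Delta_i\times\Delta_j}=1$ and~\eqref{Cruc} forces $y_i=1$ for all $y\in N$, contradicting $\rho_i(N)\ne 1$. For part (2), the assertions that $X=N\rtimes G$ with $N$ subdirect in $B_A$ for some non-trivial $A\unlhd Y$ are Proposition~\ref{RedProp}(a) together with $N\ne 1$, and the claim that all $A$-orbits in $\Delta$ have a common length $a$ is the standard fact that a normal subgroup of the transitive group $Y=X^\Delta_\Delta$ has equal-length orbits. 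For the divisibility $a\mid|L_{ij}|$ I would apply Lemma~\ref{1CLemma} with $E_1=E_2=Y$ (acting on $\Delta_i$ and $\Delta_j$ respectively) and $K=\rho_{i,j}(M_i\cap M_j)$ in its faithful product action on $\Delta_i\times\Delta_j$: by~\eqref{Cruc} we have $\rho_{i,j}(N)\le(Y\times Y)\cap K^{(1),\Delta_i\times\Delta_j}$, so Lemma~\ref{1CLemma} shows that $\rho_{i,j}(N)$, and hence $A=\rho_i(\rho_{i,j}(N))$, fixes setwise each $\rho_i(M_i\cap M_j)$-orbit of $\Delta_i$; consequently $a$ divides the length of every such orbit, which divides $|\rho_i(M_i\cap M_j)|$ and so divides $|M_i\cap M_j|=|L_{ij}|$.

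I expect the main obstacle to be the bookkeeping with $1$-closures: identifying, for the (at most) $2\times 2$ permutation groups $\rho_{i,j}(M_i\cap M_j)$ arising in parts (1) and (4), exactly which product-action elements lie in their $1$-closures, and, in part (2), threading the chain of divisibilities so that the purely combinatorial output of Lemma~\ref{1CLemma} — that $A$ respects the $\rho_i(M_i\cap M_j)$-orbit partition of $\Delta_i$ — translates cleanly into $a\mid|L_{ij}|$. None of the individual steps is deep, but each must be carried out inside the wreath-product framework of Notation~\ref{not2} with care about which stabiliser acts on which set.
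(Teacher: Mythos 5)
Your proposal is correct and follows essentially the same route as the paper: all four parts are extracted from the description of $N$ via \eqref{Cruc} in Lemma~\ref{Crucial}, Proposition~\ref{RedProp}(a), Lemma~\ref{1CLemma}, and Wielandt's criterion, with the key observations (that a non-trivial first coordinate cannot lie in the $1$-closure of a group acting trivially on $\Delta_i$, and that condition (ii) forces $\rho_{i,j}(M_i\cap M_j)$ to be subdirect in $C_2\times C_2$ and hence contain $(1,1)$) matching the paper's. The only cosmetic differences are that for (1)(ii)$\Rightarrow$(iii) you verify Theorem~\ref{thm:W} for $z$ directly where the paper counts indices in $U=\langle z\rangle G$, and for (3) you argue from \eqref{Cruc} rather than quoting part (2); both variants are valid.
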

	
	\begin{proof}
		(1) (i) $\Rightarrow$ (ii):\quad Suppose that $X\ne G$ so that, by  Lemma~\ref{Crucial} (all parts of it), $Y=R$ and
		$X=N\rtimes G\leq R\wr L$, where $N$ consists of all elements $y=(y_1,\dots,y_s)\in R^s=(\mathbb{Z}/2\mathbb{Z})^s$ satisfying  \eqref{Cruc}. Suppose also that the condition in part (a)(ii) above fails. (We will derive a contradiction.) Then there exist  $i, j\leq s$ such that $(M_i\cap M_j)G_{\alpha_i}\ne M_i$, where $\alpha_i\in\Delta_i$. Since $G_{\alpha_i}$ is a subgroup of $M_i$ of index $2$, this means that $M_i\cap M_j\leq G_{\alpha_i}$. In particular $i\ne j$, and $M_i\cap M_j$ acts trivially on $\Delta_i$. Moreover, since $G$ is transitive on $\Sigma$, we may assume that $i=1$.
		By Lemma~\ref{Crucial}(c), there exists $y=(y_1,\dots,y_s)\in N$ such that both $y_1$ and $y_j$ are nontrivial. On the other hand, by Lemma~\ref{Crucial}(b), $(y_1, y_j)\in (R\times R)\cap (\rho_{1,j}(M_1\cap M_j))^{(1),\Delta_1\times\Delta_j}$, and hence $(y_1, y_j)$ fixes setwise each orbit of $M_1\cap M_j$ in its induced action on $\Delta_1\times\Delta_j$. Since $y_1\ne0$, this contradicts the fact that $M_1\cap M_j$ acts trivially on $\Delta_1$.  Thus the condition in part (a)(ii) holds.
		
		(ii) $\Rightarrow$ (iii):\quad Suppose that the condition in part (a)(ii) holds, and note that $z$ is central in $R\wr L$. Let  $U:=\langle z\rangle G$, so that $G\leq U\leq R\wr L$ and $|U:G|\leq 2$.
		We will prove that $G$ and $U$ have the same orbits in $\Omega\times\Omega$, that is $(\alpha, \beta)^G=(\alpha, \beta)^X$ for all $\alpha,\beta\in\Omega$.  If $\alpha=\beta$ then $(\alpha, \beta)^G=(\alpha, \beta)^X$ since both $X$ and $G$ are transitive on $\Omega$, so suppose that $\alpha\ne\beta$. If $\{\alpha, \beta\} = \Delta_i$ for some $i$, then, since $M_i$ interchanges $\alpha$ and $\beta$, the orbit $(\alpha,\beta)^G$ consists of all ordered pairs of points in the same block $\Delta_j$ of $\Sigma$. Hence $(\alpha, \beta)^G=(\alpha, \beta)^X$ in this case since $X$ preserves $\Sigma$, by Lemma~\ref{lem:imprim}.   Thus we may assume that  $\alpha\in\Delta_i$ and $\beta\in\Delta_j$ for distinct $i, j$.   The condition $(M_i\cap M_j)G_{\alpha_i}=M_i$ implies that
		$(M_i\cap M_j)^{\Delta_i}=M_i^{\Delta_i}=R_i$, and similarly (interchanging $i$ and $j$), $(M_i\cap M_j)^{\Delta_j}=M_j^{\Delta_j}=R_j$. Hence $\rho_{i,j}(M_i\cap M_j)$ is a subdirect subgroup of $R_i\times R_j\cong C_2\times C_2$, and in particular contains $(1,1)$. Thus there exists $x=(z_1,\hdots,z_s)\sigma\in M_i\cap M_j\le G$ such that $z_i=z_j=1$. Now
		\begin{align*}
			(R\wr L)_{\alpha \beta}= \left\{(z_1,\hdots,z_s)\sigma\text{ : }z_i=z_j=0\text{, }\sigma \in L_{ij}\right\},
		\end{align*}
		and
		\begin{align}\label{Ulab}
			U_{\alpha \beta}= G_{\alpha \beta} \cup z\{(z_1,\hdots,z_s)\sigma \in G\text{ : }z_i=z_j=1\text{, }\sigma \in L_{ij}\}.
		\end{align}
		Since $x=(z_1,\hdots,z_s)\sigma\in M_i\cap M_j\le G$ with $z_i=z_j=1$, it follows from \eqref{Ulab} that
		$zx\in U_{\alpha \beta}\setminus G_{\alpha \beta}$, and hence $|U_{\alpha \beta}|\geq 2|G_{\alpha \beta}|$. Since $[U:G]\leq 2$, we have
		\[
		|(\alpha,\beta)^U| = |U:U_{\alpha\beta}|=\frac{|U:G|. |G:G_{\alpha\beta}|}{|U_{\alpha\beta}:G_{\alpha\beta}|}\leq |G:G_{\alpha\beta}|= |(\alpha,\beta)^G|\leq |(\alpha,\beta)^U|,
		\]
		and hence equality holds. Thus we have shown that  $U\subseteq G^{(2),\Omega}=X$, and hence that $z\in X$.
		
		(iii) $\Rightarrow$ (i):\quad Finally suppose that $z\in X$. By Notation~\ref{not2}.10, $G$ is faithful on $\Sigma$, so $G\cap R^s=1$. Hence $z\in X\setminus G$, whence $X\ne G$.
		
		\medskip
		(2) Since $X\ne G$, it follows from Proposition \ref{RedProp}(a) that $X=N\rtimes G$ with $N\ne 1$, and $N$ a
		subdirect subgroup of a base group $B_A$, where $1\ne A\unlhd Y$. The $A$-orbits in $\Delta$ all have
		the same length, say $a$, since $Y=X_\Delta^\Delta$ is transitive. Choose distinct $i, j\leq s$, We apply
		Lemma \ref{1CLemma} with $K, \Delta_1,\Delta_2, E_1, E_2$ being  $\rho_{i,j}(M_i \cap M_j), \Delta_i, \Delta_j, Y, Y$, respectively. Note that the hypotheses of Lemma \ref{1CLemma} hold since $\rho_{i,j}(M_i \cap M_j)$
		acts faithfully on $\Delta_i\times \Delta_j$, and the groups it induces on $\Delta_i$ and $\Delta_j$
		are both contained in $Y$. Thus Lemma~\ref{1CLemma} implies that $Z:=(Y\times Y)\cap (\rho_{i,j}(M_i \cap M_j))^{(1),\Delta_i\times\Delta_j}$ fixes setwise each orbit of $\rho_{i,j}(M_i \cap M_j)$ in $\Delta_i\cup\Delta_j$. Further, it follows from Lemma~\ref{Crucial}(b) that $\rho_{i,j}(N)\leq Z$, and since
		$\rho_{i,j}(N)$ is a subdirect subgroup of $A\times A$, this implies that each orbit of $\rho_{i,j}(M_i \cap M_j)$ in $\Delta_i\cup\Delta_j$ is a union of $A$-orbits, and hence has length a multiple of $a$. In particular $a$ divides $|M_i\cap M_j|$. Finally, since $G\cong G^\Sigma=L$ we have $M_i\cap M_j\cong L_{ij}$. This completes the proof.
		
		\medskip
		(3) Suppose that $L=G^{\Sigma}$ has base size $2$ and let $\{i,j\}$ be a base, that is, $L_{ij}=1$. This implies, since $G\cong L$, that $M_i\cap M_j\cong L_{ij}=1$. It now follows from part (2) that $X=G$.
		
		\medskip
		(4) To prove part (4), the argument is identical to the (i) $\Rightarrow$ (ii) argument in (1) above, with $2$ replaced by $p$.
	\end{proof}

	\section{Total $2$-closure of sporadic simple groups}\label{SporadicSection}
	
	A major objective for the remainder of the paper is to determine precisely which sporadic simple groups 
	are totally 2-closed. In the next few sections we will complete the proof that  Theorem \ref{thm:J} holds if $T$ is a sporadic simple group. In Proposition \ref{lem:simple1}(b), we 
	reduced the list of possible candidates to just eleven sporadic groups. In this section we will eliminate 
	the groups $ON, \mathrm{Co}_2, \mathrm{Fi}_{23}, \mathrm{HN}, \mathbb{B}$, and we will show that five of the remaining six groups,
	namely $\mathrm{J}_1$, $\mathrm{J}_3$, $\mathrm{Ly}$, $\mathrm{Th}$, $\mathbb{M}$, are totally $2$-closed. The remaining group, namely 
	the fourth Janko group $\mathrm{J}_4$, requires additional effort and will be treated in Section \ref{J4Section}.
	
	Our broad strategy is to apply Proposition \ref{MainCor1} to the eleven sporadic groups listed in 
	Proposition \ref{lem:simple1}(b). To do this, we require detailed information, firstly, on the 
	greatest common divisors of the orders of the two-point stabilisers in transitive actions of 
	these groups; and secondly, on the base sizes of these transitive actions. Our main technical result is the following.
	
	\begin{proposition}\label{Base2Lemma}
		Let $T$ be one of the following simple groups: $\mathrm{J}_1, \mathrm{J}_3, \mathrm{J}_4, \mathrm{Ly}$, $\mathrm{Th}$, or $\mathbb{M}$. Further, 
		let $M$ be a proper subgroup of $T$, so that $T$ acts faithfully and transitively by right multiplication on the right coset space $[T:M]$. Then
		\begin{enumerate}[\upshape(a)]
			\item either $T^{[T:M]}$ has base size at most $2$, or $(T,M)$ is as in Table $\ref{tab:basesize}$, (where the last two subgroups in the $\mathrm{J}_4$-line may possibly have base size  $2$).
			\item Further, for each  $(T, M)$  in Table $\ref{tab:basesize}$, let $d(T,M)$ be
			the  greatest common divisor of the orders of the stabilisers in $M^{[T:M]}$ of points of $[T:M]\setminus\{M\}$. Then $d(T,M)$ divides $g(T,M)$, where $g(T,M)$ is as in the third column of Table $\ref{tab:basesize}$. Moreover, if $T=\mathrm{J}_4$ and $M=2^{10}:\mathrm{L}_5(2)$, then $d(T,M)\le 30$.
		\end{enumerate}
	\end{proposition}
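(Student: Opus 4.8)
The argument is essentially a group-by-group computation through the six sporadic groups $\mathrm{J}_1,\mathrm{J}_3,\mathrm{J}_4,\mathrm{Ly},\mathrm{Th},\mathbb{M}$, but it is organised around one structural reduction. \emph{Base size is monotone under subgroup containment}: if $M\le M'\le T$ and the action of $T$ on $[T:M']$ has base size at most $2$, say $M'^{g}\cap M'^{h}=1$ for some $g,h\in T$, then $M^{g}\cap M^{h}\le M'^{g}\cap M'^{h}=1$, so the action of $T$ on $[T:M]$ also has base size at most $2$. Contrapositively, if $T^{[T:M]}$ has base size greater than $2$ then \emph{every} maximal subgroup $M'$ of $T$ containing $M$ satisfies $b(T^{[T:M']})>2$. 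The maximal subgroups of each of the six groups are completely known (for $\mathbb{M}$ using the now-completed classification of its maximal subgroups), so the first task is to identify, for each $T$, the (short) list of maximal subgroups $M'$ with $b(T^{[T:M']})>2$; the second task is then to run through the subgroups $M$ all of whose maximal overgroups lie on that list. The maximal subgroups so identified, together with these finitely many further non-maximal candidates, are exactly the pairs recorded in Table~\ref{tab:basesize}; everything else has base size at most $2$. This is part~(a).

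\textbf{Verifying the base sizes.} For a candidate $M'$, proving $b(T^{[T:M']})\le 2$ just requires exhibiting a single $g$ with $M'\cap M'^{g}=1$, which is routine from a faithful permutation or matrix representation (or can be quoted from the literature on base sizes of primitive actions of sporadic groups). The harder direction is proving $b>2$: one must show that $|M'\cap M'^{g}|>1$ for \emph{every} $g\in T$, equivalently that no point stabiliser has a regular orbit. This is read off from the subdegrees / permutation character of $T$ on $[T:M']$, computed from the maximal-subgroup data and class-fusion information; for the very large degrees — several actions of $\mathbb{M}$ and of $\mathrm{J}_4$ — assembling enough of this data is where the real computational effort sits, and this is the main obstacle of part~(a). (For the two flagged rows of the $\mathrm{J}_4$ line we do not in fact need to decide whether $b=2$ or $b>2$: the pair is listed in the table in either case, and only the divisibility statement of part~(b) is used later in the paper.)

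\textbf{Part (b): the gcd bound.} Fix a pair $(T,M)$ from Table~\ref{tab:basesize}. Since $T$ is simple and $M$ proper, $\mathrm{Core}_T(M)=1$, so $M$ acts faithfully on $[T:M]$, and the point stabilisers in $M^{[T:M]}$ of points $Mg\ne M$ are the subgroups $M\cap M^{g}$ with $g\notin M$. Hence $d(T,M)=\gcd_{g\notin M}|M\cap M^{g}|$ divides $|M\cap M^{g}|$ for \emph{each individual} $g$. It therefore suffices to produce a short list of double-coset representatives $g_1,\dots,g_k\in T\setminus M$ for which $\gcd_i|M\cap M^{g_i}|$ divides the value $g(T,M)$ specified in the third column. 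For the smaller actions one simply computes the full set of suborbits (double cosets $M\backslash T/M$) and their two-point stabiliser orders. For the large actions — notably those of $\mathbb{M}$, and the $2$-local subgroups of $\mathrm{J}_4$ — a full suborbit enumeration is infeasible, so instead we locate suitable individual representatives $g$ inside the available matrix representations (the $196882$-dimensional $\mathbb{F}_2$-module for $\mathbb{M}$; the natural modules for the $\mathrm{J}_4$ $2$-locals) and compute $|M\cap M^{g}|$ there directly. The one case requiring a genuinely structural argument is $T=\mathrm{J}_4$, $M=2^{10}:\mathrm{L}_5(2)$: here the normal $2^{10}$ is the $\mathrm{L}_5(2)$-module $\wedge^2 V$, with $V$ the natural $5$-dimensional module, and analysing how a two-point stabiliser meets this module together with the induced subgroup of $\mathrm{L}_5(2)$ gives the bound $d(T,M)\le 30$. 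Carrying out these computations over the short list of table entries completes the proof.
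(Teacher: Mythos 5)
Your overall architecture coincides with the paper's: the authors also exploit the monotonicity of base size under subgroup containment (stated as a remark before Proposition \ref{J1Prop}), first extract from \cite[Tables 1 and 2]{BOBW} the short list of maximal subgroups $M'$ with $b(T^{[T:M']})>2$, and then descend through subgroup chains (organised via the depth function $\ell_T$ in Strategy \ref{Strategy}), with the proof of Proposition \ref{Base2Lemma} distributed over Propositions \ref{J1Prop}, \ref{J3Prop}, \ref{J4LemmaCor2}, \ref{LyProp}, \ref{ThProp} and \ref{MonsterLemma}. The tactical differences are worth noting. For establishing $b\le 2$ the paper does not exhibit an explicit regular orbit; it verifies the fixed-point-ratio inequality $\widehat{Q}(T,H,2)<1$ of Proposition \ref{BOBWProp}, computed from character-table and class-fusion data (and, for $\mathrm{J}_4$, from the careful involution counts of Lemma \ref{J4Lemma}). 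For part (b) the paper never computes $|M\cap M^g|$ by direct matrix computation in a huge module: the gcds are read off from \emph{published} suborbit data (Ivanov et al.\ for $\mathrm{Ly}$, $\mathrm{Th}$, $\mathrm{J}_4$, and the nine suborbits of $\mathbb{M}$ on $[\mathbb{M}:2.\mathbb{B}]$; Bray et al.\ for the $2A$-involution action of $\mathrm{J}_4$). Your proposed alternative of locating representatives $g$ inside the $196882$-dimensional $\mathbb{F}_2$-module for $\mathbb{M}$ and computing $|2.\mathbb{B}\cap(2.\mathbb{B})^g|$ there is not realistically feasible and is in any case unnecessary.

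The one place where your plan has a genuine hole is the bound $d(\mathrm{J}_4,2^{10}{:}\mathrm{L}_5(2))\le 30$. Saying that one "analyses how a two-point stabiliser meets the module $\wedge^2V$" is not an argument, and no amount of module theory alone will produce the numerical bound: a priori a two-point stabiliser could contain the whole of $O_2(M)$. The paper's actual proof combines two independent observations: since $\gcd(|M|,|T{:}M|-1)=1$, for every prime $p\mid |M|$ some point stabiliser contains a Sylow $p$-subgroup of $M$, and inspecting which maximal subgroups of $M$ contain Sylow $31$- and $2$-subgroups forces $d(T,M)\mid 2^{10}.3^2.5.7$; separately, the rank of $T^{[T:M]}$ is $27$, so some suborbit has length at least $(|T{:}M|-1)/26$ and hence some two-point stabiliser has order less than $32$. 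Together these give $d(T,M)\mid 2^4.3^2.5.7$ and $d(T,M)\le 30$. You would need to supply an argument of this kind (or a full suborbit computation) to close this case.
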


	\begin{table}[]
		\centering
		\begin{tabular}{c|c|c}
			$T$  & $M$ &  $g(T,M)$\\
			\hline
			$\mathrm{J}_1$    &   $L_2(11)$    &     $1$          \\
			$\mathrm{J}_3$    &   $L_2(16)$,\ \ $L_2(16).2$    &  $1$,\ $2$             \\
			$\mathrm{J}_4$    &   $2^{11}:\mathrm{M}_{24}$,\ \ $2^{1+12}.3.\mathrm{M}_{22}.2$,\ \ $2^{10}:\mathrm{L}_5(2)$,    & $24$,\ $2$,\  $2^4.3^2.5.7$, \\
			& $2^{1+12}.3.\mathrm{M}_{22}$,\ \ $2^{1+12}.3.\mathrm{L}_3(4)$,\ \ $2^{1+12}.3.\mathrm{L}_3(4).2$ & $2$,\ $2$,\ $2$  \\
			$\mathrm{Ly}$    &   $G_2(5)$,\ \ $3.\mathrm{McL}$,\ \ $3.\mathrm{McL}.2$    & $48$,\ $30$,\ $30$               \\
			$\mathrm{Th}$    &   $2^5.\mathrm{L}_5(2)$,\ \ ${}^3D_4(2)$,\ \ ${}^3D_4(2).3$    &  $1$,\ $6$,\ $6$             \\
			$\mathbb{M}$    &   $2.\mathbb{B}$    &     $2090188800$         \\
			\hline
		\end{tabular}
		\caption{Information for non-base $2$ actions: the greatest common divisor of the stabiliser orders in $M^{[T:M]}$ of points in $[T:M]\setminus\{M\}$ divides $g(T,M)$.}
		\label{tab:basesize}
	\end{table}

	The proof of Proposition~\ref{Base2Lemma} is spread over six propositions: one for each of the groups $T=\mathrm{J}_1, \mathrm{J}_3, \mathrm{J}_4,\mathrm{Ly}, \mathrm{Th}$, and $\mathbb{M}$. Most of these propositions will prove that the relevant group $T$ is totally $2$-closed, and also that Proposition~\ref{Base2Lemma} holds for $T$, namely Propositions~\ref{J1Prop}, \ref{J3Prop}, \ref{LyProp}, and \ref{ThProp} for $\mathrm{J}_1, \mathrm{J}_3, \mathrm{Ly}$ and $\mathrm{Th}$, respectively. For the monster however, we give the proof of Proposition~\ref{Base2Lemma}  in Proposition~\ref{MonsterLemma}, and prove total $2$-closure in Proposition~\ref{MProp}.
	For the fourth Janko group $\mathrm{J}_4$, we give the proof of Proposition~\ref{Base2Lemma}  in Proposition~\ref{J4LemmaCor2}, and prove total $2$-closure in Proposition~\ref{J4prop}.
	Proposition~\ref{Base2Lemma} will be applied in Section \ref{DirectProductsSection} where we study the total $2$-closure of direct products of sporadic simple groups.
	
	
	The group $T^{[T:M]}$ has base size $1$ if and only if $M=1$, so we will always assume that $1<M<T$. 
	For the base sizes of the sporadic groups in their primitive actions, our main reference is \cite[Tables 1 and 2]{BOBW}. However, the subgroup $M$ occurring in  Notation \ref{not2}.4 is not always maximal in the finite group under investigation, and so we need also to study base sizes of imprimitive actions of sporadic simple groups.
	The following preliminary result from \cite{BOBW} allows us to do this.
	
	
	\begin{proposition}\textup{\cite[Corollary 2.4]{BOBW}}\label{BOBWProp}
		Let $G$ be a transitive permutation group with point stabiliser $H$, and for a positive integer $c$ let
		$$
		\widehat{Q}(G, H, c):=\sum_{i=1}^m\frac{|x_i^G\cap H|^c}{|x_i^G|^{c-1}}
		$$
		where $x_1$, $\hdots$, $x_m$ are representatives for the conjugacy classes of elements of $G$ of prime order. If $\widehat{Q}(G, H, c)<1$, then $G$ has base size at most $c$.
	\end{proposition}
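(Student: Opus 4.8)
The plan is to run the standard Liebeck--Shalev probabilistic argument for bounding base sizes. Write $\Omega=[G:H]$, so $|\Omega|=|G:H|$, and for $g\in G$ let $\mathrm{fix}(g)$ denote the number of points of $\Omega$ fixed by $g$. First I would record the elementary fixed-point identity
\[
\mathrm{fix}(g)=|\Omega|\cdot\frac{|g^G\cap H|}{|g^G|}\qquad\text{for all }g\in G.
\]
This is immediate from the fact that the permutation character of $G$ on $[G:H]$ equals $\mathrm{Ind}_H^G(1_H)$ together with Frobenius reciprocity: $\mathrm{fix}(g)=\frac{1}{|H|}|\{x\in G:x^{-1}gx\in H\}|$, and for each $h\in g^G\cap H$ there are exactly $|C_G(g)|$ elements $x$ with $x^{-1}gx=h$, so $\mathrm{fix}(g)=|g^G\cap H|\,|C_G(g)|/|H|$, which rearranges (using $|C_G(g)|=|G|/|g^G|$) to the displayed formula.

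Next I would make the key combinatorial reduction. A $c$-tuple $(\alpha_1,\ldots,\alpha_c)\in\Omega^c$ fails to be a base for $G$ exactly when some nontrivial $g\in G$ lies in $\bigcap_i G_{\alpha_i}$; and whenever some nontrivial element fixes all the $\alpha_i$, a suitable prime-power power of it is a nontrivial element of prime order fixing all the $\alpha_i$. Hence the set of non-base $c$-tuples is contained in $\bigcup_g \mathrm{Fix}(g)^c$, where the union runs over all elements $g\in G$ of prime order and $\mathrm{Fix}(g)\subseteq\Omega$ denotes the fixed-point set. Counting, and grouping the prime-order elements into conjugacy classes with representatives $x_1,\ldots,x_m$, the number of non-base $c$-tuples is at most
\[
\sum_{g\text{ of prime order}}\mathrm{fix}(g)^c=\sum_{i=1}^m|x_i^G|\,\mathrm{fix}(x_i)^c=|\Omega|^c\sum_{i=1}^m\frac{|x_i^G\cap H|^c}{|x_i^G|^{c-1}}=|\Omega|^c\cdot\widehat{Q}(G,H,c),
\]
where the middle equality substitutes the fixed-point formula. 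Dividing through by $|\Omega|^c$, the proportion of $c$-tuples in $\Omega^c$ that are not bases is at most $\widehat{Q}(G,H,c)$. Consequently, if $\widehat{Q}(G,H,c)<1$ then at least one $c$-tuple is a base, and so $G$ has base size at most $c$, as required.

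There is no serious obstacle here: the whole argument is a short union bound. The only two points needing a little care are the reduction to prime-order elements (which uses the fact that $\mathrm{Fix}(g)$ is contained in $\mathrm{Fix}(g^k)$ for every $k$) and the bookkeeping in passing from a sum over individual prime-order elements to the sum over conjugacy classes that produces $\widehat{Q}(G,H,c)$ via the fixed-point formula; both are routine.
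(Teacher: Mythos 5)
Your proof is correct and is essentially the standard Liebeck--Shalev probabilistic argument that underlies \cite[Corollary 2.4]{BOBW}, which the paper cites without reproducing a proof: the fixed-point-ratio identity plus the union bound over prime-order elements is exactly how that result is established. No gaps; the reduction to prime-order elements and the passage to conjugacy-class sums are both handled correctly.
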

	
	Lemma \ref{Base2Lemma} and Proposition \ref{BOBWProp} allow us to apply Proposition \ref{MainCor1} parts (2) and (3) to the relevant sporadic simple groups, to determine whether or not certain of their imprimitive actions are $2$-closed. Studying their primitive actions needs a different approach, and for this we use the following lemma. Recall that, for a group $G$ acting on a set $\Omega$, we denote by $G^\Omega$ the subgroup of $\Sym(\Omega)$ induced by $G$. Also, we say that permutation groups $G^{\Omega_1}$ and $H^{\Omega_2}$ on finite sets $\Omega_1$ and $\Omega_2$ are \emph{permutationally isomorphic} if there exists a bijection $f:\Omega_1\rightarrow\Omega_2$ and a group isomorphism $\varphi:G^{\Omega_1}\to H^{\Omega_2}$  such that $f(\alpha^g)=f(\alpha)^{g\varphi}$ for all $\alpha\in\Omega_1$, $g\in G^{\Omega_1}$. 
	
	\begin{lemma}\label{BaseLemma}
		Let $T$ be a finite simple group with proper subgroup $M$, and let $\Omega=[T:M]$ so that $T\leq \Sym(\Omega)$, acting by right multiplication, and $M=T_\alpha$ for the point $\alpha=M\in\Omega$.
		\begin{enumerate}[\upshape(a)]
			\item If also $T< G\leq\Aut(T)$, and $M=T\cap K$ for some $K\leq G$ such that  $TK=G$ and $G^{[G:K]}$ has base size $2$, then the actions of $T$ on $\Omega$ and on $[G:K]$ are permutationally isomorphic, and  $G^{[G:K]}\not\leq T^{(2),[G:K]}$.
			
			\item Suppose now that $M$ is maximal in $T$,  that $T^{[T:M]}$ is not $2$-transitive, and that $(T,|T:M|)$ is {\bf not} one of the following:
			\begin{itemize}
				\item $T=\mathrm{M}_{11}$, $\mathrm{M}_{12}$, $\mathrm{M}_{23}$, $\mathrm{M}_{24}$, or $\Alt(9)$, with $|T:M|=55$, $66$, $253$, $276$, or $120$, respectively; or
				\item $T=G_2(q)$ with $q\geq 3$ a prime power and $|T:M|=\frac{1}{2}q^3(q^3-1)$; or
				\item $T=\Omega_7(q)$ with $q$ a prime power and $|T:M|=\frac{1}{(2,q-1)}q^3(q^4-1)$.
			\end{itemize}
			Then the following hold:
			\begin{enumerate}[\upshape(i)]
				\item if $T=\Aut(T)$, then $T=T^{(2),\Omega}$;
				\item if $M\neq T\cap K$ for any $K\leq \Aut(T)$ such that  $K\ne M$, then $T=T^{(2),\Omega}$;
				\item if, for all  $K\leq \Aut(T)$ such that $M=T\cap K\ne K$, the group $(TK)^{[TK:K]}$ has base size $2$, then $T=T^{(2),\Omega}$.
			\end{enumerate}
		\end{enumerate}
	\end{lemma}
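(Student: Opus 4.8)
The plan is to establish part (a) directly, and then deduce part (b) from part (a) together with the classification of $2$-closures of primitive almost simple groups in \cite{LPS}.

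\textbf{Part (a).} First I would record that $\Core_G(K)=1$: the normal subgroup $\Core_G(K)\cap T$ of the simple group $T$ is either $T$ — impossible, since then $T\le K$ and hence $M=T\cap K=T$ — or $1$, in which case $\Core_G(K)$ and the normal subgroup $T=\mathrm{Inn}(T)$ of $G$ intersect trivially, so $[\Core_G(K),T]=1$ and $\Core_G(K)\le C_{\Aut(T)}(T)=1$. Thus $G$, and in particular $T$, acts faithfully on $[G:K]$; and since $TK=G$, the subgroup $T$ is transitive on $[G:K]$ with point stabiliser $T\cap K=M$ at the coset $\alpha_0:=K$, so $Mt\mapsto Kt$ is a well-defined $T$-equivariant bijection $\Omega\to[G:K]$ (using $|G:K|=|T:T\cap K|=|T:M|$), which gives the asserted permutational isomorphism. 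For the remaining assertion, suppose $G^{[G:K]}\le T^{(2),[G:K]}$. Then $T\le G\le T^{(2),[G:K]}$ forces $G^{(2),[G:K]}=T^{(2),[G:K]}$ by monotonicity and idempotence of $2$-closure, so $T$ and $G$ have the same orbits on $[G:K]\times[G:K]$; equivalently, since both are transitive, the point stabilisers $G_{\alpha_0}=K$ and $T_{\alpha_0}=M$ have the same orbits on $[G:K]$. As $G^{[G:K]}$ has base size $2$, transitivity lets us choose $\beta$ with $G_{\alpha_0}\cap G_\beta=1$; then the $K$-orbit of $\beta$ has size $|K|$, while the $M$-orbit of $\beta$ has size $|M|$ (since $M\cap T_\beta\le G_{\alpha_0}\cap G_\beta=1$). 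Equality of these orbits yields $|K|=|M|$, hence $K=M$ (as $M\le K$), and then $G=TK=TM=T$, contradicting $T<G$.

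\textbf{Part (b).} Write $X:=T^{(2),\Omega}$. Since $M$ is maximal, $T$ is primitive on $\Omega$, so $X\ge T$ is also primitive, and $X$ has the same orbitals on $\Omega$ as $T$, hence is not $2$-transitive. By \cite{LPS}, as $(T,|T:M|)$ is not among the excluded pairs, $\soc(X)=T$, i.e. $X$ is almost simple with socle $T$; identifying $X\le\Aut(T)$ with $T=\mathrm{Inn}(T)$, put $K:=X_\alpha$ where $\alpha=M\in\Omega$. Then $K\le\Aut(T)$, $T\cap K=T_\alpha=M$, and $X=TX_\alpha=TK$ by transitivity of $X$ and of $T$; moreover $K=M$ would give $X=TM=T$. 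In case (i), $X\le\Aut(T)=T$, so $X=T$. In case (ii), if $X\ne T$ then $K\ne M$, so $K$ witnesses the failure of the hypothesis; hence $X=T$. In case (iii), if $X\ne T$ then $K\ne M$, so the hypothesis applied to $K$ shows that $(TK)^{[TK:K]}=X^{[X:K]}$ has base size $2$; now part (a), applied with $G$ replaced by $X$ and with this $K$, yields $X^{[X:K]}\not\le T^{(2),[X:K]}$, while the bijection $Kg\mapsto\alpha^g$ shows the $X$-actions on $[X:K]$ and on $\Omega$ (hence also the induced $T$-actions) are permutationally isomorphic, so this reads $X\not\le T^{(2),\Omega}=X$, a contradiction; hence $X=T$.

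The only input from outside the paper is \cite{LPS}, and the step I expect to require the most care is justifying the reduction ``$\soc(X)=T$'': one must check that the pairs $(T,|T:M|)$ excluded in the statement are precisely those for which \cite{LPS} fails to guarantee $\soc(X)=T$ in a non-$2$-transitive primitive action, the $2$-transitive case being disposed of separately (there $X=\Sym(\Omega)$ by \cite[Theorem 5.11]{Wielandt}, so $T$ is not $2$-closed and does not arise). The remaining ingredients — monotonicity and idempotence of $2$-closure, the orbital/suborbit dictionary, and the elementary orbit-length count — are routine.
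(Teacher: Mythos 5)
Your proposal is correct and follows essentially the same route as the paper: part (a) is proved by comparing the $K$-orbit and $M$-orbit of a point $\beta$ for which the two-point stabiliser $G_{\alpha_0}\cap G_\beta$ is trivial (the paper phrases this as $K$ failing to preserve the $M$-orbit, you phrase it as the contradiction $|K|=|M|$), and part (b) uses \cite[Theorem 1]{LPS} to force $T^{(2),\Omega}$ to be almost simple with socle $T$ and then feeds $K=(T^{(2),\Omega})_\alpha$ back into part (a). The only differences are cosmetic, such as your (harmless but unnecessary) verification that $\Core_G(K)=1$.
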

	
	\begin{proof}
		(a)  Let $M, T, K, G$ be as in part (a).
		Since $G^{[G:K]}$ is transitive with base size $2$, there exists a base
		$\{K, Kx\}$ containing the `point' $K$, for some $x\in G$, and hence the stabiliser in $G$ of these two points, namely $K\cap K^x$, is trivial. Further, since $G=TK=KT$, we may assume that $x\in T$.
		The equality $G=TK$ also implies that $T$ is transitive on $[G:K]$ and the stabiliser in $T$ of the `point' $K\in [G:K]$ is $T\cap K=M$. Thus the actions of $T$ on $\Omega$ and on $[G:K]$ are permutationally isomorphic. Since $M=T\cap K$, we have also $M\cap M^x = T\cap K\cap (T\cap K)^x= T\cap K\cap K^x=1$, that is, the stabiliser in $M$ of the point $Kx\in [G:K]$ is trivial, so the $M$-orbit containing $Kx$ has size
		$|M:M\cap M^x|=|M|$, while the $K$-orbit containing $Kx$ has size $|K:K\cap K^x|=|K|$. Since $|K:M|=|TK:T|=|G:T|>1$, it follows that $K$ does not leave invariant the $M$-orbit containing $Kx$, and hence
		$G$ does not leave invariant the $2$-relation $(K, Kx)^T$ of $T$. Hence $G^{[G:K]}\not\leq T^{(2),[G:K]}$.
		
		(b) From now on assume that $M$ is maximal in $T$, and that $T$ is as in part (b), so  $T^\Omega$ is primitive but not $2$-transitive.   Then
		by \cite[Theorem 1]{LPS}, the group $T^{(2),\Omega}$ is contained in $N:=N_{\Sym(\Omega)}(T^\Omega)$. Since $T^\Omega$ is primitive, $N$ is almost simple with socle $T^\Omega$. In particular $T^\Omega\leq N\leq \Aut( T^\Omega)$, and part (b)(i) follows. If $T^{(2),\Omega}$ properly contains $T^\Omega$ then, taking $K=(T^{(2),\Omega})_\alpha$, we have $T^{(2),\Omega}=T^\Omega K$ since $T^\Omega$ is transitive,  $K\leq \Aut(T^\Omega)$ since $N$ has socle $T^\Omega$, and $M\cong M^\Omega=T^\Omega\cap K=T^\Omega_\alpha \ne K$.
		Thus in part (b)(ii) we conclude that $T^{(2),\Omega}=T$.

		Finally, suppose that the assumptions of part (b)(iii) hold, and that $T^{(2),\Omega}\ne T^\Omega$. The previous paragraph shows that $K=(T^{(2),\Omega})_\alpha$ satisfies $K\leq \Aut(T^\Omega)$ and
		$M\cong M^\Omega=T^\Omega\cap K\ne K$. Further, since $T^\Omega$ is transitive we have $T^{(2),\Omega}
		= T^\Omega K$, and so, by the assumptions in (b)(iii), $T^{(2),\Omega}$ has base size $2$. However part (a), applied with $G=T^{(2),\Omega}$, then gives a contradiction.
	\end{proof}
	
	Before proceeding further with the general analysis, we illustrate how to use Lemma~\ref{BaseLemma} by proving the following useful corollary.
	
	\begin{corollary}\label{BaseCor}
		Each primitive permutation group isomorphic to $\mathrm{J}_3$ or $\mathrm{ON}$ is $2$-closed.
	\end{corollary}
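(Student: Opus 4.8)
The plan is to apply Lemma \ref{BaseLemma}(b) to each of the two simple groups $T\in\{\mathrm{J}_3,\mathrm{ON}\}$, checking its hypotheses for every maximal subgroup $M$ of $T$. So let $G$ be a primitive permutation group isomorphic to $T$; by primitivity we may write $G=T^\Omega$ with $\Omega=[T:M]$ for some maximal subgroup $M$ of $T$, and $M=T_\alpha$ where $\alpha=M$. First I would dispose of the $2$-transitive cases: if $T^\Omega$ is $2$-transitive then $T^{(2),\Omega}=\Sym(\Omega)$ is certainly not equal to $T$, so we must check that neither $\mathrm{J}_3$ nor $\mathrm{ON}$ has a $2$-transitive permutation representation. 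This is immediate from the list of $2$-transitive groups (see, e.g., \cite[Section 7.7]{DM}): the only sporadic groups with a $2$-transitive action are $\mathrm{M}_{11},\mathrm{M}_{12},\mathrm{M}_{22},\mathrm{M}_{23},\mathrm{M}_{24},\mathrm{HS}$ and $\mathrm{Co}_3$, and $\mathrm{J}_3,\mathrm{ON}$ are not among them. Likewise, none of the exceptional families $G_2(q)$, $\Omega_7(q)$ in the excluded list of Lemma \ref{BaseLemma}(b) is isomorphic to $\mathrm{J}_3$ or $\mathrm{ON}$, and $T$ is sporadic so it is not one of $\mathrm{M}_{11},\mathrm{M}_{12},\mathrm{M}_{23},\mathrm{M}_{24},\Alt(9)$. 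Hence for every maximal subgroup $M$, the pair $(T,|T:M|)$ avoids all the excluded cases, and $T^\Omega$ is primitive but not $2$-transitive, so the hypotheses preceding Lemma \ref{BaseLemma}(b) are met.

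Next I would invoke part (b)(iii): it suffices to show that for every $K\le\Aut(T)$ with $M=T\cap K\ne K$, the group $(TK)^{[TK:K]}$ has base size $2$. Since $\mathrm{Out}(T)$ has order $2$ for both $T=\mathrm{J}_3$ and $T=\mathrm{ON}$ (from the Atlas \cite{Atlas}), the only possibility for such a $K$ is that $TK=\Aut(T)=T.2$ and $K$ is a maximal subgroup of $\Aut(T)$ of the form $M.2$ with $K\cap T=M$; equivalently, $\Omega$ can be identified (by the permutational isomorphism in Lemma \ref{BaseLemma}(a)) with $[\,\Aut(T):K\,]$, on which $\Aut(T)$ acts primitively. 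So the verification reduces to: every primitive action of $\Aut(\mathrm{J}_3)=\mathrm{J}_3.2$ and of $\Aut(\mathrm{ON})=\mathrm{ON}.2$ (of the type arising here, i.e. where the point stabiliser meets the socle in a maximal subgroup of the socle) has base size $2$. This I would read off from \cite[Tables 1 and 2]{BOBW}, which tabulate base sizes of primitive almost simple groups; for the finitely many maximal subgroups of $\mathrm{J}_3.2$ and $\mathrm{ON}.2$ one checks that the listed base size is $2$ in each relevant case (and where a table does not directly give it, one applies Proposition \ref{BOBWProp} with $c=2$, computing $\widehat{Q}(\Aut(T),K,2)<1$ from the Atlas character/class data). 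Finally, in the remaining situation where no such $K$ exists — i.e. $M$ does not extend to a subgroup of $\Aut(T)$ with the stated property — part (b)(ii) applies directly and gives $T^{(2),\Omega}=T$. Combining, $T^{(2),\Omega}=T^\Omega=G$ in all cases, which is the assertion.

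The main obstacle I anticipate is the bookkeeping in the last step: one must enumerate the maximal subgroups of $\mathrm{J}_3$ and $\mathrm{ON}$ (there are only a handful of each, listed in the Atlas), determine for each whether it extends to $\Aut(T)$, and then confirm the base-$2$ property of the corresponding primitive action of $\Aut(T)$ — either by citing \cite{BOBW} or by a short $\widehat{Q}$-computation via Proposition \ref{BOBWProp}. None of this is conceptually hard, but it requires care to be sure no maximal subgroup is overlooked and that each base-size claim is correctly sourced. Everything else — excluding the $2$-transitive and exceptional cases, and reducing to Lemma \ref{BaseLemma}(b)(ii)--(iii) — is routine given the results already established in the excerpt.
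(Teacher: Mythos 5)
Your overall framework is the same as the paper's: reduce to Lemma \ref{BaseLemma}(b), rule out $2$-transitivity and the listed exceptional degrees, and then try to verify the hypothesis of part (b)(iii) for each maximal subgroup $M$ that extends to a subgroup $K\le\Aut(T)$ with $T\cap K=M\ne K$. The gap is in the final verification: your claim that ``the listed base size is $2$ in each relevant case'' is false. According to \cite[Tables 1 and 2]{BOBW}, the group $\mathrm{J}_3{:}2$ has base size at least $3$ in its primitive actions with point stabilisers $L_2(16){:}4$ and $(3\times M_{10}){:}2$, and $\mathrm{ON}{:}2$ has base size at least $3$ in its primitive action with point stabiliser $4.\mathrm{L}_3(4).2.2$. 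In these three cases Lemma \ref{BaseLemma}(b)(iii) gives nothing, and no computation of $\widehat{Q}(\Aut(T),K,2)$ can rescue the argument, since $\widehat{Q}<1$ is only a sufficient condition for base size $2$ and here the base size genuinely exceeds $2$. Your proof therefore stalls exactly on the cases that actually require work.

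The paper closes this gap with a different argument for those three actions: since $T^{(2),\Sigma}$ is almost simple with socle $T$ (by \cite[Theorem 1]{LPS}) and $\Out(T)\cong C_2$, the only candidates for $T^{(2),\Sigma}$ are $T$ and $G=T.2$; one then computes with GAP that the rank of $T$ on $\Sigma=[G:K]$ is strictly larger than the rank of $G$ (e.g.\ $8$ versus $7$ for $K=L_2(16){:}4$, $28$ versus $18$ for $K=(3\times M_{10}){:}2$, and $48$ versus $34$ for $K=4.\mathrm{L}_3(4).2.2$). Hence $G$ fuses some $T$-orbitals, so $G\not\le T^{(2),\Sigma}$, forcing $T^{(2),\Sigma}=T$. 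You would need to add this (or some equivalent orbital-counting argument) to make your proof complete.
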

	
	\begin{proof} Let $T$ be $\mathrm{J}_3$ or $\mathrm{ON}$, let $M$ be a maximal subgroup of $T$, and consider $T$ as a primitive permutation group on $\Omega=[T:M]$ acting by right multiplication. Up to permutational isomorphism, each primitive representation of $T$ arises in this way.  We need to prove that $T$ is equal to $X:=T^{(2),\Omega}$. By \cite[Theorem 1]{LPS}, $X$ is almost simple with socle $T$. In each case $G:=\Aut(T)\cong T.2$, so $X$ is equal to either $T$ or $G$. Note that $T$ has no $2$-transitive representations.
		If $M$ cannot be written as $M=K\cap T$ for any $K\leq G$ with $K\ne M$, then $X=T$ by Lemma~\ref{BaseLemma}(b)(ii). So assume that such a $K$ exists, and note that $TK=G$. By Lemma \ref{BaseLemma}(b)(iii), we may assume further that $G$ has base size larger than $2$ in its action on $\Sigma:=[G:K]$.
		Using \cite[Tables 1 and 2]{BOBW}, the possibilities are as follows.
		
		If $G=\mathrm{J}_3:2$, then $K\in\{L_2(16):4,(3\times M_{10}):2\}$. In these cases, we use GAP \cite{GAP}. If $K=L_2(16)$, then $G^\Sigma$ has rank $7$, while $T^\Sigma$ has rank $8$. If $K=(3\times M_{10}):2$, then $G^\Sigma$ has rank $18$, while $T^\Sigma$ has rank $28$. Thus, $T^{\Sigma}=T^{(2),\Sigma}$  in each case.
		%
		
		If $G=ON:2$, then $K=4.\mathrm{L}_3(4).2.2$. Here, we check using GAP \cite{GAP} that $G^\Sigma$ has rank $34$, while $T^\Sigma$ has rank $48$. So $T^{\Sigma}=T^{(2),\Sigma}$.
	\end{proof}
	
	We are now ready to outline our strategy to prove Theorem \ref{thm:J}. Our analysis uses the notion of the \depth $\ell_G(H)$ of a proper subgroup $H$ of a finite group $G$, namely the least positive integer $\ell$ such that there exists a subgroup chain $H=G_\ell < \dots < G_1<G_0=G$ with $G_i$ maximal in $G_{i-1}$ for $i=1,\dots, \ell$. The maximum value of $\ell_G(H)$ over all proper subgroups $H$ of $G$ is the \depth of the trivial subgroup, and this is at most the length of the longest subgroup chain in $G$ but may be smaller. For example, $1<C_2<C_2^2<\Alt(4)<\Sym(4)$ is a subgroup chain for $\Sym(4)$ of maximum length four, while $\ell_{\Sym(4)}(1)=3$ in view of the chain $1<C_3<\Sym(3)<\Sym(4)$.
	
	\begin{Strategy}\label{Strategy}
		Assume that $T$ is one of the sporadic simple groups in Proposition~\ref{lem:simple1}(b) so, in particular, 
		$T$ admits no non-trivial factorizations. By Corollary \ref{TransRed}, $T$ is totally $2$-closed if 
		and only if $T^{[T:H]}=T^{(2),[T:H]}$ for all proper subgroups $H$ of $G$. It follows from 
		Theorem~\ref{thm:W} that this holds for $H=1$, so we need to consider proper nontrivial 
		subgroups $H$, and in these cases we note that $T^{[T:H]}$ has base size at least $2$.
		
		We  proceed in a series of steps. At Step $\ell$ we will have remaining a non-empty set 
		$\mathcal{S}_\ell$ of proper nontrivial subgroups of $T$ to consider, where  $\mathcal{S}_1$ is
		the set of all proper nontrivial subgroups of $T$.
		Either (a) we complete the analysis at Step $\ell$ by concluding with certainty that $T$ is, or is not, 
		totally $2$-closed, or (b) we remove some subgroups from $\mathcal{S}_\ell$ leaving a set 
		$\mathcal{S}_{\ell+1}$ which, if non-empty, will be  considered in Step $\ell+1$. In particular, 
		for each $\ell$, $\mathcal{S}_\ell$ will have the following property:
		
		\begin{center}
			($\ast$)\quad
			If $H\in \mathcal{S}_\ell$, then $\ell_T(H)\geq \ell$, and if $\ell_T(H)= \ell$ and $H<M<T$, then
			$T^{[T:M]}=T^{(2),[T:M]}$ and $T^{[T:M]}$ has base size at least $3$.
		\end{center}
		
		\noindent
		Note that $\mathcal{S}_1$ has property ($\ast$) since  if $\ell_T(H)= 1$ then $H$ is maximal and the last condition holds vacuously. Also the process will complete in a finite number of steps since $T$ is finite. Finally, we describe what we do in Step $\ell$ (for an arbitrary value of $\ell$).
		
		\medskip\noindent
		\emph{Step $\ell$:}\quad For each $H\in \mathcal{S}_\ell$ with $\ell_T(H)= \ell$, we determine $T^{(2),[T:H]}$. To do this when $\ell=1$ (that is, when $H$ is maximal in $T$), we use Lemma \ref{BaseLemma} together with some known results about the subdegrees of the primitive almost simple groups.
		If $\ell\geq2$ then  $T^{[T:H]}$ is imprimitive, and by  property ($\ast$), $T^{[T:M]}=T^{(2),[T:M]}$ for all $M$ such that $H<M<T$. We can therefore use Proposition \ref{MainCor1} in our analysis. (This of course is the most difficult part of the procedure.)
		
		\begin{enumerate}[\upshape(1)]
			\item Thus for each  $H\in \mathcal{S}_\ell$ with $\ell_T(H)= \ell$, we determine $T^{(2),[T:H]}$.
			\begin{enumerate}[\upshape(a)]
				\item If $T^{[T:H]}<T^{(2),[T:H]}$, for some such $H$, then \emph{we conclude that $T$ is not totally $2$-closed, and the analysis is completed.}
				\item Else we have $T^{[T:H]}=T^{(2),[T:H]}$ for every $H\in \mathcal{S}_\ell$ with $\ell_T(H)= \ell$.
			\end{enumerate}
			\item Next we check the base size of $T^{[T:H]}$ for each $H\in \mathcal{S}_\ell$ with $\ell_T(H)= \ell$, and construct the new set $\mathcal{S}_{\ell+1}$.
			\begin{enumerate}[\upshape(a)]
				\item If, $T^{[T:H]}$ has base size $2$, then it follows from Proposition \ref{MainCor1}(3) that, for each $K<H$, we have $T^{[T:K]}=T^{(2),[T:K]}$. Thus proper subgroups of $H$ need no further testing.
				
				\item We remove from $\mathcal{S}_\ell$  each $H\in \mathcal{S}_\ell$ with $\ell_T(H)= \ell$ and, in addition, if $T^{[T:H]}$ has base size $2$, we also remove all proper subgroups of $H$. Let $\mathcal{S}_{\ell+1}$ denote the resulting set of subgroups.
				
				\item  If $\mathcal{S}_{\ell+1}=\emptyset$ then we have successfully shown that $T^{[T:H]}=T^{(2),[T:H]}$ for every proper subgroup $H$ of $T$, \emph{we conclude that $T$ is totally $2$-closed, and the analysis is completed.}
				
				\item On the other hand if  $\mathcal{S}_{\ell+1}\ne\emptyset$, then by its construction it is not difficult to check that $\mathcal{S}_{\ell+1}$ has property ($\ast$) with $\ell+1$ in place of $\ell$.
			\end{enumerate}
		\end{enumerate}
	\end{Strategy}
	
	We begin by applying this strategy to the group $\mathrm{HN}$, and here we do not need to proceed past stage (1) of Step 1 in Strategy \ref{Strategy}.
	
	\begin{proposition}\label{HNProp}
		The Harada Norton group $\mathrm{HN}$ is not totally $2$-closed.
	\end{proposition}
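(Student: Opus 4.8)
The plan is to exhibit a single faithful permutation representation of $\mathrm{HN}$ in which its $2$-closure is strictly larger than $\mathrm{HN}$; since a totally $2$-closed group is $2$-closed in every faithful representation, this is enough, and by Strategy~\ref{Strategy} I expect to achieve it already at stage~(1) of Step~$1$, with a primitive representation. Write $G=\Aut(\mathrm{HN})=\mathrm{HN}.2$ (note $\Out(\mathrm{HN})=C_2$). First I would use the \textsc{Atlas} list of maximal subgroups of $G$ to select a maximal subgroup $\widetilde M\le G$ with $\widetilde M\not\le\mathrm{HN}$ such that $M:=\widetilde M\cap\mathrm{HN}$ is maximal in $\mathrm{HN}$ (so $|\widetilde M:M|=2$ and $G=\mathrm{HN}\,\widetilde M$); a natural first candidate is $M=\mathrm{A}_{12}$ with $\widetilde M=\mathrm{S}_{12}$. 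Put $\Omega=[\mathrm{HN}:M]$: then $\mathrm{HN}$ acts faithfully and primitively on $\Omega$, and identifying the $\mathrm{HN}$-sets $[\mathrm{HN}:M]$ and $[G:\widetilde M]$ extends this to a faithful action of $G$ on $\Omega$ with point stabiliser $\widetilde M$.

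Since $\mathrm{HN}$ has no $2$-transitive permutation representation, $\mathrm{HN}^{\Omega}$ is primitive but not $2$-transitive, and $(\mathrm{HN},|\mathrm{HN}:M|)$ is none of the exceptional pairs in Lemma~\ref{BaseLemma}(b); hence (as in the proof of that lemma) \cite[Theorem~1]{LPS} gives $\mathrm{HN}^{(2),\Omega}\le N_{\Sym(\Omega)}(\mathrm{HN}^{\Omega})=G^{\Omega}$, so $\mathrm{HN}^{(2),\Omega}$ equals $\mathrm{HN}^{\Omega}$ or $G^{\Omega}$. The decisive point is then to verify that the outer automorphism of $\mathrm{HN}$ fuses none of the $\mathrm{HN}$-orbitals on $\Omega\times\Omega$, equivalently that $\mathrm{HN}^{\Omega}$ and $G^{\Omega}$ have the same rank. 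Granting that, every $\mathrm{HN}$-orbit on $\Omega\times\Omega$ is a $G$-orbit, so for each $x\in G^{\Omega}$ and all $\alpha,\beta\in\Omega$ the pair $(\alpha^x,\beta^x)$ lies in the $\mathrm{HN}$-orbit of $(\alpha,\beta)$, giving $g\in\mathrm{HN}$ with $\alpha^x=\alpha^g$ and $\beta^x=\beta^g$, whence $x\in\mathrm{HN}^{(2),\Omega}$ by Theorem~\ref{thm:W}. (This is exactly the device used for $\mathrm{McL}$ and $\mathrm{Fi}_{24}'$ in the proof of Proposition~\ref{lem:simple1}.) Combining the two inclusions, $\mathrm{HN}^{(2),\Omega}=G^{\Omega}$, which strictly contains $\mathrm{HN}^{\Omega}$; so $\mathrm{HN}^{\Omega}$ is not $2$-closed, and $\mathrm{HN}$ is not totally $2$-closed.

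The real work, and the expected main obstacle, is the rank comparison: one has to single out a maximal subgroup $M<\mathrm{HN}$ for which the rank does \emph{not} drop on passing from $\mathrm{HN}$ to $\mathrm{HN}.2$, and then confirm the equality. I would read this off from the permutation-character (i.e.\ subdegree) data for the relevant primitive actions of $\mathrm{HN}$ and $\mathrm{HN}.2$ recorded in the \textsc{Atlas} and its supporting literature, or verify it by a direct computation in \textsc{GAP}. Since none of the groups-of-Lie-type exceptions in \cite[Theorem~1]{LPS} arise here, the dichotomy $\mathrm{HN}^{(2),\Omega}\in\{\mathrm{HN}^{\Omega},G^{\Omega}\}$ used above holds cleanly, so no further casework is needed once the rank equality is in hand.
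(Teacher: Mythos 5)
Your overall strategy is exactly the paper's: find one primitive representation of $\mathrm{HN}$ in which every $\mathrm{HN}$-orbital is preserved by the outer automorphism, so that Theorem~\ref{thm:W} forces $\mathrm{HN}.2\le \mathrm{HN}^{(2),\Omega}$ and hence non-$2$-closure. The differences are in the execution. The paper takes $M=4.\mathrm{HS}$ (degree $1{,}539{,}000$) and cites \cite[Lemma 2.18.1]{Ivanov95} for the fact that this action has rank $9$ with \emph{pairwise distinct} subdegrees; distinctness is the whole trick, since any element of $\Aut(\mathrm{HN})$ stabilising the point stabiliser permutes the suborbits preserving their lengths and therefore must fix each one, so the rank equality for $\mathrm{HN}.2$ comes for free, with no data about the extended group needed. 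It also makes the appeal to \cite[Theorem 1]{LPS} unnecessary: one does not need the upper bound $\mathrm{HN}^{(2),\Omega}\le G^{\Omega}$ to conclude, since $\mathrm{HN}.2\le \mathrm{HN}^{(2),\Omega}$ already shows $\mathrm{HN}^{(2),\Omega}\ne\mathrm{HN}$. By contrast, your proposal defers the decisive fact --- that for your chosen $M$ the outer automorphism fuses no orbitals --- to a computation you have not carried out, and your ``first candidate'' $M=\Alt(12)$ is not the subgroup the paper uses; you give no evidence that the rank is preserved for that action, and if it is not you would have to start over with another maximal subgroup. So the argument is correct as a template, but the one step that constitutes the actual proof (exhibiting a specific $M$ and verifying the orbital-preservation, e.g.\ via the all-subdegrees-distinct criterion) is missing.
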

	
	\begin{proof}
		Let $T=\mathrm{HN}$, the Harada Norton group. One of the groups $H\in\mathcal{S}_1$ in Strategy~\ref{Strategy} is $H=4.\mathrm{HS}$, and $\Aut(T)=T.2$ has a primitive permutation representation on a set of size $1,539,000$, with point stabiliser $H.2$. For the normal subgroup $T$, this representation is permutationally isomorphic to its transitive action on $[T:H]$, which has rank $9$, with all subdegrees distinct, see \cite[Lemma 2.18.1]{Ivanov95}. Thus, in particular, $H.2$ must leave invariant each $H$-orbit in $[T:H]$. Hence $\Aut(T)$ also has rank $9$ on $[T:H]$. We conclude that $T^{(2),[T:H]}$ contains
		$T.2$, so $T$ is not totally $2$-closed.
	\end{proof}
	
	We now move on to the group $\mathrm{J}_1$, where we must use both stages of  Steps 1 and 2 of Strategy \ref{Strategy}.
	\begin{remark}
		At various stages in the remainder of the paper, we will us the following fact:  if $K$ is a proper nontrivial subgroup  of a group $T$ and $K$ is contained in a core-free
		subgroup $H$ of $T$ such that $T^{[T:H]}$ has base size $2$, then also $T^{[T:K]}$ has base size $2$. (This follows from the fact that  $T^{[T:H]}$ having a base of size $2$ means that, for some $g\in T$, $H\cap H^g$ acts trivially on $[T:H]$ and hence $H\cap H^g=1$ since $H$ is core-free in $T$, whence also its subgroup $K\cap K^g$ is trivial.)
	\end{remark}

	
	\begin{proposition}\label{J1Prop}
		The Janko group $\mathrm{J}_1$ is totally $2$-closed. Also  Proposition~$\ref{Base2Lemma}$, and in particular Table $\ref{tab:basesize}$,  holds for $\mathrm{J}_1$.
	\end{proposition}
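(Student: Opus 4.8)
The plan is to run Strategy~\ref{Strategy} with $T=\mathrm{J}_1$. Since $\mathrm{J}_1$ admits no nontrivial factorisation by Proposition~\ref{lem:simple1}, Corollary~\ref{TransRed} reduces the claim to showing $T^{[T:H]}=T^{(2),[T:H]}$ for every proper subgroup $H\le T$; the case $H=1$ is immediate from Theorem~\ref{thm:W}, so we treat proper nontrivial $H$. For Step~$1$ recall that $\mathrm{J}_1$ has exactly seven conjugacy classes of maximal subgroups, namely $L_2(11)$, $2^3:7:3$, $2\times A_5$, $19:6$, $11:10$, $D_6\times D_{10}$ and $7:6$, and that $\mathrm{J}_1$ has no $2$-transitive permutation representation (its least primitive degree is $266$, where the rank is $5$). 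Because $\Aut(\mathrm{J}_1)=\mathrm{J}_1$ and none of the exceptional pairs listed in Lemma~\ref{BaseLemma}(b) involves $\mathrm{J}_1$, Lemma~\ref{BaseLemma}(b)(i) gives at once that $T^{[T:M]}=T^{(2),[T:M]}$ for every maximal subgroup $M$. So Step~$1$ never reaches the ``not totally $2$-closed'' branch, and every primitive action of $\mathrm{J}_1$ is $2$-closed.

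Next I would record the base-size and subdegree data that Step~$2$ needs and that also yields Proposition~\ref{Base2Lemma} for $\mathrm{J}_1$. According to \cite[Tables~1 and~2]{BOBW}, each primitive action of $\mathrm{J}_1$ has base size $2$ except the degree-$266$ action on $[T:L_2(11)]$, which has base size at least $3$; and on that action the subdegrees are $1,11,12,110,132$, so since $|L_2(11)|=660$ the orders of the point stabilisers in $M^{[T:M]}$ of the nontrivial points are $60,55,6,5$, with greatest common divisor $1$. Hence $d(\mathrm{J}_1,L_2(11))=1=g(\mathrm{J}_1,L_2(11))$, which establishes Proposition~\ref{Base2Lemma} and the $\mathrm{J}_1$ row of Table~\ref{tab:basesize}. (A direct evaluation of $\widehat{Q}(T,M,2)$ via Proposition~\ref{BOBWProp}, or the ATLAS, supplies the same base-size information, noting that the six maximal subgroups other than $L_2(11)$ all have order well below $\sqrt{|\mathrm{J}_1|}$.)

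The remaining work is to handle proper nontrivial $H$ through a block system lying above it. Such an $H$ lies in some maximal subgroup $M$ of $\mathrm{J}_1$; if $H=M$ the action is primitive and $2$-closed by Step~$1$, so assume $H<M$. Put $\Sigma=[T:M]$, a nontrivial $T$-invariant partition of $\Omega=[T:H]$, so that Notation~\ref{not2} applies (with $M$ core-free since $T$ is simple) and, by Step~$1$, $L=T^\Sigma=T^{[T:M]}$ satisfies $L=L^{(2),\Sigma}$. If $M\neq L_2(11)$ then $M$ is one of the six base-$2$ maximal subgroups, so $L$ has base size $2$ and Proposition~\ref{MainCor1}(3) gives $T^{[T:H]}=T^{(2),[T:H]}$. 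If $M=L_2(11)$ then Proposition~\ref{MainCor1}(2) applies: were $X:=T^{(2),[T:H]}\neq T$, then $X=N\rtimes T$ with $N$ a subdirect subgroup of a base group $B_A$ for some nontrivial $A\unlhd X^{\Delta}_{\Delta}$, and the common length $a$ of the $A$-orbits on $\Delta$ would divide $|L_{ij}|$ for all distinct $i,j$, hence would divide $\gcd(60,55,6,5)=d(\mathrm{J}_1,L_2(11))=1$; but then $a=1$ forces $A=1$ (as $X^{\Delta}_{\Delta}$ acts faithfully on $\Delta$), a contradiction. Thus $T^{[T:H]}=T^{(2),[T:H]}$ in every case, and $\mathrm{J}_1$ is totally $2$-closed.

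I do not expect a genuine obstacle here: this is the easiest of the six cases, precisely because $\Aut(\mathrm{J}_1)=\mathrm{J}_1$ disposes of all the primitive actions via Lemma~\ref{BaseLemma}(b)(i), and because $d(\mathrm{J}_1,L_2(11))=1$ makes the single imprimitive family collapse through Proposition~\ref{MainCor1}(2). The only points demanding care are bookkeeping ones: confirming from \cite{BOBW} (or directly via Proposition~\ref{BOBWProp}) that the six maximal subgroups other than $L_2(11)$ really do yield base-$2$ actions, and reading off the subdegrees $1,11,12,110,132$ of the degree-$266$ action to obtain $d(\mathrm{J}_1,L_2(11))=1$.
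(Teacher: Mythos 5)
Your argument for total $2$-closure is correct, and it is in fact a mild streamlining of the paper's route. The paper follows Strategy~\ref{Strategy} level by level: it applies Proposition~\ref{MainCor1}(2) only to the \emph{maximal} subgroups $H$ of $M=L_2(11)$, and then disposes of everything below them by computing $\widehat{Q}(T,H,2)<1$ (via \texttt{Magma}) to get base size~$2$ and invoke Proposition~\ref{MainCor1}(3). You instead apply Proposition~\ref{MainCor1}(2) with $\Sigma=[T:L_2(11)]$ uniformly to \emph{every} proper nontrivial $H<L_2(11)$; since Notation~\ref{not2} and Proposition~\ref{MainCor1}(2) do not require $H$ to be maximal in $M$, and the divisor $a>1$ of $\gcd(60,55,6,5)=1$ is a contradiction in all cases, this is legitimate and avoids the descent entirely for the closure statement.

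However, there is a gap in your proof of the second assertion, that Proposition~\ref{Base2Lemma} holds for $\mathrm{J}_1$. Part (a) of that proposition quantifies over \emph{all} proper subgroups $M$ of $T$, not just the maximal ones, and for $\mathrm{J}_1$ the table lists only $L_2(11)$; so one must show that every proper subgroup $K$ not conjugate to $L_2(11)$ yields a base-size-$2$ action $T^{[T:K]}$. Your base-size data from \cite[Tables 1 and 2]{BOBW} covers only the maximal subgroups of $\mathrm{J}_1$, and says nothing about subgroups $K<L_2(11)$ whose only maximal overgroup in $\mathrm{J}_1$ is (a conjugate of) $L_2(11)$ --- for such $K$ the claim "$d(\mathrm{J}_1,L_2(11))=1$ establishes Proposition~\ref{Base2Lemma}" does not follow. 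The paper closes exactly this hole by verifying $\widehat{Q}(T,H,2)<1$ for each maximal subgroup $H$ of $L_2(11)$, so that every proper subgroup of $L_2(11)$ sits inside some core-free subgroup with a base of size $2$ and hence itself gives a base-$2$ action. You would need to add this computation (or an explicit argument that each maximal subgroup of $L_2(11)$ is $\mathrm{J}_1$-conjugate into one of the six base-$2$ maximal subgroups of $\mathrm{J}_1$) to complete the proof of the Table~\ref{tab:basesize} entry.
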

	
	\begin{proof} Let $T=\mathrm{J}_1$. According to Strategy~\ref{Strategy} our task is to prove that $T^{\Omega}
		=T^{(2),\Omega}$, where $\Omega=[T:H]$, for each proper nontrivial subgroup $H$ of $T$. In Step 1 of Strategy~\ref{Strategy}, for each maximal subgroup $H$, we conclude that  $T^{[T:H]}
		=T^{(2),[T:H]}$ by Lemma \ref{BaseLemma}(b)(i),  since $\Aut(T)=T$ and $T$ has no $2$-transitive representations. Proceeding to stage (2) of Step 1, we find by \cite[Table 1]{BOBW} that, for a maximal subgroup $H$,  $T^{[T:H]}$ has base size 2 unless $H$ is conjugate to $L_2(11)$.
		Thus  the new subset $\mathcal{S}_2$ consists, up to conjugacy, of all proper nontrivial subgroups of a maximal subgroup $M:=L_2(11)$.
		
		We proceed to stage (1) of Step 2 in Strategy~\ref{Strategy}. Let $H$ be a maximal subgroup of $M$. Then  the group $T^{[T:H]}\cong T$ is imprimitive, and we use the notation in Notation \ref{not2} with $G=T$ and $\Omega=[T:H]$. So $H=G_\omega$, for a point $\omega$ contained in a part $\Delta$ in a $G$-invariant partition $\Sigma$ of $\Omega$,  $X =G^{(2),\Omega}$, $Y =X_{\Delta}^{\Delta}$, $L=G^\Sigma$, and we may assume that $G_\Delta$ (setwise stabiliser) is the subgroup $M=L_2(11)$. Thus  $L=L^{(2),\Sigma}$ by the previous paragraph. Using the Web Atlas~\cite{WebAtlas}, we see that the $M$-orbits in $\Sigma$ have lengths $1, 11, 12, 110, 132$. Thus since $|M|/110=6$ and $|M|/132=5$,  the greatest common divisor of the point-stabiliser orders in $M^\Sigma$ is $1$. Thus the first line of Table \ref{tab:basesize} holds, and it follows from Proposition \ref{MainCor1}(2) that $X=G$, that is,
		$T^{[T:H]}=T^{(2), [T:H]}$. Next, for each maximal subgroup $H$ of $M$, we use \texttt{Magma}~\cite{MAGMA} to compute the expression $\widehat{Q}(T,H,2)$ of Proposition~\ref{BOBWProp} and find that the value is less than $1$, and hence that $T^{[T:H]}$ has base size $2$.
		Thus the subset $\mathcal{S}_3$ constructed in stage (2) of Step 2 is the emptyset, and we conclude that $T$ is totally $2$-closed.
		
		To complete the proof that Proposition~$\ref{Base2Lemma}$ holds for $T=\mathrm{J}_1$, note that our argument has so far shown that each proper nontrivial subgroup $K$ of $T$ is either conjugate to $M=L_2(11)$, or is contained in a subgroup $H$ such that $T^{[T:H]}$ has base size $2$. In the latter case, as we observed above, $T^{[T:K]}$ also has base size $2$. This completes the proof.
	\end{proof}
	
	Next, we deal with the group $\mathrm{J}_3$, where three steps of Strategy \ref{Strategy} are required.
	
	\begin{proposition}\label{J3Prop}
		The Janko group $\mathrm{J}_3$ is totally $2$-closed. Moreover, Proposition~$\ref{Base2Lemma}$, and in particular Table $\ref{tab:basesize}$  holds for $\mathrm{J}_3$.
	\end{proposition}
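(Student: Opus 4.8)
The plan is to run Strategy~\ref{Strategy} for $T=\mathrm{J}_3$, just as in the proof of Proposition~\ref{J1Prop}; three steps of Strategy~\ref{Strategy} will be required. Throughout, for a proper nontrivial subgroup $H\leq T$ we write $\Omega=[T:H]$ and use the notation of Notation~\ref{not2} with $G=T$. \emph{Step~1.} If $H$ is maximal in $T$, then $T^{[T:H]}$ is primitive, hence $2$-closed by Corollary~\ref{BaseCor} (which applies since $\Aut(\mathrm{J}_3)=\mathrm{J}_3.2$ and $\mathrm{J}_3$ has no $2$-transitive representation). By \cite[Tables 1 and 2]{BOBW}, for every maximal subgroup $H$ of $T$ the group $T^{[T:H]}$ has base size $2$ unless $H$ is conjugate to $M:=L_2(16).2$; also $L_2(16)$ is too large to lie in any maximal subgroup of $T$ other than $M$. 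Hence Strategy~\ref{Strategy} produces a set $\mathcal{S}_2$ consisting of representatives of the $T$-classes of those proper nontrivial subgroups of $M$ which survive Step~1. Reading off the $M$-orbit lengths on $\Sigma=[T:M]$, and the $L_2(16)$-orbit lengths on $[T:L_2(16)]$, from the Web Atlas~\cite{WebAtlas}, one computes that the greatest common divisor of the orders of the nontrivial two-point stabilisers is a divisor of $2$ for $M=L_2(16).2$, and equals $1$ for $L_2(16)$; this yields the $\mathrm{J}_3$ rows of Table~\ref{tab:basesize}.

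\emph{Step~2.} An element $H\in\mathcal{S}_2$ with $\ell_T(H)=2$ is maximal in $M=L_2(16).2$ (any other maximal overgroup of $H$ in $T$ would be base-$2$, so $H$ would have been removed). Put $\Sigma=[T:M]$, $\Delta=[M:H]$, $L=T^{[T:M]}$, $X=G^{(2),\Omega}$, $Y=X^\Delta_\Delta$; by Step~1, $L=L^{(2),\Sigma}$, so Proposition~\ref{MainCor1} applies, and the task is to show $X=G$. Suppose not. By Proposition~\ref{MainCor1}(2), $X=N\rtimes G$ with $N$ a subdirect subgroup of a base group $B_A$ for some nontrivial $A\unlhd Y$ whose orbits on $\Delta$ all have a common length $a$ dividing $|M_i\cap M_j|$ for all distinct $i,j$; hence $a$ divides $d(T,M)$, which divides $2$, so $a=2$. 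If $|\Delta|>2$ this is already impossible, since the $A$-orbits then form a nontrivial partition of $\Delta$ normalised by $R=M^\Delta$ (as $R\leq Y$ normalises $A$), contradicting the primitivity of $R$ on $\Delta=[M:H]$. The only remaining case is $H=L_2(16)$, with $|\Delta|=2$ and $R=\mathbb{Z}/2\mathbb{Z}$: here Proposition~\ref{MainCor1}(1) applies, and we check --- using the suborbit data of $T$ on the $6156$ points of $[T:L_2(16).2]$ --- that some two-point stabiliser $M_i\cap M_j$ has odd order and so lies in $G_{\alpha_i}$, whence condition~(ii) there fails and $X=G$. Thus $T^{[T:H]}=T^{(2),[T:H]}$ for every such $H$. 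Computing $\widehat{Q}(T,H,2)$ of Proposition~\ref{BOBWProp} with \texttt{Magma}~\cite{MAGMA} then shows that $\widehat{Q}(T,H,2)<1$, hence $T^{[T:H]}$ has base size $2$, for every maximal $H<L_2(16).2$ except $H=L_2(16)$; consequently $\mathcal{S}_3$ consists of (representatives of the $T$-classes of) the proper nontrivial subgroups of $L_2(16)$.

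\emph{Step~3 and Proposition~\ref{Base2Lemma}.} For $H\in\mathcal{S}_3$ with $\ell_T(H)=3$ we have $H<L_2(16)<L_2(16).2<T$. Taking the block system $\Sigma=[T:L_2(16)]$, whose induced group $L=T^{[T:L_2(16)]}$ is $2$-closed by Step~2 and satisfies $d(T,L_2(16))=1$ by Step~1, Proposition~\ref{MainCor1}(2) forces $X=G^{(2),[T:H]}=G$, since otherwise a common $A$-orbit length $a\geq2$ would divide $1$. A final round of \texttt{Magma} computations gives $\widehat{Q}(T,H,2)<1$ for every such $H$, so all these actions have base size $2$, $\mathcal{S}_4=\emptyset$, and Strategy~\ref{Strategy} concludes that $\mathrm{J}_3$ is totally $2$-closed. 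Proposition~\ref{Base2Lemma} for $\mathrm{J}_3$ then follows: every proper nontrivial subgroup $K$ of $\mathrm{J}_3$ is conjugate to $L_2(16)$ or to $L_2(16).2$, or is contained in some $H$ with $T^{[T:H]}$ of base size $2$ (whence $T^{[T:K]}$ too has base size $2$, by the Remark preceding Proposition~\ref{J1Prop}), and together with the gcd computations from Step~1 this is exactly the $\mathrm{J}_3$ content of Proposition~\ref{Base2Lemma} and Table~\ref{tab:basesize}.

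The hard part will be the imprimitive analysis in Step~2, and specifically the case $H=L_2(16)$ with $|\Delta|=2$: because $d(T,L_2(16).2)$ is even, Proposition~\ref{MainCor1}(2) cannot by itself rule out $X\neq G$, and one must instead verify the more delicate two-point-stabiliser criterion of Proposition~\ref{MainCor1}(1) against the explicit suborbit structure of $\mathrm{J}_3$ on the $6156$ cosets of $L_2(16).2$. Determining the subgroup structure of $L_2(16).2$ precisely enough to certify which $\mathrm{J}_3$-actions have base size $2$ --- so that the recursion terminates with $\mathcal{S}_4=\emptyset$ --- is the other place where care is needed.
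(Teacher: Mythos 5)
Your overall architecture is the same as the paper's: Strategy~\ref{Strategy} in three steps, Corollary~\ref{BaseCor} for the primitive actions, Proposition~\ref{MainCor1}(2) plus primitivity of $M^\Delta$ to force $|\Delta|=2$ in Step~2, and the gcd-equals-$1$ argument for $[T:L_2(16)]$ in Step~3. However, there is a genuine error at the single most delicate point, the case $H=L_2(16)$ with $|\Delta|=2$. You propose to kill this case by exhibiting a two-point stabiliser $M_i\cap M_j$ of \emph{odd} order in $T^{[T:L_2(16).2]}$, which would automatically lie in the index-$2$ subgroup $G_{\alpha_i}$ and so violate condition (ii) of Proposition~\ref{MainCor1}(1). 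No such stabiliser exists: the suborbits of $\mathrm{J}_3$ on the $6156$ cosets of $M=L_2(16){:}2$ have lengths $85,120,510,680,1360,1360,2040$, giving two-point stabilisers of orders $96,68,16,12,6,6,4$ --- all even (consistently with the table entry $g(\mathrm{J}_3,L_2(16){:}2)=2$; if an odd-order stabiliser existed the gcd would be $1$). The paper's argument is necessarily finer: it verifies by explicit computation that the order-$6$ stabiliser attached to \emph{one} of the two suborbits of length $1360$ is contained in $H=L_2(16)$, while the order-$6$ stabiliser attached to the other is not. This is a containment check that cannot be read off from orders alone, since a subgroup of order $6$ of $L_2(16){:}2$ may or may not meet the outer coset. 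Your argument as written would terminate with the (false) conclusion that the required stabiliser exists, so this step must be replaced by the explicit containment computation.

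Two smaller points. First, you never establish that $T^{[T:L_2(16)]}$ actually has base size at least $3$; you simply place all proper subgroups of $L_2(16)$ into $\mathcal{S}_3$ and handle them in Step~3. This is conservative and does not harm the total-$2$-closedness conclusion, but the paper's separate {\sf GAP} verification that $H\cap H^g\neq 1$ for the relevant $g$ is what certifies that the pair $(\mathrm{J}_3,L_2(16))$ genuinely belongs in Table~\ref{tab:basesize} as a non-base-$2$ action. Second, your derivation of $d(T,L_2(16))=1$ from Web Atlas orbit data is plausible but the paper obtains it as a by-product of the same containment computation above (one two-point stabiliser in $T^{[T:L_2(16)]}$ is $C_3$, another has order dividing $4$); either route is acceptable provided the computation is actually carried out.
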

	
	\begin{proof} Let $T:=\mathrm{J}_3$. According to Strategy~\ref{Strategy} our task is to prove that
		$T^\Omega=T^{(2),\Omega}$, where $\Omega=[T:H]$, for each proper nontrivial subgroup $H$ of $T$.
		
		In Step 1 of Strategy~\ref{Strategy},  $\mathcal{S}_1$ is the set of all proper nontrivial subgroups. If $H\in \mathcal{S}_1$ is maximal in $T$ then, by Corollary~\ref{BaseCor},  $T^\Omega=T^{(2),\Omega}$, and by \cite[Table 1]{BOBW}, $H\cong L_2(16):2$ is the only maximal subgroup such that $T^\Omega$ has base size greater than $2$ (and in fact has base size $3$). Thus the set $\mathcal{S}_2$ constructed in Strategy~\ref{Strategy} consists of all nontrivial, non-maximal subgroups $H$ such that, if $H<M<T$ with $M$ maximal, then $M$ is conjugate to $L_2(16):2$.
		
		We see from the Web Atlas \cite{WebAtlas} that the $M$-orbits in $[T:M]$ have lengths $1, 85, 120, 510, 680, 1360, 1360, 2040$. Thus, since $|M|/1360=6$ and $|M|/120=68$, it is easily checked that the greatest common divisor of the point stabiliser orders in $M^{[T:M]}$ is $2$. The second line of Table \ref{tab:basesize} then follows for $M= L_2(16):2$.
		From now on the subgroups $H$ we consider are not maximal, and we use  Notation \ref{not2} with $G=T$, $\Omega=[T:H], X=T^{(2),\Omega}, H=G_\omega$, with $\omega$ in a part $\Delta$ of a $G$-invariant partition $\Sigma$ of $\Omega$ such that $M=G_\Delta$ (setwise stabiliser), so $\Sigma = [T:M]$ and for $L=G^\Sigma$, we will always have $L=L^{(2),\Sigma}$ by Strategy~\ref{Strategy}.

		In Step $2$, we consider all maximal subgroups $H$ of $M=L_2(16).2$. Thus $M^\Delta$ is primitive. First we claim that, for each such $H$, $T^\Omega=T^{(2),\Omega}$. Suppose that this is false, so that, for some $H$, we have $X\ne G$. Then, by Proposition \ref{MainCor1}(2), $X=N\rtimes G$ with $N = X_{(\Sigma)}\ne 1$ (kernel of the $X$-action on $\Sigma$), and the $N^\Delta$-orbits have constant length $a$, where $a$ divides the greatest common divisor of the $2$-point stabilisers  of $L$.  By the previous paragraph $a$ divides $2$. Since $1\ne N^\Delta\unlhd X_\Delta^\Delta$ and $M^\Delta\leq X^\Delta$ is primitive, it follows that $N^\Delta$ is transitive, so $a=|\Delta|=2$. This implies that $|M:H|=2$ so $H$ is equal to the derived subgroup $M'=L_2(16)$.
		We check, using  \texttt{Magma}~\cite{MAGMA}, that the stabiliser (a subgroup of order $6$) in $M$ of a part $\Delta_j$ in one of the two $M$-orbits of length 1360 in $\Sigma$  lies in $H$, while the stabiliser of a part $\Delta_k$ in the other $M$-orbit of length 1360 is not contained in $H$. Firstly we note that, for $\omega_k\in\Delta_k$, this implies that the two-point stabiliser $G_{\omega, \omega_k}=H_{\omega_k} \cong C_3$, and it follows that the greatest common divisor of the point stabiliser orders in $H^{[T:H]}$ is $1$, and hence  Line 2 of Table \ref{tab:basesize}  holds also for the subgroup $L_2(16)$. Secondly, this means  that, in the notation of Proposition \ref{MainCor1}, $(M\cap M_j)G_\omega = H\ne M$ so that property (ii) in Proposition \ref{MainCor1}(1) fails, and hence $X=G$, which is a contradiction. Thus $T^{[T:H]}=T^{(2),[T:H]}$ for all maximal subgroups $H$ of $M$. For those subgroups $H\ne L_2(16)$, we use \texttt{Magma}~\cite{MAGMA} and compute that the value of the expression $\widehat{Q}(T,H,c)$ of  Proposition \ref{BOBWProp}, with $c=2$, is strictly less than $1$, and hence $T^{[T:H]}$ has base size $2$.
		
		Suppose now that $H=L_2(16)$. We compute as follows using {\sf GAP}~\cite{GAP} to prove that  $T^{[T:H]}$ has base size $3$. The generators for $T=\mathrm{J}_3$ viewed as the primitive group $T^{[T:M]}$ acting on $\Sigma=[T:M]$ are given in the ATLAS~\cite{Atlas, WebAtlas} and using these we compute with \cite{GAP} that the  point stabilisers of $H^\Sigma$ have orders $8, 6, 6, 6, 6, 2, 48, 6, 34$. This implies in particular that $H$ is transitive on the $M^\Sigma$-orbit $\Sigma_0$ of length $2040=|H|/2$. Let $K=H\cap M^g\cong C_2$, the stabiliser in $H$ of a block $\Delta'\in\Sigma_0$.
		If $T^{[T:H]}$ were to have base size $2$, then $H^{[T:H]}$ would have an orbit $\Omega_0$ of length $|H|=4080$ in $\Omega=[T:H]$, which would have to be the union of the $2040$ blocks of $\Sigma$ in $\Sigma_0$. If this were the case then the stabiliser $J:=H\cap H^g$ in $H$ of a point of $\Omega_0$ would be trivial.  Thus we (only) need to check that $J\ne 1$, and we do this using {\sf GAP}.  Now $K=H_{\Delta'}$, the stabiliser in $H$ of $\Delta'\in\Sigma_0$. In {\sf GAP} we construct $T_{\Delta'}$ (which is $M^g$) and then the  derived subgroup of $T_{\Delta'}$ (which is $H^g$).  Finally we check (using the `IsSubgroup' command in {\sf GAP}) that $K$ is a subgroup of $H^g$. Since $K$ by definition lies in $H$ we conclude that $K\leq H\cap H^g=J$, and hence $J \neq 1$, as claimed. Thus  $T^{[T:H]}$ has base size $3$.
		To complete stage (2) of Step 2 for Strategy~\ref{Strategy} we remove various subgroups from $\mathcal{S}_2$ to form $\mathcal{S}_3$: which consists, up to conjugacy, of those proper nontrivial subgroups $H$ of $M'=L_2(16)$ such that the only maximal subgroup of $M$ containing $H$ is  $M'$.
		
		In Step 3, for each $H\in\mathcal{S}_3$ such that $H$ is maximal in $M'=L_2(16)$, we use \texttt{Magma}~\cite{MAGMA} and compute that the value of the expression $\widehat{Q}(T,H,c)$ of  Proposition \ref{BOBWProp}, with $c=2$, is strictly less than $1$, and hence $T^{[T:H]}$ has base size $2$.
		Finally we prove that $T^{[T:H]}=T^{(2),[T:H]}$ by applying Proposition~\ref{MainCor1}(2) with
		$M'=L_2(16)$ instead of $M$, so $L=T^{[T:M']}=T^{(2),[T:M']}$. If $X\ne G$ then by  Proposition~\ref{MainCor1}(2), $X=N\rtimes G$ with $N$ leaving $\Delta$ invariant, and all $N$-orbits in $\Delta$ of constant length $a>1$ such that $a$ divides the greatest common divisor of the 2-point stabiliser orders in $T^{[T:M']}$. In the previous paragraph we proved that this greatest common divisor is equal to 1, which is a contradiction. Thus $T^{[T:H]}=T^{(2),[T:H]}$ with base size $2$,  for all  $H\in\mathcal{S}_3$  maximal in $M'=L_2(16)$. When we form the set $\mathcal{S}_4$ we obtain an empty set, and so we conclude that $T=\mathrm{J}_3$ is totally $2$-closed.
		
		We note that this completes the proof also of Proposition~\ref{Base2Lemma} for $T=\mathrm{J}_3$, for if $H<T$ and $T^{[T:H]}$ has base size at least $3$, then also, for all $H<M<T$, $T^{[T:M]}$ has base size at least $3$. Thus $H\leq M=L_2(16):2$, and if $H\ne M$ then it follows from the proofs of Steps 2 and 3 above that $H=M'$.
	\end{proof}
	
	Next, we will deal with the case where $T$ is the monster. First, we prove Proposition~\ref{Base2Lemma} for this group.
	
	\begin{proposition}\label{MonsterLemma}
		Let $T:=\mathbb{M}$ (the monster), and let $K<T$ be such that $T^{[T:K]}$ has base size at least $3$. Then  $K=2.\mathbb{B}$, a maximal subgroup of $T$, and the greatest common divisor of the point-stabiliser orders in $K^{[T:K]}$  is $209,018,880$. In particular, Proposition~$\ref{Base2Lemma}$  (expecially the final row of
		Table $\ref{tab:basesize}$) holds for   $\mathbb{M}$.
	\end{proposition}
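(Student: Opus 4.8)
The plan is to mimic the strategy used for $\mathrm{J}_1$ and $\mathrm{J}_3$, but now only the base-size bookkeeping (Step~1 of Strategy~\ref{Strategy}) is required, since we are not yet asserting total $2$-closure of $\mathbb{M}$ here --- that comes later in Proposition~\ref{MProp}. First I would recall the list of maximal subgroups of $\mathbb{M}$ and invoke \cite[Tables 1 and 2]{BOBW}: for every maximal subgroup $M$ of $\mathbb{M}$ other than $M=2.\mathbb{B}$, the primitive action $\mathbb{M}^{[\mathbb{M}:M]}$ has base size $2$. (For the handful of maximal subgroups not covered directly by the tables, one uses the bound $\widehat{Q}(\mathbb{M},M,2)<1$ from Proposition~\ref{BOBWProp}, evaluating the relevant class data from the ATLAS.) By the Remark preceding Proposition~\ref{J1Prop}, if $K$ is a proper nontrivial subgroup of $\mathbb{M}$ contained in such a base-$2$ maximal subgroup $M$, then $\mathbb{M}^{[\mathbb{M}:K]}$ also has base size $2$. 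Hence the only proper subgroups $K$ for which $\mathbb{M}^{[\mathbb{M}:K]}$ can have base size $\ge 3$ are the subgroups contained only in (conjugates of) $2.\mathbb{B}$ --- and among these, $K=2.\mathbb{B}$ itself is the natural candidate, since the full list of maximal subgroups shows $2.\mathbb{B}$ is the unique maximal subgroup with base size $>2$ and one must check that $2.\mathbb{B}$ has no relevant proper subgroups producing new obstructions beyond what is already subsumed. Actually the statement only asserts $K=2.\mathbb{B}$, so it suffices to show every proper nontrivial subgroup of $2.\mathbb{B}$ lies in some other maximal subgroup of $\mathbb{M}$ of base size $2$, or equivalently that the analysis of $2.\mathbb{B}$'s subgroups can be deferred; but in fact for the purposes of Proposition~\ref{Base2Lemma} we need $K=2.\mathbb{B}$ to be the \emph{only} non-base-$2$ case, so I would argue that any proper subgroup $H<2.\mathbb{B}$ embeds into a maximal subgroup of $\mathbb{M}$ distinct from $2.\mathbb{B}$ (using the known maximal subgroup structure of $\mathbb{B}$ and $\mathbb{M}$), whence $\mathbb{M}^{[\mathbb{M}:H]}$ has base size $2$ by the Remark.

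Next I would establish the numerical claim: the greatest common divisor of the orders of the point-stabilisers in $(2.\mathbb{B})^{[\mathbb{M}:2.\mathbb{B}]}$ of the non-trivial points equals $209{,}018{,}880$. The action of $\mathbb{M}$ on $[\mathbb{M}:2.\mathbb{B}]$ is the rank-$4$ permutation action of degree $|\mathbb{M}:2.\mathbb{B}|$ (the smallest-degree faithful action of $\mathbb{M}$), and its subdegrees are classical and recorded in the literature (originating in work related to the Griess construction and the $196883$-dimensional representation; see e.g.\ \cite{Atlas} and references therein). The subdegrees are $1$ together with three further orbit lengths $\ell_1,\ell_2,\ell_3$, and the corresponding two-point stabilisers inside $2.\mathbb{B}$ have orders $|2.\mathbb{B}|/\ell_i$; I would simply compute $\gcd(|2.\mathbb{B}|/\ell_1,\,|2.\mathbb{B}|/\ell_2,\,|2.\mathbb{B}|/\ell_3)$ and verify it equals $209{,}018{,}880$. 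Note this is exactly $g(\mathbb{M},2.\mathbb{B})=2{,}090{,}188{,}800$ divided by $10$ --- wait, it is $g(\mathbb{M},2.\mathbb{B})/10$; so I must be careful: the statement says the $\gcd$ equals $209{,}018{,}880$, while Table~\ref{tab:basesize} lists $g(\mathbb{M},2.\mathbb{B})=2{,}090{,}188{,}800$, and Proposition~\ref{Base2Lemma}(b) only requires $d(T,M)\mid g(T,M)$; since $209{,}018{,}880 \cdot 10 = 2{,}090{,}188{,}800$, divisibility indeed holds, so the two assertions are consistent and I would remark on this explicitly to forestall confusion.

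The main obstacle is the verification of the subdegrees of the degree-$|\mathbb{M}:2.\mathbb{B}|$ action of $\mathbb{M}$: these cannot be computed by direct machine calculation (the degree is astronomically large), so I must rely on published structural results --- namely the identification of this action with the action on the $2A$-involutions (transpositions) of $\mathbb{M}$, whose collapse of the Griess algebra / $6$-transposition structure yields the four orbit lengths under a point stabiliser $2.\mathbb{B}$. Assembling the precise orbit lengths from the literature and confirming the arithmetic of the $\gcd$ is the delicate part; everything else is a routine appeal to \cite[Tables 1,2]{BOBW}, Proposition~\ref{BOBWProp}, and the Remark on passing to subgroups. I would also take care to state that the base size of $\mathbb{M}^{[\mathbb{M}:2.\mathbb{B}]}$ is genuinely at least $3$ (indeed, it is known to be exactly $3$), so that $2.\mathbb{B}$ really is not removed at Step~1, which is what makes it the unique surviving case.
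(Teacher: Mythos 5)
Your overall skeleton (reduce to maximal overgroups via \cite[Table 2]{BOBW}, then handle subgroups of $2.\mathbb{B}$, then compute a gcd of point-stabiliser orders) matches the paper, but there are two genuine problems. First, your treatment of proper subgroups $K<2.\mathbb{B}$ rests on the claim that every such $K$ embeds into a maximal subgroup of $\mathbb{M}$ \emph{distinct} from $2.\mathbb{B}$ and of base size $2$. That claim is not established and is not needed: it is far from clear (and probably false for some of the large maximal subgroups of $2.\mathbb{B}$, such as $2.{}^2E_6(2){:}2$) that they lie in any other maximal subgroup of $\mathbb{M}$. The paper's argument avoids this entirely: if $K<2.\mathbb{B}$ then $K$ lies in some maximal subgroup $H$ of $2.\mathbb{B}$, and one shows \emph{directly} that $\widehat{Q}(T,H,2)<1$ for every maximal subgroup $H$ of $2.\mathbb{B}$, using the {\sf GAP} character table library together with the available class fusions; by Proposition~\ref{BOBWProp} this forces $T^{[T:H]}$, and hence $T^{[T:K]}$, to have base size $2$, a contradiction. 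Your proof needs this computation (or an equivalent one); the containment argument you propose in its place is a gap.

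Second, your description of the coset action $\mathbb{M}^{[\mathbb{M}:2.\mathbb{B}]}$ as a rank-$4$ action with subdegrees $1,\ell_1,\ell_2,\ell_3$ is incorrect. This action is the conjugation action on the class of $2A$-involutions, and by the $6$-transposition property the product of two $2A$-involutions lies in one of the nine classes $1A$, $2A$, $2B$, $3A$, $3C$, $4A$, $4B$, $5A$, $6A$; the action therefore has rank $9$, and the gcd in question is taken over the orders of the \emph{eight} nontrivial point stabilisers, which are listed explicitly in Ivanov's book (\cite[top of p.~101]{Ivanov}). A gcd computed from a fictitious rank-$4$ subdegree list cannot be trusted to give $209{,}018{,}880$. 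Your observation that $209{,}018{,}880$ divides the table entry $g(\mathbb{M},2.\mathbb{B})=2{,}090{,}188{,}800$ (their ratio being $10$), so that Proposition~\ref{Base2Lemma}(b) is consistent, is correct and worth noting, but it does not repair the rank error. Finally, your remark that one must separately confirm that $\mathbb{M}^{[\mathbb{M}:2.\mathbb{B}]}$ genuinely has base size at least $3$ is sensible; this is exactly what \cite[Table 2]{BOBW} supplies.
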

	
	\begin{proof}
		Let $K\leq M<T$ with $M$ maximal in $T$. Then also $T^{[T:M]}$ has base size at least $3$. Hence
		by \cite[Table 2]{BOBW},  $M=2.\mathbb{B}$. Suppose that $K<M$, and let $H$ be a maximal subgroup of $M$ containing $K$. Then also $T^{[T:H]}$ has base size at least  $3$. For all maximal subgroups $H$ of $M$, we use the {\sf GAP} Character Table Library \cite{CharTable} (together with the available class fusions) and compute, using {\sf GAP} \cite{GAP}, that the value of the expression $\widehat{Q}(T,H,2)$ of  Proposition \ref{BOBWProp} is strictly less than $1$, and hence that $T^{[T:H]}$ has base size $2$. This is a contradiction, and hence $K=M=2.\mathbb{B}$. Finally, the point stabilisers in $K=2.\mathbb{B}$ of the nine  $K$-orbits in $[T:K]$ are given in  \cite[top of p. 101]{Ivanov} and we compute that the greatest common divisor of the point-stabiliser orders in $K^{[T:K]}$  is $209,018,880$, completing the proof.
	\end{proof}
	
	We are now ready to prove that the monster is totally $2$-closed.
	
	\begin{proposition}\label{MProp}
		The monster  $\mathbb{M}$ is totally $2$-closed.
	\end{proposition}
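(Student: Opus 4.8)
The plan is to run the inductive procedure of Strategy~\ref{Strategy} with $T=\mathbb{M}$. By Proposition~\ref{lem:simple1}, $\mathbb{M}$ admits no nontrivial factorisation, so Corollary~\ref{TransRed} reduces the claim to showing $T^{[T:H]}=T^{(2),[T:H]}$ for every proper nontrivial subgroup $H$ of $T$. In \emph{Step $1$}, where $H$ is maximal in $T$, the action $T^{[T:H]}$ is primitive and, since $\mathbb{M}$ has no $2$-transitive permutation representation, it is not $2$-transitive; as $\Aut(\mathbb{M})=\mathbb{M}$, Lemma~\ref{BaseLemma}(b)(i) immediately gives $T^{[T:H]}=T^{(2),[T:H]}$. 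By \cite[Table~2]{BOBW} (equivalently, by Proposition~\ref{MonsterLemma}), the unique maximal subgroup for which $T^{[T:H]}$ has base size greater than $2$ is $M:=2.\mathbb{B}$, so the set $\mathcal{S}_2$ surviving into Step $2$ consists, up to conjugacy, of the proper nontrivial subgroups of $M$ contained in no base-size-$2$ maximal subgroup of $T$; in particular the members $H$ of $\mathcal{S}_2$ with $\ell_T(H)=2$ are exactly (conjugates of) the maximal subgroups of $M$.

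\emph{Step $2$} is the crux, and I expect it to be the main obstacle. Fix $H$ maximal in $M=2.\mathbb{B}$. Then $T^{[T:H]}$ is imprimitive with block system $\Sigma=[T:M]$, and I would invoke Notation~\ref{not2} with $G=T$ (the core-freeness condition~\ref{not2}.$10$ being automatic as $T$ is simple), so that $L=T^{\Sigma}$, $\Delta=[M:H]$, and $R=M^{\Delta}$ is \emph{primitive} since $H$ is maximal in $M$. Step $1$ gives $L=L^{(2),\Sigma}$, so Proposition~\ref{MainCor1}(2) applies: if $X:=T^{(2),[T:H]}\neq T$, then $X=N\rtimes T$ with $N$ a subdirect subgroup of a base group $B_A$ for some nontrivial $A\unlhd Y:=X^{\Delta}_{\Delta}$, and every $A$-orbit on $\Delta$ has a common length $a$ which divides $|L_{ij}|$ for all distinct $i,j$; hence $a$ divides the greatest common divisor $d(T,M)$ of the orders of the point stabilisers in $M^{\Sigma}$, which in turn divides the entry $g(T,M)=2{,}090{,}188{,}800$ of Table~\ref{tab:basesize} by Proposition~\ref{Base2Lemma} (established for $\mathbb{M}$ in Proposition~\ref{MonsterLemma}). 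On the other hand $Y$ contains the primitive group $R$ and is therefore itself primitive, so the nontrivial normal subgroup $A$ of $Y$ is transitive on $\Delta$ and $a=|\Delta|=|M:H|$. Because $2.\mathbb{B}$ is perfect, $H$ contains $Z(M)\cong C_2$, so $|M:H|$ equals the index in $\mathbb{B}$ of a maximal subgroup, whence $|M:H|\ge 13{,}571{,}955{,}000$ — the minimal faithful permutation degree of the Baby Monster, realised on the cosets of $2.{}^2E_6(2){:}2$ (see \cite{Atlas}). Since $a\mid g(T,M)$ forces $a\le 2{,}090{,}188{,}800<13{,}571{,}955{,}000\le a$, this is absurd; hence $X=T$ for every such $H$.

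Finally, to close the induction: by Proposition~\ref{MonsterLemma} the quantity $\widehat{Q}(T,H,2)$ of Proposition~\ref{BOBWProp} is less than $1$ for every maximal subgroup $H$ of $M=2.\mathbb{B}$, so each $T^{[T:H]}$ has base size $2$; accordingly, in forming $\mathcal{S}_3$ we discard every such $H$ together with all of its proper subgroups. As every proper nontrivial subgroup of $M$ lies in some maximal subgroup of $M$, this leaves $\mathcal{S}_3=\emptyset$, and Strategy~\ref{Strategy} then certifies that $\mathbb{M}$ is totally $2$-closed. The genuinely delicate ingredient is thus the numerical inequality in Step $2$: it rests on Proposition~\ref{MonsterLemma} (which uses the known description of the $2.\mathbb{B}$-orbits on $[T:M]$) and on the value of the smallest faithful permutation degree of $\mathbb{B}$; once these are in hand, the remainder is bookkeeping inside the machinery of Sections~\ref{sec:WDT} and \ref{ImprimitiveSection}.
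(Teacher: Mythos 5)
Your proof is correct, and Steps 1 and 3 coincide with the paper's argument; the interesting divergence is in Step 2, where you and the paper reach the same contradiction by different means. The paper first pins down $Y$ exactly: since $Z(M)=\Phi(M)=\core_M(H)$, the block action is $M^{\Delta}\cong\mathbb{B}$, and because $\Aut(\mathbb{B})=\mathbb{B}$ with no $2$-transitive representations, Lemma~\ref{BaseLemma}(b)(i) forces $Y=M^{(2),\Delta}=\mathbb{B}$; then $N^{\Delta}=Y$ transitive would yield a proper factorisation $Y=Y_{\omega}(M\cap M_j)^{\Delta}$ of $\mathbb{B}$, contradicting the non-factorisability of $\mathbb{B}$ from \cite{LPS2}. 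You avoid identifying $Y$ altogether: you only need that $Y$ contains the primitive group $R=M^{\Delta}$, hence is primitive, hence its nontrivial normal subgroup $A$ is transitive, so $a=|\Delta|=|M:H|$; Proposition~\ref{MainCor1}(2) bounds $a$ by $g(T,M)=2{,}090{,}188{,}800$, while the minimal faithful degree $13{,}571{,}955{,}000$ of $\mathbb{B}$ (your appeal to perfectness of $2.\mathbb{B}$ to get $Z(M)\le\Phi(M)\le H$ is sound) makes $|M:H|$ too large. Your route is the same divisibility-versus-minimal-degree argument the paper deploys for $\mathrm{Ly}$ and $\mathrm{Th}$, and it trades the factorisation classification of \cite{LPS2} for the known minimal permutation degree of $\mathbb{B}$; both inputs are standard, so the two proofs are of comparable weight, with yours arguably needing one fewer structural fact about $M^{(2),\Delta}$. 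One cosmetic note: the paper's Table~\ref{tab:basesize} lists $g(\mathbb{M},2.\mathbb{B})=2090188800$ while Proposition~\ref{MonsterLemma} states the gcd as $209{,}018{,}880$; the discrepancy is immaterial here since either value is far below $13{,}571{,}955{,}000$.
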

	
	\begin{proof}
		Let $T=\mathbb{M}$. As usual  our task, using Strategy~\ref{Strategy}, is to prove that
		$T^\Omega=T^{(2),\Omega}$, where $\Omega=[T:H]$, for each proper nontrivial subgroup $H$ of $T$. Since $T=\Aut(T)$  and $T$ has no $2$-transitive representations, it follows from
		Lemma \ref{BaseLemma}(b)(i) that this holds for all maximal subgroups $H$. Thus we proceed to stage (2) of Step 1 in Strategy~\ref{Strategy}, and here, by Proposition~\ref{MonsterLemma}, the only maximal subgroup $M$ such that $T^{[T:M]}$ has base size at least $3$ is $M=2.\mathbb{B}$. Hence the set $\mathcal{S}_2$ formed in Step 1 consists, up to conjugacy, of all proper nontrivial subgroups of $M$.
		
		In Step 2 of Strategy~\ref{Strategy}, we examine the maximal subgroups $H$ of $M$.
		We again use Notation~\ref{not2} with $G=T$, $\Omega=[T:H], X=T^{(2),\Omega}$,  $H=G_\omega$, with $\omega$  in a part $\Delta$ of a $T$-invariant partition $\Sigma$ of $\Omega$ such that $M=G_{\Delta}$, so $\Sigma=[T:M]$ and $L=G^\Sigma$ satisfies $L=L^{(2),\Sigma}$, and we set $Y=X_\Delta^\Delta$. Since $H$ is maximal in $M$ and $Z(M)=\Phi(M)\cong C_2$, we have
		$Z(M)=\core_M(H)=\Phi(M)$ and $M^{\Delta}\cong \mathbb{B}$. This implies that $M^{\Delta}=M^{(2),\Delta}$, by Lemma \ref{BaseLemma}(b)(i), since $\Aut(\mathbb{B})=\mathbb{B}$ and $\mathbb{B}$ has no $2$-transitive representations. It now follows from Lemma~\ref{lem:imprim}(b) that $Y=\mathbb{B}=M^\Delta$. Suppose now that $X\ne G$. Then by  Proposition \ref{RedProp}(a), $X=N\rtimes G$ with $N^\Delta$ a nontrivial normal subgroup of $Y$, and hence  $N^\Delta=Y$.  By the definition of the subgroup $N$ from Lemma \ref{Crucial}(b), this would imply that, for $1<j\leq s=|\Sigma|$, the group $(M\cap M_j)^{\Delta}$ is transitive.  Since $M\cap M_j$ is a proper subgroup of $M$ and $Z(M)=\core_M(H)\leq M\cap M_j$, the induced group $(M\cap M_j)^{\Delta}$ is a proper subgroup of $M^\Delta=Y$, so we have a proper factorisation $Y=Y_\omega (M\cap M_j)^{\Delta}$. This is a contradiction since $Y$ has no such factorizations, by \cite{LPS2}. Thus $X=G$, that is, $T^{[T:H]}=T^{(2),[T:H]}$. We now move to stage (2) of Step 2 for  Strategy~\ref{Strategy}. By Proposition~\ref{MonsterLemma}, each of the groups $T^{[T:H]}$ just examined has base size $2$. Hence the set $\mathcal{S}_3$ formed in Step 2 is empty, and we conclude that $T$ is totally $2$-closed.
	\end{proof}
	
	
	
	The next two groups we deal with are the O'Nan group and the Lyons group.
	
	\begin{proposition}\label{ONProp}
		The O'Nan group $\mathrm{ON}$ is not totally $2$-closed.
	\end{proposition}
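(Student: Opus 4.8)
The plan is to exhibit a single faithful permutation representation of $T=\mathrm{ON}$ that is not $2$-closed; by Corollary~\ref{BaseCor} every \emph{primitive} representation of $\mathrm{ON}$ is already $2$-closed, so the representation we build must be imprimitive, and the vehicle will be Lemma~\ref{BaseLemma}(a), applied to a non-maximal subgroup of $T$ that is self-normalising in $T$ but normalised by an outer automorphism.

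Concretely, I would work inside $G:=\Aut(T)=T.2$. Let $P$ be a Sylow $7$-subgroup of $T$. Since $7^3$ is the exact power of $7$ dividing $|T|$, the group $P\cong 7^{1+2}$ is a full Sylow $7$-subgroup of $T$; and as $\mathrm{L}_3(7):2$ is the only maximal subgroup of $T$ whose order is divisible by $7^3$, the normaliser $N_T(P)$ lies properly inside $\mathrm{L}_3(7):2$ and so is \emph{non-maximal} in $T$. Because $P$ is the unique Sylow $7$-subgroup of $N_T(P)$ it is characteristic there, whence $N_T(P)$ is self-normalising in $T$; and by the Frattini argument $G=T\,N_G(P)$, so that $K:=N_G(P)$ satisfies $K\cap T=N_T(P)=:M$, $[K:M]=2$, $K\not\leq T$ and $TK=G$. (From the ATLAS \cite{Atlas}, $K$ is in fact a novelty maximal subgroup of $G$, of shape $7^{1+2}:(3\times D_{16})$, maximal in $\Aut(T)$ but not in $T$ — which is precisely what makes $M$ non-maximal, as required.)

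It then remains only to verify that $G^{[G:K]}$ has base size $2$. As $K$ is maximal in $G$ this can be read off from \cite{BOBW}; alternatively, since $|K|$ is minuscule compared with $|G|$, one checks directly that $\widehat{Q}(G,K,2)<1$ in the notation of Proposition~\ref{BOBWProp}, which already gives base size at most $2$. Granting this, Lemma~\ref{BaseLemma}(a) applies with these $T,M,K,G$: the action of $T$ on $[T:M]$ is permutationally isomorphic to its action on $[G:K]$, and $G^{[G:K]}\not\leq T^{(2),[G:K]}$. Hence $T^{[G:K]}$ is a faithful permutation representation of the simple group $\mathrm{ON}$ which is not $2$-closed, and therefore $\mathrm{ON}$ is not totally $2$-closed.

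The only genuine subtlety is the choice of $M$. It must be \emph{non-maximal} in $T$, for a maximal $M$ would, via Lemma~\ref{BaseLemma}(a), contradict Corollary~\ref{BaseCor} (and conversely that corollary is exactly what tells us no primitive representation can witness the failure); yet $M$ must also be self-normalising in $T$ and preserved by an outer automorphism, which funnels us towards the normaliser of a Sylow subgroup and the novelty maximal subgroups of $\Aut(T)$. Once $M$ is identified, confirming the base-size-$2$ claim is a routine appeal to \cite{BOBW} or a short computer calculation.
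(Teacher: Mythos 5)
Your argument has a fatal directional error in the final step. The conclusion of Lemma~\ref{BaseLemma}(a) is that $G^{[G:K]}\not\leq T^{(2),[G:K]}$, i.e.\ the overgroup $G=T.2$ is \emph{not} contained in the $2$-closure of $T$ in that action. This does not show that $T^{(2),[G:K]}\neq T$; it shows the exact opposite kind of thing, namely that $G$ is eliminated as a candidate for the $2$-closure. Indeed, in the paper this lemma is the main tool for \emph{proving} $2$-closedness (see Lemma~\ref{BaseLemma}(b)(iii) and Corollary~\ref{BaseCor}): one first shows via \cite{LPS} that $T^{(2),\Omega}$ would have to be of the form $TK$ for some such $K$, and then part (a) rules this out, forcing $T^{(2),\Omega}=T$. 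To prove a representation is \emph{not} $2$-closed you must exhibit a permutation outside $T$ that preserves every $T$-orbital; your construction instead exhibits a group of permutations that fails to preserve some $T$-orbital, which is evidence for, not against, $2$-closedness. So even granting all the facts about $N_T(P)$, the novelty maximal subgroup $7^{1+2}{:}(3\times D_{16})$ of $\Aut(\mathrm{ON})$, and the base size of $G^{[G:K]}$, nothing in your argument establishes that the action of $T$ on $[T:N_T(P)]$ fails to be $2$-closed.

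The paper's proof is entirely different: it takes the maximal subgroup $M=L_3(7){:}2$ and its derived subgroup $H=M'$ of index $2$, so that $T$ acts on $[T:H]$ with a block system of blocks of size $2$ whose quotient action $T^{[T:M]}$ is $2$-closed by Corollary~\ref{BaseCor}. A computation shows that no two-point stabiliser $M\cap M^t$ is contained in $H$, i.e.\ $(M\cap M^t)H=M$ for all $t\in T$, which is condition (ii) of Proposition~\ref{MainCor1}(1); that proposition then says the $2$-closure of $T^{[T:H]}$ contains the central involution $(1,1,\dots,1)$ of the base group $C_2^{|T:M|}$, so $T^{[T:H]}$ is not $2$-closed. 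Note that the extra element of the $2$-closure here is not an automorphism of $T$ at all but an element permuting points within blocks -- a mechanism your approach cannot access. If you want to salvage your line of thought, you would need either the criterion of Proposition~\ref{MainCor1}(1) or a direct verification (e.g.\ via Theorem~\ref{thm:W}) that some specific permutation outside $T$ preserves all orbitals.
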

	
	\begin{proof}
		Let $T=\mathrm{ON}$ and let $M$ be a maximal subgroup of $T$  with $M=L_3(7):2$. Also let $H=M'$ (the derived subgroup), so $|M:H|=2$.  By Corollary~\ref{BaseCor},  $T^{[T:M]}=T^{(2), [T:M]}$.
		We check, using GAP \cite{GAP}, that no two-point stabiliser  in $M^{[T:M]}$ is contained in $H$. Hence, $(M\cap M^t)H=M$ for all $t\in T$, that is to say, condition (ii) of Proposition \ref{MainCor1}(1) holds. Since $T^{[T:M]}=T^{(2), [T:M]}$, Proposition \ref{MainCor1}(1) applies, and we conclude that $T^{[T:H]}$ is not $2$-closed. Thus, $T$ is not totally $2$-closed.
	\end{proof}

	\begin{proposition}\label{LyProp} The Lyons group $\mathrm{Ly}$ is totally $2$-closed. Moreover, Pro\-position~$\ref{Base2Lemma}$, and in particular
		Table $\ref{tab:basesize}$ holds for   $T=\mathrm{Ly}$.
	\end{proposition}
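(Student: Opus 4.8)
The plan is to follow Strategy~\ref{Strategy} verbatim with $T=\mathrm{Ly}$, using Proposition~\ref{Base2Lemma} (which we are proving in tandem) to feed Proposition~\ref{MainCor1} at the imprimitive stages. Since $\Aut(\mathrm{Ly})=\mathrm{Ly}$ and $\mathrm{Ly}$ has no $2$-transitive representation, Lemma~\ref{BaseLemma}(b)(i) immediately gives $T^{[T:H]}=T^{(2),[T:H]}$ for every maximal subgroup $H$, so Step~1 of the strategy is entirely concerned with base sizes. Consulting \cite[Table 2]{BOBW}, I would record that the only maximal subgroups $M$ for which $T^{[T:M]}$ has base size exceeding $2$ are $M=G_2(5)$, $3.\mathrm{McL}$ and $3.\mathrm{McL}.2$; these are the entries in the $\mathrm{Ly}$-line of Table~\ref{tab:basesize}. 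For each of these three $M$ I would read off the $M$-orbit lengths on $\Sigma=[T:M]$ from the Web Atlas \cite{WebAtlas} (or from the permutation character / known rank data), compute the point-stabiliser orders in $M^{\Sigma}$, and verify that their greatest common divisor divides $g(T,M)$, namely $48$, $30$, $30$ respectively. That establishes Proposition~\ref{Base2Lemma} for $\mathrm{Ly}$. The set $\mathcal{S}_2$ then consists, up to conjugacy, of all proper nontrivial subgroups contained in one of $G_2(5)$, $3.\mathrm{McL}$ or $3.\mathrm{McL}.2$.

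For Step~2 and beyond I would take $G=T$, $\Omega=[T:H]$, $X=T^{(2),\Omega}$, with $\omega$ in a part $\Delta$ of a $G$-invariant partition $\Sigma$ whose setwise stabiliser is one of the three maximal subgroups $M$ above; by the previous paragraph $L=G^{\Sigma}$ satisfies $L=L^{(2),\Sigma}$, so Proposition~\ref{MainCor1} applies. The core of the argument is Proposition~\ref{MainCor1}(2): if $X\ne G$ then $X=N\rtimes G$ with $N^{\Delta}$ a nontrivial normal subgroup of $Y=X_\Delta^\Delta$, all of whose orbits on $\Delta$ have a common length $a>1$ dividing the gcd of the two-point stabiliser orders of $L$. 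Two sub-cases arise. When $H$ is maximal in $M$ and $M^{\Delta}$ is \emph{primitive} (e.g. $M^{\Delta}\cong G_2(5)$, or $\mathrm{McL}$ after quotienting the normal $3$), a nontrivial normal $N^\Delta$ must be transitive, forcing $a=|\Delta|$ and hence $|M:H|=|\Delta|$ very small; one then either derives a proper factorisation of $M^{\Delta}$ contradicting \cite{LPS2}, or contradicts condition~(ii) of Proposition~\ref{MainCor1}(1) by exhibiting (using GAP \cite{GAP} or \texttt{Magma} \cite{MAGMA}) a two-point stabiliser of $M^\Sigma$ not contained in $H$. When $M$ has a normal subgroup (the central $C_3$ in $3.\mathrm{McL}$, or the $\core_M(H)$), I would argue as in the Monster proof (Proposition~\ref{MProp}): $\core_M(H)\le M\cap M_j$ makes $(M\cap M_j)^{\Delta}$ a proper subgroup of $M^{\Delta}$, so $N^\Delta=Y$ would yield a proper factorisation $Y=Y_\omega(M\cap M_j)^\Delta$, impossible since $\mathrm{McL}$-type socles have the factorisations classified in \cite{LPS2}. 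In all cases $X=G$. Then I would run Proposition~\ref{BOBWProp}, computing $\widehat{Q}(T,H,2)<1$ with \texttt{Magma} \cite{MAGMA} for each maximal subgroup $H$ of $M$ that survives, which shows $T^{[T:H]}$ has base size $2$ and lets me delete all of $H$'s subgroups; iterating, $\mathcal{S}_{\ell}$ empties after finitely many steps, proving $T$ is totally $2$-closed.

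The main obstacle is the bookkeeping inside $G_2(5)$ and $3.\mathrm{McL}.2$: these maximal subgroups are large, with many conjugacy classes of maximal subgroups of their own, and the central $C_3$ in $3.\mathrm{McL}$ means one is repeatedly working with the \emph{imprimitive} (not primitive) action $M^\Delta$, so Lemma~\ref{BaseLemma}(b) does not apply directly and one must instead chase the factorisation/condition-(ii) dichotomy. The delicate point is to confirm, at each level of the subgroup lattice below $G_2(5)$ and below $3.\mathrm{McL}$, that either the relevant $\widehat{Q}$-bound certifies base size $2$ (truncating the recursion) or, where it does not, that the gcd of two-point stabiliser orders is small enough to rule out a nontrivial $N$ via Proposition~\ref{MainCor1}(2)--(3). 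This is the same pattern executed for $\mathrm{J}_1$ and $\mathrm{J}_3$, but the orders involved for $\mathrm{Ly}$ are larger, so the computational verification is heavier; no new theoretical ingredient beyond those already developed is needed.
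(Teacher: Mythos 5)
Your overall plan --- Strategy~\ref{Strategy} driven by the base-size data of \cite{BOBW}, Lemma~\ref{BaseLemma}(b)(i) for the primitive actions, Proposition~\ref{MainCor1}(2) plus the gcd of two-point stabiliser orders for the imprimitive ones, and $\widehat{Q}(T,H,2)<1$ to truncate the recursion --- is exactly the paper's approach. However, there are three concrete problems with the way you propose to execute it. First, $3.\mathrm{McL}$ is \emph{not} a maximal subgroup of $\mathrm{Ly}$: the maximal subgroups with base size exceeding $2$ are only $G_2(5)$ and $3.\mathrm{McL}{:}2$, and $3.\mathrm{McL}$ is the index-$2$ derived subgroup of the latter, to be handled at depth $2$. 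Consequently Lemma~\ref{BaseLemma}(b)(i) does not apply to it, its membership in Table~\ref{tab:basesize} must be justified separately (the paper shows $|M_2\cap M_2^g|>4$ for all $g$, so $H\cap H^g\neq 1$ when $|M_2:H|=2$), and the case $H=3.\mathrm{McL}$ with $|\Delta|=2$ needs its own argument, since there the divisibility constraint $a\mid 30$ is vacuous.

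Second, your route to a contradiction in the imprimitive cases is the wrong one in two respects. The clean contradiction the divisibility gives is that $|\Delta|=|M:H|$ would have to divide $48$ (resp.\ $30$) while the minimal faithful transitive degree of $G_2(5)$ (resp.\ $\mathrm{McL}{:}2$, $\mathrm{McL}$) is $3906$ (resp.\ $275$); the factorisation argument you import from the Monster proof is not justified for $Y=\mathrm{McL}{:}2$, which, unlike $\mathbb{B}$, does admit nontrivial factorisations (e.g.\ $\mathrm{McL}\cdot 2$), so ``impossible since the factorisations are classified in \cite{LPS2}'' is not a proof. Third, and most seriously, your use of Proposition~\ref{MainCor1}(1) is inverted: condition (ii) there asserts $(M\cap M_j)G_{\alpha}=M$ for \emph{all} $j$, so to force $X=G$ in the $|\Delta|=2$ case you must exhibit a two-point stabiliser that \emph{is} contained in $H$ (making $(M\cap M_j)H=H\neq M$); exhibiting one \emph{not} contained in $H$, as you propose, is consistent with condition (ii) and proves nothing --- indeed that is precisely the configuration used in Proposition~\ref{ONProp} to show $\mathrm{ON}$ is \emph{not} totally $2$-closed. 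The paper's computation finds a two-point stabiliser $2.\mathrm{Sym}(7)\le 3.\mathrm{McL}=H$, which is what the argument actually requires.
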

	
	\begin{proof} Let $T=\mathrm{Ly}$. As in the other cases, our task is to prove, using Strategy~\ref{Strategy}, that $T^\Omega=T^{(2),\Omega}$, where $\Omega=[T:H]$, for each proper nontrivial subgroup $H$ of $T$. Since $T=\Aut(T)$ and $T$ has no $2$-transitive representations, it follows from
		Lemma \ref{BaseLemma}(b)(i) that this holds for all maximal subgroups $H$. Thus we proceed to stage (2) of Step 1 in Strategy~\ref{Strategy}: constructing the set $\mathcal{S}_2$. By \cite[Table 2]{BOBW}, there are only two conjugacy classes of maximal subgroups $M$ of $T$ such that $T^{[T:M]}$ has base size at least $3$, with representatives $M_1=G_2(5)$ and $M_2=3.\mathrm{McL}:2$. Therefore according to Strategy~\ref{Strategy}, the set $\mathcal{S}_2$ consists of all non-maximal subgroups $H$ such that the only maximal subgroups of $T$ containing $H$ are conjugate to $M_1$ or $M_2$.
		
		We now move on to Strategy \ref{Strategy}, Step 2, stage (1). Here, we need to prove that $T^{[T:H]}$ is $2$-closed for all $H\in\mathcal{S}_2$ with $\ell_T(H)=2$. Let $H\in\mathcal{S}_2$  with $\ell_T(H)=2$, and assume that $T^{[T:H]}\neq T^{(2),[T:H]}$. Let $M$ be a maximal subgroup of $T$ containing $H$, so $H$ is maximal in $M$. Without loss of generality we may assume that $M=M_i$ for $i=1$ or $2$. We adopt the rest of the notation in Notation \ref{not2}, with $T=G$. So $\Omega=[T:H]$, $\Sigma=[T:M]$, $L=T^{\Sigma}$, $M=T_{\Delta}$ (with $\Delta\in\Sigma$), $H=T_{\omega}$ (with $\omega\in\Delta$), etc. Note that $L^{(2),\Sigma}=L$, by the previous paragraph, and $M^\Delta$ is primitive since $H$ is maximal in $M$. We also set $X=T^{(2),\Omega}$, so $X>T$, and $Y=X_{\Delta}^{\Delta}\le M^{(2),\Delta}$.
		
		We begin with the case $M=M_1=G_2(5)$. We note that, by \cite[Lemma 2.19.1]{Ivanov95}, it follows that the greatest common divisor of the orders of the $2$-point stabilisers in $T^{[T:M]}$ is $48$, and hence the entry in Table~\ref{tab:basesize} for $M=G_2(5)$ is correct. Suppose first that $|M:H|\neq 5^3(5^3-1)/2$. Then $M\cong M^{\Delta}$, so \cite[Theorem 1]{LPS} implies that $Y\le M^{(2),\Delta}$ is almost simple with socle $G_2(5)$. Since $\Out(G_2(5))=1$ and $G_2(5)$ has no $2$-transitive representations, it follows from  Lemma \ref{BaseLemma}(b)(i) that $M^{\Delta}=M^{(2), \Delta}$. Therefore $Y\cong G_2(5)$ and so the nontrivial normal subgroup $A$ of $Y$ in  Proposition \ref{MainCor1}(2) is equal to $Y$. In particular, $A^{\Delta}$ is transitive. However this means,  by Proposition \ref{MainCor1}(2), that $|M:H|=|\Delta|$ divides the greatest common divisor of the orders of the $2$-point stabilisers in $T^{[T:M]}$, that is, $|M:H|$ divides $48$. This is a contradiction since the smallest degree of a transitive permutation representation of  $G_2(5)$ is $3906$, \cite[page 114]{Atlas}. Thus $|M:H|=5^3(5^3-1)/2$, and \cite[Theorem 1]{LPS} implies that $Y\le M^{(2),\Delta}\le \Aut(\Omega_7(5))=\Omega_7(5).2$. Since $M\le Y$ and the only maximal overgroups of $G_2(5)$ in $\Omega_7(5).2$ are $G_2(5)$, $\Omega_7(5)$, and $\Omega_7(5).2$, we have $Y\in\{G_2(5),\Omega_7(5),\Omega_7(5).2\}$. Hence  the nontrivial normal subgroup $A$ of $Y$ in  Proposition \ref{MainCor1}(2) satisfies $A\in\{G_2(5),\Omega_7(5),\Omega_7(5).2\}$. In all cases $A\geq G_2(5)$, so $A^\Delta$ is transitive since $M^\Delta$ is primitive. Then the same argument as in the previous shows that $|\Delta|$ divides $48$ and we get the same contradiction.
		Thus, we have shown that $T=T^{(2),[T:H]}$ for all maximal subgroups $H$ of $M=M_1$, completing Step 2 stage (1) for $M=M_1$.
		
		Now we consider the case $M=M_2=3.\mathrm{McL}:2$. We note that, by \cite[Lemma 2.19.1]{Ivanov95}, it follows that the greatest common divisor of the orders of the $2$-point stabilisers in $T^{[T:M]}$ is $30$, and hence the entry in Table~\ref{tab:basesize} for $M=3.\mathrm{McL}:2$ is correct. Let $Z=Z(M')$, where $M'=3.\mathrm{McL}$ is the derived subgroup of $M$, so $|Z|=3$. Since $Z\le\Phi(M)$ and $H$ is maximal in $M$, we have $Z\le \core_M(H)$. Suppose first that $H=M'$, so $|{\Delta}|=2$. We will use Proposition \ref{MainCor1}(1): by \cite[Lemma 2.19.1]{Ivanov95}, $M^{\Sigma}$ has an orbit such that the stabiliser $M\cap M_j$ in $M$ of a point in this orbit has shape $2.\Sym(7)$. A computation using MAGMA shows that $(M\cap M_j)H\neq M$. Thus the condition in Proposition \ref{MainCor1}(1)(ii) fails, and hence $X=T$, which is a contradiction. Therefore $H\neq M'$, and hence $\core_M(H)=Z\cong C_3$ and $M^{\Delta}\cong \mathrm{McL}:2$. Then $M^{\Delta}=M^{(2),\Delta}$, by \cite[Theorem 1]{LPS}, and since $M^{\Delta}\le Y\le M^{(2),\Delta}$, this means that $Y=M^{(2),\Delta}\cong \mathrm{McL}:2$. Thus, the nontrivial normal subgroup $A$ of $Y$ in Proposition \ref{MainCor1}(2) is either $\mathrm{McL}$ or $\mathrm{McL}:2$. In particular, $A^{\Delta}$ is transitive, and hence,  by Proposition \ref{MainCor1}(2), $|M:H|=|\Delta|$ divides the greatest common divisor of the orders of the $2$-point stabilisers in $T^{[T:M]}$, that is, $|M:H|$ divides $30$. This is a contradiction since the smallest degree of a transitive permutation representation of  $\mathrm{McL}:2$ is $275$, \cite[page 100]{Atlas}.
		
		This completes stage (1) of Step 2.
		We now proceed to stage (2) of Step 2: constructing the set $\mathcal{S}_3$.   To do this we prove the following claim.
		
		\medskip
		\noindent
		{\it Claim: If $H\leq T$ and either (i) $\ell_T(H)=2$ but $H \not\cong M_2'=3.\mathrm{McL}$, or
			(ii) $H<M_2'$ with $H$ conjugate to $3.\mathrm{M}_{22}$ or $3.2^4.A_7$, then $T^{[T:H]}$ has base size  $2$.}

		\smallskip\noindent
		{\it Proof of Claim:}
		Let $M$ be a maximal subgroup of $T$ containing $H$. If $T^{[T:M]}$ has base size $2$ then so does $T^{[T:H]}$, so we may assume that $T^{[T:M]}$ has base size greater than $2$. As we saw above, this means (replacing $H, M$ by some $T$-conjugate if necessary) that  $M=M_i$ for $i=1$ or $2$. Suppose first that $\ell_T(H)=2$, that is, $H$ is maximal in $M$, but $H\ne M_2'$. Then $H$ is either a maximal subgroup of $M_1=G_2(5)$, or $M=M_2$, $H$ contains $Z:=Z(M')\cong C_3$, and $H/Z$ is a maximal subgroup of $M/Z\cong \mathrm{McL}:2$ of index greater than $2$. Such subgroups $H$ can be read off using the Web Atlas \cite{WebAtlas}.
		As in Proposition \ref{MProp}, an upper bound for the base size of $T^{[T:H]}$ can be computed using Proposition \ref{BOBWProp}.
		Using the character table library (together with the available class fusions), we check using {\sf GAP} \cite{GAP} that $\hat{Q}(T,H,2)<1$ for each choice of $H$. Hence by Proposition \ref{BOBWProp}, $T^{[T:H]}$ has base size $2$. This proves part (i). Finally suppose that $H<M_2'$ with $H$ conjugate to $3.\mathrm{M}_{22}$ or $3.2^4.A_7$. The same method,  using {\sf GAP}, shows that $\hat{Q}(T,H,2)<1$, and hence that $T^{[T:H]}$ has base size $2$, proving part (ii).
		
		\medskip
		It follows from part (i) of the claim, and the definition of $\mathcal{S}_3$, that $\mathcal{S}_3$ consists of those subgroups conjugate to a proper nontrivial subgroup $H$ of $M_2'=3.\mathrm{McL}$ such that the only subgroup $K$ containing $H$ with $\ell_T(K)=2$ is $K=M_2'$. Examining the Atlas \cite[page 100]{Atlas}, we see that $H$ must be conjugate to a subgroup of $N_1:=3.\mathrm{M}_{22}$ or $N_2:=3.2^4:A_7$, since all other proper subgroups of $M_2'$ are contained in a subgroup $K$ of $M_2$ with $\ell_T(K)=2$ and $K\ne M_2'$.
		
		Now we move on to Strategy \ref{Strategy}, Step 3, stage (1). We need to prove that $T^{[T:H]}$ is $2$-closed for all $H\in\mathcal{S}_3$ with $\ell_T(H)=3$. By the previous paragraph each such subgroup $H$ is conjugate to $N_1$ or $N_2$ (both maximal subgroups of $3.\mathrm{McL}$). The argument showing that in these two cases $T^{[T:H]}$ is $2$-closed is almost identical to that given in Step 2, stage (1) for maximal subgroups of $M_2$ other than $M_2'$. We assume that $H=N_i$ and $T^{[T:H]}$ is not $2$-closed. We have $\Core_{M_2'}(H)=Z\cong C_3$ and (using Notation~\ref{not2}) $(M_2')^\Delta\cong \mathrm{McL}$ where $\Delta=[M_2':H]$. We again apply \cite[Theorem 1]{LPS} to deduce that $Y=(M_2')^{(2),\Delta}=\mathrm{McL}$. This implies that the subgroup $A$ in Proposition~\ref{MainCor1}(2) is transitive on $\Delta$ and hence that $|\Delta|=|M_2':H|$ divides the greatest common divisor of the orders of the $2$-point stabilisers in $T^{[T:M_2']}$. Hence $|M_2':H|$ divides $30$, which is a contradiction.  This completes  Step 3, stage (1).
		
		Finally to complete  Step 3, stage (2), we construct $\mathcal{S}_4$. A subgroup $H\in \mathcal{S}_4$ must satisfy $H\in \mathcal{S}_3$ and $\ell_T(H)\geq 4$, and further, if $\ell_T(H)=4$ and $H<K<T$, then in particular $T^{[T:K]}$ has base size at least $3$. Now $H\in \mathcal{S}_3$  and $\ell_T(H)\geq 4$ imply that $H$ is conjugate to a proper subgroup of $N_i$ for some $i$, and the second condition implies that  $T^{[T:N_i]}$ has base size at least $3$. This however contradicts part (ii) of the claim, and we conclude that  $\mathcal{S}_4$ is empty. We have therefore shown that $T$ is totally $2$-closed.

		Finally we prove Proposition \ref{Base2Lemma} for $T=\mathrm{Ly}$. The information deduced above implies that, if $H$ is a proper nontrivial subgroup of $T$ with $H\not\in\{G_2(5),3.\mathrm{McL},3.\mathrm{McL}:2\}$, then $T^{[T:H]}$ has base size $2$. We know from \cite[Table 2]{BOBW} that $T^{[T:H]}$ has base size  $3$ when $H\in\{G_2(5),3.\mathrm{McL}:2\}$, and in these cases the entries in Table~\ref{tab:basesize} is correct. Suppose then that $H=3.\mathrm{McL}<M_2=3.\mathrm{McL}:2$. Then, for each $g\in T$, $|M_2\cap M_2^g:H\cap H^g|$ divides $4$. However, by \cite[Lemma 2.19.1]{Ivanov95}, $|M_2\cap M_2^g|>4$ for all $g\in T$. Thus $H\cap H^g\ne 1$ for each $g\in T$, and hence $T^{[T:H]}$ has base size greater than $2$.
		Finally, the greatest common divisor of the orders of the $2$-point stabilisers in $T^{[T:H]}$ divides the analogous number for the action $T^{[T:M_2]}$ which, as we showed above, is $30$. Thus  the entry in Table~\ref{tab:basesize} is correct also for $3.\mathrm{McL}$.
	\end{proof}
	
	Now we deal with the Conway and Fischer groups $\mathrm{Co}_2$ and $\mathrm{Fi}_{23}$, and then the baby monster $\mathbb{B}$.
	
	\begin{proposition}\label{Co2Prop} Neither the second Conway group $\mathrm{Co}_2$ nor the Fischer group $\mathrm{Fi}_{23}$ is totally $2$-closed.
	\end{proposition}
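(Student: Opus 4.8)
The plan is to follow the same template used for the groups $\mathrm{ON}$ and $\mathrm{HN}$ above, namely to exhibit, for each of $T=\mathrm{Co}_2$ and $T=\mathrm{Fi}_{23}$, a single transitive (possibly imprimitive) permutation representation which fails to be $2$-closed, thereby contradicting Corollary~\ref{TransRed}. Since both groups appear in Proposition~\ref{lem:simple1}(b), they admit no nontrivial factorisation, so by Corollary~\ref{TransRed} total $2$-closure would force every transitive representation to be $2$-closed; a single counterexample suffices.

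First, for $\mathrm{Co}_2$: by \cite[Table~2]{BOBW} the primitive action of $\mathrm{Co}_2$ on the cosets of its maximal subgroup $M=\mathrm{U}_6(2){:}2$ has base size at least $3$, and $\Aut(\mathrm{Co}_2)=\mathrm{Co}_2$ (so the $\mathrm{LPS}$ machinery gives $T^{[T:M]}=T^{(2),[T:M]}$, hence this primitive action itself is $2$-closed and does not help directly). Instead I would descend to a core-free subgroup $H<M$ of index $2$, namely $H=M'=\mathrm{U}_6(2)$, exactly as in the $\mathrm{ON}$ proof. Working in Notation~\ref{not2} with $G=T$, $\Delta=[M:H]$ of size $2$, $R=C_2$, and $\Sigma=[T:M]$ with $L=G^\Sigma$ already $2$-closed, I would check (using \cite{GAP} or \cite{MAGMA}, or the Web Atlas subdegree data) that no two-point stabiliser in $M^{[T:M]}$ is contained in $H$ — equivalently $(M\cap M^t)H=M$ for all $t\in T$ — so condition (ii) of Proposition~\ref{MainCor1}(1) holds; Proposition~\ref{MainCor1}(1) then yields $X:=T^{(2),[T:H]}\ne T$, so $T^{[T:H]}$ is not $2$-closed and $\mathrm{Co}_2$ is not totally $2$-closed. (Alternatively, if a suitable maximal subgroup of even order exhibits rank collapse under $\Aut(T)$ one could mimic the $\mathrm{HN}$ argument, but $\Aut(\mathrm{Co}_2)=\mathrm{Co}_2$ rules that out, so the index-$2$ descent is the natural route.)

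Second, for $\mathrm{Fi}_{23}$: the same strategy applies, now exploiting a maximal subgroup of $\mathrm{Fi}_{23}$ with a proper core-free subgroup $H$ of small index. A convenient choice is the maximal subgroup $M=\mathrm{Fi}_{22}{:}2$ (which by \cite[Table~2]{BOBW} gives base size $\ge 3$), with $H=M'=\mathrm{Fi}_{22}$ of index $2$; again $L=G^\Sigma=T^{[T:M]}$ is $2$-closed by Lemma~\ref{BaseLemma}(b)(i) since $\Aut(\mathrm{Fi}_{23})=\mathrm{Fi}_{23}$ and $\mathrm{Fi}_{23}$ has no $2$-transitive action. I would verify computationally that some two-point stabiliser in $M^{[T:M]}$ — i.e.\ some $M\cap M^t$ — satisfies $(M\cap M^t)H=M$ for all $t\in T$ (equivalently, no such stabiliser lies inside the index-$2$ subgroup $H$), so that Proposition~\ref{MainCor1}(1)(ii) holds and hence $T^{[T:H]}$ is not $2$-closed. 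Should it happen that every two-point stabiliser does lie in $H$ (forcing condition (ii) to fail), one falls back on Proposition~\ref{MainCor1}(1)(i)$\Leftrightarrow$(iii) or on a direct rank computation in $\Aut(T)$-style comparison at a different subgroup, but the index-$2$ subgroup construction is expected to work verbatim as for $\mathrm{ON}$.

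The main obstacle is purely computational: one must locate, for each of $\mathrm{Co}_2$ and $\mathrm{Fi}_{23}$, a concrete maximal subgroup $M$ of even order together with a core-free index-$2$ (or more generally prime-index) subgroup $H$, and then certify — via \cite{GAP}, \cite{MAGMA}, the Web Atlas, or the character table library \cite{CharTable} — the combinatorial condition on two-point stabilisers that activates Proposition~\ref{MainCor1}(1) (or its $|\Delta|=p$ analogue, Proposition~\ref{MainCor1}(4)). There is no conceptual difficulty beyond verifying $L=L^{(2),\Sigma}$ (immediate from Lemma~\ref{BaseLemma}(b)(i), since both groups are complete and have no $2$-transitive representations) and reading off the relevant subdegrees/stabiliser structures; the argument is then a direct transcription of the $\mathrm{ON}$ proof. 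A subtle point to check is that $\Core_T(H)=1$ for the chosen $H$ (automatic since $T$ is simple and $H<T$), which is needed so that $T^{[T:H]}\cong T$ and the failure of $2$-closure is genuine.
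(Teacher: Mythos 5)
Your treatment of $\mathrm{Co}_2$ is exactly the paper's proof: take $M=\mathrm{U}_6(2){:}2$, $H=M'=\mathrm{U}_6(2)$ of index $2$, verify computationally that $(M\cap M^t)H=M$ for every $t\in T$, note $T^{[T:M]}=T^{(2),[T:M]}$ via Lemma~\ref{BaseLemma}(b)(i), and invoke Proposition~\ref{MainCor1}(1). That half is correct.

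The $\mathrm{Fi}_{23}$ half has a genuine gap: the maximal subgroup of $\mathrm{Fi}_{23}$ associated with $\mathrm{Fi}_{22}$ is not $\mathrm{Fi}_{22}{:}2$ but the transposition centraliser $2.\mathrm{Fi}_{22}$, a non-split perfect central extension (the Schur double cover) of $\mathrm{Fi}_{22}$. Being a stem extension of a perfect group, this subgroup is itself perfect, so $M'=M$ and it has no subgroup of index $2$ at all; your descent to $H=M'$ therefore cannot even begin. Your fallback clause does not rescue this, because the obstruction is the non-existence of the subgroups $M$ and $H$ you name, not the failure of the two-point-stabiliser condition. (Note also that a subgroup of $2.\mathrm{Fi}_{22}$ isomorphic to $\mathrm{Fi}_{22}$ would be a complement splitting that extension, so no copy of $\mathrm{Fi}_{22}$, let alone $\mathrm{Fi}_{22}{:}2$, sits inside it.) The repair is simply to pick a maximal subgroup with an insoluble derived subgroup of index $2$: the paper takes $M=\mathrm{O}_8^+(3){:}\Sym(3)$, whose coset action also has rank $3$ and base size at least $3$, sets $H=M'$, and then runs precisely the GAP verification of condition (ii) of Proposition~\ref{MainCor1}(1) that you describe. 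With that substitution your argument goes through verbatim.
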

	
	\begin{proof} Let $T\in\{\mathrm{Co}_2,\mathrm{Fi}_{23}\}$. Consider the following maximal subgroup $M$ of $T$, namely
		$M = U_6(2):2$ if $T=\mathrm{Co}_2$, and $M=O^+_8(3):\Sym(3)$ if $T=\mathrm{Fi}_{23}$. In each case, $T^{[T:M]}$ has rank $3$. Set $H:=M'$, the derived subgroup, so that $|M:H|=2$, and adopt the notation in Notation \ref{not2}, with $T=G$. We argue as in the case of $T=\mathrm{ON}$ in Proposition~\ref{ONProp}: we check, using {\sf GAP} \cite{GAP}, that each of the $2$-point stabilisers $M\cap M_j$ in the $T$-action on $[T:M]$ satisfies
		$(M\cap M_j)H=M$,  that is to say, condition (ii) of Proposition \ref{MainCor1}(1) holds. On the other hand $T^{[T:M]}=T^{(2), [T:M]}$ by Lemma \ref{BaseLemma}(b)(i) since $T=\Aut(T)$  and $T^{[T:M]}$ is not $2$-transitive. Hence Proposition \ref{MainCor1}(1) applies, and we conclude that $T^{[T:H]}$ is not $2$-closed.
		Hence, $T$ is not totally $2$-closed.
	\end{proof}
	
	\begin{proposition}\label{BProp}
		Fischer's baby monster $\mathbb{B}$ is not totally $2$-closed.
	\end{proposition}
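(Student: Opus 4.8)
The plan is to run the same argument used for $\mathrm{ON}$, $\mathrm{Co}_2$ and $\mathrm{Fi}_{23}$ in Propositions~\ref{ONProp} and~\ref{Co2Prop}: I would produce a maximal subgroup $M$ of $T=\mathbb{B}$ whose derived subgroup $M'$ has index $2$, and show that the conditions of Proposition~\ref{MainCor1}(1) force the action of $T$ on $[T:M']$ to fail to be $2$-closed. Concretely, I would take $M=2.{}^2E_6(2){:}2$, the centraliser in $\mathbb{B}$ of a $2A$-involution, which is a maximal subgroup of $\mathbb{B}$; since ${}^2E_6(2)$ is simple and $2.{}^2E_6(2)$ is a perfect central extension, $H:=M'=2.{}^2E_6(2)$ has index $2$ in $M$. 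Then I would adopt Notation~\ref{not2} with $G=T$, $\Omega=[T:H]$, $\Sigma=[T:M]$, $M=G_\Delta$ and $H=G_\omega$ for $\omega$ in a part $\Delta\in\Sigma$, so that $R=M^\Delta\cong\mathbb{Z}/2\mathbb{Z}$ as $|M:H|=2$.

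First I would check that $L:=G^\Sigma=T^{[T:M]}$ is $2$-closed. This is immediate from Lemma~\ref{BaseLemma}(b)(i): we have $\Aut(\mathbb{B})=\mathbb{B}$, the action of $\mathbb{B}$ on its $2A$-involutions is not $2$-transitive, and $(\mathbb{B},|T:M|)$ is not one of the exceptional pairs in Lemma~\ref{BaseLemma}(b). Hence $L=L^{(2),\Sigma}$, and all standing hypotheses of Proposition~\ref{MainCor1}(1) are in force.

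The crucial step would be to verify condition~(ii) of Proposition~\ref{MainCor1}(1): taking orbit representatives $\Delta_j$ for the $M$-orbits on $\Sigma\setminus\{\Delta\}$, I must show that $(M\cap M_j)H=M$ for each $j$, equivalently that no two-point stabiliser of $T^{[T:M]}$ is contained in the index-$2$ subgroup $H=2.{}^2E_6(2)$, i.e. that each two-point stabiliser contains an element of $M\setminus H$. The action of $\mathbb{B}$ on $[T:M]$ is the familiar action on the class of $2A$-involutions ($\mathbb{B}$ being a $\{3,4\}$-transposition group); it has small rank, and its two-point stabilisers --- the centralisers $C_{\mathbb{B}}(\langle t,t'\rangle)$ for pairs $t,t'$ of $2A$-involutions --- are recorded in the literature and can also be recovered from the permutation character $1_M^{\mathbb{B}}$ together with the class-fusion data for the maximal subgroups of $\mathbb{B}$ available in the Atlas and the {\sf GAP} character table library \cite{CharTable,GAP}. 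Using this, I would confirm that each relevant two-point stabiliser meets $M\setminus H$. Granting this, Proposition~\ref{MainCor1}(1) yields $X:=T^{(2),\Omega}\ne G=T$, so $T^{[T:H]}$ is not $2$-closed, and therefore $\mathbb{B}$ is not totally $2$-closed.

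The hard part is precisely this last verification. Unlike the cases of $\mathrm{Co}_2$ and $\mathrm{Fi}_{23}$, the two-point stabilisers cannot be computed directly by machine, since the smallest faithful permutation representation of $\mathbb{B}$ has degree larger than $10^{10}$; instead one must appeal to the known suborbit structure of the $2A$-involution action and to character-table computations, much as in the base-size arguments of \cite{BOBW}. Should $M=2.{}^2E_6(2){:}2$ turn out to be awkward, the same reduction applies verbatim to any maximal subgroup $M$ of $\mathbb{B}$ with $|M:M'|=2$, and it would suffice to exhibit one such $M$ for which condition~(ii) of Proposition~\ref{MainCor1}(1) holds.
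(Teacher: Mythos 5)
Your choice of subgroup and your overall route coincide exactly with the paper's: take $M=2.{}^2E_6(2){:}2$, set $H=M'=2.{}^2E_6(2)$ of index $2$, note that $T^{[T:M]}$ is $2$-closed via Lemma~\ref{BaseLemma}(b)(i), and then invoke Proposition~\ref{MainCor1}(1) once you know that $(M\cap M_j)H=M$ for every two-point stabiliser $M\cap M_j$ of $T^{[T:M]}$. So the skeleton is right.

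The gap is that the verification of $(M\cap M_j)H=M$ is the entire content of the proof, and you neither carry it out nor propose a method that would actually suffice. The four nontrivial two-point stabilisers are, by \cite[Lemma~2.25.1]{Ivanov95}, $E_2=(2\times 2^{1+20}.U_6(2)).2$, $E_3=2^2\times F_4(2)$, $E_4=\mathrm{Fi}_{22}{:}2$ and $E_5=2^{1+20}.U_4(3).2^2$. Knowing the permutation character $1_M^{\mathbb{B}}$ and the class fusions tells you the suborbit lengths and the abstract isomorphism types of these groups, and that is enough only for $E_2$ (its index in $M$ is odd, so $E_2H=M$ for parity reasons). For the other three, whether $E_i\le M'$ is a question about how $E_i$ embeds in $M$, not about its isomorphism type, and it cannot be read off from character data. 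The paper resolves $E_3$ by observing that $E_3/Z(M)$ is the centraliser of a graph automorphism of ${}^2E_6(2)$, so $E_3$ contains a graph automorphism and hence lies outside $M'$; it resolves $E_4$ using the Leon--Sims fact that $M$ has exactly two conjugacy classes of $\mathrm{Fi}_{22}{:}2$ subgroups interchanged by an automorphism acting trivially on $H$, forcing $E_4\not\le H$; and for $E_5$ it needs the decomposition $E_5=KQ$ with $Q\le O_2(E_2)\cap M'$ and a machine computation showing $K E_4'=E_4$, which is then chained with $E_4H=M$ to get $E_5H=M$. None of these three arguments is a routine consequence of ``known suborbit structure,'' so as written your proposal defers exactly the part of the proof that requires new work.
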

	
	\begin{proof} Let $T=\mathbb{B}$, and consider the     maximal subgroup $M=2.{}^2E_6(2).2$.
		Then $M/Z(M)$ is an extension of ${}^2E_6(2)$ by a graph automorphism. Let $H=M'\cong 2.{}^2E_6(2)$, of index $2$ in $M$.  The stabilisers in $T^{[T:M]}$ of two distinct points are given in \cite[Lemma 2.25.1]{Ivanov95}, namely
		\[
		(2\times 2^{1+20}.U_6(2)).2,\quad 2^2\times F_4(2),\quad  \mathrm{Fi}_{22}:2,\quad 2^{1+20}.U_4(3).2^2.
		\]
		We call these groups $E_i=M\cap M_i$, for $i=2,3,4,5$, respectively. We claim that $HE_i=M$ for $i=2,\dots,5$. All assertions in the next two paragraphs are from the short proof of the main theorem in \cite{LeonSims}. The groups $E_i$ are the same as in \cite{LeonSims}, but there the group $M$ is denoted by $E$.
		
		First, $|M:E_2|$ is odd, so certainly $E_2H=M$. The group $E_3$ contains $Z(M)$, and $E_3/Z(M)$ is the centraliser in ${}^2E_6(2).2$ of a graph automorphism. In particular, $E_3$ contains a graph automorphism, so we also have $E_3 H=M$. Next, $M$ has precisely two conjugacy classes of subgroups isomorphic to $\mathrm{Fi}_{22}:2$, and they are interchanged by an automorphism of $M$ which acts trivially on $H$ (see \cite[top of page 1040]{LeonSims}). It follows, since $E_4$ is one of these groups, that $E_4\not\le H$, and hence $E_4H=M$.
		
		Finally, the group $E_5$ can be written as $E_5=KQ$ as follows (see \cite[page 1040]{LeonSims}). We may take $E_2=C_M(d_2)$ with $d_2\in E_4$ being a $2A$-involution of $E_4=\mathrm{Fi}_{22}:2$; and $K=C_{E_4}(d_2)\cap C_{E_4}(d_2')$ with $d_2'$ a $2B$-involution of $E_4$ which does not commute with $d_2$ (using ATLAS notation). Then $Q$ is a unique index $2$ subgroup of $O_2(E_2)=2\times 2^{1+20}$ which is normalised by $K$.  A computation in {\sf GAP} \cite{GAP} shows that 
		$KE_4'=E_4$. Hence, since $Q\leq M'=H,\ E_4'\le M'= H$ and $E_4H=M$, we deduce that
		\[
		M=E_4H = (KE_4')H= K (E_4'H) = KH = K(QH) = (KQ)H= E_5H.
		\]
		Thus $E_iH=M$ for all $i$, proving the claim. Hence the condition of Proposition \ref{MainCor1}(1)(ii) holds for the action $T^{[T:H]}$,  and it follows from Proposition \ref{MainCor1}(1) that $T^{[T:H]}$ is not $2$-closed. The result follows.
	\end{proof}
	
	We now move on to our second to last case: the Thompson group $\mathrm{Th}$. We begin by showing that a particular transitive action of $\mathrm{Th}$ is $2$-closed.
	
	\begin{proposition}\label{ThPrelim}
		Suppose that $H={}^3D_4(2)$ is a subgroup of $T=\mathrm{Th}$ with $\ell_T(H)=2$, with $H$ contained in a maximal subgroup $M:={}^3D_4(2).3$. Then $T^{[T:H]}$ is $2$-closed.
	\end{proposition}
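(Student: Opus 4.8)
The plan is to feed the pair $(M,H)$ into the imprimitive‑action machinery of Section~\ref{ImprimitiveSection}, with $G=T=\mathrm{Th}$, $\Omega=[T:H]$ and block system $\Sigma=[T:M]$, and then to rule out the only possible ``extra'' $2$‑closure elements by hand. First I record the data for Notation~\ref{not2}: since $|M:H|=|{}^3D_4(2).3:{}^3D_4(2)|=3$, each block $\Delta$ has size $3$ and $R=M^\Delta\cong C_3$ is of \emph{prime} order; $M$ is maximal in $T$, hence core‑free and (as $T$ is simple) not contained in any proper normal subgroup of $T$; and $L:=T^\Sigma\cong T^{[T:M]}$ has rank $11$ (as used in Proposition~\ref{ThPropCount}), so it is not $2$‑transitive. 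Because $T=\Aut(T)$ and $M={}^3D_4(2).3$ is none of the subgroups excluded in Lemma~\ref{BaseLemma}(b), Lemma~\ref{BaseLemma}(b)(i) gives $L=L^{(2),\Sigma}$, so the hypotheses of Lemma~\ref{Crucial}, Lemma~\ref{PrimeCor} and Proposition~\ref{MainCor1} are all in force.

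Now suppose, for a contradiction, that $X:=T^{(2),\Omega}\neq T$. Since $|R|$ is prime, Lemma~\ref{Crucial} yields $Y:=X^\Delta_\Delta=R^{(2),\Delta}=R\cong C_3$ and $X=N\rtimes T$ with $1\neq N=B_R\cap X\leq B_R\cong C_3^{\,s}$; using condition~$10$ of Notation~\ref{not2} (so $T\cong T^\Sigma=L$) we regard $N$ as a nonzero $\mathbb{F}_3[T]$‑submodule of the permutation module for the $T$‑action on $\Sigma$, i.e.\ of $(1_M)\!\uparrow^{T}$. The condition on $J$ in Lemma~\ref{PrimeCor} is exactly what Proposition~\ref{ThPropCount} supplies for this $(M,H)$ --- indeed with the strict inequality $\sum_{j\in J}|M:M\cap M_j|>s/2$ --- so Lemma~\ref{PrimeCor} (whose proof invokes Lemma~\ref{InducedModules}) forces $N$ to be the diagonal subgroup $\{(x,\dots,x):x\in R\}\cong C_3$ of $B_R$.

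It remains to show this diagonal $N$ cannot lie inside $T^{(2),\Omega}$. Fix $1\neq x\in R$ and set $z=(x,\dots,x)\in N$. By Theorem~\ref{thm:W}, and because every $T$‑orbit on $\Omega\times\Omega$ restricts on a product $\Delta_i\times\Delta_j$ ($i\neq j$) to an orbit of $\rho_{i,j}(M_i\cap M_j)$ in product action, membership $z\in T^{(2),\Omega}$ is equivalent to: for every $j$, the restriction $(x,x)$ of $z$ to $\Delta_1\cup\Delta_j$ lies in $\rho_{1,j}(M\cap M_j)$ acting on $\Delta_1\times\Delta_j$ (for elements of $R\times R$, membership in $\rho_{1,j}(M\cap M_j)$ and in its $1$‑closure coincide since $R$ is regular of prime order, consistent with Lemma~\ref{Crucial}(b)). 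But Lemma~\ref{Crucial}(c) (cf.\ also Proposition~\ref{MainCor1}(4)) already forces each $\rho_{1,j}(M\cap M_j)^{\Delta_1\times\Delta_j}$ to be either all of $C_3\times C_3$ or a full diagonal subgroup $\{(t,t^{\theta_j}):t\in R\}$, and $(x,x)$ lies in such a diagonal precisely when $\theta_j$ is the identity automorphism of $C_3$. So I would finish by exhibiting a block $\Delta_j$ with $j\in J$ for which $\theta_j$ is the inversion automorphism instead --- equivalently, for which the two homomorphisms $M\cap M_j\to M/H\cong C_3$ and $M\cap M_j\to M^{g}/H^{g}\cong C_3$ coming from the actions on $\Delta_1$ and on $\Delta_j$ differ by inversion. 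This is extracted from the explicit descriptions of the subgroups $M\cap M_j\cong(M\cap M_j)^{\Sigma}$ and their actions in \cite{Mazurov}, supplemented if needed by a short {\sf GAP} computation inside $M={}^3D_4(2).3$ and a suitable conjugate. The existence of such a $j$ contradicts $z\in T^{(2),\Omega}$, whence $X=T$ and $T^{[T:H]}$ is $2$‑closed.

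Everything up to the diagonal conclusion is a routine application of Section~\ref{ImprimitiveSection} together with the counting in Proposition~\ref{ThPropCount}; I expect the real difficulty to be concentrated entirely in the last step. The isomorphism types of the two‑point stabilisers $M\cap M_j$ do not by themselves reveal whether the induced diagonal on a pair of blocks is ``straight'' or ``twisted'', so one must pin down --- from \cite{Mazurov} or by machine --- the precise way the outer $C_3=M/H$ of ${}^3D_4(2).3$ acts on the relevant coordinate blocks. (If, contrary to expectation, every such diagonal turned out to be straight, the diagonal $z$ would lie in $T^{(2),\Omega}$; the proposition asserts it does not, so this verification is exactly the content of the proof.)
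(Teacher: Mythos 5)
Your first half is exactly the paper's argument: with $|\Delta|=3$ and $L=L^{(2),\Sigma}$ supplied by Lemma \ref{BaseLemma}(b)(i), the combination of Lemma \ref{Crucial}, Proposition \ref{ThPropCount} and Lemma \ref{PrimeCor} forces $N=B_R\cap X$ to be the diagonal $C_3$, and (as you note) $X=N\rtimes T$ with $T$ simple then forces $T$ to centralise $N$. The problem is the second half. You correctly identify that everything now hinges on showing that the order-$3$ element $z=(x,\dots,x)$ fails the membership test of Lemma \ref{Crucial}(b) for at least one pair of blocks, i.e.\ that some $\rho_{1,j}(M\cap M_j)^{\Delta_1\times\Delta_j}$ is the ``twisted'' diagonal --- but you do not prove this. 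You defer it to ``the explicit descriptions in \cite{Mazurov}, supplemented if needed by a short {\sf GAP} computation'', and your final parenthesis concedes that if the computation came out the other way the proposition would be false. That concession is precisely the admission that the proof is incomplete: the entire content of the proposition is concentrated in the step you have left unverified. (It is also optimistic to call this a short computation: the isomorphism types of the $M\cap M_j$ listed in \cite{Mazurov} do not determine the twist, and pinning it down directly would mean working with explicit conjugates of $M$ inside $\mathrm{Th}$, whose smallest faithful permutation degree exceeds $1.4\times 10^{8}$.)

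The paper closes this gap without any such computation, by a purely structural argument. Since the subdegrees of $T^{[T:M]}$ are pairwise distinct, every orbital is self-paired, so for the block $\Delta_2$ with $M\cap M_2=2^2.[2^9].(\Sym(3)\times 3)$ there is $g\in T$ interchanging $\Delta$ and $\Delta_2$. By Lemma \ref{Crucial}(b) and (c), $\rho_{1,2}(N)$ and $\rho_{1,2}(M\cap M_2)^{\Delta\times\Delta_2}$ are both diagonal subgroups of order $3$, one contained in the other, hence equal; since $T$ centralises $N$, the element $g$ centralises this $C_3$, so $T_{\{\Delta\cup\Delta_2\}}$ induces a regular $C_6$ on the six points of $\Delta\cup\Delta_2$ and $T_{\{\Delta\cup\Delta_2\}}/(H\cap H^g)\cong C_6$. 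On the other hand, setting $P=O_2(M\cap M_2)$ and using Linton's description of $N_T(P)$ inside the maximal subgroup $2^5.\mathrm{L}_5(2)$, one finds that $T_{\{\Delta\cup\Delta_2\}}/P\cong\Sym(3)\times\Sym(3)$, which has no cyclic quotient of order $6$ --- a contradiction. If you want to salvage your route, you must replace ``exhibit a twisted diagonal by machine'' with an argument of this kind; as written, your proof stops exactly where the difficulty begins.
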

	
	\begin{proof} Let $T$, $H$, and $M$ be as in the statement, and use the notation in Notation \ref{not2}, with $T=G$. So $\Omega=[T:H]$, $\Sigma=[T:M]$ with $s=|\Sigma|$, $L=T^{\Sigma}$, $M=T_{\Delta}$ (with $\Delta\in\Sigma$), $H=T_{\omega}$ (with $\omega\in\Delta$), etc. By Lemma~\ref{BaseLemma}(b)(i), $L^{(2),\Sigma}=L$, since $T=\Aut(T)$  and $T$ has no $2$-transitive representations. 
		We also set $X=T^{(2),\Omega}$ and $Y=X_{\Delta}^{\Delta}\le M^{(2),\Delta}$. Since $|\Delta|=3$, we have $R=M^\Delta=Y=M^{(2),\Delta}\cong C_3$.
		
		Assume that $T^{[T:H]}$ is not $2$-closed. Then, by Lemma~\ref{Crucial}, $X=N\rtimes T$ with $N$ a nontrivial subdirect subgroup of $B_Y=C_3^s$, and moreover, for each $j=2,\dots,s$, $(\rho_{1,j}(M\cap M_j))^{\Delta\times\Delta_j}$ is either equal to $Y\times Y$ or is a diagonal subgroup of  $Y\times Y$. By Proposition~\ref{ThPropCount}, this group is diagonal for strictly more than $s/2$ values of $j$, and hence by Lemma~\ref{PrimeCor}, $N$ is a diagonal subgroup of $B_Y$.  Thus $N\cong C_3$ and, since $X=N\rtimes T$ with $T$ a nonabelian simple group, the group $T$ centralises $N$.
		
		We now examine a part $\Delta_2\in\Sigma$ such that $M\cap M_2= 2^2.[2^9].(\Sym(3)\times 3)$. As in the proof of Proposition~\ref{ThPropCount}, $(\rho_{1,2}(M\cap M_2))^{\Delta\times\Delta_2}$ is a diagonal
		subgroup of  $Y\times Y$. Further, the subdegrees of $T^{[T:M]}$ are pairwise distinct, so each $T$-orbit in $\Sigma^{(2)}=\{(\alpha,\beta)\text{ : }\alpha,\beta\in\Sigma,\alpha\neq\beta\}$ is self-paired. It follows that the setwise stabiliser  $T_{\{\Delta\cup \Delta_2\}}$ in $T$ of $\Delta\cup\Delta_2$ contains an element, say $g$, which interchanges $\Delta$ and $\Delta_2$. Thus the permutation group
		induced by $T_{\{\Delta\cup \Delta_2\}}$ on  $\Delta\cup\Delta_2$ is regular of order $6$, so is either $C_6$ or $\Sym(3)$. Moreover, since $g\in T_{\{\Delta\cup \Delta_2\}}$ and $T$  centralises $N$, it follows from  Lemma~\ref{Crucial}(b)  that  $g$ centralises $\rho_{1,2}(M\cap M_2))^{\Delta\times\Delta_2}$, and hence  the permutation group induced by $T_{\{\Delta\cup \Delta_2\}}$ on  $\Delta\cup\Delta_2$ is $C_6$, and
		\begin{align}\label{th1}
			\text{$T_{\{\Delta\cup \Delta_2\}}/(H\cap H^g)\cong C_6$.}
		\end{align}
		
		Let $P :=O_2(M\cap M_2) = 2^2.[2^9]$, and $Q:= N_T(P)$. Since $|M\cap M_2:H\cap H^g|=3$ it follows that $P < H\cap H^g < H < Q$. Also, since $P$ is characteristic in $M\cap M_2$ and $M\cap M_2\unlhd T_{\{\Delta\cup \Delta_2\}}$, we have $P<T_{\{\Delta\cup \Delta_2\}}<Q$. By \cite[Section 2.2]{Linton}, $Q$ has shape $2^5.(2^6:\Sym(3)\times L_3(2))$ and $Q$ is contained in a maximal subgroup $K:=2^5.\mathrm{L}_5(2)$  of $T$.
		More precisely, $Q$ contains $O_2(K)\cong 2^5$, and $Q/O_2(K)$ is a maximal parabolic subgroup of $\mathrm{L}_5(2)$ of shape $2^6:\Sym(3)\times L_3(2)$. Further, $O_2(Q)=O_2(M\cap M_2)=P$ and  $(M\cap M_2)/P$ has shape $\Sym(3)\times 3$. Hence, since $M\cap M_2$ has index $2$ in $T_{\{\Delta\cup \Delta_2\}}$, it follows that $T_{\{\Delta\cup \Delta_2\}}/P$ is isomorphic to a subgroup of $\Sym(3)\times L_3(2)$ of shape $(\Sym(3)\times 3).2$.
		The only such subgroups of $\Sym(3)\times L_3(2)$ are the Sylow $3$-normalisers, and they are isomorphic to $\Sym(3)\times \Sym(3)$. However, since $H\cap H^g$ contains $P$, $T_{\{\Delta\cup \Delta_2\}}/(H\cap H^{g})$ is isomorphic to a quotient of $T_{\{\Delta\cup \Delta_2\}}/P\cong \Sym(3)\times \Sym(3)$. This contradicts (\ref{th1}), since $\Sym(3)\times \Sym(3)$ has no cyclic quotients of order $6$.
		Thus $T^{[T:H]}$ is $2$-closed.
	\end{proof}
	
	\begin{proposition}\label{ThProp}
		The Thompson group $\mathrm{Th}$ is totally $2$-closed. Furthermore, Proposition $\ref{Base2Lemma}$, and in particular Table $\ref{tab:basesize}$, holds for $T=\mathrm{Th}$.
	\end{proposition}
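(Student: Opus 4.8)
The plan is to run Strategy \ref{Strategy} with $T=\mathrm{Th}$, exactly as in the proofs of Propositions \ref{J1Prop}, \ref{J3Prop}, \ref{LyProp} and \ref{MProp}, using the preliminary work in Propositions \ref{ThPropCount} and \ref{ThPrelim} to handle the one genuinely delicate imprimitive action. First I would invoke Corollary \ref{TransRed}: since $\mathrm{Th}$ appears in Proposition \ref{lem:simple1}(b) it admits no nontrivial factorisation, so it suffices to show $T^{[T:H]}=T^{(2),[T:H]}$ for every proper nontrivial subgroup $H\le T$. For $H=1$ this is immediate from Theorem \ref{thm:W}. In Step 1, for each maximal subgroup $H$ of $T$, since $T=\Aut(T)$ and $T$ has no $2$-transitive representation, Lemma \ref{BaseLemma}(b)(i) gives $T^{[T:H]}=T^{(2),[T:H]}$. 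Then, consulting \cite[Table 2]{BOBW}, I would identify the maximal subgroups $M$ for which $T^{[T:M]}$ has base size $\ge 3$ — these are $2^5.\mathrm{L}_5(2)$, ${}^3D_4(2)$ and ${}^3D_4(2).3$ (matching the $\mathrm{Th}$-row of Table \ref{tab:basesize}) — and for the remaining maximal subgroups note base size $2$, so by Proposition \ref{MainCor1}(3) all their proper subgroups are cleared. Thus $\mathcal{S}_2$ consists (up to conjugacy) of the proper nontrivial subgroups lying only inside one of $2^5.\mathrm{L}_5(2)$, ${}^3D_4(2)$, ${}^3D_4(2).3$.

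In Step 2, I would treat the three cases. For $M={}^3D_4(2).3$: if $H$ is a maximal subgroup of $M$ with $H\ne {}^3D_4(2)$ I would compute $\widehat{Q}(T,H,2)<1$ using {\sf GAP} and the character table library with known class fusions, so $T^{[T:H]}$ has base size $2$ by Proposition \ref{BOBWProp} and is $2$-closed; the base-size-$2$ conclusion then clears all proper subgroups of such $H$. The one exceptional case $H={}^3D_4(2)=[M,M]$ is precisely Proposition \ref{ThPrelim}, which gives $T^{[T:H]}=T^{(2),[T:H]}$; and a computation of the two-point stabiliser orders in $H^{[T:H]}$ shows this action has base size $\ge 3$, so $H={}^3D_4(2)$ must be carried into $\mathcal{S}_3$. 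For $M={}^3D_4(2)$: here $M^\Delta$ is simple with $\Out({}^3D_4(2))=3$, and for each maximal subgroup $H$ of $M$ I would again verify $\widehat{Q}(T,H,2)<1$, so $T^{[T:H]}$ has base size $2$. For $M=2^5.\mathrm{L}_5(2)$: the maximal subgroups $H$ of $M$ either already lie in a base-size-$2$ maximal subgroup of $T$, or $\widehat{Q}(T,H,2)<1$; in the latter case base size $2$ follows. One then checks that after Step 2 the only surviving subgroup (up to conjugacy) with $\ell_T\ge 3$ is $H={}^3D_4(2)$ itself, and its proper subgroups.

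Step 3 then handles the proper subgroups of $H={}^3D_4(2)$. Here I would apply Lemma \ref{BaseLemma}(b)(i) again, since ${}^3D_4(2)$ is simple with $\Out=3$ and has no $2$-transitive representations — actually more carefully: for $K<{}^3D_4(2)$, either $K$ lies inside a subgroup of $T$ whose coset action on $T$ already has base size $2$ (clearing $K$ by the Remark following Proposition \ref{J1Prop}), or $K$ is maximal in ${}^3D_4(2)$, in which case I would compute $\widehat{Q}(T,K,2)<1$ and apply Proposition \ref{BOBWProp} to get base size $2$, together with Proposition \ref{MainCor1}(2) (using that $T^{[T:{}^3D_4(2)]}=T^{(2),[T:{}^3D_4(2)]}$ by Proposition \ref{ThPrelim}) to conclude $T^{[T:K]}=T^{(2),[T:K]}$. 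This empties $\mathcal{S}_4$ and completes the proof that $\mathrm{Th}$ is totally $2$-closed. Finally, for the Proposition \ref{Base2Lemma} assertion: the analysis shows every proper nontrivial $H$ with $T^{[T:H]}$ of base size $\ge 3$ is conjugate to one of $2^5.\mathrm{L}_5(2)$, ${}^3D_4(2)$, ${}^3D_4(2).3$; the entries $1,6,6$ of Table \ref{tab:basesize} follow by computing (via {\sf GAP} and \cite{Mazurov}, \cite{Linton}) the greatest common divisor of the point-stabiliser orders in $M^{[T:M]}$ in each case — for $2^5.\mathrm{L}_5(2)$ the subdegrees force gcd $1$, and for the two ${}^3D_4(2)$-type subgroups the gcd is $6$. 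The main obstacle is the case $H={}^3D_4(2)$, but that difficulty has been isolated into Propositions \ref{ThPropCount} and \ref{ThPrelim}, so here it only remains to assemble the pieces and to carry out the (routine but library-dependent) $\widehat{Q}$-computations for the other subgroups.
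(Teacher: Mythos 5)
Your overall architecture matches the paper's: Strategy \ref{Strategy}, Lemma \ref{BaseLemma}(b)(i) for maximal subgroups, Propositions \ref{ThPropCount} and \ref{ThPrelim} for the delicate coset space $[T:{}^3D_4(2)]$, and $\widehat{Q}$-computations for base sizes. However, there is a genuine logical gap at the heart of your Step 2. For a subgroup $H\in\mathcal{S}_2$ with $\ell_T(H)=2$ (i.e.\ $H$ maximal in $M_1={}^3D_4(2).3$ or in $M_2=2^5.\mathrm{L}_5(2)$, and contained in no other maximal subgroup of $T$), you repeatedly write that $\widehat{Q}(T,H,2)<1$ gives base size $2$ for $T^{[T:H]}$ ``and is $2$-closed''. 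That inference is false: Proposition \ref{MainCor1}(3) only converts base size $2$ of the \emph{block} action $L=G^\Sigma$ into $2$-closure of the refined action, so base size $2$ of $T^{[T:H]}$ clears the proper subgroups $K<H$ but says nothing about the $2$-closure of $T^{[T:H]}$ itself (a $2$-transitive group of base size $2$ has $2$-closure the full symmetric group). Since here the block action $T^{[T:M_i]}$ has base size $3$, you cannot appeal to \ref{MainCor1}(3), and establishing $T^{[T:H]}=T^{(2),[T:H]}$ for these $H$ is precisely the substantive content of Step 2. The paper does it via Proposition \ref{MainCor1}(2): for $M_2$ the greatest common divisor of the two-point stabiliser orders in $T^{[T:M_2]}$ is $1$ (by \cite[Lemma 2.20.1(ii)]{Ivanov95}), forcing $X=T$ at once for \emph{every} proper subgroup of $M_2$; for $M_1$ the gcd is $6$, and one needs \cite[Theorem 1]{LPS} to see that $M_1^{(2),\Delta}$ is almost simple with socle ${}^3D_4(2)$, so the nontrivial normal subgroup $A\unlhd Y$ is transitive on $\Delta$ and $|M_1:H|$ must divide $6$ --- impossible for $H\neq M_1'$. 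None of this appears in your proposal, so the $2$-closure of these imprimitive actions is unproved.

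Two smaller points. First, ${}^3D_4(2)$ is \emph{not} a maximal subgroup of $\mathrm{Th}$ (it is $[M_1,M_1]$ of index $3$ in the maximal subgroup ${}^3D_4(2).3$), so your Step 1 list of ``maximal subgroups of base size $\ge 3$'' should contain only two classes; the Th-row of Table \ref{tab:basesize} records subgroups, not maximal subgroups. Second, your Step 3 for the maximal subgroups $K<{}^3D_4(2)$ again invokes \ref{MainCor1}(2) without supplying the transitivity of $A$; the paper sidesteps this entirely by observing (via \cite[p.\,89]{Atlas}) that every maximal subgroup of ${}^3D_4(2)$ lies in its normaliser, a \emph{different} maximal subgroup of ${}^3D_4(2).3$ of base size $2$, so that $\mathcal{S}_3$ is empty and no Step 3 analysis is needed. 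Your route could be repaired, but as written the key $2$-closure assertions in Steps 2 and 3 rest on a non-implication.
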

	
	\begin{proof}
		Let $T=\mathrm{Th}$. Our task is to prove, using Strategy~\ref{Strategy}, that $T^\Omega=T^{(2),\Omega}$, where $\Omega=[T:H]$, for each proper nontrivial subgroup $H$ of $T$.
		Since  $T=\Aut(T)$  and $T$ has no $2$-transitive representations, it follows from
		Lemma \ref{BaseLemma}(b)(i) that this holds for all maximal subgroups $H$. Thus we proceed to stage (2) of Step 1 in Strategy~\ref{Strategy}: constructing the set $\mathcal{S}_2$.
		By \cite[Table 2]{BOBW}, there are only two conjugacy classes of maximal subgroups $M$ of $T$ such that $T^{[T:M]}$ has base size at least $3$, with representatives $M_1={}^3D_4(2):3$ and $M_2=2^5.\mathrm{L}_5(2)$. Therefore according to Strategy~\ref{Strategy}, the set $\mathcal{S}_2$ consists of all non-maximal subgroups $H$ such that the only maximal subgroups of $T$ containing $H$ are conjugate to $M_1$ or $M_2$.
		
		We now move on to Strategy \ref{Strategy}, Step 2, stage (1). We need to prove that $T^{[T:H]}$ is $2$-closed for all $H\in\mathcal{S}_2$ with $l_T(H)=2$. Let $H\in\mathcal{S}_2$, and assume that $T^{[T:H]}\neq T^{(2),[T:H]}$. Let $M$ be a maximal subgroup of $T$ containing $H$, so without loss of generality we may assume that $M\in\{M_1, M_2\}$. We use the rest of the notation in Notation \ref{not2}, with $T=G$. So $\Omega=[T:H]$, $\Sigma=[T:M]$, $L=T^{\Sigma}$, $M=T_{\Delta}$ (with $\Delta\in\Sigma$), $H=T_{\omega}$ (with $\omega\in\Delta$), etc. Note that $L^{(2),\Sigma}=L$, by Step 1. We also set $X=T^{(2),\Omega}$, so $X>T$, and $Y=X_{\Delta}^{\Delta}\le M^{(2),\Delta}$. Suppose first that $M=M_2$.  We note that, by
		\cite[Lemma 2.20.1(ii)]{Ivanov95}, it follows that the greatest common divisor of the orders of the
		two-point stabilisers in $T^{[T:M]}$ is $1$, and hence the entry in Table~\ref{tab:basesize} for $M=
		2^5.\mathrm{L}_5(2)$ is correct. This implies, by Proposition \ref{MainCor1}(2), that $X=T$, which is a contradiction. We note here that this argument is valid for all proper subgroups $H$ of $M_2$ (without the condition $\ell_T(H)=2$).
		
		Thus $M=M_1$. At this point, we note that it follows from
		\cite[Lemma 2.20.1(i)]{Ivanov95} that the greatest common divisor of the orders of the
		$2$-point stabilisers in $T^{[T:M]}$ is $6$, and hence the entry in Table~\ref{tab:basesize} for $M=
		{}^3D_4(2):3$ is correct. Further this also implies correctness of the entry in that table for the derived subgroup $M'={}^3D_4(2)$ (we show below that $T^{[T:M']}$ has base size at least $3$).
		If $H=M'$ then $T^{[T:H]}$ is $2$-closed by Proposition \ref{ThPrelim}. Thus we may assume that $H\neq M'$, and we assume also that $\ell_T(H)=2$. Then, by \cite[Theorem 1]{LPS}, $M^{(2),\Delta}$, and hence also its subgroup $Y$, is almost simple with socle ${}^3D_4(2)$. Since $X>T$, the nontrivial normal subgroup $A$ of $Y$ in Proposition \ref{MainCor1}(2) must contain ${}^3D_4(2)$. In particular, $A^{\Delta}$ is transitive, and so by Proposition \ref{MainCor1}(2), $|M:H|=|\Delta|$ divides  the greatest common divisor of the orders of the $2$-point stabilisers in $T^{[T:M]}$, which as noted above is $6$. This is a contradiction, and hence $T^{[T:H]}$ is $2$-closed, completing Step 2, stage (1).
		
		
		We now proceed to stage (2) of Step 2: constructing the set $\mathcal{S}_3$. Given the results of Step 1 above, the set $\mathcal{S}_3$ will consist of all conjugates of proper nontrivial subgroups $H$ of $M=M_1$ or $M=M_2$ such that $\ell_T(H)\geq 3$, and if $\ell_T(H)= 3$ then, for any maximal subgroup  $J$ of $M$ containing $H$,  $T^{[T:J]}$ has base size at least $3$.
		If $M=M_1$, then one such subgroup $J$ is $M'={}^3D_4(2)$.
		However if $H<M'$ is maximal then, by the Atlas \cite[p.89]{Atlas}, $H< N_M(H)<M$, and $N_M(H)$ is another maximal subgroup of $M$ containing $H$. Thus each subgroup $H<M$ with $\ell_T(H)=3$ is contained in a maximal subgroup $J$ of $M$ with $J$ lying in the union $\mathcal{C}$ of the nine conjugacy classes of maximal subgroups of $M$ of index greater than $3$ (that is, maximal subgroups not equal to $M'$). Thus if we can prove that $T^{[T:J]}$ has base size $2$ for all $J\in\mathcal{C}$, and also for all maximal subgroups $J$ of $M_2=2^5.\mathrm{L}_5(2)$, then the set $\mathcal{S}_3$ would be empty, and we will have proven that $T$ is totally $2$-closed.
		
		So let $J$ be either a maximal subgroup of $M_2=2^5.\mathrm{L}_5(2)$, or a member of $\mathcal{C}$. We use Proposition \ref{BOBWProp} to show that $T^{[T:J]}$ has base size $2$. So let $x_1,\hdots,x_m$ be representatives for the $T$-conjugacy classes of elements of prime order. First we note the following crude upper bound:
		\begin{align}\label{Crude0}
			\widehat{Q}(T,J,2)\le \sum_{p \text{ prime}} \frac{|\{g\in J\text{ : }|g|=p\}|^2}{\min_i\{|x_i^T|\text{ : }|x_i|=p\}}.
		\end{align}
		We use \texttt{Magma}~\cite{MAGMA} to compute the right hand side of (\ref{Crude0}), and find that it is less than $1$ for each subgroup $J$, and hence, by Proposition \ref{BOBWProp}, $T^{[T:J]}$ has base size $2$ for each $J$. Thus, $T$ is totally $2$-closed. (For $J$ a maximal subgroup of $M_2={}^3D_4(2):3$, we work with the degree $819$ permutation representation of ${}^3D_4(2)$ from the Web Atlas to construct ${}^3D_4(2)$ in \texttt{Magma}~\cite{MAGMA}, and then we take its normaliser in $\Sym(819)$ to get ${}^3D_4(2):3$. For $J$ contained in  the \emph{Dempwolff group} $M_2=2^5.\mathrm{L}_5(2)$, we work with the degree $7440$ permutation representation of $M_2$ given in the Web Atlas.)
		
		Finally we complete the proof of Proposition~\ref{Base2Lemma} for the group $\mathrm{Th}$. Suppose that $H$ is a proper subgroup of $T$ for which  $T^{[T:H]}$ has base size at least $3$. Let $M$ be a maximal subgroup of $T$ containing $H$, so also  $T^{[T:M]}$ has base size at least $3$. Then, as above, $M$ is conjugate to $M_1$ or $M_2$, so  we may assume that $M\in\{M_1, M_2\}$ and $H\ne M$. Let $J$ be a maximal subgroup of $M$ containing $H$. If either $M=M_2$ or $J\in\mathcal{C}$, then we showed above that  $T^{[T:J]}$ has base size $2$, and hence also $T^{[T:H]}$ has base size $2$, which is a contradiction. Thus $M=M_1$ and $J=M'$ is the only maximal subgroup of $M$ containing $H$. As discussed above, by  \cite[p.89]{Atlas}, the only possibility is $H=J=M'$.
		Now $|M:H|=3$ and hence, for each $g\in T$, the index $|M\cap M^g:H\cap H^g|$ divides $9$.
		Moreover, it follows from \cite[Lemma 2.20.1(i)]{Ivanov95} that, for each $g\in T$,
		$\gcd(|M\cap M^g|,9)$ is strictly less than $|M\cap M^g|$, and hence $H\cap H^g\ne 1$. Thus, $T^{[T:H]}$ has base size at least $3$. This proves Proposition~\ref{Base2Lemma}(a), and the assertions of Proposition~\ref{Base2Lemma}(b) have been verified already in the course of the proof.
	\end{proof}
	
	\section{The case $T=\mathrm{J}_4$}\label{J4Section}
	
	In this section, we prove that the fourth Janko group $\mathrm{J}_4$ is totally $2$-closed, and in so-doing we prove Proposition~\ref{Base2Lemma} for $\mathrm{J}_4$. This will complete the proofs of Theorem \ref{thm:J} and Proposition~\ref{Base2Lemma}. Our main reference is Janko's original paper \cite{Janko4}, and  wherever possible we try to be consistent with his notation, or draw attention to it. See Remark \ref{rem:JankoRemark} for more information.
	
	Throughout the section we set $T:=\mathrm{J}_4$, and we adopt Strategy~\ref{Strategy}.
	To complete Step 1 of Strategy~\ref{Strategy}, we need to know all maximal subgroups $M$ of $T$ such that $T^{[T:M]}$ has base size at least $3$.  By \cite[Table 2]{BOBW}, each such subgroup $M$ is conjugate to one of the following three maximal subgroups $D_1,D_2,D_3$, where  $z$ is a representative of the $2A$ conjugacy class of involutions in $T$, and in fact $T^{[T:M]}$ has base size $3$ in each case.
	
	\begin{itemize}
		\item $D_1:=N_T(V)=V\rtimes F_1 \cong 2^{11}:\mathrm{M}_{24}$, where $V=O_2(D_1)$ lies in the unique conjugacy class of elementary abelian subgroups of $T$ of order $2^{11}$ and $F_1\cong \mathrm{M}_{24}$, \cite[Propositions 7 and 18]{Janko4}.
		
		\item $D_2:=C_T(z)\cong 2^{1+12}.3.\mathrm{M}_{22}:2$. The subgroup $O_2(D_2)$ is not complemented in $D_2$, but has a supplement $F_2\cong 6.\mathrm{M}_{22}:2$ such that $F_2\cap O_2(D_2)=\langle z\rangle\cong C_2$,  \cite[Proposition 1]{Janko4}.
		
		\item $D_3:=N_T(A)=A\rtimes F_3\cong  2^{10}:\mathrm{L}_5(2)$, where $A=O_2(D_3)$ is an elementary abelian self-centralizing subgroup of order $2^{10}$, contained in $D_2$, and $F_3\cong \mathrm{L}_5(2)$, \cite[Propositions 20 and 21]{Janko4}.
	\end{itemize}
	
	Maximality of the subgroups $D_1$, $D_2$, and $D_3$ can also be checked from the Web Atlas \cite{WebAtlas}. For Strategy~\ref{Strategy} we also need to examine transitive actions $T^{[T:H]}$ with base size at least 3 for non-maximal subgroups $H$. For such a subgroup, if $H<M<T$ with $M$ maximal, then $T^{[T:M]}$ also has base size at least 3, and hence $M$ is conjugate to $D_i$ for
	some $i$.  In similar situations in Section \ref{SporadicSection}, we determined whether or not such an action  $T^{[T:H]}$ has base size $2$ by finding crude upper bounds on $|x^T\cap H|$ for each element $x\in H$ of prime order, and then applying Proposition \ref{BOBWProp}. For $T=\mathrm{J}_4$, we need to be much more careful when finding upper bounds for $|x^T\cap H|$ in the case where $x$ is an involution. Thus the  first lemma in this section counts involutions in subgroups of the maximal subgroups $D_1$, $D_2$, and $D_3$. Since we make frequent reference to Janko's paper \cite{Janko4}, we summarise  Janko's notation in Remark~\ref{rem:JankoRemark}.

	\begin{remark}\label{rem:JankoRemark}{\rm
			The Janko group $T=\mathrm{J}_4$ has precisely two classes of involutions, denoted class $2A$ and $2B$, and Janko uses $z$ and $f$, respectively, to denote representatives of these classes, \cite[Theorem A(5)]{Janko4}. He also uses the following notation for the subgroups $D_1$, $D_2$, and $D_3$:
			
			\smallskip\noindent
			(1) Janko writes $M=D_1$, and $K=F_1$, so $M=V\rtimes K\cong 2^{11}\rtimes \mathrm{M}_{24}$ with $V\cap K=1$, \cite[Proposition 18]{Janko4}.
			
			\smallskip\noindent
			(2) Janko writes $H=D_2=C_T(z)$, $E=O_2(D_2)\cong 2^{1+12}$, and $H_0=[D_2,D_2]\cong 2^{1+12}.3.\mathrm{M}_{22}:2$ for the (index $2$) derived subgroup of $H$, \cite[Section 3]{Janko4}.   In \cite[last 13 lines of Proposition 1]{Janko4}, Janko describes the $D_2$-conjugacy classes of involutions in $H_0$:  there are two classes in $E\setminus\langle z\rangle$, with representatives $e$ and $f$, and four classes in $H_0\setminus E$, with representatives $t'$, $t'z$, $t_1$, and $t_2$. Moreover all of $t'$, $t'z$, $t_1$,  $t_2$ lie in the same coset of $E$, \cite[Page 570, line-11]{Janko4}.
			As noted above, $f\in 2B$; by \cite [Proposition 9]{Janko4}, the elements $e, t', t_1$ all lie in class $2A$, and $t'z\in 2B$; while by \cite [Proposition 15]{Janko4},  $t_2\in 2B$.
			Finally, by \cite[Propositions 6 and 10]{Janko4}, there are three $D_2$-conjugacy classes of involutions in $H\setminus H_0$, with representatives $\widetilde{t_1}$, $\widetilde{t_1}z$, and $\widetilde{t_2}$, and by \cite [Proposition 15]{Janko4}, $\widetilde{t_1}\in 2A$, and $\widetilde{t_1}z$, $\widetilde{t_2}\in 2B$.
			
			\smallskip\noindent
			(3) Janko writes $D_3=N_T(A)=A\rtimes B$, where $A=2^{10}$ is self-centralizing with $A<D_2$ and $B=F_3\cong \mathrm{L}_5(2)$, \cite[Propositions 20 and 21]{Janko4}. He notes that $O_2(D_2)\cong 2^{1+12}$ normalises $A=O_2(D_3)$, and $N_{D_2}(A)/A$ is isomorphic to the parabolic subgroup $2^6:(\Sym(3)\times L_3(2))$ of $D_3/A\cong B\cong \mathrm{L}_5(2)$, \cite[Page 587, lines 4--7]{Janko4}.
		}
	\end{remark}
	
	To help us find upper bounds for $|x^T\cap H|$ in the case where $x$ is an involution in $H<T$, we  introduce the following (non-standard) notation for counting involutions in finite groups.
	
	\begin{notation}\label{not5.2} For a subset $S$ of a finite group $G$, define
		$$
		\Inv(S\text{ : }\mathcal{P}_1,\hdots,\mathcal{P}_r)
		$$
		to be the number of involutions in $S$ with properties $\mathcal{P}_1,\hdots,\mathcal{P}_r$. For example, if $H$ is a subgroup of $D_j$, for some $j$, then
		$$
		\Inv({H}\backslash [{D_j},{D_j}]\text{ : }2\text{-central in }{D_j})
		$$
		is the number of involutions in ${H}\backslash [{D_j},{D_j}]$ which are centralised by a Sylow $2$-subgroup of ${D_j}$. We also define
		\begin{center}
			$\Inv(S)$ to be the total number of involutions in $S$,
		\end{center}
		and for $S\subseteq \mathrm{J}_4$, we denote by $\Inv(S\text{ : }2A)$ and $\Inv(S\text{ : }2B)$ the number of involutions in $S$ in the conjugacy class $2A$ and $2B$ of $\mathrm{J}_4$, respectively, so that
		$$
		\Inv(S)=\Inv(S\text{ : }2A)+\Inv(S\text{ : }2B).
		$$
	\end{notation}
	
	
	\begin{remark}\label{rem:J4Magma}
		{\rm
			In some of our proofs  we use \texttt{Magma}~\cite{MAGMA} to compute certain quantities in $T=\mathrm{J}_4$. More precisely, we use the $112$-dimensional representation for $T$ over $\mathbb{F}_2$. The Magma code for this representation is available from the Web Atlas \cite{WebAtlas}, and it conveniently includes constructions of the subgroups $D_1$, $D_2$, $D_3$, and $F_3$ defined above.
		}
	\end{remark}

	\begin{lemma}\label{J4Lemma} For $j=1,2,3$, let $D_j$ be as described above, let $E_j=O_2(D_j)$ and consider a subgroup  $H$ satisfying $E_j\leq H\leq D_j$. For each such subgroup let $\overline{H}= H/E_j$.
		Then the following assertions hold.\begin{enumerate}[\upshape(1)]
			\item If $j=1$, then
			\begin{align*}
				\Inv(H\text{ : }2A) =&7.11.23+16\,\Inv(\ol{H}\text{ : }2\text{-central in }\mathrm{M}_{24})+\\
				& 32\,\Inv(\ol{H}\text{ : non }2\text{-central in }\mathrm{M}_{24})\text{, and } \\
				\Inv(H\text{ : }2B) =& 4.3.23+7.16\,\Inv(\ol{H}\text{ : }2\text{-central in }\mathrm{M}_{24})+\\
				& 32\,\Inv(\ol{H}\text{ : non }2\text{-central in }\mathrm{M}_{24}).
			\end{align*}
			
			\item For $j=2$, and for $k\geq 1$, let $\mathcal{P}_k$ be the condition on an involution $\ol{h}\in \ol{H}$ that $|C_{E_2}(h)|=2^k$ for some preimage $h\in D_2$ of $\ol{h}$, and set $H_0:= [D_2,D_2]$. Then
			\begin{align*}
				\Inv(H\text{ : }2A) =&1387+112\,\Inv(\ol{H}_0)+64\,\Inv(\ol{H}\setminus \ol{H}_0\text{ : }\mathcal{P}_7)\text{, and }\\
				\Inv(H\text{ : }2B) =&2772+208\,\Inv(\ol{H}_0)+128\,\Inv(\ol{H}\setminus \ol{H}_0\text{ : }\mathcal{P}_6)+\\
				& 64\,\Inv(\ol{H}\setminus \ol{H}_0\text{ : }\mathcal{P}_7).
			\end{align*}
			
			\item If $j=3$, then
			\begin{align*}
				\Inv(H\text{ : }2A) =&5.31+64\,\Inv(\ol{H}\text{ : }2\text{-central in }\mathrm{L}_5(2))+\\
				& 16\,\Inv(\ol{H}\text{ : non }2\text{-central in }\mathrm{L}_5(2))\text{, and }\\
				\Inv(H\text{ : }2B) =& 4.7.31+64\,\Inv(\ol{H}\text{ : }2\text{-central in }\mathrm{L}_5(2))+\\
				& 48\,\Inv(\ol{H}\text{ : non }2\text{-central in }\mathrm{L}_5(2)).
			\end{align*}
		\end{enumerate}
	\end{lemma}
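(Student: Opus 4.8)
The plan is to count involutions in $H$ by partitioning them according to their image in $\overline H = H/E_j$, using the fact that an element $h\in H$ is an involution if and only if either $h\in E_j$ and $h^2=1$, or the image $\overline h$ is an involution of $\overline H$ and some preimage of $\overline h$ in $H$ has order $2$. For each involution $\overline h\in\overline H$, the coset $hE_j$ (for a fixed preimage $h$ of order dividing $2$ times an element of $E_j$) contains exactly $|C_{E_j}(h)|$ involutions when $h$ can be chosen to be an involution, by the standard coset-counting argument: the involutions in $hE_j$ are precisely the elements $hv$ with $v\in E_j$ satisfying $(hv)^2 = h^2\cdot v^{h}v = 1$, and when $h^2=1$ this is the condition $v^h = v^{-1}$; the solution set is a coset of $C_{E_j}(h)$ inside $E_j$ (or empty). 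So the count splits into (i) a constant term counting involutions inside $E_j$ itself, plus (ii) a sum over nontrivial involution-classes $\overline h\in\overline H$ of $|C_{E_j}(h)|$, with $h$ a well-chosen preimage, weighted by whether the resulting involutions of $T$ lie in class $2A$ or $2B$. The three parts of the lemma are then obtained by running through the three cases $j=1,2,3$ and reading off, from Janko's paper \cite{Janko4}, (a) the number of involutions of each $T$-class inside $E_j$; (b) the value of $|C_{E_j}(h)|$ as a function of the $\overline H$-class of $\overline h$; and (c) the $T$-class of each of the resulting involutions.

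For $j=1$: here $E_1 = V\cong 2^{11}$ is elementary abelian, so it contains $2^{11}-1 = 2047$ involutions, and $D_1 = V\rtimes K$ with $K\cong\mathrm{M}_{24}$ acting as the $11$-dimensional irreducible (Todd or Golay-cocode) module. One reads off from \cite{Janko4} (or from the standard description of this module) that $V\setminus\{1\}$ splits into two $K$-orbits, one of length $7\cdot 11\cdot 23 = 1771$ consisting of $2A$-involutions and one of length $4\cdot3\cdot23 = 276$ consisting of $2B$-involutions, accounting for the constant terms $7\cdot11\cdot23$ and $4\cdot3\cdot23$. For an involution $\overline h\in\overline H = H/V\leq\mathrm{M}_{24}$, the fixed-point space $C_V(h)$ has dimension $4$ if $\overline h$ is a $2$-central involution of $\mathrm{M}_{24}$ (class $2A$ of $\mathrm{M}_{24}$, giving $|C_V(h)|=16$) and dimension $5$ if $\overline h$ is non-$2$-central (class $2B$, giving $32$); this is the source of the coefficients $16$ and $32$. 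The split of the resulting coset-involutions into $T$-classes $2A$ versus $2B$ — in particular the asymmetry $16$ versus $7\cdot16$ in the two formulas — must be extracted from Janko's fusion analysis (\cite[Propositions 9, 15]{Janko4} and the surrounding discussion), or verified directly in \texttt{Magma} using the $112$-dimensional representation as in Remark \ref{rem:J4Magma}. For $j=3$ the argument is entirely parallel: $E_3 = A\cong 2^{10}$ with $D_3 = A\rtimes B$, $B\cong\mathrm{L}_5(2)$ acting on the natural module $\wedge^2(\mathbb F_2^5)$ or its dual; $A\setminus\{1\}$ has two $B$-orbits of sizes $5\cdot31 = 155$ and $4\cdot7\cdot31 = 868$, and $|C_A(h)| \in\{64,16\}$ according as $\overline h$ is $2$-central or not in $\mathrm{L}_5(2)$. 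For $j=2$ the structure of $E_2\cong 2^{1+12}$ (extraspecial, not elementary abelian) makes the constant terms and the centralizer sizes more delicate: $E_2$ contains $2^{13}-1 = 8191$ involutions minus those counted differently — in fact one uses that $E_2\setminus\langle z\rangle$ has exactly two $D_2$-classes (representatives $e\in 2A$ and $f\in 2B$ in Janko's notation), together with $z\in 2A$, giving the constants $1387$ and $2772$; and for $\overline h\in\overline H$ an involution, $|C_{E_2}(h)| = 2^6$ or $2^7$ depending on the $\overline H$-class, encoded by the conditions $\mathcal P_6,\mathcal P_7$ in the statement. The class-$2A$ versus class-$2B$ split of the coset-involutions uses Janko's classification of involutions in $D_2$ (Remark \ref{rem:JankoRemark}, parts (2)): every coset of $E_2$ in $H_0$ contributes only $2A$-involutions and $2B$-involutions in the ratio implicit in the coefficients $112$ versus $208$, while cosets in $H\setminus H_0$ contribute according to $\mathcal P_6$ (only $2B$) and $\mathcal P_7$ (both classes, $64$ each).

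The main obstacle is the bookkeeping in part (2), the case $j=2$: because $E_2$ is extraspecial rather than elementary abelian, the coset-involution count in $hE_2$ depends not merely on $|C_{E_2}(h)|$ but on whether the quadratic form $v\mapsto h^2 (v^h v)$ is solvable, and one must separate the involutions of $\overline H$ according to the isometry type of their action on $E_2/\langle z\rangle$; moreover the $T$-fusion of the many $D_2$-classes of involutions ($e$, $f$, $t'$, $t'z$, $t_1$, $t_2$, $\widetilde{t_1}$, $\widetilde{t_1}z$, $\widetilde{t_2}$ in Janko's notation) must be tracked carefully via \cite[Propositions 9, 15]{Janko4}. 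I would organize this by first fixing, for each $D_2$-class of involution image $\overline h$, a canonical preimage $h$ of order $2$ (which exists by Janko's explicit list), then invoking the coset-counting identity with that $h$, and finally assembling the weighted sums. Throughout, every numerical coefficient and every fusion assertion can be cross-checked in \texttt{Magma} using the $112$-dimensional $\mathbb F_2$-representation and the available constructions of $D_1$, $D_2$, $D_3$, $F_3$ (Remark \ref{rem:J4Magma}); I would use that both as a sanity check and, where Janko's paper is terse, as the primary source for the $2A/2B$ split.
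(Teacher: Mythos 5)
Your overall strategy is the same as the paper's: decompose $H$ into $E_j$-cosets, observe that a nontrivial coset $E_jh$ contains involutions only when $\ol{h}$ is an involution, count the involutions in each such coset, and track their fusion into the $T$-classes $2A$ and $2B$ using Janko's data (with \texttt{Magma} as a supplement). The identity $\Inv(H:2X)=\sum_{h}\Inv(E_jh:2X)$ and the treatment of the extraspecial case $j=2$ via the dichotomy ``$x\in C_{E_2}(s)$ with $x^2=1$, or $|x|=4$ and $s$ inverts $x$'' are exactly what the paper does.

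However, your stated centraliser orders are wrong, and inconsistently so with the formulas you are trying to prove. For $j=1$ you claim $|C_V(h)|=16$ when $\ol{h}$ is $2$-central in $\mathrm{M}_{24}$ and $32$ when it is not; since $E_1$ is elementary abelian, $|C_{E_1}(h)|$ is the \emph{total} number of involutions in the coset $E_1h$, which must equal the sum of the $2A$- and $2B$-counts, namely $16+7\cdot16=2^7$ in the $2$-central case and $32+32=2^6$ otherwise. The correct values are therefore $|C_{E_1}(h)|=2^7$ (2-central) and $2^6$ (non-$2$-central) -- note your values even reverse which class has the larger centraliser. The coefficients $16$ and $32$ in the $2A$-formula are the numbers of $2A$-involutions \emph{within} the coset, a fusion datum from \cite[Proposition 18]{Janko4}, not the centraliser order. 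The same confusion occurs for $j=3$, where you claim $|C_A(h)|\in\{64,16\}$; the correct totals are again $2^7$ and $2^6$, splitting as $64+64$ and $16+48$. Separately, your plan to ``read off'' the $j=3$ splits from Janko is optimistic: the paper actually determines them indirectly, by counting $\Inv(E_2E_3\setminus E_3:2X)$ in two ways (once via $E_2$-cosets using the already-proved part (2), once via $E_3$-cosets) and combining the resulting linear equations with a \texttt{Magma} count of the total $2A$- and $2B$-involutions in $D_3$; your \texttt{Magma} fallback would also work, but the direct read-off is not available. Finally, your parenthetical ``$E_2$ contains $2^{13}-1$ involutions'' is false ($E_2$ is extraspecial, not elementary abelian; it contains $4159$ involutions), though you recover the correct constants $1387$ and $2772$ afterwards.
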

	
	\begin{proof} For $i\in\{1,2,3\}$, and $X\in\{A,B\}$, let $[H:E_i]$ denote a set of representatives for the $E_i$-cosets in $H$. Then we have
		\begin{align}\label{Count}
			\Inv(H\text{ : }2X)=\sum_{h\in [H:E_i]}\Inv(E_ih\text{ : }2X).
		\end{align}
		If $h_1$ and $h_2$ are conjugate in $D_i$, then  $\Inv(E_ih_1\text{ : }2X)$ and $\Inv(E_ih_2\text{ : }2X)$ are equal. If  $h\in D_i\setminus E_i$ and $E_ih$ contains an involution, then we may assume that
		$h$ is an involution, and also in this case $\ol{h}$ is an involution in $\ol{D_i}$. On the other hand, if $i=1$ or $3$, then $D_i=E_i\rtimes F_i$, and in these cases if $\ol{h}$ is an involution then  we may assume that $h\in F_i$ and hence $h$ is also an involution.
		If $i=2$, then $D_i=E_i F_i$ with $E_i\cap F_i=Z(E_i)=\langle z\rangle$ of order 2. In this case, if $E_2h$ is such that $\ol{h}$ is an involution, then we have checked using Magma (see Remark~\ref{rem:J4Magma}) that there is an involution $x\in D_2$ such that $E_2h=E_2x$. Thus, for all $i$, a nontrivial coset $E_ih$ contains an involution if and only if $\ol{h}$ is an involution.
		We use these facts frequently throughout the proof.
		We now prove parts (1), (2), and (3) separately.
		
		\medskip\noindent
		(1) Here, we are counting involutions in $D_1=2^{11}:\mathrm{M}_{24}$ with $E_1\cong 2^{11}$.
		By (\ref{Count}), we just need to compute $\Inv(E_1h)$ for each $h\in [H:E_2]$.
		We first deal with the case $h=1$. Here $\Inv(E_1\text{ : }2A)=7.11.23$ and $\Inv(E_1\text{ : }2B)=4.3.23$, by \cite[Proposition 9]{Janko4} (see also Remark \ref{rem:JankoRemark}). Suppose now that $1\neq h\in [H:E_1]$. As mentioned above, we may assume that $h$ is an involution and also that $\ol{h}$ is an involution. Since $E_1$ is an elementary abelian $2$-group, the involutions in $E_1h$ are precisely the elements of $C_{E_1}(\ol{h})h$. Thus, $\Inv(E_1h)=|C_{E_1}(\ol{h})|$. We now use \cite[Proposition 18]{Janko4}: if $\ol{h}$ is $2$-central in $\ol{D_1}\cong \mathrm{M}_{24}$, then $|C_{E_1}(\ol{h})|=2^7$, and $2^4$ of the $2^7$ involutions in $E_1h$ are of type $2A$, with the other $7.2^4$ of type $2B$. If $\ol{h}$ is not $2$-central in $\mathrm{M}_{24}$, then $|C_{E_1}(\ol{h})|=2^6$, and $E_1h$ contains $2^5$ involutions of type $2A$ and $2^5$ of type $2B$. The stated formulae in (1) then follow from (\ref{Count}).
		
		\medskip\noindent
		(2) Here, we are counting involutions in $D_2=2^{1+12}.3.\mathrm{M}_{22}:2$ with $E_2\cong 2^{1+12}$. As mentioned above, for each coset $E_2h$ in $H$ we may assume that $h\in F_2=6.\mathrm{M}_{22}:2$, where $F_2\cap E_2 = \langle z\rangle$.
		By Remark \ref{rem:JankoRemark}(2), there are two $D_2$-conjugacy classes of involutions in $E_2\setminus\langle z\rangle$, with representatives $e$ and $f$, in $2A$ and $2B$ respectively, and from the information in  \cite[Proposition 1 (from line --13)]{Janko4} about $|C_{D_2}(e)|, |C_{D_2}(f)|$, we compute their class sizes as $|e^{D_2}|=1386$ and $|f^{D_2}|=2772$. Thus we have the following, which will also be important for part (3):
		\begin{align}\label{lab:E2}
			&\text{The group $E_2$ contains $4159$ involutions, with $1+1386=1387$ of type $2A$} \nonumber \\
			&\text{ and $2772$ of type $2B$.}
		\end{align}
		Next, we count the involutions in $H_0\setminus E_2$, where $H_0=[D_2, D_2]$. By Remark \ref{rem:JankoRemark}(2), there are four $D_2$-conjugacy classes of involutions in $H_0\setminus E_2$ with representatives $t', t_1$ (in class $2A$) and $t'z, t_2$ (in class $2B$), and all four of these elements lie in the same coset $E_2t'$. Thus, by our remarks above, the only nontrivial $E_2$-cosets containing involutions are $E_2h$ with $h$ conjugate in $D_2$ to $t'$. By \cite[Page 569, line -3]{Janko4}, $E_2t'$ contains $16$ involutions conjugate to $t'$ and $16$ conjugate to $t'z$, and by \cite[Page 570, line -11, -10]{Janko4}, $E_2t'$ contains $2^5.3$ involutions conjugate to $t_1$ and $2^6.3$ conjugate to $t_2$. Hence, for each involution  $\ol{h}$ in $\ol{H}_0$,
		\begin{align}\label{t'N}
			&\text{$E_2h$ contains $2^6.5$ involutions, with $2^4+2^5.3=112$ of type $2A$, and} \nonumber\\
			&\text{$2^4+2^6.3=208$ of type $2B$.}
		\end{align}
		Finally, we count the involutions in $H\setminus H_0$. Again, as above, the only $E_2$-cosets in $H\setminus H_0$ containing involutions are those of the form $E_2h$ for an involution $h\in H\setminus H_0$.
		By Remark \ref{rem:JankoRemark}(2), there are three $D_2$-conjugacy classes of involutions in $H\setminus H_0$ with representatives $\widetilde{t_1}$ (in class $2A$) and $\widetilde{t}_1z, \widetilde{t}_2$ (in class $2B$). By \cite[Proposition 6]{Janko4}, $\widetilde{t}_1$ and $\widetilde{t}_1z$ have property $\mathcal{P}_7$ while $\widetilde{t}_2$ has property $\mathcal{P}_6$.
		Let $s\in\{\widetilde{t_1}, \widetilde{t_1}z, \widetilde{t_2}\}$, so $s$ has property $\mathcal{P}_k$ with $k=6$ or $7$. An element $xs\in E_2s$ is an involution if and only if $x^s=x^{-1}$ and this, in turn, is equivalent to either (i) $x\in C_{E_2}(s)$ with $x^2=1$, or (ii) $|x|=4$ and $s$ inverts $x$. Now,  for each $x\in E_2$ of order $4$, we have $x^{-1}=zx$. We claim that, for $s=\wt{t_1}$, there are  no possibilities in case (ii). Assume to the contrary that $x$ is such an element. Then
		\[
		\wt{t_1}^x=x^{-1}\wt{t_1}x=zx\wt{t_1}x=z\wt{t_1} x^{\wt{t_1}}x=z\wt{t_1},
		\]
		contradicting the fact that $\wt{t_1}$ and $\wt{t_1}z=z\wt{t_1}$ are not conjugate. Thus, for $s=\wt{t_1}$, the involutions in $E_2\widetilde{t_1}$ are precisely the elements $x\widetilde{t_1}$, where $x\in C_{E_2}(\wt{t_1})$ and $x^2=1$. Since $C_{E_2}(\wt{t_1})$ is elementary abelian of order $2^7$ by \cite[Proposition 6]{Janko4}, we deduce that $E_2\wt{t_1}$ contains $2^7$ involutions. Since $D_2=C_T(z)$ and $C_{E_2}(\wt{t_1})$ contains both $\wt{t_1}$ and $\wt{t_1}z$, exactly half (that is, $2^6$) of these involutions are $D_2$-conjugate to $\wt{t_1}$, and the other $2^6$ involutions are conjugate to $\wt{t_1}z$.
		
		Finally consider $s=\wt{t_2}$. The element $\wt{t_2}$ inverts $2^6$ elements of $E_2$ of order $4$ (this can be checked using \texttt{Magma}~\cite{MAGMA}, see Remark \ref{rem:J4Magma} above). In particular, there exists $x\in E_2$ of order $4$ such that $x^{\wt{t_2}}=xz$. Also, $C_{E_2}(\wt{t_2})$ is elementary abelian of order $2^6$ by \cite[Propositions 6 and 10]{Janko4}. Thus, there are $2^6+2^6=2^7$ involutions in $E_2\widetilde{t_2}$, consisting of the elements of the form $x\widetilde{t_2}$, where either $x\in C_{E_2}(\wt{t_2})$, or $|x|=4$ and $x^{\wt{t_2}}=xz$. If $x\in C_{E_2}(\wt{t_2})$, then $C_{E_2}(x\wt{t_2})=C_{E_2}(\wt{t_2})\cong 2^6$, so $x\wt{t_2}$ cannot be $D_2$-conjugate to $\wt{t_1}$ or $\wt{t_1}z$ (since these elements have centralisers of order $2^7$ in $E$). If $|x|=4$ and $x^{\wt{t_2}}=x^{-1}$, then $x\wt{t_2}$ inverts the element $x$ of order $4$, so again $x\wt{t_2}$ cannot be $D_2$-conjugate to $\wt{t_1}$ or $\wt{t_1}z$. Hence all of the $2^7$ involutions in $E_2\wt{t_2}$ are $D_2$-conjugate to $\wt{t_2}$.
		
		It follows from the last two paragraphs above that if $h\in D_2\setminus H_0$ and $h$ is an involution, then
		\begin{itemize}
			\item $\Inv(Eh\text{ : }2A)=\Inv(Eh\text{ : }2B)=2^6$ if $|C_{E_2}(h)|=2^7$; and
			\item $\Inv(Eh\text{ : }2A)=0$ and $\Inv(Eh\text{ : }2B)=2^7$ if $|C_{E_2}(h)|=2^6$.
		\end{itemize}
		Putting all these counts together and applying (\ref{Count}) yields part (2).
		
		\medskip\noindent
		(3) Here, we are counting involutions in $D_3=2^{10}:\mathrm{L}_5(2)$ (so $E_3\cong 2^{10}$). Recall that $D_3=E_3\rtimes F_3$ with $F_3\cong \mathrm{L}_5(2)$. Thus by our discussion above, the $E_3$-cosets in $D_3$
		containing involutions are precisely the cosets $E_3h$ with $h\in F_3$ such that $h^2=1$.
		We first consider the case $h=1$. By \cite[Proposition 20]{Janko4}, we have
		\begin{align}\label{F}
			\Inv(E_3\text{ : }2A)=5.31\text{ and }\Inv(E_3\text{ : }2B)=4.7.31.
		\end{align}
		Assume now that $h\in F_3\backslash \{1\}$ and $h$ is an involution. As mentioned in Remark \ref{rem:JankoRemark}, the group $E_3$ is normalised by $E_2$. Furthermore, $|E_2E_3|=2^{16}$, and $N_{D_2}(E_3)=E_3Q$, where $Q$ is a subgroup of $F_3\cong \mathrm{L}_5(2)$ of shape $2^6: (L_3(2)\times \Sym(3))$ (see \cite[Section 5, fourth paragraph]{Janko4}).
		
		Now, $F_3\cong \mathrm{L}_5(2)$ has two conjugacy classes of involutions: the $2$-central involutions and the non $2$-central involutions. Moreover, each such involution is $F_3$-conjugate to an element of the  normal subgroup $O_2(Q)\cong 2^6$ of $Q\le N_{D_2}(E_3)$. Indeed, $21$ of the $2^6-1$ involutions in $O_2(Q)$ are $2$-central in $F_3$, and $42$ are non $2$-central. Note that $|E_2E_3:E_2|=2^3$, and ${E_2E_3}\le [{D_2},{D_2}]$. Let $\{h_1=1, h_2,\hdots,h_8\}\subseteq [D_2,D_2]$ be a set of representatives for the $E_2$-cosets in $E_2E_3$. Then, for $X\in\{A,B\}$,
		\begin{align*}
			\Inv(E_2E_3\backslash E_3\text{ : }2X)&= \Inv(E_2E_3\text{ : }2X)- \Inv(E_3\text{ : }2X)\\
			&=\left(\sum_{i=1}^8 \Inv(E_2h_i\text{ : }2X)\right)- \Inv(E_3\text{ : }2X).
		\end{align*}
		It then follows from (\ref{lab:E2}), (\ref{t'N}), and (\ref{F}) that
		\begin{align*}
			\Inv(E_2E_3\backslash E_3\text{ : }2A)& =1387+7.112-5.31=2016.
		\end{align*}
		In exactly the same way, we calculate that $\Inv(E_2E_3\backslash E_3\text{ : }2B)=3360$.
		On the other hand, $E_2E_3=E_3O_2(Q)$, so
		$$
		\Inv(E_2E_3\setminus E_3)=\sum_{g\in O_2(Q)\backslash \{1\}}\Inv(E_2g).
		$$
		As mentioned above, $21$ of the $2^6-1$ involutions in $O_2(Q)\cong 2^6$ are $2$-central in $F_3$, with $42$ non $2$-central. It follows that, for  $x, y\in O_2(Q)\le F_3\cong \mathrm{L}_5(2)$ with $x$ a $2$-central involution, and $y$ a non $2$-central involution,
		\begin{align}\label{Eq1}
			21\,\Inv(E_3x\text{ : }2A)+42\,\Inv(E_3y\text{ : }2A)&=2016,\text{ and }\nonumber\\
			21\,\Inv(E_3x\text{ : }2B)+42\,\Inv(E_3y\text{ : }2B)&=3360.
		\end{align}
		In particular, $\Inv(E_2E_3\backslash E_3)=5376=21\,\Inv(E_3x)+42\,\Inv(E_3y)$.
		Since $\Inv(E_3x)=|C_{E_3}(x)|$ and $\Inv(E_3y)=|C_{E_3}(y)|$ are powers of $2$, it is an easy exercise to show that the only possibility is $\Inv(E_3x)=2^7$ and $\Inv(E_3y)=2^6$.
		
		Finally, we use \texttt{Magma}~\cite{MAGMA} to compute the $\mathrm{J}_4$-conjugacy classes of involutions in which the involutions in $D_3$ lie (see Remark \ref{rem:J4Magma} above). We find that the group $D_3$ contains $134705$ involutions in the class $2A$, and $343108$ involutions in the class $2B$. Since $\mathrm{L}_5(2)$ has $465$ $2$-central involutions and $6510$ non $2$-central involutions, we deduce from (\ref{Count}) and (\ref{F}) that
		\begin{align}\label{Eq2}
			465\,\Inv(E_3x\text{ : }2A)+6510\,\Inv(E_3y\text{ : }2A)+5.31&=134705\text{, and }\nonumber \\
			465\,\Inv(E_3x\text{ : }2B) + 6510\,\Inv(E_3y\text{ : }2B)+4.7.31&=343108.
		\end{align}
		The system of equations given by (\ref{Eq1}) and (\ref{Eq2}) can be solved to give $\Inv(E_3x\text{ : }2A)=64\text{, }\Inv(E_3y\text{ : }2A)=16\text{, }\Inv(E_3x\text{ : }2B)=64\text{, and }\\ \Inv(E_3y\text{ : }2B)=48.$
		Part (3) then follows from (\ref{Count}).
	\end{proof}

	\begin{corollary}\label{J4LemmaCor} Let $D_1, D_2, D_3$ be as described above, and let $H$ be a subgroup of $T=\mathrm{J}_4$. Then $T^{[T:H]}$ has base size $2$ for each of the following subgroups $H$.\begin{enumerate}[\upshape(1)]
			\item $O_2(D_1)< H < D_1$, with $H/O_2(D_1)$ maximal in $D_1/O_2(D_1)\cong \mathrm{M}_{24}$;
			\item $O_2(D_2)< H < D_2$ such that one of the following holds:
			\begin{enumerate}[\upshape(a)]
				\item $H/O_2(D_2)$ is maximal in $D_2/O_2(D_2)\cong 3.\mathrm{M}_{22}:2$, and $H/O_2(D_2)$ is not of shape $3.\mathrm{L}_3(4).2$ or $3.\mathrm{M}_{22}$;
				\item $H/O_2(D_2)$ is a maximal subgroup of $3.\mathrm{L}_3(4).2<3.\mathrm{M}_{22}:2$, and $H/O_2(D_2)\neq 3.\mathrm{L}_3(4)$;
				\item $H/O_2(D_2)$ is a maximal subgroup of $3.\mathrm{M}_{22}<3.\mathrm{M}_{22}:2$ isomorphic to $3.\Alt(7)$;\end{enumerate}
			\item  $O_2(D_3)< H < D_3$, with $H/O_2(D_3)$ maximal in $D_3/O_2(D_3)\cong  \mathrm{L}_5(2)$;
			\item $H$ conjugate to a maximal subgroup $\mathrm{M}_{24}$ of $D_1$ (in one of two $D_1$-conjugacy classes); or the maximal $F_2\cong 6.\mathrm{M}_{22}:2$ in $D_2$; or the maximal $F_3\cong \mathrm{L}_5(2)$ in $D_3$;
			\item $H$ maximal of shape $[2^7].3.\mathrm{L}_3(4).2$ in the maximal subgroup of the form $2^{1+12}.3.\mathrm{L}_3(4).2$ of $D_2$.
		\end{enumerate}
	\end{corollary}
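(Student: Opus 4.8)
The plan is, for each subgroup $H$ in the list, to bound the quantity $\widehat{Q}(T,H,2)$ of Proposition~\ref{BOBWProp} and show it is strictly less than $1$; this yields base size at most $2$ for $T^{[T:H]}$, and since every $H$ listed is a proper nontrivial subgroup of $T=\mathrm{J}_4$, the base size is then exactly $2$. Writing $x_1,\dots,x_m$ for representatives of the conjugacy classes of prime-order elements of $T$ and grouping the sum $\widehat{Q}(T,H,2)=\sum_i |x_i^T\cap H|^2/|x_i^T|$ by the prime $p=|x_i|$, the contributions from odd $p$ are controlled by the crude estimate $|x_i^T\cap H|\le |\{g\in H:|g|=p\}|$ together with $|x_i^T|\ge\min\{|y^T|:|y|=p\}$; both quantities are read off in \texttt{Magma}~\cite{MAGMA} from the $112$-dimensional $\mathbb{F}_2$-representation of $T$ and its built-in constructions of $D_1,D_2,D_3,F_3$ (Remark~\ref{rem:J4Magma}). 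The real work is with $p=2$, where the two classes $2A$, $2B$ contribute the dominant terms $\mathrm{Inv}(H:2A)^2/|2A^T|+\mathrm{Inv}(H:2B)^2/|2B^T|$.

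For the subgroups in items (1)--(3), where $O_2(D_j)\le H\le D_j$, I would apply Lemma~\ref{J4Lemma}, which expresses $\mathrm{Inv}(H:2A)$ and $\mathrm{Inv}(H:2B)$ in terms of the numbers of $2$-central and non-$2$-central involutions in the quotient $\overline{H}=H/O_2(D_j)$ inside $\mathrm{M}_{24}$, $3.\mathrm{M}_{22}{:}2$ (and its relevant subgroups $3.\mathrm{L}_3(4).2$, $3.\mathrm{M}_{22}$), or $\mathrm{L}_5(2)$, respectively. Those quotient involution counts are obtained from the Atlas~\cite{Atlas,WebAtlas} or the {\sf GAP} character table library~\cite{CharTable,GAP}. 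Substituting the resulting exact values of $\mathrm{Inv}(H:2A)$, $\mathrm{Inv}(H:2B)$, together with the class sizes $|2A^T|=|T:D_2|$ and $|2B^T|$, reduces the verification of $\widehat{Q}(T,H,2)<1$ to a finite arithmetic check, one subgroup at a time.

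Items (4) and (5) require a slightly different treatment, since there $H$ does not contain the relevant $O_2(D_j)$ (for instance $F_1\cong\mathrm{M}_{24}$ is a complement in $D_1$, and the group in (5) meets $O_2(D_2)$ in a group of order $2^7$), so Lemma~\ref{J4Lemma} does not apply directly. Where $H$ is contained in a core-free point stabiliser already shown to have base size $2$, I would simply invoke the elementary fact (recorded in the remark preceding Proposition~\ref{J1Prop}) that passing to a subgroup preserves ``base size $2$''. When no such overgroup is available --- which is the case in (5), because $2^{1+12}.3.\mathrm{L}_3(4).2$ itself appears in Table~\ref{tab:basesize} with base size at least $3$ --- I would instead compute the $\mathrm{J}_4$-class of each involution of $H$ directly in \texttt{Magma} via the $112$-dimensional representation, obtaining $\mathrm{Inv}(H:2A)$ and $\mathrm{Inv}(H:2B)$, and then run the same $\widehat{Q}$-bound as before.

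The main obstacle is exactly the accuracy of the involution counts: because the classes $2A^T$ and $2B^T$ of $\mathrm{J}_4$ are large but so are the totals $\mathrm{Inv}(H)$ for these subgroups, squaring the numerator in $\widehat{Q}$ makes a crude bound $|x^T\cap H|\le \mathrm{Inv}(H)$ useless, and one genuinely needs the near-exact formulae of Lemma~\ref{J4Lemma} --- which themselves rest on Janko's coset-by-coset description of the $D_j$-classes of involutions in~\cite{Janko4} (summarised in Remark~\ref{rem:JankoRemark}). Once those formulae are in hand, however, proving $\widehat{Q}(T,H,2)<1$ for the finitely many $H$ in the statement is routine, and delivers the corollary.
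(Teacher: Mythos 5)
Your proposal is correct and follows essentially the same route as the paper: apply Proposition~\ref{BOBWProp}, control the odd-prime contributions to $\widehat{Q}(T,H,2)$ by the crude class-counting bound, and use the exact involution counts of Lemma~\ref{J4Lemma} for the dominant $p=2$ terms in items (1)--(3). The only (harmless) divergence is in items (4) and (5), where the paper observes that the crude bound already suffices for all primes including $p=2$ --- these subgroups are small enough --- so neither the ``subgroup of a base-size-$2$ overgroup'' reduction (which in fact is unavailable for the groups in (4), since $D_1,D_2,D_3$ all have base size $3$) nor a direct \texttt{Magma} classification of their involutions is needed.
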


	\begin{proof} Let $H$ be one of the subgroups in the statement. We will show that
		\begin{align}\label{lab:WTP}
			\widehat{Q}(T, H, 2):=\sum_{i=1}^m\frac{|x_i^T\cap H|^2}{|x_i^T|}\text{ is less than $1$,}
		\end{align}
		where $x_1$, $\hdots$, $x_m$ are representatives for the conjugacy classes of elements of $T$ of prime order. By Proposition \ref{BOBWProp}, this suffices to show that $T^{[T:H]}$ has base size $2$.
		Let $\mathcal{X}=\{x_1,\dots, x_m\}$.
		
		To prove (\ref{lab:WTP}), we compute upper bounds for
		the quantities $\frac{|x^T\cap H|^2}{|x^T|}$ for each $x\in\mathcal{X}$.
		For any fixed prime $p$, the following crude bound is available:
		\begin{align}\label{Crude}
			\sum_{x\in \mathcal{X},|x|=p}\frac{|x^T\cap H|^2}{|x^T|}\le \frac{|\{y\in H\text{ : }|y|=p\}|^2}{\min{\{ |x^T| \text{ : }x\in \mathcal{X},|x|=p\}}}.
		\end{align}
		The right-hand side above can be found, for each $H$ and each prime $p$, using Magma (see Remark \ref{rem:J4Magma} above). The bounds at (\ref{Crude}) in fact suffice to prove (\ref{lab:WTP}) for all subgroups $H$ in parts (4) and (5) of the corollary.
		
		In parts (1), (2) and (3), however, we need to be more careful. We can use \texttt{Magma}~\cite{MAGMA} (see Remark \ref{rem:J4Magma} above) and Lemma \ref{J4Lemma}, to compute the quantities
		$$
		\frac{|z^T\cap H|^2}{|z^T|}\text{ and }\frac{|f^T\cap H|^2}{|f^T|}
		$$
		precisely for each subgroup $H$, (recalling that $z\in 2A$ and $f\in 2B$).
		Putting these together with the bounds for odd prime orders coming from (\ref{Crude}) above then yields $\widehat{Q}(T,H,2)<1$ in each case, as required.
	\end{proof}
	
	We now have the tools needed to prove Lemma~\ref{Base2Lemma} for $\mathrm{J}_4$. Recall the notion of the depth $\ell_G(H)$ of a subgroup $H$ of a group $G$ introduced just before Strategy~\ref{Strategy}.

	\begin{proposition}\label{J4LemmaCor2} Let $H$ be a proper subgroup of $T=\mathrm{J}_4$.
		\begin{enumerate}[\upshape(1)]
			\item If $T^{[T:H]}$ has base size at least $3$, then one of the following holds:
			\begin{enumerate}[\upshape(a)]
				\item $\ell_T(H)=1$ and $H$ is $T$-conjugate to one of $D_1$, $D_2$, or $D_3$;
				\item $\ell_T(H)=2$ and either $H$ is $T$-conjugate to $[D_2,D_2]\cong 2^{1+12}.3.\mathrm{M}_{22}$; or $H$ is $T$-conjugate to a maximal subgroup of $D_2$ containing $O_2(D_2)$ of shape $2^{1+12}.3.\mathrm{L}_3(4):2$.
				\item $\ell_T(H)=3$ and $H$ is $T$-conjugate to a second maximal subgroup of $D_2$ containing $O_2(D_2)$ of shape $2^{1+12}.3.\mathrm{L}_3(4)$.
			\end{enumerate}
			\item If $\ell_T(H)\geq 4$, then $H$ is properly contained in a subgroup $K$ of $T$ such that $T^{[T:K]}$ has base size $2$.
			\item Proposition $\ref{Base2Lemma}$, and in particular Table~$\ref{tab:basesize}$ holds for $T=\mathrm{J}_4$.
		\end{enumerate}
	\end{proposition}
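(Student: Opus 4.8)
The plan is to prove part~(1) by descending through the subgroup lattices of the three maximal subgroups $D_1$, $D_2$, $D_3$, and then to read parts~(2) and~(3) off from it. Throughout I would use the observation recorded in the remark following Proposition~\ref{J1Prop}: since every proper subgroup of the simple group $T=\mathrm{J}_4$ is core-free, if $H\le K<T$ and $T^{[T:K]}$ has base size $2$ then so does $T^{[T:H]}$. Equivalently, the set $\mathcal{B}$ of proper subgroups $H$ of $T$ with $T^{[T:H]}$ of base size at least $3$ is closed under passing to proper overgroups. In particular, if $H\in\mathcal{B}$ then every maximal subgroup of $T$ containing $H$ lies in $\mathcal{B}$, hence is conjugate to $D_1$, $D_2$ or $D_3$ by \cite[Table 2]{BOBW} (as already recorded at the start of this section); so it suffices to determine which subgroups of each $D_i$ lie in $\mathcal{B}$.

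For $D_1=2^{11}:\mathrm{M}_{24}$ I would use that $O_2(D_1)$ is an irreducible $\mathrm{M}_{24}$-module (see \cite{Janko4}) and that the extension $D_1=O_2(D_1)\rtimes F_1$ splits; consequently every maximal subgroup of $D_1$ is either the preimage in $D_1$ of a maximal subgroup of $\mathrm{M}_{24}\cong D_1/O_2(D_1)$, or a complement isomorphic to $\mathrm{M}_{24}$ (in one of the two $D_1$-conjugacy classes). By Corollary~\ref{J4LemmaCor}(1) and~(4) all of these have base size $2$, so by the observation above no proper subgroup of $D_1$ lies in $\mathcal{B}$. The identical argument, now with $O_2(D_3)$ an irreducible $\mathrm{L}_5(2)$-module and $D_3=O_2(D_3)\rtimes F_3$ (\cite{Janko4}), together with Corollary~\ref{J4LemmaCor}(3) and~(4), shows that no proper subgroup of $D_3$ lies in $\mathcal{B}$. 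Thus any $H\in\mathcal{B}$ not conjugate to $D_1$ or $D_3$ is conjugate into $D_2$.

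The bulk of the work is the case $D_2=2^{1+12}.3.\mathrm{M}_{22}.2$; write $E=O_2(D_2)$. Using that $E/\langle z\rangle$ is irreducible for $D_2/E\cong 3.\mathrm{M}_{22}.2$ (so that the only $D_2$-invariant subgroups of $E$ are $1$, $\langle z\rangle$ and $E$) and that $E$ is not complemented in $D_2$, one shows that the maximal subgroups of $D_2$ are precisely the preimages in $D_2$ of the maximal subgroups of $3.\mathrm{M}_{22}.2$, together with the conjugates of $F_2\cong 6.\mathrm{M}_{22}.2$. By Corollary~\ref{J4LemmaCor}(2a) and~(4) all of these have base size $2$ except possibly the preimage of $3.\mathrm{L}_3(4).2$, namely $2^{1+12}.3.\mathrm{L}_3(4).2$, and the preimage of $3.\mathrm{M}_{22}$, namely $[D_2,D_2]=2^{1+12}.3.\mathrm{M}_{22}$. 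I would then descend into these two in the same manner, using the lists of maximal subgroups of $3.\mathrm{M}_{22}.2$, $3.\mathrm{M}_{22}$, $3.\mathrm{L}_3(4).2$ and $3.\mathrm{L}_3(4)$ from the Atlas together with the inclusions among them: for each maximal subgroup $\overline{K}'$ of the relevant quotient that arises, one checks either that the preimage of $\overline{K}'$ in $D_2$ is covered directly by one of Corollary~\ref{J4LemmaCor}(2a),(2b),(2c),(5), or that $\overline{K}'$ is contained in a maximal subgroup of $3.\mathrm{M}_{22}.2$ whose preimage is covered by~(2a) (whence the preimage of $\overline{K}'$, being contained in it, also has base size $2$); the maximal subgroups not containing $E$ are handled by Corollary~\ref{J4LemmaCor}(4) and~(5). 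Carried through, the only proper subgroup of $D_2$ that survives in $\mathcal{B}$ is $2^{1+12}.3.\mathrm{L}_3(4)$, which is case~(1c). This bookkeeping — in particular the verification that every proper subgroup of $2^{1+12}.3.\mathrm{L}_3(4)$ lies in some subgroup of base size $2$ — is the step I expect to be most delicate, since it requires keeping track of containments among numerous small groups together with the precise actions of the various quotients on $E$. Note that the converse is never needed: we do not assert that the six listed subgroups really have base size $\ge 3$, which is precisely why Table~\ref{tab:basesize} and Proposition~\ref{Base2Lemma}(a) allow the last two $\mathrm{J}_4$-entries to ``possibly have base size $2$''.

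Part~(2) then follows formally: if $\ell_T(H)\ge 4$, take a maximal subgroup $M$ of $T$ properly containing $H$; if $T^{[T:M]}$ has base size $2$ set $K=M$, and otherwise $M\in\mathcal{B}$, so $M$ is conjugate to $D_1$, $D_2$ or $D_3$. In the latter case $H\subsetneq M$, and since all six subgroups in part~(1) have depth at most $3$ whereas $\ell_T(H)\ge 4$, the subgroup $H$ is not conjugate to any of them; hence the analysis in the proof of part~(1) places $H$ inside one of the base-size-$2$ subgroups exhibited there — each of depth at most $3$ — so the inclusion is proper and we may take $K$ to be that subgroup. Finally, for part~(3): the alternative in Proposition~\ref{Base2Lemma}(a) for $T=\mathrm{J}_4$ is exactly part~(1), the six groups in the $\mathrm{J}_4$-row of Table~\ref{tab:basesize} being those listed in~(1a)--(1c); and for Proposition~\ref{Base2Lemma}(b) one reads the subdegrees of $T$ on each of the six coset spaces off from \cite{Janko4} and \cite{Ivanov95}, or computes them in the $112$-dimensional representation of $T$ over $\mathbb{F}_2$ (Remark~\ref{rem:J4Magma}), and checks that the greatest common divisor of the two-point stabiliser orders divides the stated value $g(T,M)$, the sharper bound $d(T,2^{10}:\mathrm{L}_5(2))\le 30$ coming from the orbit structure of $\mathrm{L}_5(2)$ on the corresponding coset space.
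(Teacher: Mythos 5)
Your overall strategy for parts (1) and (2) coincides with the paper's: prune the subgroup lattices of $D_1$, $D_2$, $D_3$ using the fact that base size $2$ is inherited by subgroups of core-free subgroups, together with Corollary \ref{J4LemmaCor}. Your identification of the maximal subgroups at each level (preimages of maximal subgroups of the quotient, plus complements/supplements to $O_2(D_i)$, plus -- crucially -- the subgroup $[2^7].3.\mathrm{L}_3(4).2$ arising because $E_2/\langle z\rangle$ is reducible under $3.\mathrm{L}_3(4).2$) matches what the paper verifies by explicit \texttt{Magma} computation, and you correctly observe that the converse direction (that the listed groups really do have base size $\ge 3$) is not needed. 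The one piece of bookkeeping you flag as delicate, namely that every proper subgroup of $2^{1+12}.3.\mathrm{L}_3(4)$ lands inside a base-size-$2$ subgroup, is exactly where the paper needs an extra computational fact: every maximal subgroup $I_2$ of $3.\mathrm{L}_3(4)$ lies either in a maximal subgroup of $F_2$ covered by Corollary \ref{J4LemmaCor}(2)(a) or in a maximal $3.\Alt(7)$, and the remaining maximal subgroup $E_0I_1$ of $E_2I_1$ sits inside $E_0I\cong[2^7].3.\mathrm{L}_3(4).2$. So parts (1) and (2) of your outline are sound.

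There is, however, a genuine gap in your part (3) for $M=D_3=2^{10}{:}\mathrm{L}_5(2)$. The subdegrees of $T$ on $[T:D_3]$ are not listed in \cite{Janko4} or \cite{Ivanov95} (which cover the $D_1$- and $D_2$-actions), and the degree $|T:D_3|\approx 8.5\times 10^9$ rules out computing the orbits of $M$ directly in the $112$-dimensional representation; so "read off or compute the subdegrees and take the gcd" is not an executable plan here. The paper instead argues indirectly: since $\gcd(|M|,|T:M|-1)=1$, for each prime $p$ some point stabiliser $M\cap M^t$ contains a Sylow $p$-subgroup of $M$, and inspecting which maximal subgroups of $M$ can contain Sylow $31$- and $2$-subgroups bounds the possible prime powers dividing $d(T,M)$; combining this with the rank ($27$, computed from the permutation character) forces some two-point stabiliser to have order less than $32$, giving both $d(T,M)\mid 2^4.3^2.5.7$ and $d(T,M)\le 30$. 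Some argument of this indirect kind is needed to complete your part (3); your appeal to "the orbit structure of $\mathrm{L}_5(2)$ on the corresponding coset space" does not supply it.
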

	
	Since it is not necessary for our purposes, we have not attempted to resolve whether or not the permutation group $T^{[T:H]}$ has base size $2$ in the cases where $H$ has shape $2^{1+12}.3.\mathrm{L}_3(4):2$ (in Part (1)(b)), or $2^{1+12}.3.\mathrm{L}_3(4)$ (in Part (1)(c)).
	
	\begin{proof}
		We first prove part (1) for subgroups $H$ with $\ell(H)\leq 3$. Let $H$ be a proper subgroup of $T$ such that $T^{[T:H]}$ has base size larger than $2$. Then by \cite{BOBW}, and as noted at the beginning of the section, $H$ is conjugate to a subgroup of $D_1$, $D_2$, or $D_3$. Thus if $\ell_T(H)=1$ then Part (1)(a) holds.  Therefore we may assume both that $\ell_T(H)\geq2$, and that $H<D_i$ for some $i=1,2,3$. Recall that, if $H<K<T$ then also $T^{[T:K]}$ has base size larger than $2$. Also $D_i=E_iF_i$, where $E_i=O_2(D_i)$, and $E_i\cap F_i$ is trivial with $E_i$ elementary abelian and $F_i$ acting irreducibly on it, if $i=1$ or $3$, and  $E_2\cap F_2=Z(E_2)=\langle z\rangle$ of order $2$ with $F_2$ irreducble on $E_2/Z(E_2)$, (see Remark~\ref{rem:JankoRemark}). Also using \texttt{Magma}~\cite{MAGMA}, we find that there are two $D_1$-conjugacy classes of complements to $E_1$, with representatives $F_1$ and $F_{11}$, and we obtain,
		for each $i$,  a complete set $\mathcal{M}(D_i)$  of representatives for the $D_i$-conjugacy classes of maximal subgroups of $D_i$, as:
		\begin{align*}
			\mathcal{M}(D_1) &= \{F_1, F_{11}\}\cup \{E_1I\text{ : }I<F_1\text{  maximal}\}\cup \{E_1I\text{ : }I<F_{11}\text{  maximal}\},\\
			\mathcal{M}(D_2) &= \{F_2\}\cup \{E_2I\text{ : }I<F_2\text{  maximal}\},\\
			\mathcal{M}(D_3) &= \{F_3\}\cup \{E_3I\text{ : }I<F_3\text{  maximal}\}.\\
		\end{align*}
		Assume now that $\ell_T(H)=2$ so that $H\in\mathcal{M}(D_i)$ for some $i$.  It then follows from Corollary \ref{J4LemmaCor} that $i=2$ and, up to $T$-conjugacy, $H=E_2I$ with $I/\langle z\rangle$ maximal in $F_2/\langle z\rangle\cong 3.\mathrm{M}_{22}.2$ of shape $3.\mathrm{L}_3(4).2$ or $3.\mathrm{M}_{22}$. This proves (1)(b).
		
		Suppose next that $\ell_T(H)=3$. Then by part (1)(b), $H$ is  $T$-conjugate to a maximal subgroup of either $[D_2,D_2]$ or a subgroup $K=E_2I$ with $I\cong 3.\mathrm{L}_3(4).2<3.\mathrm{M}_{22}.2$. Consider first the latter case. Using Magma, we see that $E_2/Z(E_2)$ has composition length $2$ as an $\mathbb{F}_2[I]$-module, with both composition factors of dimension $6$. Let $E_0/Z(E_2)$ be an $I$-submodule of $E_2/Z(E_2)$ of dimension $6$. Then
		using Magma again, we see that the set
		\begin{equation}\label{J4-ell31}
			\{E_0I\}\cup\{E_2I_1\text{ : }I_1<I\text{  maximal}\}
		\end{equation}
		is a complete set of representatives for the $K$-conjugacy classes of maximal subgroups of $K$. The subgroup $E_0I$ has shape $[2^7].3.\mathrm{L}_3(4).2$, and  by Corollary \ref{J4LemmaCor}(5), $T^{[T:E_0I]}$ has base size $2$. Thus, $H$ is $K$-conjugate to $E_2I_1$ for some maximal subgroup $I_1$ of $I$. By Corollary \ref{J4LemmaCor}(2)(b), it follows that $I_1\cong 3.\mathrm{L}_3(4)$, as in Part (1)(c).
		Suppose now that $H$ is maximal in $[D_2,D_2]$.
		By using Magma, we find that the set
		\begin{equation}\label{J4-ell32}
			\{[F_2,F_2]\cong 3.\mathrm{M}_{22}\}\cup\{E_2J\text{ : }J<[F_2,F_2]\text{ maximal}\}
		\end{equation}
		is a complete set of representatives for the  $[D_2,D_2]$-conjugacy classes of maximal subgroups of $[D_2,D_2]$. Since,  by Corollary \ref{J4LemmaCor}(4), $T^{[T:F_2]}$ has base size $2$, it follows that also $T^{[T:L]}$ has base size $2$, where $L=[F_2,F_2]< F_2$. Thus $H$ is not conjugate to $[F_2,F_2]L$, and we may assume that $H=E_2J$ for some maximal subgroup $J$ of $[F_2,F_2]\cong 3.\mathrm{M}_{22}$. If $J<M<3.\mathrm{M}_{22}:2$ with $M$ maximal and $M\not\cong 3.\mathrm{L}_3(4).2$ or $3,\mathrm{M}_{22}$, then we would have $T^{[T:H]}$ of base size $2$ on applying  Corollary \ref{J4LemmaCor}(2)(a). Thus this is not the case, and, by using the Web Atlas, we see that the only possibilities for $J$ are the subgroups of shape $3.\Alt(7)$ or $3.\mathrm{L}_3(4)$. By Corollary \ref{J4LemmaCor}(2)(c), it follows that $H\cong E_2J$, where $J=3.\mathrm{L}_3(4)$. This proves completes the proof of Part (1) when $\ell(H)\leq 3$.
		
		Next we complete the proof of (1) in the case of subgroups $H$ with $\ell(H)\geq 4$, and prove (2).
		Note that, if (2) holds then, for any subgroup $H$ with $\ell_T(H)\geq 4$, the base size of $T^{[T:H]}$ is at most 2, and hence Part (1) lists all possibilities for $H$ where the base size of $T^{[T:H]}$ could possibly be greater than $2$.
		So assume that $H$ is a subgroup of $T$, $\ell_T(H)\geq 4$, and $H$ is not properly contained in a subgroup $K$ of $T$ with $T^{[T:K]}$ having base size $2$. Then, by our arguments up to this point,  $H$ is $T$-conjugate to a subgroup of $K_1:=E_2I_1$, where $I_1\cong 3.\mathrm{L}_3(4)$. Since $\ell_T(H)\geq 4$, we may assume that $H<K_1=E_2I_1$. Using Magma, we find that
		$$
		\{E_0I_1\}\cup\{E_2I_2\text{ : }I_2<I_1\text{  maximal}\}
		$$
		is a complete set of representatives for the $K_1$-conjugacy classes of maximal subgroups of $K_1$, where $E_0$ is as defined in the previous paragraph. Consider  $M=E_2I_2$, for some maximal subgroup $I_2$ of $I_1$. We can check using Magma that, for every subgroup $I_2<I_1<[F_2,F_2]\cong 3.\mathrm{M}_{22}$, $I_2$ is contained in either a maximal subgroup of $F_2$ which is not isomorphic to $3.\mathrm{M}_{22}$ or $3.\mathrm{L}_3(4).2$; or a maximal $3.\Alt(7)$ subgroup of $3.\mathrm{M}_{22}$. Thus $T^{[T:M]}$ has base size 2 by Corollary~\ref{J4LemmaCor}, and hence $H$ is not contained in any such subgroup $M$. Thus we may assume that $H\le E_0I_1$ and $H$ projects onto $I_1$. Now $E_0I_1$ is an index 2 subgroup of $E_0I$, with $I=3.\mathrm{L}_3(4).2$ as above, and by Corollary~\ref{J4LemmaCor}(5), the permutation group $T^{[T:E_0I]}$ has base size $2$.  Hence also $T^{[T:J]}$ has base size $2$, for all subgroups $J$ of $E_0I_1$.
		This is a contradiction to our assumption that $T^{[T:H]}$ has base size larger than $2$, and hence Part (2) is proved.

		Finally, we complete the proof of Part (3). If $M=D_1\cong 2^{11}:\mathrm{M}_{24}$, then the subdegrees of $T^{[T:M]}$ are given in \cite[Lemma 2.23.1]{Ivanov95}. Thus, we can compute $d(T,M)$ precisely (the greatest common divisor of the orders of the stabilisers in  $M^{[T:M]}$ of points in  $[T:M]\setminus\{M\}$): we get $d(T,M)=24$. The first entry in the final column of the third row of Table \ref{tab:basesize} follows.
		
		Next, suppose that $M=D_3=E_3F_3=2^{10}:\mathrm{L}_5(2)$. As noted above, the maximal subgroups of $M$ are $T$-conjugate to either $F_3$, or $E_3I$, where $I$ is a maximal subgroup of $F_3$. Now, since $(|M|,|T:M|-1)=1$, there exists, for each prime $p$ dividing $|M|$, a point stabiliser $M_{(p)}:=M\cap M^{t_p}$ in $M^{[T:M]}$ containing a Sylow $p$-subgroup of $M$. If $L$ is a maximal subgroup of $M$ containing a Sylow $31$-subgroup of $M$, then either $L$ is $T$-conjugate to $F_3$, or $L$ contains $E_3$ and $|L/E_3|=5.31$ (one can see this by inspection of the maximal subgroups of $\mathrm{L}_5(2)\cong M/O_2(M)$). Thus, $|M_{(31)}|$ divides $2^{10}.3^2.5.7.31$. Similarly, $M_{(2)}$ has order dividing $2^{20}.3^2.5.7$. Thus, the greatest common divisor of the orders of the point stabilisers in $M^{[M:T]}$ divides $2^{10}.3^2.5.7$. On the other hand, we can compute the rank of $T^{[T:M]}$ using the permutation character and GAP \cite{GAP} -- it is $27$. Thus, at least one of the $M$-orbits in $[T:M]$ must be of size greater than or equal to $(|T:M|-1)/26$. It follows that at least one point stabiliser in $M$ has order less than $26|M|/(|T:M|-1)<32$. Thus, $d(T,M)$ divides $2^{10}.3^2.5.7$ and is less than $32$. It follows that $d(T,M)$ divides $2^4.3^2.5.7$ and $d(T,M)\le 30$, as required.
		
		To prove Part (3), suppose that
		$$
		M\in\{D_2,[D_2,D_2],2^{1+12}.3.\mathrm{L}_3(4):2,2^{1+12}.3.\mathrm{L}_3(4)\}.
		$$
		Then $M$ is $T$-conjugate to a subgroup of $D_2$. Thus, $d(T,M)$ divides $d(T,D_2)$. The subdegrees of $T^{[T:D_2]}$ are given in \cite[Table 2]{BrayCameron}, and the number $d(T,D_2)$ can then be computed immediately: we get $d(T,D_2)=2$. The remaining entries in the third column of the $\mathrm{J}_4$-row of Table \ref{tab:basesize} follow. In the case where $M=D_2$, the base size of $T^{[T:M]}$ is  $3$, by \cite[Table 2]{BOBW}. This action is equivalent to the conjugation action of $T$ on the class of $2A$-involutions of $T$. The orders of the $2$-point stabilisers in this action have been computed by Bray et al in \cite{BrayCameron} and are listed in the column headed $s_2$ in \cite[Table 2]{BrayCameron}. The smallest order is $16$. If now $H= [D_2,D_2]$, of index $2$ in $M$, then each $2$-point stabiliser $H\cap H^x$ in the permutation group $T^{[T:H]}$ has index at most $4$ in $M\cap M^x$, and since $|M\cap M^x|\geq 16$, it follows that $H\cap H^x\ne 1$. Thus $T^{[T:H]}$ also has base size greater than $2$.
	\end{proof}
	
	We remark that by proving Proposition \ref{J4LemmaCor2}(3), we have completed the proof of Proposition \ref{Base2Lemma}.
	As final preparation for proving that $T=\mathrm{J}_4$ is totally $2$-closed,  we deal separately with a specific coset action.
	
	\begin{proposition}\label{J4Prelim}
		Let $T=\mathrm{J}_4$ and $D_1=E_1\rtimes F_1=2^{11}:\mathrm{M}_{24}$ as above, and consider $H=E_1\rtimes \mathrm{M}_{23}<D_1$ (so $\ell_T(H)=2$). Then $T^{[T:H]}$ is $2$-closed.
	\end{proposition}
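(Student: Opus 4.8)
The plan is to run Strategy~\ref{Strategy} in the setting of Notation~\ref{not2}, taking $G=T=\mathrm{J}_4$ and $M=D_1=E_1\rtimes F_1\cong 2^{11}{:}\mathrm{M}_{24}$, which is maximal in $T$, so that $H=E_1\rtimes\mathrm{M}_{23}$ is maximal in $M$ and $\ell_T(H)=2$. Put $\Omega=[T:H]$, $\Sigma=[T:M]$, $L=T^\Sigma$, $\Delta=\Delta_1$ the block with $\omega\in\Delta$, $M=T_\Delta$, $H=T_\omega$. Since $T=\Aut(T)$ has no $2$-transitive representations and $M$ is maximal, Lemma~\ref{BaseLemma}(b)(i) gives $L=L^{(2),\Sigma}$. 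Next I would identify $R=M^\Delta$: because $E_1=O_2(D_1)\unlhd M$ lies in $H$ while $H/E_1\cong\mathrm{M}_{23}$ is core-free in $M/E_1\cong\mathrm{M}_{24}$, we get $\Core_M(H)=E_1$, so $R\cong\mathrm{M}_{24}$ in its $5$-transitive action on the $24$ points of $\Delta$; in particular $R$ is primitive and, being $2$-transitive, $R^{(2),\Delta}=\Sym(\Delta)$ by \cite[Theorem 5.11]{Wielandt}.

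Now suppose for a contradiction that $X:=T^{(2),\Omega}\ne T$. By Proposition~\ref{RedProp}(a) we have $X=N\rtimes T$ with $1\ne N=X\cap B_Y$ a subdirect subgroup of $B_A=A^s$ for some $1\ne A\unlhd Y:=X^\Delta_\Delta$, and $R\le Y\le R^{(2),\Delta}=\Sym(\Delta)$ by Lemma~\ref{lem:imprim}. The first delicate point is to pin down $Y$ and $A$. Since $R$ is primitive on $\Delta$ and normalises $A$, the $A$-orbits form an $R$-invariant partition, so $A$ is transitive on $\Delta$ (it is nontrivial and faithful there); moreover $Y=X^\Delta_\Delta=N^\Delta R=AR$. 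If $A\cap R=1$, then $Y=A\rtimes R$ strictly contains $\mathrm{M}_{24}$ inside $\Sym(24)$, hence (as $\mathrm{M}_{24}$ is maximal in $\Alt(24)$) equals $\Alt(24)$ or $\Sym(24)$; but then $A\unlhd Y$ forces $A\in\{1,\Alt(24),\Sym(24)\}$, and the order identity $|A|\,|\mathrm{M}_{24}|=|Y|$ fails in each case. Therefore $A\cap R=R$, so $R\le A$ and $A=Y\in\{\mathrm{M}_{24},\Alt(24),\Sym(24)\}$. In particular $Y$ has no normal subgroup of prime order, so the dichotomy of Proposition~\ref{RedProp}(b) becomes: either (i) $N$ is a full diagonal subgroup of $B_A$, or (ii) $N$ contains the base group $B_{\mathcal{S}(Y)}$, where the socle $\mathcal{S}(Y)\in\{\mathrm{M}_{24},\Alt(24)\}$ is a nonabelian simple group transitive on $\Delta$.

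In case (i), $N\cong A$ is a single full strip, so the conjugation action of $T$ on $N$ is a homomorphism $T\to\Aut(A)$. For each of the three possibilities, $\Aut(A)$ embeds into $\Sym(24)$, whereas $\mathrm{J}_4$ has no faithful permutation representation of degree~$24$; hence this homomorphism has nontrivial kernel, which equals $T$ as $T$ is simple, so $T$ centralises $N$. But $T$ is transitive on $\Omega=[T:H]$, so $C_{\Sym(\Omega)}(T)\cong N_T(H)/H$; and $O_2(H)=E_1$, so $N_T(H)\le N_T(E_1)=N_T(V)=D_1$, giving $N_T(H)=N_{D_1}(H)=E_1\rtimes N_{\mathrm{M}_{24}}(\mathrm{M}_{23})=H$. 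Thus $C_{\Sym(\Omega)}(T)=1$, forcing $N=1$, a contradiction. In case (ii), I would feed into Lemma~\ref{Crucial}(b) the elements $(z,1,\dots,1)$ and $(1,\dots,1,w,1,\dots,1)$ of $B_{\mathcal{S}(Y)}\le N$, with $z$ in position $1$ and $w$ in position $j$, for each $j\in\{2,\dots,s\}$ and all $z,w\in\mathcal{S}(Y)$; this yields $\mathcal{S}(Y)\times\mathcal{S}(Y)\le(\rho_{1,j}(M_1\cap M_j))^{(1),\Delta_1\times\Delta_j}$. Since $\mathcal{S}(Y)$ is transitive on $\Delta$, this group — and hence the $1$-closure, and hence (having the same orbits) $\rho_{1,j}(M_1\cap M_j)$ itself — is transitive on $\Delta_1\times\Delta_j$ in product action. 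Therefore $24^2=576$ divides $|\rho_{1,j}(M_1\cap M_j)|$, so $576\mid|M_1\cap M_j|$, and since this holds for every $j$, $576$ divides the greatest common divisor $d(T,D_1)$ of the two-point stabiliser orders of $T^\Sigma$. But $d(T,D_1)=24$ by Proposition~\ref{J4LemmaCor2}(3) (see Table~\ref{tab:basesize}), and $576\nmid 24$ — the desired contradiction. I expect the main obstacle to be exactly this last input: it is essential that $d(T,D_1)$ equals $24$ rather than merely dividing $24$, since otherwise case (ii) would survive; the other point needing care is the identification of $Y$ as one of $\mathrm{M}_{24}$, $\Alt(24)$, $\Sym(24)$.
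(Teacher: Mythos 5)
Your proof is correct, and while it starts from the same skeleton as the paper's (Notation~\ref{not2}, Lemma~\ref{Crucial}, Proposition~\ref{RedProp}, and the identification $Y, A\in\{\mathrm{M}_{24},\Alt(24),\Sym(24)\}$), both halves of your case analysis diverge genuinely from the paper's argument. The paper first forces the diagonal case by examining the single suborbit with two-point stabiliser $M\cap M_2\cong L_2(23)$ (which cannot be transitive on $24^2$ points), and then, with $X=N\times T$ in hand, derives its contradiction by producing an element $g$ interchanging $\Delta$ and $\Delta_2$ that must centralise $L_2(23)$, against the fact that elements of order $23$ in $\mathrm{J}_4$ have centraliser of order $23$. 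You instead dispatch the diagonal case via the standard isomorphism $C_{\Sym(\Omega)}(T)\cong N_T(H)/H$ together with $N_T(H)\le N_T(O_2(H))=D_1$ and the self-normalisation of $\mathrm{M}_{23}$ in $\mathrm{M}_{24}$ — this is cleaner and avoids the delicate self-paired-orbital computation entirely — and you dispatch the base-group case by running the transitivity argument over all suborbits at once, landing on $576\mid d(T,D_1)$ against $d(T,D_1)\mid 24$. One small correction: your closing worry that the argument needs $d(T,D_1)=24$ \emph{exactly} is backwards; knowing only that $d(T,D_1)$ divides $24$ (which is what Proposition~\ref{Base2Lemma} and Table~\ref{tab:basesize} supply) already rules out $576\mid d(T,D_1)$, so case (ii) dies either way. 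Both routes are valid; yours trades the paper's bespoke local analysis for two more structural, reusable arguments, at the cost of needing the precise dichotomy of Proposition~\ref{RedProp}(b) in full (which is available here since $Y$ is almost simple with nonabelian socle).
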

	
	\begin{proof} We use the notation in Notation \ref{not2}, with $G=T$ and $M=D_1$. So $\Omega=[T:H]$, $\Sigma=[T:M]$ with $s=|\Sigma|$, $L=T^{\Sigma}$, $M=T_{\Delta}$ (with $\Delta\in\Sigma$), $H=T_{\omega}$ (with $\omega\in\Delta$), etc. By Lemma~\ref{BaseLemma}(b)(i), $L^{(2),\Sigma}=L$, since $T=\Aut(T)$  and $T$ has no $2$-transitive representations. 
		We also set $X=T^{(2),\Omega}$ and $Y=X_{\Delta}^{\Delta}\le M^{(2),\Delta}$. The group $R=M^{\Delta}\cong \mathrm{M}_{24}$ is $2$-transitive of degree $24$. Hence, we have $\mathrm{M}_{24}\cong R=M^\Delta\le Y\le M^{(2),\Delta}\cong \Sym(24)$. The only overgroups of $\mathrm{M}_{24}$ in $\Sym(24)$ are $\mathrm{M}_{24}$, $\Alt(24)$, and $\Sym(24)$. Thus, $Y\in\{\mathrm{M}_{24},\Alt(24),\Sym(24)\}$.
		
		Assume that $T^{[T:H]}$ is not $2$-closed. Then, by Lemma~\ref{Crucial}, $X=N\rtimes T$ with $N$ a nontrivial subdirect subgroup of $B_A$, for some non-trivial normal subgroup $A$ of $Y$. Hence, $A\in\{\mathrm{M}_{24},\Alt(24),\Sym(24)\}$.
		
		We now examine a part $\Delta_2\in\Sigma$ such that $M\cap M_2= L_2(23)$ (the groups $M\cap M_j$ are listed in \cite[Lemma 2.23.1]{Ivanov}).
		Suppose that the subdirect subgroup $\rho_{1,2}(N)\le A\times A$ is not a full diagonal subgroup of $A\times A$. Then since $A$ is almost simple, $\rho_{1,2}(N)$ must contain $\soc(A)\times\soc(A)$. Since $\soc(A)$ acts transitively on $\Delta$, it follows that $\rho_{1,2}(N)^{\Delta\times\Delta_2}$ is transitive. On the other hand, by Lemma~\ref{Crucial}(b), the group $\rho_{1,2}(N)^{\Delta\times\Delta_2}$ leaves invariant each $(M\cap M_2)^{\Delta\times\Delta_2}$-orbit.
		Since $M\cap M_2\cong L_2(23)$ cannot act transitively on a set of size $24^2$, we have a contradiction. Thus $\rho_{1,2}(N)\le A\times A$ is a full diagonal subgroup of $A\times A$. It then follows from Proposition \ref{RedProp}(b) that $N$ is a full diagonal subgroup of $B_A$. In particular, since $X\cong N\rtimes T$ with $N\cong A\in\{\mathrm{M}_{24},\Alt(24),\Sym(24)\}$ and $T\cong \mathrm{J}_4$, we have $X=N\times T$.
		
		We examine further the part $\Delta_2\in\Sigma$ with $M\cap M_2= L_2(23)$.
		We claim that
		\begin{align}\label{lab:L223}
			C_T(M\cap M_2)\text{ has even order.}
		\end{align}
		To prove this, consider the setwise stabiliser $T_{\{\Delta\cup \Delta_2\}}$ of $\Delta\cup \Delta_2$ in $T$. Note first that the subdegrees of $T^{[T:M]}$ are pairwise distinct (see \cite[Lemma 2.23.1]{Ivanov}), so each $T$-orbit in $\Sigma^{(2)}:=\{(\Delta',\Delta'')\text{ : }\Delta',\Delta''\in\Sigma,\Delta'\neq\Delta''\}$ is self-paired. It follows that $T_{\{\Delta\cup \Delta_2\}}$ contains an element, say $g$, which interchanges $\Delta$ and $\Delta_2$. Moreover, $T_{\{\Delta\cup \Delta_2\}}$ contains $M\cap M_2\cong L_2(23)$. Since $M\cap M_2\cap O_2(M)=1$, and the unique conjugacy class of subgroups of $\mathrm{M}_{24}$ isomorphic to $L_2(23)$ acts transitively on the cosets of $\mathrm{M}_{23}$ in $\mathrm{M}_{24}$, the group $(M\cap M_{2})^{\Delta}$ is transitive. It follows that $T_{\{\Delta\cup \Delta_2\}}^{\Delta\cup \Delta_2}$ is transitive, and that the kernel of the action of $T_{\{\Delta\cup \Delta_2\}}$ on the block system $\{\Delta,\Delta_2\}$ is the group $M\cap M_2\cong L_2(23)$.
		
		Now, $(M\cap M_2)^{\Delta\cup\Delta_2}=\rho_{1,2}(M\cap M_2)^{\Delta\cup\Delta_2}$, by the definition of $\rho_{1,2}$. Since $M\cap M_2<X$ and $X$ normalises $N$, we have that $\rho_{1,2}(M\cap M_2)^{\Delta\cup\Delta_2}$ normalises $\rho_{1,2}(N)^{\Delta\cup\Delta_2}$. As shown above, $\rho_{1,2}(N)^{\Delta\cup\Delta_2}$ is a full diagonal subgroup of $A\times A\le\Sym(24)\times\Sym(24)$, that is to say, for some $\theta\in\Aut(A)$ we have
		$\rho_{1,2}(N)^{\Delta\cup\Delta_2}=\{(a,a^{\theta})\text{ : }a\in A\}$.  Now $A\le\Sym(24)$ is almost simple and primitive, so it has trivial centraliser in $\Sym(24)$, and $N_{\Sym(24)}(A)=\Aut(A)\cong PGL_2(23), \Sym(24),\Sym(24)$ for $A= L_2(23), \Alt(24), \Sym(24)$, respectively. Hence
		the normaliser of $\rho_{1,2}(N)^{\Delta\cup\Delta_2}$ in $(\Sym(\Delta\cup\Delta_2))_{\Delta,\Delta_2}$ is $\{(b,b^{\theta})\text{ : }b\in \Aut(A)\}$. It follows that $\rho_{1,2}(M\cap M_2)^{\Delta\cup\Delta_2}
		=\{(d,d^{\theta})\text{ : }d\in D\}$, for some subgroup $D$ of $\Sym(24)$ with $D\cong L_2(23)$.
		
		Next, consider the natural epimorphism $\mu:X_{\{\Delta\cup \Delta_2\}}\rightarrow X_{\{\Delta\cup \Delta_2\}}^{\Delta\cup \Delta_2}\le\Sym(24)\wr 2$. Since the element $g$ (which interchanges $\Delta$ and $\Delta_2$) centralises $N$, its image $\mu(g)$ centralises $\mu(N)=\rho_{1,2}(N)^{\Delta\cup\Delta_2} =\{(a,a^{\theta})\text{ : }a\in A\}$. Thus, writing $\mu(g)=(y_1,y_2)\sigma$ (where $\sigma\in\Sym(2)\backslash \{1\}$ and $y_i\in \Sym(24)$), we have
		\begin{align*}
			(a,a^{\theta})=(a,a^{\theta})^{\mu(g)}=(a^{\theta y_2},a^{y_1})\text{ for all }a\in A.
		\end{align*}
		It follows that $a^{\theta y_2}=a=a^{\theta y_1^{-1}}$, for all $a \in A$. Thus, since $C_{\Sym(24)}(A)=1$, we have $y_1=y_2^{-1}=\theta$. Hence, for each $(d,d^{\theta})\in \rho_{1,2}(M\cap M_2)^{\Delta\cup\Delta_2}= \mu(M\cap M_2)$,
		\begin{align*}
			(d,d^{\theta})^{\mu(g)}=(d^{\theta y_2},d^{y_1})=(d,d^{\theta}),
		\end{align*}
		and therefore $\mu(g)$ centralises $\mu(M\cap M_2)$. Since $M\cap M_2$ is simple and $\mu(M\cap M_2)\neq 1$, the kernel of the restriction of $\mu$ to $M\cap M_2$ is trivial. Hence, $g$ centralises $M\cap M_2$, and since $g$ has even order, claim (\ref{lab:L223}) follows.
		
		This leads to a contradiction, since $M\cap M_{2}$ contains elements of order $23$, and the unique $T$-conjugacy class of such elements has centraliser of order $23$ (see the Web Atlas \cite{WebAtlas}). Thus, $T^{[T:H]}$ is $2$-closed.
	\end{proof}

	
	We now prove that $T=\mathrm{J}_4$ is totally $2$-closed.

	\begin{proposition}\label{J4prop}
		The Janko group $\mathrm{J}_4$ is totally $2$-closed.
	\end{proposition}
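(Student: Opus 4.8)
The plan is to run Strategy~\ref{Strategy} with $T=\mathrm{J}_4$, feeding in the results of this section together with Section~\ref{ImprimitiveSection}. Since $T=\Aut(T)$ and $T$ has no $2$-transitive permutation representation, Lemma~\ref{BaseLemma}(b)(i) gives $T^{[T:H]}=T^{(2),[T:H]}$ for every maximal subgroup $H$ of $T$; and by \cite[Table~2]{BOBW} together with Proposition~\ref{J4LemmaCor2}(1)(a), the only maximal subgroups $M$ for which $T^{[T:M]}$ has base size $>2$ are $D_1,D_2,D_3$. Hence after Step~1 the set $\mathcal{S}_2$ consists, up to conjugacy, of the proper nontrivial subgroups of $D_1,D_2,D_3$, and the values $d(T,D_1)=24$, $d(T,D_2)=2$, $d(T,D_3)\le 30$ of Proposition~\ref{J4LemmaCor2}(3) are in hand.

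The engine for Steps~$2$ onwards is the following observation. Suppose $H$ is maximal in a subgroup $M\le T$ with $T^{[T:M]}=T^{(2),[T:M]}$, and suppose $T^{[T:H]}$ is not $2$-closed. Then Proposition~\ref{MainCor1}(2) (applied with $G=T$, $\Sigma=[T:M]$, $\Delta=[M:H]$) gives $T^{(2),[T:H]}=N\rtimes T$ with $N$ subdirect in a base group $B_A$, $1\ne A\unlhd X_\Delta^\Delta$, and all $A$-orbits in $\Delta$ of a common length $a$ dividing $d(T,M)$; and since $H$ is maximal in $M$, the group $M^\Delta$ (hence $X_\Delta^\Delta$) is primitive, so $A$ is transitive and $a=|\Delta|=|M:H|$. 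Thus $T^{[T:H]}$ is $2$-closed whenever $|M:H|\nmid d(T,M)$. For $M=D_3$ every maximal subgroup has index $\ge 31>30\ge d(T,D_3)$ (a proper subgroup of $\mathrm{L}_5(2)$ has index $\ge 31$), so all are disposed of; for $M=D_1$ the only maximal subgroup with index dividing $24$ is $E_1\rtimes\mathrm{M}_{23}$ (index $24$), which is exactly the case settled in Proposition~\ref{J4Prelim}; and for $M=D_2$ the only maximal subgroup with index dividing $2$ is $[D_2,D_2]\cong 2^{1+12}.3.\mathrm{M}_{22}$ (index $2$), the remaining maximal subgroups of $D_2$ having index $\ge 22$. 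For $H=[D_2,D_2]$ the block $\Delta$ has size $2$, so the generic argument degenerates; here one instead uses Proposition~\ref{MainCor1}(1)/(4), for which it suffices to exhibit a single two-point stabiliser of the action $T^{[T:D_2]}$ (permutationally isomorphic to the conjugation action of $T$ on its class of $2A$-involutions) contained in $[D_2,D_2]$, read off from the involution-centraliser data of \cite{Janko4} and the suborbit table of \cite{BrayCameron}, or checked directly in \texttt{Magma}~\cite{MAGMA}. This completes Step~2, stage~(1). For stage~(2) one records which of these actions have base size $2$: Corollary~\ref{J4LemmaCor} (supplemented, where needed, by the crude bound fed into Proposition~\ref{BOBWProp}) shows that all do except $[D_2,D_2]$ and the maximal subgroup $2^{1+12}.3.\mathrm{L}_3(4).2$ of $D_2$ --- precisely the groups of Proposition~\ref{J4LemmaCor2}(1)(b) --- so $\mathcal{S}_3$ is the set of proper nontrivial subgroups of these two.

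Steps $3$ and higher repeat this pattern. By Proposition~\ref{J4LemmaCor2}(1)(c), among subgroups with $\ell_T(H)=3$ the only one whose action can have base size $>2$ is (a conjugate of) $2^{1+12}.3.\mathrm{L}_3(4)$; routing it through the overgroup $M=[D_2,D_2]$, in which it is maximal of index $22$, and using that $d(T,M)$ divides $d(T,D_2)=2$, the engine above again forces $T^{[T:H]}=T^{(2),[T:H]}$, while the other maximal subgroups of $[D_2,D_2]$ and of $2^{1+12}.3.\mathrm{L}_3(4).2$ either have index not dividing $2$ (so the engine applies) or have base size $2$ by Corollary~\ref{J4LemmaCor}. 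Finally, Proposition~\ref{J4LemmaCor2}(2) ensures that every $H$ with $\ell_T(H)\ge 4$ is properly contained in a subgroup $K$ with $T^{[T:K]}$ of base size $2$; each such $K$ has $\ell_T(K)\le 3$ and has already been shown $2$-closed, so Proposition~\ref{MainCor1}(3) gives $T^{[T:H]}=T^{(2),[T:H]}$ for these $H$ as well. Hence $\mathcal{S}_4=\emptyset$, and by Strategy~\ref{Strategy} the group $\mathrm{J}_4$ is totally $2$-closed.

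I expect the main obstacle to be the degenerate index-$2$ case $[D_2,D_2]<D_2$: here the clean "primitive block $\Rightarrow$ transitive normal subgroup $\Rightarrow |\Delta|\mid d(T,M)$" argument fails, and one must instead locate an explicit two-point stabiliser of the $2A$-conjugation action lying inside $[D_2,D_2]$, which is where the delicate bookkeeping with Janko's involution classes in \cite{Janko4} and the suborbit data of \cite{BrayCameron} is genuinely needed. The companion index-$2$ subgroup $2^{1+12}.3.\mathrm{L}_3(4)<2^{1+12}.3.\mathrm{L}_3(4).2$ does \emph{not} require this, since it can be reached instead as an index-$22$ subgroup of $[D_2,D_2]$; and everything else in the proof reduces to the uniform divisibility contradiction together with the base-size computations already packaged in Corollary~\ref{J4LemmaCor} and Proposition~\ref{J4LemmaCor2}.
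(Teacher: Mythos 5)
Your proposal is correct and follows essentially the same architecture as the paper's proof: Strategy~\ref{Strategy} seeded by the base-size data of Proposition~\ref{J4LemmaCor2}, the divisibility ``engine'' from Proposition~\ref{MainCor1}(2) (primitivity of $M^{[M:H]}$ forcing $a=|M:H|$ to divide $d(T,M)$) to dispose of $D_3$ and of all but one maximal subgroup of each of $D_1$ and $D_2$, Proposition~\ref{J4Prelim} for $E_1\rtimes\mathrm{M}_{24-1}$, and the explicit two-point stabiliser containment \eqref{lab:2p} for the degenerate index-$2$ case $[D_2,D_2]<D_2$.

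The one genuine point of divergence is your treatment of the depth-$3$ subgroup $H\cong 2^{1+12}.3.\mathrm{L}_3(4)$. The paper reaches it as the index-$2$ derived subgroup of $K=2^{1+12}.3.\mathrm{L}_3(4).2$, where the divisibility argument is powerless, and must therefore invoke \eqref{lab:2p} a second time to get $K\cap K^t\le[K,K]$ and apply Proposition~\ref{MainCor1}(1). You instead reach the same subgroup as a maximal subgroup of index $22$ in $[D_2,D_2]$, where the engine applies directly since $22$ does not divide $d(T,[D_2,D_2])\le d(T,D_2)=2$. Both routes are valid (the subgroup $E_2\cdot(3.\mathrm{L}_3(4))$ is indeed maximal in $E_2\cdot(3.\mathrm{M}_{22})$, by the list \eqref{J4-ell32}, and $T^{[T:[D_2,D_2]]}$ has already been shown $2$-closed at Step~2); yours is marginally cleaner in that it needs the computationally verified containment \eqref{lab:2p} only once, for $[D_2,D_2]$ itself, exactly as you predicted in your final paragraph.
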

	
	\begin{proof}
		We use the notation fixed at the beginning of the section, so that $T=\mathrm{J}_4$, $D_1\cong 2^{11}:\mathrm{M}_{24}$ etc. Our task is to prove, using Strategy~\ref{Strategy}, that $T^\Omega=T^{(2),\Omega}$, where $\Omega=[T:H]$,     for each proper nontrivial subgroup $H$ of $T$.
		Since  $T=\Aut(T)$  and $T$ has no $2$-transitive representations, it follows from
		Lemma \ref{BaseLemma}(b)(i) that this holds for all maximal subgroups $H$. Thus we proceed to stage (2) of Step 1 in Strategy~\ref{Strategy}: constructing the set $\mathcal{S}_2$.
		By \cite[Table 2]{BOBW}, there are precisely three conjugacy classes of maximal subgroups $M$ of $T$ such that $T^{[T:M]}$ has base size at least $3$, with representatives $D_1\cong 2^{11}:\mathrm{M}_{24}$, $D_2\cong 2^{1+12}.3:\mathrm{M}_{22}.2$, and $D_3=2^{10}:\mathrm{L}_5(2)$ as defined at the beginning of the section. Therefore according to Strategy~\ref{Strategy}, the set $\mathcal{S}_2$ consists of all non-maximal subgroups $H$ such that the only maximal subgroups of $T$ containing $H$ are conjugate to $D_1$, $D_2$, or $D_3$.
		
		We now move on to Strategy \ref{Strategy}, Step 2, stage (1). We need to prove that $T^{[T:H]}$ is $2$-closed for all $H\in\mathcal{S}_2$ with $\ell_T(H)=2$. Let $H\in\mathcal{S}_2$, and assume that $T^{[T:H]}\neq T^{(2),[T:H]}$. Let $M$ be a maximal subgroup of $T$ containing $H$ so that, without loss of generality, $M=D_i$, for some $i=1,2,3$ and $H$ is maximal in $M$. We use the rest of the notation in Notation \ref{not2}, with $T=G$. So $\Omega=[T:H]$, $\Sigma=[T:M]$, $L=T^{\Sigma}$, $M=T_{\Delta}$ (with $\Delta\in\Sigma$), $H=T_{\omega}$ (with $\omega\in\Delta$), etc. Note that $L^{(2),\Sigma}=L$, by Step 1. We also set $X=T^{(2),\Omega}$, so $X>T$, and $Y=X_{\Delta}^{\Delta}\le M^{(2),\Delta}$.
		
		Suppose first that $i=1$ or $3$. Then $M=E_i\rtimes F_i$, where $E_i$ is elementary abelian, and $F_i$ is a simple group acting faithfully and irreducibly on $E_i$. Then, as mentioned in the proof of Proposition \ref{J4LemmaCor2}(1)(b), since $H$ is maximal in $M$,  either $H\cong F_i$ or $H=E_iI$, where $I$ is a maximal subgroup of $F_i$. It follows that either $M^{[M:H]}\cong E_iF_i$ is a primitive group of affine type, of degree $|E_i|$; or $M^{[M:H]}\cong F_i$ is a primitive almost simple group, of degree $|F_i:I|$. Since $T^{[T:H]}$ is not $2$-closed, it follows that the normal subgroup $A$ of $Y\le M^{(2),\Delta}$ from Proposition \ref{MainCor1}(2) is transitive. On the other hand, Proposition \ref{MainCor1}(2) and Proposition \ref{Base2Lemma} imply that the size of an $A$-orbit in $\Delta$ divides $d(T,M)$, with
		$d(T,M)$ as in Proposition \ref{Base2Lemma}(b), that is to say, $|M:H|=|\Delta|$ divides $d(T,M)$. Now, by Proposition \ref{Base2Lemma}(b), $d(T,M)$ divides $g(T,M)$, with   $g(T,M)$ as in Table \ref{tab:basesize}. In both cases $g(T,M)<|E_i|$, so  $M^{[M:H]}\cong F_i$ is nonabelian simple and primitive. If $i=1$ then $g(T,M)=24$ and it follows that $|M:H|=24$ and $I\cong \mathrm{M}_{23}$ maximal in $F_1\cong \mathrm{M}_{24}$; however, by Proposition \ref{J4Prelim}, $T^{[T:H]}$ is $2$-closed in this case, which is a contradiction.  Thus $i=3$, and by Proposition \ref{Base2Lemma}(b), $d(T,M)\leq 30$. This means that $|M:H|=|F_3:I|\leq 30$, which is not possible for any maximal subgroup $I$ of $F_3\cong \mathrm{L}_5(2)$.
		Thus $T^{[T:H]}$ is $2$-closed for all maximal subgroups $H$ of $M\in\{D_1,D_3\}$.
		
		Next, assume that $H$ is maximal in $M=D_2$. Then, as mentioned in the proof of Proposition \ref{J4LemmaCor}, either $H$ is $M$-conjugate to a supplement $F_2\cong 6.\mathrm{M}_{22}:2$ for $E_2=O_2(M)$ in $M$, or $H=E_2I$, for some $I<F_2$ maximal. Since $T^{[T:H]}$ is not $2$-closed, it follows that the normal subgroup $A$ of the primitive permutation group $Y\leq M^{(2),\Delta}$, from Proposition \ref{MainCor1}(2), is transitive. On the other hand, the size of an $A$-orbit divides $2$, by Proposition \ref{Base2Lemma}(b). Hence, we must have $|M:H|=|\Delta|=2$, and it follows that $H$ is the maximal subgroup $[M,M]\cong 2^{1+12}.3.\mathrm{M}_{22}$. By Proposition \ref{MainCor1}(1),
		for each $\Delta_2\in\Sigma\setminus\{\Delta\}$, the stabiliser $M_2=T_{\Delta_2}$ satisfies $M=(M\cap M_2)H$. We shall obtain a contradiction by proving that:
		\begin{align}\label{lab:2p}
			\text{for some $\Delta_2\in\Sigma\setminus\{\Delta\}$, the stabiliser $M\cap M_2=T_{\Delta,\Delta_2}$ is contained in $H$.}
		\end{align}
		We prove the claim using Magma and the $112$-dimensional representation of $T$ over $\mathbb{F}_2$. In particular, we  use the generating $112$-dimensional matrices $a$ and $b$ of $T$ given in the Web Atlas \cite{WebAtlas}, of orders $2$ and $4$, respectively. Using this notation, we have that $M$ is $T$-conjugate to $C_T(a)$, so we may assume that $M=C_T(a)$. Define
		$$
		t:=(ab^2)^4;c:=ab; y:={t^{c^2}}.
		$$
		Using the excellent computations done in \cite[Section 2]{BrayCameron}, we have that
		$$
		M=\langle [a,b]^5(ab^2)^6,ba(b^2)ab[a,ba(b^2)ab]^5
		ababab[a,ababab]^5\rangle.
		$$
		Then the action of $T$ on the set of $T$-cosets of $M$ is permutation isomorphic to the action of $T$ on the set of $T$-conjugates of $a$. Thus, the $2$-point stabilisers $M\cap M_2$ in $T^{[T:M]}$ are the groups $C_T(a)\cap C_T(a^g)$, for $g\in T$. Consider the $2$-point stabiliser $C_{M}(a^y)=C_T(a)\cap C_T(a^y)$. One can quickly check using \texttt{Magma}~\cite{MAGMA} that $(aa^y)^6$ is an element of $M$ of order $2$, and that $C_{M}((aa^y)^6)$ is contained in $[M,M]=H\cong 2^{1+12}.3.\mathrm{M}_{22}$. Hence, since $C_{M}(a^y)\le C_{M}((aa^y)^6)$, we have that $C_T(a)\cap C_T(a^y)=C_{M}(a^y)\le [M,M]=H$. Thus, (\ref{lab:2p}) is proved, yielding a contradiction. We conclude that $T^{[T:H]}$ is $2$-closed for all maximal subgroups $H$ of $M=D_2$, and hence that $T^{[T:H]}$ is $2$-closed for all $H\in\mathcal{S}_2$ with $\ell_T(H)=2$.

		This completes stage (1) of Step 2 of Strategy \ref{Strategy}. For stage (2) of Step 2, we construct the set $\mathcal{S}_3$. By Proposition \ref{J4LemmaCor2}, we remove from $\mathcal{S}_2$ all (conjugates of) proper subgroups of   $D_1$, $D_2$, and $D_3$ except those (conjugates of) subgroups $H$ of $D_2$ such that $\ell_T(H)\geq 3$, and if $H<K<T$, then $K$ is conjugate to one of the groups in
		$$
		\{D_2,\ [D_2,D_2],\ 2^{1+12}.3.\mathrm{L}_3(4).2\}.
		$$
		The set $\mathcal{S}_3$ consists precisely of these remaining subgroups $H$.
		
		We now commence Strategy \ref{Strategy}, Step 3, stage (1). We need to prove that $T^{[T:H]}$ is $2$-closed for all $H\in\mathcal{S}_3$ with $\ell_T(H)=3$. We may assume that such a subgroup $H$ is contained in $M:=D_2$. Then $H$ must be a maximal subgroup in either $[M,M]$, or a subgroup of shape $2^{1+12}.3.\mathrm{L}_3(4).2$. We listed conjugacy class representatives for these subgroups in the proof of Proposition \ref{J4LemmaCor2}(1)(c), namely in \eqref{J4-ell31} and \eqref{J4-ell32}. We also showed that every maximal subgroup of $[M,M]$, apart from those conjugate to $2^{1+12}.3.\Alt(7)$ or $2^{1+12}.3.\mathrm{L}_3(4)$, is contained in a maximal subgroup $K$ of $M$ with $T^{[T:K]}$ of base size $2$. Thus the subgroups $H$ which remain to be considered are the following:
		\begin{enumerate}[\upshape(i)]
			\item the maximal subgroups $H$ of $2^{1+12}.3.\mathrm{L}_3(4).2<M$; and
			\item the subgroup $H:=2^{1+12}.3.\Alt(7)<[M,M]<M$.
		\end{enumerate}
		Let $H$ be one of these subgroups, and let $K:=2^{1+12}.3.\mathrm{L}_3(4).2$ in case (i), and $K=[M,M]$ in case (ii), so that $H$ is maximal in $K$.
		In either case the quantities $g(T,K)=2$ and $d(T,K)\leq 2$ (see Table \ref{tab:basesize} and Proposition~\ref{Base2Lemma}(b)).
		Assume that $T^{(2),   [T:H]}\neq T$. Then, by Proposition \ref{MainCor1}(2) applied to $\Sigma=[T:K]$,  and Proposition \ref{Base2Lemma}(b), the usual argument shows that the only possibility is for $K, H$ as in case (i) with $H=[K,K]\cong 2^{1+12}.3.\mathrm{L}_3(4)$. So assume that we are in this case. Then by (\ref{lab:2p}), there exists an element $t$ of $T$ such that $M\cap M^t\le [M,M]$. Hence, $K\cap K^t\le K\cap [M,M]\cong 2^{1+12}.3.\mathrm{L}_3(4)=[K,K]$. It then follows from Proposition \ref{MainCor1}(1) that $T^{[T:H]}$ is $2$-closed, which is a contradiction. This completes stage (1) of Step 3. For stage (2) of Step 3,
		we simply note that the set $\mathcal{S}_4$ is empty, by Proposition \ref{J4LemmaCor2}(2). Thus, it follows from Strategy~\ref{Strategy} that $T=\mathrm{J}_4$ is totally $2$-closed.
	\end{proof}

	\section{The exceptional groups}\label{sec:ExGroups}
	
	In this section, we prove that none of the finite simple exceptional groups of Lie type listed in Proposition \ref{lem:simple1}(a) can be totally $2$-closed.  
	To do this, we assume throughout this section that
	\begin{align}\label{AssEx}
		& T\in\{{}^3D_4(q),{}^2F_4(q)', F_4(q),{}^2E_6(q), E_6(q),E_7(q),E_8(q)\},\nonumber\\
		&\text{with }
		q=p^f\text{, }p\text{ prime.} 
	\end{align}
	
	
	Our aim is to prove the following.
	
	\begin{proposition}\label{ExProp}
		Let $T$ be one of the groups at (\ref{AssEx}). Then $T$ is not totally $2$-closed.
	\end{proposition}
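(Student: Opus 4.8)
\emph{Outline of the proof.} By Corollary~\ref{TransRed} it is enough to produce, for each group $T$ listed in \eqref{AssEx}, a single transitive permutation representation on a set $\Omega$ with $T^{(2),\Omega}\neq T^{\Omega}$; here we may use freely that none of these $T$ admits a nontrivial factorisation --- this is exactly why they survive into Proposition~\ref{lem:simple1}(a). The basic mechanism is Theorem~\ref{thm:W}: if $T^{\Omega}\leq A\leq\Sym(\Omega)$ with $A$ transitive on $\Omega$ and having the same orbits on $\Omega\times\Omega$ as $T$ --- equivalently, with $A$ transitive and $\mathrm{rank}(A^{\Omega})=\mathrm{rank}(T^{\Omega})$ --- then $A\leq T^{(2),\Omega}$, so $T$ fails to be $2$-closed on $\Omega$ whenever $A^{\Omega}>T^{\Omega}$.

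I would first treat every $T$ in \eqref{AssEx} for which $\Out(T)$ contains a field automorphism or a diagonal automorphism $\phi$ (equivalently, $\Out(T)$ is neither trivial nor generated purely by a graph automorphism of the Dynkin diagram). Take $\Omega=[T:P]$ for a maximal parabolic subgroup $P$ of $T$, so that $T^{\Omega}$ is primitive. Such a $\phi$ normalises the standard parabolic $P$ and acts trivially on the Weyl group $W$, so by the Bruhat decomposition $T=\bigsqcup_{w}PwP$ (the union over distinguished $P,P$-double coset representatives in $W$) each double coset $PwP$ is $\phi$-invariant; hence $\langle P,\phi\rangle$ and $P$ have the same orbits on $\Omega$, and so $A:=\langle T,\phi\rangle$ and $T$ have the same suborbits. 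Thus $A\leq T^{(2),\Omega}$. Moreover $A$ acts faithfully on $\Omega$ (the core in $T$ of $P$ is trivial, since $P$ is maximal in the simple group $T$, and $C_{\Aut(T)}(T)=1$), so $A^{\Omega}\cong A>T\cong T^{\Omega}$; hence $T$ is not $2$-closed on $\Omega$. This settles all of \eqref{AssEx} apart from those $T$ for which $\Out(T)$ is trivial or generated by a graph automorphism --- for instance $F_4(p)$ with $p$ odd, $E_7(2)$, $E_8(p)$, and $E_6(p)$ with $p\not\equiv 1\pmod 3$.

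For each of the remaining groups the plan is to imitate the argument used for the sporadic groups in Propositions~\ref{ONProp}, \ref{Co2Prop} and~\ref{BProp}. Choose a maximal subgroup $M$ of $T$ admitting an index-$2$ subgroup $H$: when $q$ is odd one may take $M$ to be a maximal parabolic $P_j$ (its abelianisation is cyclic of order $q-1$, hence even) with $H=[M,M]$; when $q$ is even one may instead take $M$ to be the normaliser of a suitable subsystem subgroup, such as $\Omega_{16}^{+}(q).2$ inside $E_8(q)$, with $H=M'$. By Lemma~\ref{BaseLemma}(b)(i) together with \cite[Theorem~1]{LPS}, the primitive action $T^{[T:M]}$ is $2$-closed (these $T$ are not $2$-transitive and are not among the small exceptions of Lemma~\ref{BaseLemma}(b)), so, in the notation of Notation~\ref{not2} with $\Sigma=[T:M]$, we have $L^{(2),\Sigma}=L$; and the action $T^{[T:H]}$ is imprimitive with blocks of size $2$. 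By Proposition~\ref{MainCor1}(1), $T^{[T:H]}$ fails to be $2$-closed as soon as $(M_i\cap M_j)H_i=M_i$ for all distinct blocks $\Delta_i,\Delta_j$, that is, as soon as no two-point stabiliser $M\cap M^{t}$ ($t\in T$) of the action $T^{[T:M]}$ is contained in the index-$2$ subgroup $H$. This last point is where I expect the real work to lie: unlike in the sporadic case it cannot be verified by a single machine computation, so one must use the classification of the maximal subgroups of the exceptional groups and information on the subdegrees of their parabolic (and subsystem) actions to control the intersections $M\cap M^{t}$ --- the goal being to show that every such intersection has even order and meets $M\setminus H$, whence $(M\cap M^{t})H=M$. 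Once suitable $M$ and $H$ have been pinned down and this is checked, Proposition~\ref{MainCor1}(1) yields $T^{(2),[T:H]}>T^{[T:H]}$, completing the proof of Proposition~\ref{ExProp}. The single group $T={}^2F_4(2)'$, if not already covered by the first method, is small enough to be dealt with by a direct computation (for example, by finding a maximal subgroup $M$ of ${}^2F_4(2)$ not contained in $T$ such that $T$ and ${}^2F_4(2)$ have equal rank on $[{}^2F_4(2):M]$).
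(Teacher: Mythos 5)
Your first step (using an outer automorphism that normalises a maximal parabolic and fixes every $P,P$-double coset to force $\langle T,\phi\rangle\leq T^{(2),[T:P]}$) is sound and plays the same role as the paper's Proposition~\ref{TwistedCases}, which instead observes that the subdegrees of the minimal-degree parabolic action are pairwise distinct so that \emph{any} overgroup of $T$ in $\Aut(T)$ normalising $M$ preserves the orbitals. Either justification disposes of the twisted groups and the untwisted groups with $f>1$ (and, with diagonal automorphisms, of $E_7(p)$ for odd $p$ and $E_6(p)$ for $p\equiv 1\pmod 3$), so up to that point you and the paper are aligned.

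The gap is in the residual cases. You correctly identify the right framework (an index-$2$ subgroup $H$ of a core-free $M$ with $T^{[T:M]}$ $2$-closed, followed by Proposition~\ref{MainCor1}(1)), but the decisive step --- showing that $(M\cap M^{t})H=M$ for \emph{every} $t\in T$ --- is exactly what you leave open, and the route you propose for it (classifying maximal subgroups and controlling subdegrees of parabolic or subsystem actions of $F_4(p)$, $E_7(2)$, $E_8(p)$, etc.) is not carried out and is not realistically tractable. The paper closes this gap with a short structural argument: for $p$ odd it takes $M=B=U\rtimes S$ a Borel subgroup with $H=UR$, $|S:R|=2$; by the Bruhat decomposition $T=BNB$ every two-point stabiliser is, up to conjugation by an element of $B$ (which normalises $H$), of the form $B\cap B^{n}$ with $n\in N=N_T(S)$, hence contains $S$, and $SH=B$ immediately. (It also sidesteps your reliance on Lemma~\ref{BaseLemma} by simply assuming $T^{[T:B]}$ is $2$-closed, since otherwise there is nothing to prove.) Note too that for a maximal parabolic $M=P_j$ the quotient $M/[M,M]$ has order roughly $q-1$, so $[M,M]$ is generally not of index $2$ as you assert. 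Finally, for $q=2$ the split torus is trivial and this mechanism fails; the paper needs a genuinely separate argument for $E_6(2),E_7(2),E_8(2)$, using the parabolic $P_{a_3}$ whose Levi has a non-perfect $A_1(2)\cong\Sym(3)$ factor together with an explicit root-system computation --- your suggestion of a subsystem subgroup such as $\Omega_{16}^{+}(q).2$ in $E_8(q)$ would additionally require establishing $2$-closure of that primitive action and controlling its two-point stabilisers, neither of which is addressed. As it stands, the proposal is a correct reduction plus an unproved key claim rather than a proof.
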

	
	Recall that the subdegrees of a transitive permutation group are the orbit lengths of a point stabiliser.
	Our first lemma collates the subdegrees of the minimal degree permutation representations of $T$, which were determined in \cite[Theorem 2]{VasF4} for  $F_4(q)$, in \cite[Theorems 1, 2 and 3]{VasE6} for  $E_6(q), E_7(q), E_8(q)$, and in \cite[Theorems 3, 4, 5 and 6]{VasT} for 
	${}^3D_4(q)$,  ${}^2E_6(q)$, ${}^2F_4(q)\, (q>2)$, and ${}^2F_4(2)'$, respectively. 
	
	\begin{lemma}\label{lem:ExDegLemma}
		Let $M$ be a point stabiliser for $T$ in a minimal degree (necessarily primitive) permutation representation. Then $M$ and the point stabilisers in $M^{[T:M]}$ are known, and $M$, the degree $|T:M|$, the rank, and the subdegrees are given in Table $\ref{tab:minex}$.
	\end{lemma}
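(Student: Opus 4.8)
The statement is essentially a compilation, so the plan is to assemble Table~\ref{tab:minex} from the literature on minimal degree permutation representations of the finite exceptional groups of Lie type, together with the (already classical) determination of their subdegrees. First I would recall that, for each $T$ listed in~(\ref{AssEx}), a minimal faithful permutation representation of $T$ is afforded by the coset action on $[T:M]$ for a specific maximal parabolic subgroup $M$ (with small-field and twisted cases, notably $T={}^2F_4(2)'$, treated separately as in the sources). This identification of $M$ and of the degree $|T:M|$ is due to work surveyed and extended in the references cited just before the lemma: for ${}^3D_4(q)$, ${}^2E_6(q)$, ${}^2F_4(q)$ with $q>2$, and ${}^2F_4(2)'$ it is in \cite{VasT} (Theorems~3--6); for $F_4(q)$ it is in \cite{VasF4} (Theorem~2); and for $E_6(q), E_7(q), E_8(q)$ it is in \cite{VasE6} (Theorems~1--3).

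Next, for each pair $(T,M)$ I would read off from those same theorems the $M$-orbits on $[T:M]$ — equivalently the point stabilisers in $M^{[T:M]}$ — from which the rank and the full list of subdegrees of $T^{[T:M]}$ follow immediately. I would then transcribe these data into Table~\ref{tab:minex}, taking care to impose the restrictions on $q$ that are inherited from Proposition~\ref{lem:simple1}(a) (so $q$ odd for $F_4(q)$, $q=2^{2c+1}$ for ${}^2F_4(q)'$, and so on), and to handle the exceptional isomorphisms and degenerate small-field situations exactly as in the cited papers.

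The bulk of the work is the transcription and cross-checking; the one point that genuinely needs care is the uniqueness clause implicit in the phrase ``a point stabiliser for $T$ in a minimal degree permutation representation'' — namely that, up to permutational isomorphism, this representation (hence $M$, the rank, and the subdegrees) is well defined. This is established in the same references: in each case the minimal degree is strictly smaller than the index of every other maximal subgroup of $T$, so there is a single $T$-conjugacy class of such $M$, and I would cite this rather than reprove it. I do not expect any new argument to be required beyond assembling the tables; the main obstacle is simply bookkeeping accuracy across the several source papers and their differing notational conventions.
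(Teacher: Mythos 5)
Your proposal matches the paper's treatment exactly: the lemma is proved by citation, collating the subdegree data from \cite[Theorem 2]{VasF4}, \cite[Theorems 1, 2 and 3]{VasE6}, and \cite[Theorems 3, 4, 5 and 6]{VasT} into Table~\ref{tab:minex}, with no further argument given. Your extra remark on the well-definedness of the minimal degree representation is a sensible bookkeeping point but is likewise delegated to the same sources.
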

	
	\begin{table}[]
		\centering
		\begin{tabular}{c|c|c|c|c}
			$T$   &  $M$  &  $|T:M|$ & rank & subdegrees\\
			\hline
			$ {}^3D_4(q)$   &  $P_2$  &  $(q^8+q^4+1)(q+1)$ & $4$ & $1,q(q^3+1),q^{9},q^5(q^3+1)$\\
			${}^2F_4(q)$,   &  $P_{2,3}$  &  $(q^6+1)(q^3+1)(q+1)$ & $5$ & $1,q(q^2+1),q^{10},{q^4}(q^2+1),$\\
			\ $q>2$     & & & & $q^{7}(q^2+1)$\\
			${}^2F_4(2)'$   &  $A_2(3):2$  &  $1600$ & $3$ & $1,351,1248$\\
			$F_4(q)$   &  $P_1\text{ or }P_4$  &  $\frac{(q^{12}-1)(q^4+1)}{q-1}$ & $5$ & $1,\frac{(q^{4}-1)(q^3+1)}{q-1},q^{15},
			q^5\frac{q^6-1}{q-1},$\\
			& & & & $q^8\frac{(q^{4}-1)(q^3+1)}{q-1}$
			\\
			${}^2E_6(q)$   &  $P_4$  &  $\frac{(q^{12}-1)(q^6-q^3+1)(q^4+1)}{q-1}$ & $5$ & $1,q(q^5+1)(q^3+1)(q+1),q^{21},$\\
			& & & & ${q^6}(q^5+1)(q^4+q^2+1)$,\\
			& & & & $q^{11}(q^5+1)(q^3+1)(q+1)$\\
			$E_6(q)$   &  $P_1\text{ or }P_6$  &  $\frac{(q^{9}-1)(q^8+q^4+1)}{q-1}$ & $3$ & $1,q\frac{(q^{8}-1)(q^3+1)}{q-1},
			q^8\frac{(q^{5}-1)(q^4+1)}{q-1}$\\
			$E_7(q)$   &  $P_1$  &  $\frac{(q^{14}-1)(q^9+1)(q^5+1)}{q-1}$ & $4$ & $1,q\frac{(q^{9}-1)(q^8+q^4+1)}{q-1},$\\
			& & & & $q^{10}\frac{(q^9-1)(q^8+q^4+1)}{q-1},q^{27}$\\
			$E_8(q)$   &  $P_1$  &  $\frac{(q^{30}-1)(q^{12}+1)(q^{10}+1)(q^8+1)}{q-1}$ & $5$ & $1,q\frac{(q^{14}-1)(q^9+1)(q^5+1)}{q-1},q^{57},$ \\
			& & & & $q^{29}\frac{(q^{14}-1)(q^9+1)(q^5+1)}{q-1}$\\
			& & & & $q^{12}\frac{(q^{14}-1)(q^{12}+q^6+1)(q^9+q^4+1)}{q-1}$\\
			\hline
		\end{tabular}
		\caption{Information on the minimal degree permutation representations for exceptional groups in \eqref{AssEx}}
		\label{tab:minex}
	\end{table}
	
	\begin{proposition}\label{TwistedCases}
		Suppose that either $T$ is twisted; or $T$ is untwisted with $f>1$, where $q=p^f$ as in \eqref{AssEx}. Let $M$ be a point stabilizer for $T$ in a minimal degree permutation representation, so $M$ is as in Table \ref{tab:minex}.
		Then $T$ is not totally $2$-closed.
	\end{proposition}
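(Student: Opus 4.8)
The plan is to show, for each $T$ in the statement, that some faithful transitive action of $T$ fails to be $2$-closed; by the definition of total $2$-closure (equivalently by Corollary~\ref{TransRed}) this is enough. The action I would use is the minimal-degree primitive action of $T$ on $\Omega:=[T:M]$, where $M$ is the maximal parabolic subgroup listed for $T$ in Table~\ref{tab:minex} (taking $M=P_1$ in the two lines where two parabolics are offered); this action is faithful because $T$ is simple and $M$ is proper. The crucial feature I would read off from Table~\ref{tab:minex} is that in every line the subdegrees are pairwise distinct --- indeed, apart from the line $T={}^2F_4(2)'$ (where they are $1,351,1248$), the nontrivial subdegrees even have pairwise distinct degrees as polynomials in $q$, so pairwise distinctness is immediate for all relevant $q$.

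The main step is the device already used in the proofs of Corollary~\ref{BaseCor} and Propositions~\ref{HNProp}, \ref{ONProp} and~\ref{Co2Prop}: I would produce an overgroup of $T$ inside $\Aut(T)$ that has the same orbits as $T$ on $\Omega\times\Omega$. Under the hypothesis of the proposition, $\Out(T)$ contains an element $\alpha$ of prime order $r$ that normalises a standard parabolic of $T$. If $T$ is untwisted with $q=p^f$ and $f>1$, I take for $\alpha$ a field automorphism of prime order $r\mid f$: a standard field automorphism fixes a standard Borel subgroup and hence normalises every standard parabolic, so we may assume $M$ is such a parabolic. If $T$ is twisted, then $\Out(T)$ always contains a nontrivial cyclic group generated by a field-type automorphism (of order $3f$ for ${}^3D_4(q)$, of order $2f$ for ${}^2E_6(q)$, and of order $f$ for ${}^2F_4(q)$ with $q=2^f>2$), and again I extract a prime-order element $\alpha$ normalising a standard parabolic $M$. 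Now put $G:=T\langle\alpha\rangle\le\Aut(T)$, so $G/T\cong C_r$ and $|G|=r|T|$, and put $K:=M\langle\alpha\rangle$. Then $M\lhd K$, $K\cap T=M$, and $|TK|=|T|\,|K|/|T\cap K|=r|T|=|G|$, so $G=TK$; hence $T$ is transitive on $[G:K]$ with point stabiliser $K\cap T=M$, and this action of $T$ is permutationally isomorphic to $T^\Omega$.

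To finish, I would observe that $K$ normalises $M$, so $K$ permutes the $M$-orbits on $[G:K]$ preserving their lengths; since those lengths are pairwise distinct, $K$ fixes each $M$-orbit setwise, so the point stabilisers $G_\omega=K$ and $T_\omega=M$ have exactly the same orbits on $[G:K]$. Therefore $T$ and $G$ have the same orbits on $[G:K]\times[G:K]$, whence $G\le T^{(2),[G:K]}$ by Theorem~\ref{thm:W}. Since $|G|=r|T|>|T|$, the faithful action $T^{[G:K]}$ is not $2$-closed, and so $T$ is not totally $2$-closed.

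The one case that needs separate treatment, and the main obstacle, is the Tits group $T={}^2F_4(2)'$: here $\Out(T)=C_2$, but the two $T$-conjugacy classes of the parabolic-type subgroup $M=L_3(3){:}2$ are interchanged by the outer automorphism, so no element of $\Aut(T)\setminus T$ normalises $M$ and the construction above does not apply verbatim. For this group I would instead argue directly with the ATLAS/GAP character-table data: one exhibits a maximal subgroup $K$ of $\Aut({}^2F_4(2)')={}^2F_4(2)$ with $K\not\le{}^2F_4(2)'$ for which the permutation ranks of ${}^2F_4(2)$ on $[{}^2F_4(2):K]$ and of ${}^2F_4(2)'$ on $[{}^2F_4(2)':K\cap{}^2F_4(2)']$ agree; the same orbital-comparison argument then shows ${}^2F_4(2)'$ is not $2$-closed in that representation. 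For every other group the two ingredients --- pairwise-distinct subdegrees (read off Table~\ref{tab:minex}) and a prime-order outer automorphism normalising a standard parabolic (a standard fact about $\Aut$ of groups of Lie type) --- are routine to verify, so the bulk of the remaining work for this proposition is really just the small, explicit Tits-group computation.
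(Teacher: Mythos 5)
Your main argument is the same as the paper's: take the minimal-degree action on $\Omega=[T:M]$ from Table~\ref{tab:minex}, observe that the subdegrees are pairwise distinct, exhibit a non-trivial outer automorphism normalising $M$, and conclude via Theorem~\ref{thm:W} that the extension $T\langle\alpha\rangle$ lies in $T^{(2),\Omega}$. The paper phrases this as $G:=TN_{\Aut(T)}(M)>T$ because $M$ is a maximal parabolic normalised by a graph automorphism (twisted case) or a field automorphism (untwisted, $f>1$); your version, with a prime-order $\alpha$ and the explicit permutational isomorphism $T^{\Omega}\cong T^{[G:K]}$, is just a more careful writing of the identical idea, so for every group except the Tits group the two proofs coincide.

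The one genuine divergence is $T={}^2F_4(2)'$, and here your instinct is sound: the stabiliser $M=\mathrm{L}_3(3){:}2$ in the $1600$-point action is not a parabolic (it does not contain a Sylow $2$-subgroup), and since $\mathrm{L}_3(3){:}2$ is complete and the Tits group has no elements of order $26$, one gets $N_{\Aut(T)}(M)=M$, consistent with the ATLAS fusion of the two classes of $\mathrm{L}_3(3){:}2$ under the outer automorphism. So the uniform argument really does fail in that row, a point the paper's own proof passes over silently. However, your replacement for this case is only a promissory note: you assert that some maximal $K\le{}^2F_4(2)$ with $K\not\le{}^2F_4(2)'$ has the property that ${}^2F_4(2)$ and ${}^2F_4(2)'$ have equal rank on $[{}^2F_4(2):K]$, but you neither name $K$ nor verify the rank equality, and without that the Tits group is not eliminated. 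The gap is easy to close: take $K$ to be a maximal parabolic of ${}^2F_4(2)$ of index $1755$ (a point stabiliser of the generalized octagon of order $(2,4)$); then $K\cap{}^2F_4(2)'=2.[2^8]{:}5{:}4$ is the subgroup of index $1755$ in Table~\ref{tab:minex} specialised at $q=2$, both groups act distance-transitively of rank $5$ with the pairwise distinct subdegrees $1,10,80,640,1024$ (see \cite{Ivanov95}), and your orbital-comparison argument then applies verbatim. You should either carry out this computation explicitly or cite the distance-transitivity; as written, the Tits-group case is the only incomplete step in an otherwise correct proof.
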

	
	\begin{proof}
		By Lemma \ref{lem:ExDegLemma}, the group $M$ is a maximal parabolic subgroup of $T$. More precisely, we write $M=P_I$, where $I$ is a subset of the set of node-labels of the Dynkin diagram for $T$, and $P_I$ is the parabolic subgroup obtained by deleting the nodes with labels in $I$. In particular, in each case the group $G:=TN_{\Aut(T)}(M)$ is strictly larger than $T$. Indeed, if $T$ is twisted, then $T$ has a non-trivial graph automorphism which normalises $M$, while if $T$ is untwisted and $f>1$, then $T$ has a non-trivial field automorphism which normalises $M$. See \cite[Sections 2.5 and 2.6]{GLS}. In particular, we use the definition of graph, field, and graph-field automorphisms of $T$ given by \cite[Definition 2.5.13]{GLS}.
		
		Now, since $M$ is maximal in $T$, we have $M=N_T(M)$. Hence, $|G:N_{\Aut(T)}(M)|=|T:M|$, and $T^{[T:M]}$ is permutationally isomorphic to the permutation group $T^{[G:N_{\Aut(T)}(M)]}$. By inspection of Table \ref{tab:minex}, we see that in each case, any two subdegrees of  $T^{[G:N_{\Aut(T)}(M)]}$ involve different $q$-parts, and hence the subdegrees are pairwise distinct. Thus $T$ and $G$ have the same orbits  on ordered pairs of points from $[G:N_{\Aut(T)}(M)]$, and it follows from Theorem~\ref{thm:W} that $G\le T^{(2),[G:N_{\Aut(T)}(M)]}$. Therefore $T$ is not totally $2$-closed.
	\end{proof}
	
	
	In proving Proposition \ref{ExProp}, it is convenient to deal with the cases $E_6(2)$, $E_7(2)$ and $E_8(2)$ separately. We do this in the next proposition, where we use the following labelling of the Dynkin diagrams for these groups. This is consistent with the labelling in the computer algebra system \texttt{Magma}~\cite{MAGMA}.
	\begin{align}\label{dynkin}\nonumber 
		&\dynkin[label,label macro/.code={#1},
		edge length=.75cm]E6\text{ }E_6
		&\dynkin[label,label macro/.code={#1},
		edge length=.75cm]E7\text{ }E_7\\ 
		&\dynkin[label,label macro/.code={#1},
		edge length=.75cm]E8\text{ }E_8
	\end{align}
	
	\begin{proposition}\label{EmProp}
		The simple exceptional groups $E_6(2)$, $E_7(2)$, and $E_8(2)$, are not totally $2$-closed.
	\end{proposition}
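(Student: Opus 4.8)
I would split the proof into the case $T = E_6(2)$, which carries a graph automorphism, and the cases $T\in\{E_7(2),E_8(2)\}$, for which $\Aut(T)=T$. For $T=E_6(2)$, the plan is to mimic Proposition~\ref{TwistedCases}, using the graph automorphism $\tau$ generating $\Out(T)\cong C_2$ in place of the field automorphism there. In the labelling \eqref{dynkin}, $\tau$ normalises the two maximal parabolic subgroups $P_2$ and $P_4$ fixed by the diagram symmetry. Fixing such a parabolic $M$, we get $G:=\Aut(T)=T\langle\tau\rangle>T$ and, since $M$ is maximal (so $N_T(M)=M$), $N_G(M)=M\langle\tau\rangle$ with $|G:N_G(M)|=|T:M|$; thus $T^{[T:M]}$ is permutationally isomorphic to $T^{[G:N_G(M)]}$. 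By Theorem~\ref{thm:W}, to deduce $G\le T^{(2),[G:N_G(M)]}$ — and hence that $T$ is not totally $2$-closed — it suffices to show that $\tau$ fixes setwise every $M$-orbit on $[G:N_G(M)]\setminus\{M\}$; equivalently, that the subdegrees of $T^{[T:M]}$ are pairwise distinct. These subdegrees are the integers $|M:M\cap \dot{w}M\dot{w}^{-1}|$ as $\dot{w}$ runs over representatives of the double cosets $W(A_5)\backslash W(E_6)/W(A_5)$, each a monomial-times-polynomial in $q=2$; I would check distinctness from the Bruhat-decomposition formulae, or, as a safety net, by computing in the $27$-dimensional $\mathbb{F}_2$-module of $E_6(2)$ or in its permutation representation of degree $|E_6(2):M|$. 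Should a pair of subdegrees coincide for $M=P_2$, I would repeat the argument with $M=P_4$, or with a $\tau$-invariant non-parabolic maximal subgroup such as $F_4(2)$.

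For $T\in\{E_7(2),E_8(2)\}$ we have $\Out(T)=1$, so by \cite[Theorem~1]{LPS} every primitive permutation representation of $T$ that is not $2$-transitive is already $2$-closed (its $2$-closure lies in the almost simple group $N_{\Sym}(T)$, which here equals $T$), and $T$ has no $2$-transitive representation. Hence, to show $T$ is not totally $2$-closed, I would by Corollary~\ref{TransRed} exhibit an imprimitive transitive representation that fails to be $2$-closed, via Proposition~\ref{MainCor1}(1). The plan is to take a maximal subgroup $M$ of $T$ with $|M:[M,M]|=2$ — for instance $M=\mathrm{L}_8(2):2$ in $E_7(2)$ (the normaliser of the maximal-rank $A_7$-subgroup), or, in $E_8(2)$, $M=\Omega_{16}^+(2):2$, $M=(\mathrm{L}_3(2)\times E_6(2)):2$, or $M=\mathrm{L}_9(2):2$ — and put $H=[M,M]$, a core-free subgroup of index $2$ in $M$. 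With $\Omega=[T:H]$, $\Sigma=[T:M]$ and $|\Delta|=2$, the hypothesis $L=L^{(2),\Sigma}$ of Proposition~\ref{MainCor1}(1) holds by the previous sentence; and since each point stabiliser $G_{\alpha_i}$ is a conjugate of $H$, which is the unique index-$2$ subgroup $M_i'$ of $M_i$, condition~(ii) of that proposition says precisely that $T^{[T:H]}$ is \emph{not} $2$-closed if and only if, for all $i\ne j$, the two-point stabiliser $M_i\cap M_j$ of $T^{[T:M]}$ is not contained in $M_i'=[M_i,M_i]$ — that is, maps onto $M_i/[M_i,M_i]\cong C_2$.

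The crux of the proof is therefore the verification that, for the chosen maximal subgroup $M$, every nontrivial intersection $M\cap M^t$ escapes $[M,M]$ (together with the distinct-subdegree check for $E_6(2)$). For $E_7(2)$, realised in dimension $56$ over $\mathbb{F}_2$, I expect this to be tractable by machine once the orbits of $M$ on $[T:M]$ (equivalently the double cosets $M\backslash T/M$, which can be enumerated from the known structure of the $A_7$-subgroup) are in hand. For $E_8(2)$, which has no permutation representation of feasible degree and order about $3.4\times 10^{74}$, the argument must be structural: I would analyse the overgroups of $[M,M]$ together with the action of the maximal-rank subgroup $M$ on the relevant $T$-geometry, showing that a suborbit whose stabiliser lay inside $[M,M]$ would force a containment or factorisation ruled out by the classification of maximal subgroups of $E_8(q)$, or would contradict a parity count on the suborbit lengths. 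Pinning down these two-point stabilisers is where essentially all the work lies.
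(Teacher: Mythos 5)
Your framework — pass to an index-two subgroup $H=[M,M]$ of a suitable core-free subgroup $M$ and apply Proposition~\ref{MainCor1}(1) — is the right one, and your $E_6(2)$ argument via the graph automorphism (in the spirit of Proposition~\ref{TwistedCases}) is a legitimate alternative route, though it is conditional on the unverified claim that some $\tau$-stable maximal subgroup of $E_6(2)$ has pairwise distinct subdegrees. The genuine gap is for $E_7(2)$ and $E_8(2)$, exactly where you say ``essentially all the work lies'': you never verify, for any concrete choice of $M$, that every two-point stabiliser $M\cap M^t$ surjects onto $M/[M,M]\cong C_2$, and the route you sketch --- maximal-rank reductive subgroups such as $\mathrm{L}_8(2){:}2$, $\Omega_{16}^+(2){:}2$ or $\mathrm{L}_9(2){:}2$, with the verification carried out by enumerating the double cosets $M\backslash T/M$ or by unspecified structural arguments --- is not viable for $E_8(2)$, as you yourself concede. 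A proof cannot end with ``the argument must be structural'' followed by a list of things one might try; moreover the precise shapes of the normalisers of these maximal-rank subgroups at $q=2$ (whether the outer ``$.2$'' is even present) would themselves need checking before the strategy could start.

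The idea you are missing is to take $M$ to be a maximal \emph{parabolic} subgroup, chosen so that its Levi factor has a direct factor $A_1(2)\cong\Sym(3)$: the paper deletes the node $a_3$ adjacent to the end node $a_1$, so that $M=U\rtimes L$ with $L\cong A_1(2)\times A_{m-2}(2)$, and sets $H=U\rtimes[L,L]$ of index $2$. Since $B\le M$ and $H\unlhd M$, the Bruhat decomposition $T=BNB$ reduces the condition $(M\cap M^t)H=M$ for all $t\in T$ to the same condition for $t=n_w$ ranging over Weyl group representatives, and that in turn reduces to a purely combinatorial statement about the root system: for every $w\in W$ the set $(\Phi_3^+\cup\Phi_{\neg\,3})^w\cap\Phi_{\neg\,3}$ contains a root $r_w$ with nonzero $a_1$-coordinate, so that $M\cap M^{n_w}$ contains the root subgroup $U_{r_w}$, whose projection to the $A_1(2)$ factor is a Sylow $2$-subgroup of $\Sym(3)$ and hence supplements $[L,L]$. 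This finite Weyl-group computation (done in Magma) is what makes the argument go through uniformly for $E_6(2)$, $E_7(2)$ and $E_8(2)$ without ever computing inside $E_8(2)$ itself; it also sidesteps your need to know in advance that $T^{[T:M]}$ is $2$-closed, since Proposition~\ref{MainCor1}(1) then shows that at least one of $T^{[T:M]}$ or $T^{[T:H]}$ fails to be $2$-closed, and either conclusion suffices.
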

	
	\begin{proof} Here $T=E_m(2)$ for some $m\in\{6,7,8\}$. Let $(B,N)$ be a $(B,N)$-pair for $T$. Since $T$ is defined over a field of order $2$, the group $N$ is isomorphic to the Weyl group $W$ of $T$, with respect to this $(B,N)$-pair. Let $\Phi$ be the associated root system for $T$, with fundamental roots $\Pi:=\{a_1,\hdots,a_m\}$. As mentioned above, we label the roots and Dynkin diagram for $T$ as in \texttt{Magma}~\cite{MAGMA}, see Figure~\ref{dynkin}.  
		Let $M:=P_{a_3}$ be the maximal parabolic subgroup of $T$ obtained by deleting the node labelled by $3$ in the Dynkin diagram. Then $M=U\rtimes L$, where $U$ is the unipotent radical of $M$ and $L$ is a Levi subgroup. 
		
		For each root $a_i$, we denote by $\Phi_i$ the set of roots $r\in\Phi$ with the property that the $a_i$-coordinate of $r$ is non-zero; and we set $\Phi_{\neg\, i} = \Phi\setminus\Phi_i$. For any subset $\Lambda$ of $\Phi$, we write $\Lambda^+$ for the set of positive roots in $\Lambda$. Finally, we write $U_r$ for the root subgroup of the unipotent radical $U$ associated with $r$.
		
		Then $U$ is the $2$-group $U =\langle U_r\text{ : }r\in\Phi^+_3\rangle$, and we may take $L$ to be the standard Levi subgroup $L:=\langle U_r\text{ : }r\in\Phi_{\neg\, 3}\rangle$. It is clear from the Dynkin diagram, Figure~\ref{dynkin}, that
		\begin{align}\label{Lnote}
			L=\langle U_{\pm a_1}\rangle\times\langle U_{\pm a_2},U_{\pm a_4},U_{\pm a_5},\hdots,U_{\pm a_m}\rangle \cong A_1(2)\times A_{m-2}(2).
		\end{align}
		Let $\pi_1:L\rightarrow A_1(2)$ and $\pi_2:L\rightarrow A_{m-2}(2)$ denote the canonical projections. Set $H=U\rtimes [L,L]$, so that $|M:H|=2$.
		
		Now a straightforward computation with \texttt{Magma}~\cite{MAGMA} shows that, 
		for each $w\in W$,
		\begin{align}\label{eq:Phiroot}
			(\Phi^+_3\cup\Phi_{\neg\, 3})^w \cap \Phi_{\neg\, 3}
		\end{align}
		contains a root $r_w\in \Phi$ with nonzero $a_1$-coordinate (and zero $a_3$-coordinate).
		Recall that $N\cong W$, as we noted above.  Choose $n\in N$, and suppose that $n$ corresponds to $w\in W$. Let $r:=r_w$ be a root in the set in \eqref{eq:Phiroot} (with nonzero $a_1$-coordinate and zero $a_3$-coordinate). 
		Then it follows from \eqref{eq:Phiroot} that  $M^n \cap L$ contains the root subgroup $U_{r}$. Since $U_r$ has non-zero $a_1$-coordinate, $\pi_1(U_r)$ contains a Sylow $2$-subgroup of $A_1(2)\cong \Sym(3)$. Thus, $\pi_1(M^n\cap L)$ contains a Sylow $2$-subgroup of $A_1(2)$, and hence $(M\cap M^n)H=M$ for all $n \in N$.
		
		Now, by Bruhat Normal Form \cite[Theorem 2.3.5]{GLS}, we have $T=BNB$, so the element 
		$t=xny$ for some $x,y\in B<M$ and $n\in N$. Then, noting that  $y\in N_T(H)$ since $|M:H|=2$, we have 
		\[
		(M\cap M^t)H=(M\cap M^{ny})H=((M\cap M^n)H)^y = M^y=M.
		\] 
		It now follows from Proposition \ref{MainCor1}(1) applied with $G, \Omega, \Sigma, M, \Delta$ and $L=L^{(2),\Sigma}$ being $T, [T:H], [T:M], M, [M:H]$ and $T^{[T:M]}$, that at least one of $T^{[T:M]}$ or $T^{[T:H]}$ is not $2$-closed. In either case $T$ is not totally $2$-closed.
	\end{proof}
	
	We are now ready to prove Proposition \ref{ExProp}.
	\begin{proof}[Proof of Proposition \ref{ExProp}]
		By Proposition \ref{TwistedCases}, we may assume that $T$ is untwisted and that $q=p$ is prime. 
		Thus, by Proposition \ref{lem:simple1}(a) and Proposition \ref{EmProp}, we may assume that $T\in\{F_4(p),E_6(p),E_7(p),E_8(p)\}$, and that $p$ is an odd prime.
		
		Let $S$ be a maximally split torus in $T$, with $(B,N)$ a $(B,N)$-pair for $T$, with respect to $S$. Then $B$ has the form $B=U\rtimes S$, where $U$ is a Sylow $p$-subgroup of $T$.
		If $T^{[T:B]}$ is not $2$-closed there is nothing further to prove for this group, so we may assume that $T^{[T:B]}$ is $2$-closed. Since $p$ is odd, the group $S$ has even order, and hence has a subgroup $R$  of index $2$. Let $H:=UR$. Then $|B:H|=2$. 
		
		For the permutation group $T^{[T:B]}$, the subgroup $B$ is the stabiliser of the `point' $B$. Each $B$-orbit in $[T:B]\setminus\{B\}$ is of the form $\{ Bah\mid h\in B\}$ for some $a\in T\setminus B$. 
		Now, by Bruhat Normal Form \cite[Theorem 2.3.5]{GLS}, we have $T=BNB$, so $a=xny$ for some $x,y\in B$ and $n\in N$, and it follows that the $B$-orbit containing $Ba$ also contains $Bay^{-1}=Bn$, and the stabiliser in $B$ of the point $Bn$ is $B\cap B^n$.  Since $N=N_T(S)$, we have $S\le B\cap B^n$, and it follows that $B=(B\cap B^n)H$. This holds for all $n\in N$.
		
		Finally, by again using the fact that $T=BNB$, and arguing in exactly the same way as in the final paragraph of the proof of Proposition \ref{EmProp}, we can see that $(B\cap B^t)H=B$ for all $t\in T$, and hence, after applying Proposition \ref{MainCor1}(1), that $T^{[T:H]}$ is not $2$-closed. Hence, $T$ is not totally $2$-closed. 
	\end{proof}
	
	\subsection{Proof of Theorem~\ref{thm:J}}
	By Proposition~\ref{lem:simple1}, together with our results in Sections  \ref{SporadicSection}, \ref{J4Section}, and \ref{sec:ExGroups}, on sporadic simple groups, and Proposition~\ref{ExProp} in this section for the exceptional groups,  we know 
	that each of the groups $\mathrm{J}_1, \mathrm{J}_3, \mathrm{J}_4, \mathrm{Th},\mathrm{Ly},\mathbb{M}$ is totally $2$-closed, and that these are the only totally $2$-closed simple groups. Thus Theorem~\ref{thm:J} is proved.

	\section{Direct products}\label{DirectProductsSection}
	
	In this section, our aim is to prove Theorem \ref{thm:classification}. For this reason, we assume throughout the section that $G$ is a non-trivial finite totally $2$-closed group with $F(G)=1$. By Theorem \ref{thm:issimple}, the group $G$ must have the form $G=T_1\times \hdots\times T_r$ where, for each $i\le r$, $T_i$ is a finite totally $2$-closed nonabelian simple group, and $T_i$ is not a section of $\prod_{i\neq j}T_j$.
	Moreover, by Theorem~\ref{thm:J},  each of the $T_i$ lies in the set 
	$$
	\mathcal{S}:=\{\mathrm{J}_1, \mathrm{J}_3, \mathrm{J}_4, \mathrm{Th}, \mathrm{Ly}, \mathbb{M}\}.
	$$ 
	By inspection of the sections of these sporadic simple groups, we see that there exist distinct $S, T\in\mathcal{S}$ such that $S$ is isomorphic to a section of $T$ if and only if $S=\mathrm{Th}$ and $T=\mathbb{M}$, see Remark~\ref{rem:section}. Thus not both of the groups $\mathrm{Th}$ and $\mathbb{M}$ lie in $\{T_i\text{ : }1\le i\le r\}$, and hence $r\leq 5$ and one of the conditions (3)(i) or (3)(ii) of Theorem \ref{thm:classification} holds. This proves that each of the assertions (1)--(3) of  Theorem \ref{thm:classification} holds.

	Thus to complete the proof of Theorem \ref{thm:classification} we must show that, 
	if $G = \prod_{i=1}^rT_i$ with the $T_i$ pairwise no-isomorphic, and if one of the conditions  (3)(i) or (3)(ii) of Theorem \ref{thm:classification} holds, 
	then $G$ is totally $2$-closed. Since none of the groups in $\mathcal{S}$ admits a nontrivial factorisation (Corollary~\ref{lem:W2}), it follows from Proposition \ref{PreTransRed} that it is sufficient to prove that $\varphi(G)$ is $2$-closed for each permutation representation $\varphi:G\to \Sym(\Omega)$ (faithful and non-faithful) with $|\Omega|>1$ and $\varphi(G)$ transitive.
	With this in mind, we introduce the following hypothesis and notation, which we will assume, for the remainder of the section: 
	
	\begin{hypothesis}\label{hyp:dir}
		{\rm
			(a)\ The group $G=T_1\times \hdots\times T_r$  with the $T_i$ pairwise non-isomorphic, and either 
			\begin{enumerate}[\upshape(i)]
				\item $T_i\in\{\mathrm{J}_1,\mathrm{J}_3,\mathrm{J}_4,\mathrm{Th},\mathrm{Ly}\}$ for each $i\le r$; or
				\item $T_i\in \{\mathrm{J}_1,\mathrm{J}_3,\mathrm{J}_4,\mathrm{Ly},\mathbb{M}\}$ for each $i\le r$.
			\end{enumerate}
			(b) We extend the notation for direct products introduced in Hypothesis \ref{not1}. Namely, for a direct product $G_1\times\hdots\times G_s$ of arbitrary finite groups $G_i$, and each subgroup $L_i$ of $G_i$, we write 
			\[
			\widehat{L_i}=\{(x_1,\hdots,x_s)\text{ : }x_i\in L_i\text{, and } x_j=1\text{ for }j\neq i\}\le G_1\times\hdots\times G_s.
			\]  
			\noindent 
			(c) Seeking a contradiction, we assume that $G$ is \textbf{\underline{not}} totally $2$-closed, and that the number $r$ of simple direct summands is minimal with respect to this property. Note that $r>1$ by Theorem~\ref{thm:J}, and by 
			Proposition \ref{PreTransRed}, $G$ has a proper subgroup $H$ such that $G^{[G:H]}$ is not $2$-closed. Without loss of generality we may assume that the proper subgroup $H$ is maximal by inclusion such that $G^{[G:H]}$ is not $2$-closed. By \cite[Theorem~5.12 and Example 5.13]{Wielandt}, it follows that $H\ne 1$. Further, $H$ is core-free in $G$ (since otherwise $G/\Core_G(H)\cong G^{[G:H]}$  is not totally $2$-closed and is a direct product of less than $r$ of the simple groups in (i) or (ii), contradicting the minimality of $r$).

			\vspace{0.25cm}
			\noindent
			(d) Next, write $\mu_i:G\rightarrow T_i$ for the natural coordinate projection, for $i\leq r$. Note that $\mu_i(\widehat{T}_i\cap H)$ is normal in $\mu_i(H)$, but to avoid cumbersome notation, we will usually just say that $\widehat{T}_i\cap H$ is normal in $\mu_i(H)$.  
			
			\vspace{0.25cm}
			\noindent 
			(e) Our final piece of notation is defined with the minimality of $r$ in mind: we set $G_1:=T_1$, and $G_2:=T_2\times\hdots\times T_r$, so that both $G_1$ and $G_2$ are finite totally $2$-closed groups (by the minimality of $r$). 
			We write the coordinate projections here as $\pi_1:G\rightarrow G_1$ and $\pi_2:G\rightarrow G_2$ (so in particular $\pi_1=\mu_1$). We also set $M_i:=\pi_i(H)$ and $H_i=\widehat{G}_i\cap H$, for $i\in\{1,2\}$, where $\widehat{G}_i$ is as defined in (b) relative to the direct product $G_1\times G_2$. Note that $H$ is a subdirect subgroup of $M:=M_1\times M_2\le G$
			and $\pi_i(H_i)\unlhd M_i$ for each $i$. As in part (d), we often omit the $\pi_i$, and just say that $H_i\unlhd M_i$. Since $H$ is a subdirect subgroup of $M=M_1\times M_2$, it is important to note that $M_1/H_1\cong M_2/H_2$ (Goursat's Lemma, see \cite[Theorem 4.8(i)]{PS}).
		}
	\end{hypothesis}

	\begin{remark}\label{rem1}{\rm
			The final piece of notation in Hypothesis \ref{hyp:dir}  is introduced for convenience only: there will be nothing special about the labelling of the simple direct factors 
			of $G$, and we will sometimes vary the direct factor which is chosen to be $T_1$ in our analysis. We warn the reader however, that the labelling of the direct factors of $G$ changes  in this way, then the group $M$ defined in Hypothesis~\ref{hyp:dir}(e) may also change.
		}
	\end{remark}
	
	In order to prove that each group $G$ satisfying Hypothesis \ref{hyp:dir} is totally $2$-closed, we use the following strategy:
	
	\begin{Strategy}\label{Strategy2}
		Let $G, T_i, H$, etc be as in Hypothesis~\ref{hyp:dir}. We follow the four steps below. 
		
		\medskip\noindent
		\emph{Step $1$:}\quad We prove (Lemma~\ref{ProductAction}) that the subgroup $M=M_1\times M_2$ in Hypothesis \ref{hyp:dir}(e) is a proper overgroup of $H$, that  $M$ is core-free in $G$, and that $G^{[G:M]}$ is $2$-closed. This enables us to use the tools developed in Section \ref{ImprimitiveSection}, (see Notation \ref{not2}). 
		
		\medskip\noindent
		\emph{Step $2$:}\quad   The tools developed in Section \ref{ImprimitiveSection} also require information concerning the transitive permutation group $M^{[M:H]}$ with point stabiliser $H^{[M:H]}$. Note that $H$ is a subdirect subgroup of $M=M_1\times M_2$. Step 2 comprises two results (Lemmas~\ref{lem:subdirect} and~\ref{RegularLemma}) concerning subdirect subgroups of a direct product of finite groups. 
		
		\medskip\noindent
		\emph{Step $3$:}\quad   We prove (Lemma~\ref{lem:Step2}) that, for some $i\le r$, the permutation group $T_i^{[T_i:\mu_i(H)]}$ has base size greater than $2$, and so, by Proposition \ref{Base2Lemma}, this pair ($T_i,\mu_i(H)$) lies in Table \ref{tab:basesize}.

		\medskip\noindent
		\emph{Step $4$:}\quad  For each pair ($T_i,\mu_i(H)$) in Table \ref{tab:basesize}, we examine the structure of $X:=G^{(2), [G:H]}$ using Proposition~\ref{MainCor1}(2), seeking a contradiction. We use the same notation as that introduced in Notation~\ref{not2} with some small changes given in Notation~\ref{not:dirmain}. 
	\end{Strategy}
	
	We begin with Step 1.
	
	\begin{lemma}\label{ProductAction}
		Let $G, r, T_i, H, M$ be as in Hypothesis $\ref{hyp:dir}$. Then,
		\begin{enumerate}[\upshape(i)]
			\item $\mu_i(H)\neq T_i$ for any $i\le r$, $M$ is core-free in $G$, $G^{[G:M]}$ is $2$-closed, and $H<M<G$; and
			
			\item for each $i\le r$, and each composition factor $S$ of 
			$\mu_i(H)/(\widehat{T}_i\cap H)$, there exists $j\neq i$ such that $\mu_j(H)$  has a composition factor isomorphic to $S$.
		\end{enumerate}
	\end{lemma}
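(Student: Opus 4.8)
\textbf{Proof strategy for Lemma~\ref{ProductAction}.}

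The plan is to prove part (i) first and then deduce part (ii) from it by a short structural argument. For part (i), suppose for contradiction that $\mu_i(H)=T_i$ for some $i$; by relabelling we may take $i=1$, so $\pi_1(H)=T_1$. I would then use Lemma~\ref{lem:dp1} applied to the direct decomposition $G=T_1\times\dots\times T_r$: since $\pi_1(H)=T_1$ is a quotient of $H$, the group $T_1$ is a section of $H$, so Lemma~\ref{lem:dp1} forces $\widehat{T}_1\leq H$. But then $\Core_G(H)\geq\widehat{T}_1\neq 1$, contradicting the fact (Hypothesis~\ref{hyp:dir}(c)) that $H$ is core-free in $G$. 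Hence $\mu_i(H)\neq T_i$ for every $i$. Since $M=M_1\times M_2$ with $M_1=\pi_1(H)=\mu_1(H)<T_1$ and $M_2=\pi_2(H)$, the projection $\mu_j(M_2)=\mu_j(H)<T_j$ for $j\geq 2$, so $M=\prod_i\mu_i(H)$ is a proper subgroup of $G$ containing $H$; moreover $M$ is a direct product of proper subgroups of the simple factors, so $\Core_G(M)=1$. To see $H<M$ strictly, note that $H$ is a subdirect subgroup of $M=M_1\times M_2$ with $M_1/H_1\cong M_2/H_2$ (Goursat), and if $H=M$ then $H=M_1\times M_2$ is a direct product; then, writing $G=G_1\times G_2$ with $G_1=T_1$ and $G_2=\prod_{j\geq2}T_j$, either $M_1=H_1<G_1$ and $M_2=H_2<G_2$ — but $G_1=T_1$ is simple so $H_1=M_1\neq G_1$ means $H_1$ does not contain $\widehat{G}_1$, and one must rule out $H=M$ being non-$2$-closed while $r$ is minimal; concretely, if $H=M_1\times M_2$ then $G^{[G:H]}$ would decompose as a direct product action $G_1^{[G_1:M_1]}\times\dots$, and since each $G_i$ is totally $2$-closed and each factor action has the required form, Wielandt's Dissection Theorem (Theorem~\ref{thm:W2}) together with Theorem~\ref{thm:W} would give that $G^{[G:H]}$ is $2$-closed, contradicting the choice of $H$. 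So $H<M$. Finally, for $G^{[G:M]}$ being $2$-closed: $M$ is a direct product $\prod_i\mu_i(H)$ of proper subgroups of the $T_i$, so the action of $G$ on $[G:M]$ is (permutationally isomorphic to) the product action $\prod_i T_i^{[T_i:\mu_i(H)]}$; each $T_i$ is totally $2$-closed, so each factor is $2$-closed, and the $2$-closure of a product action with pairwise non-isomorphic transitive factors is the product of the $2$-closures — this follows by iterating Wielandt's Dissection Theorem~\ref{thm:W2} (the two-point-stabiliser / transitivity condition (c) holds automatically here since the orbits in question lie in distinct coordinates), so $G^{[G:M]}=(G^{[G:M]})^{(2),[G:M]}$.

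For part (ii), fix $i$ and let $S$ be a composition factor of $\mu_i(H)/(\widehat{T}_i\cap H)$. Here the key observation is that $H$ is a subdirect subgroup of $M=\prod_j\mu_j(H)$, so by Goursat's Lemma (or \cite[Theorem 4.16]{PS}) the quotient $\mu_i(H)/(\widehat{T}_i\cap H)$ is isomorphic to a section of $\prod_{j\neq i}\mu_j(H)$. Indeed $\widehat{T}_i\cap H$ is precisely the kernel of the restriction to $H$ of the projection $\bigl(\prod_{j\neq i}\mu_j\bigr)\big|_H$, so $\mu_i(H)/(\widehat{T}_i\cap H)\cong H/(\widehat{T}_i\cap H)$ embeds into $\prod_{j\neq i}\mu_j(H)$. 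Therefore the composition factor $S$ of the left-hand side is a composition factor of $\prod_{j\neq i}\mu_j(H)$, hence (by Jordan--H\"older) a composition factor of $\mu_j(H)$ for some $j\neq i$.

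The main obstacle I anticipate is the careful handling of the strict inclusion $H<M$ in part (i): one has to rule out the possibility that $H$ already equals $M=M_1\times M_2$, and this requires invoking the minimality of $r$ and the fact that a product action of totally $2$-closed groups on pairwise non-isomorphic transitive sets is $2$-closed. The cleanest route is through Wielandt's Dissection Theorem~\ref{thm:W2}: if $H=M$ is a direct product then $[G:H]$ splits as a product of the coset spaces $[T_i:\mu_i(H)]$, the relevant orbits sit in distinct blocks, condition (b)/(c) of Theorem~\ref{thm:W2} is satisfied (a point stabiliser in one factor acts transitively on an orbit of another factor, trivially), so $\prod_i T_i^{[T_i:\mu_i(H)]}\leq (G^{[G:H]})^{(2)}$, and combined with Theorem~\ref{thm:W} this forces equality, contradicting that $G^{[G:H]}$ is not $2$-closed. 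Getting the bookkeeping of this product-action argument exactly right — in particular which coordinate projections and which pairs of points one feeds into Theorem~\ref{thm:W2} — is the delicate part; everything else is a direct application of Lemma~\ref{lem:dp1}, Goursat's Lemma, and Jordan--H\"older.
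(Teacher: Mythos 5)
Your argument for part (i) goes wrong at the identification of $M$: in Hypothesis~\ref{hyp:dir}(e) the group $M$ is defined as $M_1\times M_2=\mu_1(H)\times\pi_2(H)$, and $\pi_2(H)$ is in general only a \emph{subdirect} subgroup of $\mu_2(H)\times\hdots\times\mu_r(H)$, so $M\neq\prod_i\mu_i(H)$ and $[G:M]$ is not the product of the coset spaces $[T_i:\mu_i(H)]$. (Core-freeness of $M$ survives, since $M\le\prod_i\mu_i(H)$ with each $\mu_i(H)<T_i$, so $M$ contains no $\widehat{T}_i$.) More seriously, the fact you need -- that the $2$-closure of $G_1\times G_2$ in its product action on $[G_1:M_1]\times[G_2:M_2]$ is the product of the $2$-closures -- cannot be obtained by ``iterating Wielandt's Dissection Theorem'': Theorem~\ref{thm:W2} concerns \emph{intransitive} actions on a disjoint union $\Gamma\cup\Delta$ of invariant sets, whereas the action on $[G:M]$ is transitive, and there are no ``orbits lying in distinct coordinates'' to feed into condition (c). The paper decomposes $M=M_1\times M_2$ along $G=G_1\times G_2$, uses the minimality of $r$ to see that $G_1^{[G_1:M_1]}$ and $G_2^{[G_2:M_2]}$ are $2$-closed, and then quotes \cite[Theorem 5.1]{CGJKKM} for the product-action statement; some such external (or separately proved) input is genuinely required. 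Your derivation of $\mu_i(H)\neq T_i$ from Lemma~\ref{lem:dp1} and core-freeness of $H$ is correct and is exactly the paper's, and the deduction $H<M<G$ is fine once the $2$-closedness of $G^{[G:M]}$ is in place.

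In part (ii) there are two problems. First, $\mu_i(H)/(\widehat{T}_i\cap H)$ (which, by the convention of Hypothesis~\ref{hyp:dir}(d), means $\mu_i(H)/\mu_i(\widehat{T}_i\cap H)$) is not isomorphic to $H/(\widehat{T}_i\cap H)$ but is only a quotient of it, since $\ker(\mu_i|_H)$ need not be trivial; this is harmless for composition factors. The real gap is the inference ``$H/(\widehat{T}_i\cap H)$ embeds into $\prod_{j\neq i}\mu_j(H)$, therefore $S$ is a composition factor of $\prod_{j\neq i}\mu_j(H)$'': composition factors of a subgroup are not composition factors of the ambient group (for example $C_3\le\Sym(5)$), so Jordan--H\"older gives you nothing at this point. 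The image of $H/(\widehat{T}_i\cap H)$ is a proper \emph{subdirect} subgroup of $\prod_{j\neq i}\mu_j(H)$ in general, and what is needed is precisely the Claim the paper proves inside its own argument: a simple section of a subdirect subgroup of $X_1\times\hdots\times X_t$ is a section of some $X_j$, provided one first replaces each $X_j$ by the corresponding projection. That statement is true, but it requires the inductive argument via the kernels of the coordinate projections (equivalently, Goursat's Lemma); it is the one non-trivial idea in part (ii), and your write-up omits it. Your route via the embedding of $H/(\widehat{T}_i\cap H)$ is otherwise a reasonable alternative to the paper's route via $M_1/H_1\cong M_2/H_2$, but both hinge on this same lemma.
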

	
	\begin{proof}
		(i) Note that Hypothesis~\ref{not1} also holds for $G$ (with projection maps $G\to T_i$ now called $\mu_i$). Hence, since $H$ is core-free in $G$ by Hypothesis~\ref{hyp:dir}(c), it follows from Lemma \ref{lem:dp1} that 
		$\mu_i(H)\neq T_i$ for any $i\le r$. Then, since $M\le \mu_1(H)\times\hdots\times \mu_r(H)$, the group $M$  does not contain $\hat{T}_i$, for any $i\le r$, and so $M$ is core-free in $G$. Also (see Hypothesis~\ref{hyp:dir}(e)), $M= M_1\times M_2 < G_1\times G_2$, and by the minimality of $r$, $G_{i}^{[G_i:M_i]}$ is $2$-closed for  $i\in\{1,2\}$. Hence $G^{[G:M]}$ is $2$-closed, by \cite[Theorem 5.1]{CGJKKM}. In particular, since by assumption $G^{[G:H]}$ is not $2$-closed, we have $H<M<G$. This proves part (i).

		\vspace{0.25cm}
		\noindent (ii) In order to prove part (ii) we will prove the following claim: 
		
		\smallskip
		\emph{Claim: Let $S$ be a finite simple group, and let $X_1,\hdots,X_t$ be finite 
			groups ($t\geq1$) such that no $X_i$ has a section isomorphic to $S$. If $X$ is a subdirect subgroup of $X_1\times\hdots\times X_t$, then $X$ has no section isomorphic to $S$. }
		
		\smallskip
		First we see that part (ii) follows from the claim. Indeed, assume that the claim holds, and that $S$ is a composition factor of $\mu_i(H)/(\widehat{T}_i\cap H)$, for some $i\le r$. Without loss of generality assume that $i=1$, so $S$ is a composition factor of $\mu_1(H)/(\widehat{T}_1\cap H)= M_1/H_1$. Since $H$ is a subdirect subgroup of $M=M_1\times M_2$ (Hypothesis~\ref{hyp:dir}(e)), we have $M_1/H_1\cong M_2/H_2$, and hence $S$ is a composition factor of $M_2$. Then since $M_2=\pi_2(H)$ is a subdirect subgroup of $\mu_2(H)\times\hdots\times \mu_r(H)$, it follows from the claim that $\mu_j(H)$ has a composition factor isomorphic to $S$ for some $j\geq 2$.  
		
		\noindent
		\emph{Proof of the Claim:} The proof is by induction on $t$. The case $t=1$ is 
		trivially true, so assume that $t>1$ and that the claim holds for direct products 
		of less than $t$ groups. Suppose that $X$ is a subdirect subgroup of $X_1\times\hdots
		\times X_t$,  that no $X_i$ has a section isomorphic to $S$, but that $X$ has a 
		section isomorphic to $S$, say $S\cong Y/N$ with 
		$N\unlhd Y\leq X$. We will derive a contradiction. For each $i\leq r$, let $\nu_i:X\to X_i$ be the natural projection map and $Y_i=\nu_i(Y)$, so $Y$ is a subdirect subgroup of $Z:=Y_1\times\dots\times Y_t$ and no $Y_i$ has a section isomorphic to $S$.
		Further let $\nu_i':Z\rightarrow \prod_{j\neq i}Y_j$ be the natural projection map, for each $i$, and note that $\nu_1'(Y)$ is a subdirect subgroup of $Y_2\times\hdots\times Y_t$. Hence, if $\nu_1'(Y)$ has a section isomorphic to $S$, then by induction $Y_i$ has a section isomorphic to $S$, for some $i\geq 2$, which is a contradiction. Hence $\nu_1'(Y)$ has no section isomorphic to $S$. Since $\nu_1'(Y)/\nu_1'(N)$ is isomorphic to a quotient of the simple group $Y/N\cong S$, this implies that $\nu_1'(N)=\nu_1'(Y)$. Now the kernel of $\nu_1'|_Y$ is the group $K:=Y\cap \widehat{Y}_1$, and so $\nu_1'(N)=\nu_1'(Y)$ implies that $Y=NK$, whence $K/(N\cap K)\cong NK/N = Y/N\cong S$. Thus $\widehat{Y}_1\cong Y_1$ has a section isomorphic to $S$, which is a contradiction. This completes the inductive proof of the claim, and hence completes the proof of the lemma.
	\end{proof}
	
	We now move to Step 2, and prove  two lemmas concerning subdirect subgroups of direct products.
	
	\begin{lemma}\label{lem:subdirect}
		Let $X_1,\hdots,X_t$ be finite groups, and let $X$ be a subdirect subgroup of $X_1\times\hdots\times X_t$. Then the following assertions hold.
		\begin{enumerate}[\upshape(i)]
			\item If $t=2$, and $\widehat{X}_i\leq X$ for some $i\in\{1,2\}$, then $X=X_1\times X_2$.
			\item Let $p$ be a prime, and suppose that each group $X_i$ is a transitive permutation group of $p$-power degree. Then the Fitting subgroup $F(X)$ is a (possibly trivial) $p$-group.
		\end{enumerate}
	\end{lemma}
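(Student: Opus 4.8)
The plan is to prove the two parts independently, each by a short direct argument; neither requires the machinery of the earlier sections.

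For part (i), I would assume without loss of generality that $\widehat{X}_1\le X$. Since $X$ is subdirect in $X_1\times X_2$, the coordinate projection $X\to X_2$ is surjective, so for each $x_2\in X_2$ there exists $x_1\in X_1$ with $(x_1,x_2)\in X$. Multiplying on the left by $(x_1^{-1},1)\in\widehat{X}_1\le X$ yields $(1,x_2)\in X$; as $x_2$ was arbitrary this gives $\widehat{X}_2\le X$, and hence $X\ge\widehat{X}_1\widehat{X}_2=X_1\times X_2$, forcing $X=X_1\times X_2$. This part has essentially no obstacle.

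For part (ii), I would first record the elementary permutation-group fact that a transitive permutation group $Y$ of $p$-power degree has no nontrivial normal $q$-subgroup for a prime $q\ne p$: such a subgroup acts faithfully, and all of its orbits have $q$-power length dividing a power of $p$, hence length $1$, so it is trivial. Consequently $F(X_i)=O_p(X_i)$ is a (possibly trivial) $p$-group for each $i$. Next I would set $F=F(X)$; since $F$ is nilpotent and normal in $X$, its image $\mu_i(F)$ under the coordinate projection $\mu_i\colon X\to X_i$ is a nilpotent normal subgroup of $\mu_i(X)=X_i$ (this is where subdirectness of $X$ is used), so $\mu_i(F)\le F(X_i)$ is a $p$-group. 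Finally, restricting the embedding $X\hookrightarrow X_1\times\cdots\times X_t$ to $F$ embeds $F$ into $\prod_{i}\mu_i(F)$, a direct product of $p$-groups, whence $F$ is a $p$-group. The only point needing a little care is the step $\mu_i(F)\le F(X_i)$, which relies on $\mu_i(X)$ being all of $X_i$ (hence transitive of $p$-power degree) rather than merely a subgroup; this is exactly why the hypothesis that $X$ is subdirect is invoked, and I do not anticipate any further difficulty.
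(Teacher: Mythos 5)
Your proposal is correct and follows essentially the same route as the paper: part (i) is the standard translation argument (the paper simply cites a lemma of Praeger--Schneider for it), and part (ii) reduces, exactly as in the paper, to the observations that each projection $\mu_i(F(X))$ is a nilpotent normal subgroup of $X_i$ (using subdirectness) and that a normal $q$-subgroup of a transitive group of $p$-power degree is trivial for $q\ne p$ by the equal-orbit-length argument.
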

	\begin{proof}  
		(i) This follows immediately from \cite[Lemma 4.7]{PS}.
		
		\vspace{0.25cm} 
		(ii) For each $i\leq t$, assume that  $X_i$ is a transitive permutation group on a set $\Omega_i$ with $|\Omega_i|$ a power of $p$, and let $\nu_i:X\rightarrow X_i$ be the natural projection map. It is sufficient to show that $\nu_i(F(X))$ is a $p$-group for each $i$. Note that,  for each $i$, $\nu_i(F(X))$ is a nilpotent normal subgroup of $\nu_i(X)=X_i$ (since $X$ is subdirect), and hence $\nu_i(F(X))\leq  F(X_i)$. If $\nu_i(F(X))=1$ there is nothing to prove so assume that $\nu_i(F(X))\ne 1$ and let $q$ be a prime dividing $|\nu_i(F(X))|$. Then the unique (nontrivial) Sylow $q$-subgroup $Q$ of $\nu_i(F(X))$ is characteristic in $\nu_i(F(X))$ and hence normal in $X_i$. Since $X_i$ is transitive on 
		$\Omega_i$, all the orbits of its normal subgroup $Q$ have the same size, say $m$. Thus 
		$m$ divides $|\Omega_i|$ which is a power of $p$, and also $m$ divides $|Q|$ which is a power of $q$. It follows that $p= q$, and hence that $\nu_i(F(X))$ is a $p$-group.
		Thus part (ii) is proved. 
	\end{proof}

	\begin{lemma}\label{RegularLemma}
		Let $E$ be a finite group, let $\theta\in\Aut(E)$, and let 
		\[
		J:=\{(e,e^{\theta})\text{ : }e\in E\},
		\]
		a full diagonal subgroup of $X:=E\times E$. Then:
		\begin{enumerate}[\upshape(i)]
			\item $\core_X(J)=\{(z,z^{\theta})\text{ : }z\in Z(E)\}=Z(X)\cap J$.
			\item The group $Y:=X^{[X:J]}$ is a central product $E_1\circ E_2$, where $E_i\cong E$ for each $i$, and $E_1\cap E_2\cong Z(E)$. In particular $Y=X/(Z(X)\cap J)$.
			\item Writing $\ol{U}$ for the image of a subgroup $U\leq Y$ under the natural projection map $Y\to Y/(E_1\cap E_2)$, we have $\ol{Y}\cong \ol{E_1}\times\ol{E_2}$, and $\ol{J}$ is a subdirect subgroup such that 
			\[
			\ol{J}\cap \widehat{\ol{E_1}} =\ol{J}\cap \widehat{\ol{E_2}}=1.
			\]
		\end{enumerate}
	\end{lemma}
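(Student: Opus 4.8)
The plan is to unwind the definitions carefully, tracking what happens under the coset action of $X=E\times E$ on $[X:J]$, where $J=\{(e,e^\theta):e\in E\}$ is the full diagonal subgroup determined by $\theta$. First I would establish (i): an element $(a,b)\in X$ lies in $\core_X(J)$ precisely when $J^{(a,b)}=J$ (since $\core_X(J)\le J$ and $J$ has only conjugates of the same order), and conjugating a typical element $(e,e^\theta)$ by $(a,b)$ gives $(e^a,(e^\theta)^b)=(e^a,(e^a)^{\theta'})$ where $\theta'=\iota(a^{-1})\theta\,\iota(b)$ in the notation already used in the proof of Proposition~\ref{RedProp}; demanding this lie in $J$ for all $e$ forces $\theta'=\theta$, i.e.\ $\iota(a)\theta=\theta\iota(b)$, equivalently $\iota(a)=\theta\iota(b)\theta^{-1}=\iota(b^\theta)$, so $a$ and $b^\theta$ differ by an element of $Z(E)$; combined with $(a,b)\in J$ (so $b=a^\theta$) this pins down $a=b^\theta z$ with $z\in Z(E)$ in a way that collapses to $(z,z^\theta)$ with $z\in Z(E)$. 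This set is visibly $Z(X)\cap J$, giving the displayed equality.

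Next, for (ii), recall that for any transitive action the group induced, $X^{[X:J]}$, is the quotient $X/\core_X(J)$ acting on the cosets; so $Y\cong X/(Z(X)\cap J)$ by part (i), establishing the last sentence. To see the central product structure, let $E_1$ be the image in $Y$ of $\widehat{E}_1=E\times 1$ and $E_2$ the image of $\widehat{E}_2=1\times E$. Since $\widehat{E}_i\cap (Z(X)\cap J)=1$ (an element of $\widehat{E}_1\cap J$ has the form $(e,1)$ with $1=e^\theta$, so $e=1$), the projection $Y\to E_i$ is injective on $E_i$, giving $E_i\cong E$; moreover $[\widehat{E}_1,\widehat{E}_2]=1$ in $X$ so $[E_1,E_2]=1$ in $Y$, and $Y=E_1E_2$ since $X=\widehat{E}_1\widehat{E}_2$. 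Finally $E_1\cap E_2$ is the image of $\widehat{E}_1\cap(\widehat{E}_2\cdot(Z(X)\cap J))$ — equivalently one computes $\widehat{E}_1\cap (Z(X)\cap J)\widehat{E}_2 = \{(z,1):z\in Z(E)\}\cdot(Z(X)\cap J)$ modulo the core, which has order $|Z(E)|$ — so $E_1\cap E_2\cong Z(E)$, as required for a central product.

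For (iii), set $Y_0:=E_1\cap E_2\cong Z(E)$, which is central in $Y$ (it is contained in both $E_i$ and each $E_i$ centralises the other, and $E_i$ is abelian on its centre), hence normal. Write $\overline{U}$ for the image of $U\le Y$ in $Y/Y_0$. Then $\overline{Y}=\overline{E_1}\,\overline{E_2}$ with $\overline{E_1}\cap\overline{E_2}=1$ (the intersection is the image of $E_1\cap E_2Y_0=E_1\cap E_2=Y_0$, which is trivial mod $Y_0$) and $[\overline{E_1},\overline{E_2}]=1$, so $\overline{Y}\cong\overline{E_1}\times\overline{E_2}$ with each $\overline{E_i}\cong E/Z(E)$. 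That $\overline{J}$ is subdirect is clear since $J$ projects onto each factor $E$ of $X$ and these surject onto the $\overline{E_i}$. It remains to show $\overline{J}\cap\widehat{\overline{E_1}}=1$ (and symmetrically). An element of $J$ mapping into $\widehat{\overline{E_1}}$ modulo $Y_0$ is of the form $(e,e^\theta)$ whose $E_2$-component is trivial in $\overline{E_2}$, i.e.\ lies in $Y_0\cong Z(E)$; but then $e^\theta\in Z(E)$ so $e\in Z(E)$, and $(e,e^\theta)\in Z(X)\cap J=\core_X(J)$, so its image in $Y$ is already trivial. Hence the intersection is trivial.

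The main obstacle I anticipate is keeping the bookkeeping straight between the three ambient groups $X=E\times E$, its quotient $Y$, and the further quotient $\overline{Y}$, and in particular correctly identifying which intersections in $X$ descend to the stated intersections in $Y$ and $\overline{Y}$; the structural facts themselves are elementary (they are essentially the statement that the holomorph-style diagonal gluing of two copies of $E$ along $Z(E)$ produces a central product), but the cleanest route is to prove everything in $X$ first and push forward, checking at each stage that the relevant subgroups meet $\core_X(J)=Z(X)\cap J$ trivially. No deep input is needed — only Lemma~\ref{lem:subdirect}(i) might be invoked for the subdirectness claim, though even that can be done by hand here.
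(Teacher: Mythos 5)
Your argument for part (i) rests on a false characterisation of the core, and this is a genuine gap. The core is $\core_X(J)=\bigcap_{x\in X}J^x$, so $y\in\core_X(J)$ if and only if $y^x\in J$ for \emph{every} $x\in X$; it is not equivalent to ``$y\in J$ and $J^y=J$''. What your computation actually establishes is the condition for $(a,b)$ to lie in $N_X(J)$, namely $b(a^{\theta})^{-1}\in Z(E)$ (in one convention), and since $J\leq N_X(J)$ automatically, intersecting that condition with $J$ returns all of $J$ rather than $Z(X)\cap J$. Your own algebra shows this: combining $\iota(a)\theta=\theta\iota(b)$ with $b=a^{\theta}$ yields a tautology, not the constraint $a\in Z(E)$. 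For a concrete failure, take $E$ nonabelian simple and $\theta=\mathrm{id}$: every $(a,a)\in J$ satisfies $J^{(a,a)}=J$, so your criterion would give $\core_X(J)=J\cong E$, whereas in fact $\core_X(J)=Z(X)\cap J=1$ (here $J$ is self-normalising and core-free). The repair is short and is exactly what the paper does: if $y=(e,e^{\theta})\in\core_X(J)$ then $y^{(1,e_2)}=(e,(e^{\theta})^{e_2})$ must lie in $J$ for all $e_2\in E$, which forces $(e^{\theta})^{e_2}=e^{\theta}$ for all $e_2$, i.e.\ $e^{\theta}\in Z(E)$ and hence $e\in Z(E)$; the reverse inclusion is clear.

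Parts (ii) and (iii) are essentially sound once (i) is in place, and they follow the same route as the paper: (ii) is the standard identification of $X^{[X:J]}$ with $X/\core_X(J)$ together with the observation that $\widehat{E}_i\cap J=1$, and in (iii) the paper identifies $\ol{Y}$ with $X/Z(X)$ and chases cosets, while you work directly with the component decomposition in $\ol{E_1}\times\ol{E_2}$ --- the two are equivalent. Be aware, however, that your final step in (iii) explicitly invokes the equality $Z(X)\cap J=\core_X(J)$, so the error in (i) propagates and must be corrected before the remainder of the proof stands.
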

	
	\begin{proof}
		(i) and (ii): Since part (ii) follows immediately from (i), we only need to prove (i).
		Let $x\in \core_X(J)=\cap_{y\in X}J^y$. Then $x=(e,e^\theta)$ for some $e\in E$ (since $x\in J$), and also $x^y\in J$ for all $y=(e_1,e_2)\in X$. In particular, taking $y=(1,e_2)$, we have $x^y=(e,e^{\theta e_2})\in J$, so $e^{\theta e_2} = e^\theta$, and this holds for all $e_2\in E$. Hence $e^\theta\in Z(E)$, and since $\theta$ is an automorphism, it follows that $e\in Z(E)$. Thus  $\core_X(J)\subseteq \{(z,z^{\theta})\text{ : }z\in Z(E)\}$. The reverse inclusion clearly holds, so equality holds and hence part (i), and also part (ii), are proved.
		
		\vspace{0.25cm}
		\noindent (iii): The fact that $\ol{Y}\cong \ol{E_1}\times\ol{E_2}$ follows from part (ii), and by definition, $\ol{J}$ projects onto each direct factor $\ol{E_i}$. Noting that $Z(X)=Z(E)\times Z(E)$, it follows that $E_1\cap E_2 = Z(X)/(Z(X)\cap J)$, and hence $\ol{Y}=Y/(E_1\cap E_2)\cong X/Z(X)$. Under this isomorphism $\widehat{\ol{E_1}}$ corresponds to the subgroup of cosets $Z(X)x$ with $x$ of the form $x=(e,z)$   for some $e\in E, z\in Z(E)$. If such a coset $Z(X)x$ corresponds to an element of $\ol{J}$, then $x=x'z'$ with $x'\in J, z'\in Z(X)$, so $x=(e,z)=(e_1, e_1^\theta)(z_1, z_2)$ for some $e_1\in E, z_1, z_2\in Z(E)$. This implies firstly that $e_1^\theta=z z_2^{-1}\in Z(E)$ whence $e_1\in Z(E)$; and secondly that $e=e_1z_1\in Z(E)$. It follows that the coset $Z(X)x$ is trivial, and hence that $\ol{J}\cap \widehat{\ol{E_1}}=1$. The proof that 
		$\ol{J}\cap \widehat{\ol{E_2}}=1$ is similar.
	\end{proof}

	Now we proceed with Step 3, examining the base sizes of the simple direct factors $T_i$ in their coset actions on $[T_i,\mu_i(H)]$. 
	
	\begin{lemma}\label{lem:Step2}
		Let $G, r, T_i, \mu_i, H, M$ be as in Hypothesis $\ref{hyp:dir}$. 
		Then there exists $i\leq r$ such that the permutation  group $T_i^{[T_i:\mu_i(H)]}$ has base size greater than $2$, and in particular  $(T_i,\mu_i(H))$ is one of the pairs $(T, M)$ in Table $\ref{tab:basesize}$.
	\end{lemma}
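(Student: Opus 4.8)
The plan is to argue by contradiction: suppose that $T_i^{[T_i:\mu_i(H)]}$ has base size at most $2$ for every $i\le r$. First I would invoke Lemma~\ref{ProductAction}(i) to record that $H<M<G$, that $M$ is core-free in $G$, and that $G^{[G:M]}$ is $2$-closed. Together with the fact that $H$ is core-free in $G$ (Hypothesis~\ref{hyp:dir}(c)), this shows that all the conditions of Notation~\ref{not2} hold, with $\Omega:=[G:H]$, $\Sigma:=[G:M]$ (a nontrivial $G$-invariant partition since $H<M<G$), $L:=G^{\Sigma}=G^{[G:M]}$, and $\Delta:=[M:H]\in\Sigma$; in particular $L=L^{(2),\Sigma}$. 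The goal is then to show that the standing assumption forces $L$ to have base size $2$, so that Proposition~\ref{MainCor1}(3) yields $G^{[G:H]}=G^{(2),[G:H]}$, contradicting the maximal choice of $H$ in Hypothesis~\ref{hyp:dir}(c).

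To bound the base size of $L$, recall from Hypothesis~\ref{hyp:dir}(e) that $G=G_1\times G_2$ and $M=M_1\times M_2$ with $M_i\le G_i$, so the action of $G$ on $\Sigma=[G:M]$ is permutationally isomorphic to the product action of $G_1^{[G_1:M_1]}\times G_2^{[G_2:M_2]}$ on $[G_1:M_1]\times[G_2:M_2]$. Since the stabiliser in a product action of a point $(\alpha_1,\alpha_2)$ is $(G_1)_{\alpha_1}\times(G_2)_{\alpha_2}$, a set of points is a base precisely when its projections to the two coordinates are bases for the respective factors; hence the base size of $G^{\Sigma}$ equals $\max\{b_1,b_2\}$, where $b_i$ denotes the base size of $G_i^{[G_i:M_i]}$. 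Here $G_1=T_1$ and $M_1=\mu_1(H)$, so $b_1\le 2$ by assumption. For the second factor, put $N_2:=\mu_2(H)\times\cdots\times\mu_r(H)\le G_2$, so that $M_2=\pi_2(H)\le N_2$; the natural $G_2$-equivariant surjection $[G_2:M_2]\to[G_2:N_2]$ shows $b_2\le b\bigl(G_2^{[G_2:N_2]}\bigr)$, because a base of the quotient lifts (point-by-point) to a base of the cover. But $G_2^{[G_2:N_2]}$ is the product action of $T_2^{[T_2:\mu_2(H)]}\times\cdots\times T_r^{[T_r:\mu_r(H)]}$, whose base size is $\max_{2\le j\le r}b\bigl(T_j^{[T_j:\mu_j(H)]}\bigr)\le 2$. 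Therefore $L=G^{\Sigma}$ has base size at most $2$, and the contradiction described above follows.

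Consequently there exists $i\le r$ with $b\bigl(T_i^{[T_i:\mu_i(H)]}\bigr)\ge 3$. By Lemma~\ref{ProductAction}(i) the subgroup $\mu_i(H)$ is proper in $T_i$, and $T_i\in\{\mathrm{J}_1,\mathrm{J}_3,\mathrm{J}_4,\mathrm{Ly},\mathrm{Th},\mathbb{M}\}$ by Hypothesis~\ref{hyp:dir}(a), so Proposition~\ref{Base2Lemma}(a) forces $(T_i,\mu_i(H))$ to be one of the pairs $(T,M)$ listed in Table~\ref{tab:basesize}, as required.

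I expect the only delicate points to be the two elementary facts about base sizes used in the second paragraph — that the base size of a product action is the maximum of the base sizes of the factors, and that base size cannot increase on passing from a $G$-equivariant quotient to its cover — both routine but worth stating carefully. Everything else is a direct appeal to Lemma~\ref{ProductAction}, Proposition~\ref{MainCor1}(3) and Proposition~\ref{Base2Lemma}(a), so the argument should be short.
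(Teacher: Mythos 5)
Your proof is correct and follows the same strategy as the paper's: assume every $T_i^{[T_i:\mu_i(H)]}$ has base size at most $2$, deduce that a suitable block action of $G$ has base size $2$, and apply Proposition~\ref{MainCor1}(3) to contradict the assumption that $G^{[G:H]}$ is not $2$-closed. The only real difference is the choice of block system: the paper works directly with $N:=\mu_1(H)\times\cdots\times\mu_r(H)$, so that $G^{[G:N]}$ is visibly the product action of the $r$ groups $T_i^{[T_i:\mu_i(H)]}$ and the base-size bound is immediate, whereas you use $M=M_1\times M_2$ and then need the extra (correct) covering step bounding the base size of $G_2^{[G_2:M_2]}$ by that of $G_2^{[G_2:N_2]}$, using $M_2\le N_2$. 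Both routes are valid; yours has the small advantage that the hypothesis $L=L^{(2),\Sigma}$ of Proposition~\ref{MainCor1} is supplied explicitly by Lemma~\ref{ProductAction}(i), a point the paper leaves implicit for the action on $[G:N]$.
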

	
	\begin{proof}
		By  Lemma \ref{ProductAction}(i),  $H$ is a proper subgroup of $M$, and hence $H$ is a proper subgroup of $N:=\mu_1(H)\times \hdots\times\mu_r(H)$ since $M\le N$ (Hypothesis~\ref{hyp:dir}(e)). Since $H$ is core-free in $G$ (Hypothesis~\ref{hyp:dir}(c)), it follows from Lemma \ref{lem:dp1} that $\mu_i(H)$ is a proper subgroup of $T_i$ for all $i\le r$. Thus, $N$ is core-free in $G$. If $T_i^{[T_i:\mu_i(H)]}$ has base size at most $2$ for all $i$, then $G^{[G:N]}$ has base size at most $2$, and it follows from Proposition \ref{MainCor1}(3) that $G^{[G:H]}$ is $2$-closed, contradicting Hypothesis \ref{hyp:dir}(c). Thus, for some $i\leq r$, $T_i^{[T_i:\mu_i(H)]}$ has base size greater than $2$, and hence, by Proposition~\ref{Base2Lemma}(a),  $(T_i,\mu_i(H))$ is one of the pairs $(T, M)$ in Table \ref{tab:basesize}.
	\end{proof}

	We now move on to Step 4 of Strategy~\ref{Strategy2}. We examine each possibility for ($T_i,\mu_i(H)$) from Table~\ref{tab:basesize}. First we deal with the unique pair in  Table \ref{tab:basesize} involving the Monster, namely $(\mathbb{M}, 2.\mathbb{B})$. 
	
	\begin{lemma}\label{NoPairs} For $T_i, \mu_i, H$ as in Hypothesis \ref{hyp:dir}, no pair $(T_i,\mu_i(H))$ is equal to $(\mathbb{M}, 2.\mathbb{B})$. 
	\end{lemma}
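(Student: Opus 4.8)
The plan is to argue by contradiction, supposing that $(T_i,\mu_i(H)) = (\mathbb{M}, 2.\mathbb{B})$ for some $i$, and to show this is incompatible with Hypothesis~\ref{hyp:dir}. The key point is that $\mathbb{M}\in\{T_1,\dots,T_r\}$, so by Hypothesis~\ref{hyp:dir}(a) we are in case~(ii), and hence $T_j\in\{\mathrm{J}_1,\mathrm{J}_3,\mathrm{J}_4,\mathrm{Ly},\mathbb{M}\}$ for every $j\le r$; in particular $\mathrm{Th}\notin\{T_1,\dots,T_r\}$. Without loss of generality relabel so that $i=1$, so $T_1=\mathbb{M}$ and $\mu_1(H)=2.\mathbb{B}$.

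\textbf{Key steps.} First I would apply Lemma~\ref{ProductAction}(ii) to the composition factors of $\mu_1(H)/(\widehat T_1\cap H)$. Since $H$ is core-free in $G$ (Hypothesis~\ref{hyp:dir}(c)) and Hypothesis~\ref{not1} holds for $G$, Lemma~\ref{lem:dp1} gives that $\widehat T_1\not\le H$, so $\widehat T_1\cap H$ is a proper normal subgroup of $\mu_1(H)\cong 2.\mathbb{B}$; as the only proper normal subgroups of $2.\mathbb{B}$ are $1$ and the centre $C_2$, we get $\widehat T_1\cap H\le Z(2.\mathbb{B})\cong C_2$. Hence $\mu_1(H)/(\widehat T_1\cap H)$ has $\mathbb{B}$ as a composition factor. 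By Lemma~\ref{ProductAction}(ii), some $\mu_j(H)$ with $j\neq 1$ has a composition factor isomorphic to $\mathbb{B}$. Since $\mu_j(H)\le T_j$ with $T_j\in\{\mathrm{J}_1,\mathrm{J}_3,\mathrm{J}_4,\mathrm{Ly}\}$ (as $T_j\not\cong\mathbb{M}=T_1$), this forces $\mathbb{B}$ to be a section of one of the Janko groups $\mathrm{J}_1,\mathrm{J}_3,\mathrm{J}_4$ or of $\mathrm{Ly}$.

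\textbf{Ruling out the remaining possibility.} This is the one genuine obstacle, but it is quickly dispatched: $|\mathbb{B}| = 2^{41}\cdot 3^{13}\cdot 5^6\cdot 7^2\cdot 11\cdot 13\cdot 17\cdot 19\cdot 23\cdot 31\cdot 47$ is far larger than $|\mathrm{J}_1|$, $|\mathrm{J}_3|$, $|\mathrm{J}_4|$ and $|\mathrm{Ly}|$ (indeed $|\mathbb{B}|>10^{33}$ while $|\mathrm{J}_4|<10^{20}$ is the largest of these), so $\mathbb{B}$ cannot be a section of any of them. Alternatively, and without appeal to orders, one may simply consult the section-diagram of the sporadic groups in \cite[p.\,238]{Atlas} (cf.\ Remark~\ref{rem:section}), which shows the baby monster $\mathbb{B}$ is involved only in $\mathbb{B}$ and $\mathbb{M}$. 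Either way we reach a contradiction, so no pair $(T_i,\mu_i(H))$ equals $(\mathbb{M},2.\mathbb{B})$, completing the proof.
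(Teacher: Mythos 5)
Your overall strategy — show that $\mathbb{B}$ is a composition factor of $\mu_1(H)/(\widehat{T}_1\cap H)$, transfer it to some $\mu_j(H)$ with $j\neq 1$ via Lemma~\ref{ProductAction}(ii), and observe that $\mathbb{B}$ is not a section of $\mathrm{J}_1,\mathrm{J}_3,\mathrm{J}_4$ or $\mathrm{Ly}$ — is exactly the paper's. There is, however, a gap in the middle step. You assert that $\widehat{T}_1\not\le H$ implies that $\widehat{T}_1\cap H$ is a \emph{proper} normal subgroup of $\mu_1(H)\cong 2.\mathbb{B}$. That inference does not follow: $\widehat{T}_1\not\le H$ only gives $\widehat{T}_1\cap H\neq\widehat{T}_1\cong\mathbb{M}$, while $\widehat{T}_1\cap H=\widehat{M}_1\cap H\le\widehat{M}_1\cong 2.\mathbb{B}$ holds automatically because $H\le\mu_1(H)\times\cdots\times\mu_r(H)$. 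What you must exclude is the case $\widehat{M}_1\le H$, i.e.\ $\widehat{T}_1\cap H=2.\mathbb{B}$; in that case $\mu_1(H)/(\widehat{T}_1\cap H)$ is trivial, has no composition factor $\mathbb{B}$, and Lemma~\ref{ProductAction}(ii) yields nothing. (A minor point: Lemma~\ref{lem:dp1} is not the right citation for $\widehat{T}_1\not\le H$ either; the direct reason is that $\widehat{T}_1\unlhd G$, so $\widehat{T}_1\le H$ would force $\widehat{T}_1\le\Core_G(H)=1$.)

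The gap is real but closes in one line, and the paper closes it using the subdirect-product structure: $H$ is a subdirect subgroup of $M=M_1\times M_2$ (Hypothesis~\ref{hyp:dir}(e)) and $H<M$ by Lemma~\ref{ProductAction}(i), so Lemma~\ref{lem:subdirect}(i) shows that $H$ cannot contain $\widehat{M}_1$. Equivalently, Goursat's Lemma gives $M_1/H_1\cong M_2/H_2$, so $H_1=M_1$ would force $H_2=M_2$ and hence $H=M$. With that sentence inserted, your argument is complete; your closing order comparison ($|\mathbb{B}|>10^{33}$ exceeds $|\mathrm{J}_4|<10^{20}$, the largest of the remaining candidates) is a clean, self-contained justification of the final non-section claim, which the paper instead asserts by inspection of sections of the sporadic groups.
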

	\begin{proof}
		Suppose to the contrary that $(T_i,\mu_i(H))=(\mathbb{M}, 2.\mathbb{B})$, for some $i$. As discussed in Remark~\ref{rem1}, we may assume that $i=1$, so $M_1=2.\mathbb{B}$, and by Hypothesis~\ref{hyp:dir},  $T_j\in\{\mathrm{J}_1,\mathrm{J}_3,\mathrm{J}_4,\mathrm{Ly}\}$ for each $j\geq 2$. Now $H$ is a subdirect subgroup of $M=M_1\times M_2$, and by  Lemma \ref{ProductAction}(i), $H$ is a proper subgroup of $M$. Thus, by Lemma \ref{lem:subdirect}(i), $H$ does not contain $\widehat{M}_1$, so  $H_1=\widehat{T}_1\cap H= \widehat{M}_1\cap H$ is a proper normal subgroup of $M_1=2.\mathbb{B}$. It follows that $\mathbb{B}$ is a composition factor of $M_1/H_1$, but this contradicts Lemma \ref{ProductAction}(ii) since, for each $j\geq2$, $T_j\in\{\mathrm{J}_1,\mathrm{J}_3,\mathrm{J}_4,\mathrm{Ly}\}$ and none of these simple groups has a section isomorphic to $\mathbb{B}$.
	\end{proof}

	To deal with the other pairs ($T_i,\mu_i(H)$) from Table~\ref{tab:basesize},
	our main tool is Proposition \ref{MainCor1}(2), and we extend the notation used there slightly as follows.
	
	\begin{notation}\label{not:dirmain}
		\rm{
			We continue to adopt the notation and hypotheses set out in Hypothesis \ref{hyp:dir}, and in order to apply Proposition \ref{MainCor1}(2),  we recall and slightly extend the notation from Notation~\ref{not2} (notably parts  (2), (3), and (5) -- (10)):  
			
			\vspace{0.25cm}
			\noindent (a) $\Omega=[G:H]$, $\Sigma=[G:M]$ of size $s=|\Sigma|$, $\Delta=[M:H]$, $L=G^{\Sigma}$ (which equals $L^{(2),\Sigma}$ by Lemma~\ref{ProductAction}), $R=M^\Delta$, $Y=X_\Delta^\Delta$, and $G\le W=R\wr L$ acting on $\Omega$. For a subgroup $A\leq R$ we write $B_A$ for the corresponding base group $A^s$ of the base group $B=R^s$ of $W$. Note that $M$ is core-free in $G$ (by Lemma~\ref{ProductAction}(i)), so $L=G^\Sigma\cong G$.  
			
			\vspace{0.25cm}
			\noindent (b) Since $M=M_1\times M_2$ and $G=G_1\times G_2$ (Hypothesis~\ref{hyp:dir}(e)), the set $\Sigma=\Sigma_1\times \Sigma_2$ and $s=s_1s_2$, where $\Sigma_i=[G_i:M_i]$ of size $s_i$, for $i=1, 2$.  For this reason, and as we will see below, it is more natural to label the $G$-invariant partition $\Sigma$ of $\Omega$ in Notation \ref{not2}(2) as $\Sigma=\{\Delta_{(i,j)}\text{ : }1\le i\le s_1\text{; }1\le j\le s_2\}$, and we choose  $\Delta:=\Delta_{(1,1)}$, and $\omega\in\Delta$, (see Notation~\ref{not2}~(2) and~(3)).
			
			\vspace{0.25cm}
			\noindent (c) In Notation~\ref{not2}~(5) and (6),  we identify $\Sigma$ with the set $\{(i,j)\text{ : }1\le i\le s_1\text{; }1\le j\le s_2\}$, and we assume that $G\le W=R\wr L$ is acting on $\Omega=\Delta\times \{(i,j)\text{ : }1\le i\le s_1\text{; }1\le j\le s_2\}$; where we identify $\Delta_{(i,j)}$ with $\Delta\times\{(i,j)\}$. The elements $(x_{(1,1)},\hdots,x_{(s_1,s_2)})\in B_R$ and $\sigma=(\sigma_1,\sigma_2)\in L\cong G_1^{[G_1:M_1]}\times G_2^{[G_2:M_2]}$ (with $\sigma_i\in G_i^{[G_i:M_i]}$) act as follows on a point $(\delta,(i,j))\in\Omega$:  
			\begin{align*}
				(\delta,(i,j))^{(x_{(1,1)},\hdots,x_{(s_1,s_2)})}=(\delta^{x_{(i,j)}},(i,j)) 
				\text{ and }(\delta,(i,j))^{\sigma}= (\delta,(i^{\sigma_1},j^{\sigma_2})).
			\end{align*}
			
			\vspace{0.25cm}
			\noindent (d) In Notation~\ref{not2}~(7) -- (9), we write 
			\[
			R_{(i,j)}=\{(x_{(1,1)},\hdots,x_{(s_1,s_2)})\text{ : }x_{(k,\ell)}=1\text{ if }(k,\ell)\neq(i,j)\};
			\]
			for $A\leq \Sym(\Delta)$, we replace $W_i$ by $W_{(i,j)}=B_A\rtimes L_{(i,j)}$; the map $\rho_i$ by 
			\[
			\rho_{(i,j)}:W_{(i,j)}\to A \text{ given by }  \rho_{(i,j)}: x\to x_{(i,j)}
			\] 
			for $x=(x_{(1,1)},\hdots,x_{(s_1,s_2)})\in B_A$; the map $\rho_{i,j}$ by 
			\[
			\rho_{(i,j), (k,\ell)}:W_{(i,j)}\cap W_{(k,\ell)}\to A\times A \text{ given by }  \rho_{(i,j),(k,\ell)}: x\to (x_{(i,j)}, x_{(k,\ell)})
			\] 
			for distinct $(i,j), (k,\ell)$; and in applying this to notation to $W=R\wr L$, we write $M_{(i,j)}= W_{(i,j)}\cap G$, so $\rho_{(i,j)}(M_{(i,j)})=M_{(i,j)}^{\Delta_{(i,j)}}=R_{(i,j)}\cong R$, etc.  Moreover if, for $k\in\{1,2\}$,  
			$M^{(1)}_k$, $\hdots$, $M^{(s_k)}_k$ are the point stabilizers in the transitive permutation group $G_k^{[G_k:M_k]}$, then, relabelling if necessary, the stabiliser $M_{(i,j)}=M^{(i)}_1\times M^{(j)}_2$, for $1\le i\le s_1$, and $1\le j\le s_2$.}  
		
		\vspace{0.25cm}
		\noindent (e) By Proposition \ref{MainCor1}(2),  $X=N\rtimes G$ with $N$ a subdirect subgroup of a base group $B_A=A^s$ for some nontrivial $A\unlhd\, Y$, and the $A$-orbits in $\Delta$ have a constant length $a=a(G, M, H)$ such that $a$ divides $|L_{(i,j), (k,\ell)}|$, for all distinct   $\Delta_{(i,j)}, \Delta_{(k,\ell)}\in \Sigma$.

	\end{notation}

	Our strategy for proceeding is to prove that the parameter $a$ in Notation~\ref{not:dirmain}(e) divides a product of $r$ positive integers related to the simple groups $T_1,\dots,T_r$ (see Lemma~\ref{lem:factora}). We then prove (see Lemma \ref{lem:divisora}) that $a$ has the form $a=d_1d_2$, where the $d_i$ are orders of certain normal sections of $M_1/H_1$. Finally, in the proof of Theorem \ref{thm:classification}, we prove that in most cases, $d_1d_2$ cannot divide $a$, yielding a contradiction in those cases.

	\begin{lemma}\label{lem:factora}
		Assume that the notation and conditions from Notation~$\ref{not:dirmain}$ hold, noting in particular that $L=L^{(2),\Sigma}$ and $X\ne G$. In addition extend the notation $g(T,M)$ from Proposition $\ref{Base2Lemma}$ to all pairs $(T,M)$ with $T$ a finite simple group and $M< T$, by defining $g(T,M):=1$ if $(T,M)$ is not in Table $\ref{tab:basesize}$. Then the parameter $a=a(G,M,H)$ divides $\prod_{i=1}^rg(T_i,\mu_i(H))$.
	\end{lemma}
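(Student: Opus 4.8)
\textbf{Proof proposal for Lemma~\ref{lem:factora}.}

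The plan is to recall the divisibility constraint on $a$ from Notation~\ref{not:dirmain}(e) and to re-express the two-point stabiliser $L_{(i,j),(k,\ell)}$ in $L$ in terms of two-point stabilisers in the individual factors $G_i^{[G_i:M_i]}$, and ultimately in terms of the greatest common divisors $d(T_i,\mu_i(H))$ which, by Proposition~\ref{Base2Lemma}(b), divide $g(T_i,\mu_i(H))$. First I would fix notation: since $L=G^\Sigma\cong G=T_1\times\dots\times T_r$ acting on $\Sigma=\Sigma_1\times\dots\times\Sigma_r$ where $\Sigma_i=[T_i:\mu_i(H)]$ in product action (refining the two-factor decomposition $\Sigma=\Sigma_1\times\Sigma_2$ of Notation~\ref{not:dirmain}(b) into its $r$ simple pieces), a two-point stabiliser $L_{\si,\si'}$ for suitably chosen distinct $\si,\si'\in\Sigma$ can be taken to be $\prod_{i=1}^r (\mu_i(H))_{\alpha_i,\beta_i}$, where $\alpha_i,\beta_i\in\Sigma_i$ and $(\mu_i(H))_{\alpha_i,\beta_i}$ is a two-point stabiliser in $T_i^{[T_i:\mu_i(H)]}$ (or a one-point stabiliser if $\alpha_i=\beta_i$, which is the full group $\mu_i(H)$). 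By Notation~\ref{not:dirmain}(e), $a$ divides $|L_{(i,j),(k,\ell)}|$ for \emph{all} distinct $\Delta_{(i,j)},\Delta_{(k,\ell)}\in\Sigma$; hence $a$ divides the greatest common divisor, over all such pairs, of the orders $|L_{\si,\si'}|$.

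The key step is then to compute that greatest common divisor. Because the factors act in product action, varying $\si$ and $\si'$ independently in each coordinate shows that the gcd of $|L_{\si,\si'}|$ over all distinct pairs equals $\prod_{i=1}^r d(T_i,\mu_i(H))$, where $d(T_i,\mu_i(H))$ is the greatest common divisor of the orders of the stabilisers in $\mu_i(H)^{[T_i:\mu_i(H)]}$ of points in $[T_i:\mu_i(H)]\setminus\{\mu_i(H)\}$ — that is, exactly the quantity appearing in Proposition~\ref{Base2Lemma}(b). (Here one should take care over the coordinate in which $\si$ and $\si'$ agree versus differ: for the pair $(\si,\si')$ to be distinct in $\Sigma$ it suffices that they differ in at least one coordinate, so in each coordinate $i$ the relevant stabiliser ranges over all $(\mu_i(H))_{\alpha_i,\beta_i}$ with $\alpha_i,\beta_i\in\Sigma_i$ not both equal to the base point, together with $\mu_i(H)$ itself; the gcd over this family in coordinate $i$ is $d(T_i,\mu_i(H))$, using that $T_i$ is transitive so all choices of base point are equivalent.) Thus $a \mid \prod_{i=1}^r d(T_i,\mu_i(H))$.

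Finally I would invoke Proposition~\ref{Base2Lemma}(b): for each $i$, if $(T_i,\mu_i(H))$ appears in Table~\ref{tab:basesize} then $d(T_i,\mu_i(H))\mid g(T_i,\mu_i(H))$; and if $(T_i,\mu_i(H))$ does not appear there — equivalently, by Proposition~\ref{Base2Lemma}(a), the permutation group $T_i^{[T_i:\mu_i(H)]}$ has base size at most $2$ — then there is a pair of points with trivial two-point stabiliser, so $d(T_i,\mu_i(H))=1=g(T_i,\mu_i(H))$ by our extended convention. Either way $d(T_i,\mu_i(H))\mid g(T_i,\mu_i(H))$, and therefore $a$ divides $\prod_{i=1}^r g(T_i,\mu_i(H))$, as claimed.

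The main obstacle, I expect, is the bookkeeping in the second step: establishing rigorously that the gcd of product-action two-point stabiliser orders factors as a product of the per-coordinate gcds $d(T_i,\mu_i(H))$. One must handle the possibility that in some coordinates the two points coincide (contributing the full factor-group order rather than a proper stabiliser), verify that $T_i$-transitivity lets us normalise the base point in each coordinate, and confirm that every divisor forced on $a$ by \emph{some} pair is already forced by pairs differing in a single coordinate — so that no cancellation between coordinates can make the true gcd smaller than the product. This is essentially the observation that two-point stabilisers in a direct product acting in product action are themselves direct products of (one- or two-point) stabilisers in the factors, which follows from \cite[Theorem 4.16]{PS}-type reasoning on the structure of the relevant point stabilisers, but it needs to be stated carefully to avoid an off-by-one slip between one- and two-point stabilisers.
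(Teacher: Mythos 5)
Your overall strategy --- bound $a$ by the greatest common divisor of the two-point stabiliser orders in $L$, factor that gcd coordinate-by-coordinate using the identity $\gcd_{i,j}(a_ib_j)=\gcd_i(a_i)\cdot\gcd_j(b_j)$, and finish with Proposition~\ref{Base2Lemma} --- is sound, and it would give a cleaner, non-inductive alternative to the paper's argument (which instead peels off one simple factor at a time by induction on $r$). But there is a concrete false step at the start: $\Sigma$ does \emph{not} decompose as $\prod_{i=1}^r[T_i:\mu_i(H)]$, and the two-point stabilisers $L_{\sigma,\sigma'}$ are \emph{not} of the form $\prod_{i=1}^r(\mu_i(H))_{\alpha_i,\beta_i}$. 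By Hypothesis~\ref{hyp:dir}(e) and Notation~\ref{not:dirmain}, $\Sigma=[G:M]$ with $M=M_1\times M_2=\mu_1(H)\times\pi_2(H)$, and $\pi_2(H)$ is merely a \emph{subdirect} subgroup of $\mu_2(H)\times\cdots\times\mu_r(H)$; it can be a proper (for instance diagonally linked) subgroup, which is precisely why the paper introduces the larger group $N=\prod_i\mu_i(H)$ separately in Lemma~\ref{lem:Step2}. Consequently your ``refinement'' of the two-factor decomposition into $r$ simple pieces is not available, and the equality you assert for the gcd (rather than a divisibility) is false in general: the true gcd of the $|L_{\sigma,\sigma'}|$ may be strictly smaller than $\prod_i d(T_i,\mu_i(H))$. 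Your stated worry, that cancellation might make the gcd smaller than the product, is in fact realised --- just not for the reason you consider.

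Fortunately the error points in the harmless direction and is easy to repair: since $M\le N:=\prod_{i=1}^r\mu_i(H)$, every two-point stabiliser $L_{\sigma,\sigma'}\cong M^x\cap M^y$ is contained in $N^x\cap N^y=\prod_{i=1}^r\bigl(\mu_i(H)^{x_i}\cap\mu_i(H)^{y_i}\bigr)$, so $a$ divides $\prod_i|\mu_i(H)^{x_i}\cap\mu_i(H)^{y_i}|$ whenever $Nx\ne Ny$ (which forces $Mx\ne My$). Choosing $x,y$ so that $\mu_i(H)x_i\ne\mu_i(H)y_i$ in \emph{every} coordinate --- possible since each $\mu_i(H)<T_i$ by Lemma~\ref{ProductAction}(i) --- and applying your gcd-multiplicativity observation then yields $a\mid\prod_i d(T_i,\mu_i(H))$, and your final appeal to Proposition~\ref{Base2Lemma} (with $d=g=1$ in the base-size-$2$ cases) is correct. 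With that one containment made explicit in place of the claimed equality, your proof goes through and is arguably tidier than the paper's induction, which exploits the very same containment ($M_2\le\mu_2(H)\times\pi_2'(M_2)$) one factor at a time.
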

	
	\begin{proof}
		By Notation~$\ref{not:dirmain}$(e), $a$ divides $|L_{(i,j),(k,\ell)}|$, for all $i,k\le s_1$ and all $j,\ell\le s_2$ with $(i,j)\ne (k,\ell)$. Moreover since $G$ is faithful on $\Sigma$ (see Notation~$\ref{not:dirmain}$(a)) we have $|L_{(i,j),(k,\ell)}|=|M_{(i,j)}\cap M_{(k,l)}|$, and by Notation~$\ref{not:dirmain}$(d), $|M_{(i,j)}\cap M_{(k,l)}|= |M^{(i)}_1\cap M^{(k)}_1|\cdot|M^{(j)}_2\cap M^{(\ell)}_2|$. Thus, for fixed  $j$ and $\ell$,
		we see that $a$ divides $|M^{(i)}_1\cap M^{(k)}_1|\cdot|M^{(j)}_2\cap M^{(\ell)}_2|$ for all $i,k\le s_1$, and hence $a$ divides $d_1|M^{(j)}_2\cap M^{(\ell)}_2|$, where $d_1=\gcd\{|M^{(i)}_1\cap M^{(k)}_1| \,|\, i\ne k\}$. 
		
		By Hypothesis~\ref{hyp:dir}(e), $\mu_1(H)=M_1$, and by Lemma~\ref{ProductAction}(i), $\mu_1(H)\ne T_1$, and hence $M_1$ is a proper subgroup of $T_1$. If $(T_1, M_1)$ is not a pair in Table~\ref{tab:basesize}, then we have defined $g(T_1,M_1)=1$; moreover  by Proposition~\ref{Base2Lemma}(a), the permutation group $T_1^{[T_1,\mu_1(H)]}$ has base size at most $2$ and hence $M^{(i)}_1\cap M^{(k)}_1=1$ for some $i,k$. Thus $d_1=1=g(T_1,\mu_1(H))$ in this case. On the other hand, if  $(T_1, M_1)$ is a pair in Table~\ref{tab:basesize}, then $d_1$ is the quantity $d(T_1, \mu_1(H))$ in Proposition~\ref{Base2Lemma}(b), and by that result, $d_1$ divides the quantity $g(T_1,\mu_1(H))$. Thus in either case $a$ divides 
		$g(T_1,\mu_1(H))\cdot  |M^{(j)}_2\cap M^{(\ell)}_2|$, for all $j, \ell\leq s_2$.
		
		We complete the proof by induction on $r$. If $r=2$, then $\mu_2(H)=M_2$ and the argument of the previous paragraph shows that  $a$ divides $g(T_1,\mu_1(H))\cdot g(T_2,M_2)$, proving the lemma in this case. Assume now that $r\geq 3$, and that the result holds for direct products of fewer than $r$ simple groups. Set $G_1':=T_2$, $G_2':=T_3\times\hdots\times T_r$, and $G':=G_1'\times G_2'$. Also, let $\pi_i':G'\rightarrow G_i'$ be the coordinate projections and $M_i':=\pi_i'(M_2)$, for $i=1, 2$. Note that, since $\pi_2(H)=M_2$, it follows from the definition of $\pi_2'$ that $M_1'=\mu_2(H)$ and this is a proper subgroup of $G_1'=T_2$ by Lemma~\ref{ProductAction}(a). 
		Set $M':=M_1'\times M_2'$. Then $M^{(j)}_2\cap M^{(\ell)}_2$ is a subgroup of $G'=G_1'\times G_2'$ contained in $M'$. 
		Now, the greatest common divisor of the orders $|M^{(j)}_2\cap M^{(\ell)}_2|$, for all $j,\ell\le s_2$ with $j\neq \ell$, divides the greatest common divisor of the orders $|(M')^{x}\cap (M')^{y}|$, for all $x, y\in G'$. 
		The argument in the first two paragraphs above shows that $|M^{(j)}_2\cap M^{(\ell)}_2|$ divides $g(T_2,M_1')\cdot |(M_2')^{x}\cap (M_2')^y|$ for every fixed pair of elements 
		$x,y\in G_2'$. 
		Since $M_1'=\mu_2(H)<T_2$, we have that $a$ divides $g(T_1,\mu_1(H))\cdot g(T_2,\mu_2(H))\cdot |(M_2')^{x}\cap (M_2')^y|$, for all $x, y\in G_2'$, and by induction we conclude that $a$ divides $g(T_1,\mu_1(H))\cdot g(T_2,\mu_2(H))\cdots g(T_r,\mu_r(H))$, completing the proof.  
	\end{proof}

	Next we prove Lemma~\ref{lem:divisora} which provides a factorisation of the parameter $a=a(G, M, H)$. 
	
	\begin{lemma}\label{lem:divisora}
		Assume that the notation and conditions from Notation~$\ref{not:dirmain}$ hold, and recall that $X=G^{(2),\Omega}$. Then 
		
		\begin{enumerate}[\upshape(i)]
			\item there exist normal subgroups $V$ of the centre $Z(M_1/H_1)$, and  $U$ 
			of $(M_1/H_1)/Z(M_1/H_1)$, such that $a=a(G,M,H)=|U|\cdot|V|$; and
			\item if $a$ is a power of a prime $p$, then $F(X)$ is a $p$-group.
		\end{enumerate}
	\end{lemma}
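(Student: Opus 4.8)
The plan is to determine the induced permutation group $R=M^\Delta$ and the normal subgroup $A$ of $Y=X^\Delta_\Delta$ completely, using the structure of $H$ inside $M=M_1\times M_2$. Write $Q:=M_1/H_1$; by Goursat's Lemma \cite[Theorem 4.8]{PS} we have $Q\cong M_2/H_2$, the subgroup $H_1\times H_2$ is normal in $M$ and contained in $H$, and $\bar H:=H/(H_1\times H_2)$ is a full diagonal subgroup of $M/(H_1\times H_2)=(M_1/H_1)\times(M_2/H_2)$. Hence $R=M^\Delta$ is permutationally isomorphic to the action of $Q\times Q$ on the coset space of a full diagonal subgroup, and Lemma~\ref{RegularLemma} applies with $E=Q$: it gives $R\cong E_1\circ E_2$, a central product with $E_i\cong Q$, $Z:=E_1\cap E_2=Z(E_i)\cong Z(Q)$, each $E_i$ acting regularly on $\Delta$, and $\bar R:=R/Z\cong\bar E_1\times\bar E_2$ with $\bar E_i\cong Q/Z(Q)$ acting faithfully and regularly on the set $\Sigma_Z$ of $Z$-orbits in $\Delta$.

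Next I would split $a$ into a ``block'' contribution and a ``within-block'' contribution. As $Z\unlhd R$, the partition $\Sigma_Z$ is $R$-invariant, hence $R^{(2),\Delta}$-invariant by \cite[Theorem 4.11]{Wielandt}, and therefore $Y$-invariant since $R\le Y\le R^{(2),\Delta}$. Recall from Notation~\ref{not:dirmain}(e) that $X=N\rtimes G$ with $A\unlhd Y$ and every $A$-orbit in $\Delta$ of length $a$. Since $A\unlhd Y$ and $Y$ is transitive, $A$ has constant orbit length $b$ on $\Sigma_Z$; moreover, for a block $B\in\Sigma_Z$, each $A$-orbit in $\Delta$ meets $B$ in a single orbit of the block-stabiliser $A_B$, of constant length $c$, and $a=bc$.

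For the within-block factor, a direct count gives $|R_B|=|R|/|\Sigma_Z|=|Q|$, and since the point stabiliser $R_\omega$ of $R$ acts on $E_1\cong Q$ as the group of inner automorphisms, which fix $Z=Z(E_1)$ pointwise, $R_B$ acts on $B$ only through translations by $Z$; hence $R_B^B=Z^B$, a regular group, which is $2$-closed by \cite[Theorem 5.12]{Wielandt}. Lemma~\ref{lem:imprim}(b) then gives $Y_B^B\le(R_B^B)^{(2),B}=R_B^B=Z^B$, so $A_B^B\unlhd Y_B^B=Z^B$ is the regular representation of a subgroup $V\le Z(Q)$ with $c=|V|$. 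For the block factor, set $\bar A:=A^{\Sigma_Z}\unlhd Y^{\Sigma_Z}$, and note $\bar E_1\times\bar E_2=R^{\Sigma_Z}\le Y^{\Sigma_Z}$. The partition of $\Sigma_Z$ into $\bar A$-orbits is invariant under $\bar E_1$ and under $\bar E_2$, both of which normalise $\bar A$, so it is a block system for the regular group $\bar E_1\cong Q/Z(Q)$; the block through the base point is then a subgroup $U$ of $Q/Z(Q)$ with $b=|U|$, and invariance of this coset partition under the right regular action $\bar E_2$ forces $U$ to be normal in $Q/Z(Q)$. Thus $a=bc=|U|\,|V|$, proving (i). I expect this block factor to be the main obstacle: showing $b$ is the order of a \emph{normal} subgroup of $Q/Z(Q)$, rather than merely a subgroup, is what forces one to recognise the bi-regular structure of $\bar E_1\times\bar E_2$ on $\Sigma_Z$ and to use that the $\bar A$-orbit block system is stable under both the left and the right regular actions of $Q/Z(Q)$.

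For (ii), assume $a=p^k$. Since $F(G)=1$, the nilpotent normal subgroup $F(X)$ meets $G$ trivially, so $F(X)N/N\unlhd X/N\cong G$ is trivial and $F(X)\le N\le B_A=A^s$. For each coordinate projection $\nu_i\colon B_A\to A$, the restriction $\nu_i|_N$ is surjective because $N$ is subdirect, so $\nu_i(F(X))\unlhd A$ is nilpotent and hence lies in $F(A)$. For a prime $q\ne p$, the Sylow $q$-subgroup of $F(A)$ is characteristic in $F(A)\unlhd A$, hence normal in $A$, and so has orbits on $\Delta$ of length dividing both a power of $q$ and $a=p^k$; thus it acts trivially on $\Delta$, and as $A$ is faithful on $\Delta$ it is trivial. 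Therefore $F(A)$, and hence each $\nu_i(F(X))$, is a $p$-group, and since $F(X)$ embeds in $\prod_i\nu_i(F(X))$ it is a $p$-group as well.
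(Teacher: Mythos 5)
Your proof is correct, and for part (i) it follows a genuinely different route from the paper's. The paper works \emph{upstairs} in $M$: it takes $K$ to be the setwise stabiliser in $M$ of the $A$-orbit $\omega^A$ (so $|K:H|=a$), passes to $\ol{M}=M/(H_1\times H_2)\cong E\times E$ where $\ol{H}$ is a full diagonal subgroup, shows that $\widehat{K_1}:=\ol{K}\cap(E\times 1)$ is a normal subgroup of $E$ complementing $\ol{H}$ in $\ol{K}$, hence of order exactly $a$, and then simply sets $V=\widehat{K_1}\cap Z(E)$ and $U=\widehat{K_1}Z(E)/Z(E)$. You instead work \emph{downstairs} in $\Sym(\Delta)$: you use Lemma~\ref{RegularLemma} to identify $R=M^\Delta$ as a central product of two regular copies of $E$, split $a=bc$ along the $Z$-orbit partition $\Sigma_Z$, and pin down each factor separately — the within-block factor via $2$-closure of regular groups (\cite[Theorem 5.12]{Wielandt} together with Lemma~\ref{lem:imprim}(b)), and the block factor via bi-invariance of the $\ol{A}$-orbit partition under the left and right regular representations of $Q/Z(Q)$. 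The paper's argument is shorter and in fact yields the slightly stronger statement that $a$ is the order of a single normal subgroup of $E=M_1/H_1$; your argument is longer but identifies $U$ and $V$ concretely as permutation groups ($V\cong A_B^B$ and $U$ the block of the $\ol{A}$-orbit system through the base point), and correctly isolates the one delicate point, namely why the block factor is the order of a \emph{normal} subgroup of $Q/Z(Q)$. For part (ii) your argument is essentially the paper's: the paper packages the key step as Lemma~\ref{lem:subdirect}(ii), whereas you prove it inline by the same orbit-length divisibility argument applied to the Sylow subgroups of $F(A)$.

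Two small imprecisions, neither of which damages the proof. First, $R^{\Sigma_Z}$ is a central product $\ol{E_1}\circ\ol{E_2}$ rather than a direct product whenever $Q/Z(Q)$ has nontrivial centre (a left translation by an element whose image is central in $Q/Z(Q)$ can agree on $\Sigma_Z$ with a right translation); you only ever use the two images $\ol{E_1}$ and $\ol{E_2}$ separately, so nothing breaks, but the displayed equality $\ol{E_1}\times\ol{E_2}=R^{\Sigma_Z}$ should be weakened to $\langle\ol{E_1},\ol{E_2}\rangle=R^{\Sigma_Z}$. Second, when $Z(Q)=1$ or $Q$ is abelian the partition $\Sigma_Z$ is trivial and Lemma~\ref{lem:imprim}(b) does not formally apply; in those degenerate cases one of $b,c$ equals $1$ and the corresponding half of the argument should simply be omitted.
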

	
	Recalling from Hypothesis~\ref{hyp:dir}(e) that $M_1/H_1\cong M_2/H_2$, we see that in the decomposition  $a=|U|\cdot|V|$ in part (i), each of $U, V$ is (isomorphic to) a normal section of $M_1/H_1\cong M_2/H_2$.

	\begin{proof}
		\quad (i) By Notation~\ref{not:dirmain}(e), $A\unlhd Y=X_\Delta^{\Delta}$, so the set 
		of $A$-orbits in $\Delta$ is a $Y$-invariant partition of $\Delta$ with parts of size $a>1$, and since $M^\Delta=R\leq Y$ (Notation~\ref{not:dirmain}(a)) this partition is also $M^\Delta$-invariant. By Notation~\ref{not:dirmain}(a), $H$ is the stabiliser in $M$ of a point $\omega\in\Delta$, and so the setwise stabiliser $K$ in $M$ of the $A$-orbit $\omega^A$ satisfies $H<K\leq M$ and $|K:H|=a$. 
		
		Recall the definitions of the $M_i, H_i$ in Hypothesis~\ref{hyp:dir}(e) and in particular that $M_1/H_1\cong M_2/H_2\cong E$, say. Now $S:=H_1\times H_2\unlhd M=M_1\times M_2$ and $S<H<M$, so the permutation groups induced on $\Delta$ by $M$ and by $\overline{M}:= M/S\cong 
		(M_1/H_1)\times (M_2/H_2)\cong E\times E$ are the same. In the latter group, $\overline{H}=H/S$ is the stabiliser of $\omega$ and $\overline{K}=K/S$ is the setwise stabiliser of $\omega^A$. In particular $a=|\overline{K}:\overline{H}|$. Since $H$ is a subdirect subgroup of $M=M_1\times M_2$ it follows that $\overline{H}$  is a subdirect subgroup of 
		$\overline{M}= E\times E$, and moreover from the definition of $S$ it follows that 
		$\overline{H}$ meets each direct factor of $\overline{M}= E\times E$ trivially, so $\overline{H}$ is a full diagonal subgroup of $E\times E$ and we have $\overline{H}=\{(z,z^\theta)| z\in E\}$ for some $\theta\in\Aut(E)$ and $\overline{H}\cap(E\times 1)=1$. 
		
		For $i=1,2$, let $\nu_i:\overline{M}\to E$ denote the natural projection map onto the $i^{th}$ direct factor. Since $\overline{H}$ is a subdirect subgroup, 
		$\nu_i(\overline{H})=E$ and hence also $\nu_i(\overline{K})=E$, for each $i$. Let 
		$\widehat{K_1}:= \overline{K}\cap \Ker(\nu_2)$, and note that $\Ker(\nu_2)=E\times 1$. Then $\widehat{K_1}\unlhd \overline{K}$ 
		and $\overline{K}/\widehat{K_1}\cong \nu_2(\overline{K})= E$.  Also $\widehat{K_1}\cap \overline{H}=\overline{H}\cap 
		\Ker(\nu_2)=1$. It follows that $\overline{K}=\widehat{K_1}\rtimes \overline{H}$, 
		and hence $|\widehat{K_1}|=|\overline{K}:\overline{H}|=|K:H|=a$.  Now $E\cong M_1/H_1$ has 
		normal subgroups $\widehat{K_1}$ and $Z(E)$, and hence also $\widehat{K_1}\cap Z(E)
		\unlhd E$. Thus $a=d_1d_2$, where 
		$d_1= |\widehat{K_1}/(\widehat{K_1}\cap Z(E))|$ and $d_2=|\widehat{K_1}\cap Z(E)|$. 
		Part (i) follows on noting that $V:= \widehat{K_1}\cap Z(E)$ is a normal subgroup of 
		$Z(E)$, and $U:=(\widehat{K_1}Z(E))/Z(E)\cong \widehat{K_1}/(\widehat{K_1}\cap Z(E))$ is a normal subgroup of $E/Z(E)$.
		
		\vspace{0.25cm}
		\noindent (ii) Suppose now that $a$ is a power of a prime $p$. Since $X=N\rtimes G$ with $N$ a subdirect subgroup of the base group $B_A=A^s$ (Notation~\ref{not:dirmain}(e)), and since $G$ is a direct product of nonabelian simple groups, it follows that $F(X)=F(N)\leq F(A)^s$. 
		Now $A$ is a permutation group on $\Delta$ with all orbits of length $a$ (a power of $p$), and hence $A$ is a subdirect subgroup of $X_1\times\dots\times X_t$ with the $X_i$ satisfying the conditions of Lemma~\ref{lem:subdirect}(ii). Hence $F(A)$ is a $p$-group, and therefore also $F(X)$ is a $p$-group. 
	\end{proof}


	\begin{remark}\label{rem:explain}{\rm 
			We make a preliminary remark about how we may exploit Lemma \ref{lem:divisora}(ii) in the proof of Theorem \ref{thm:classification}: suppose that, for a fixed pair $(T_1,M_1)$, we have used Lemmas \ref{lem:factora} and \ref{lem:divisora} to show that $a(G,M,H)$ is a power of a prime $p$, and hence, by Lemma \ref{lem:divisora}(ii) that $F(X)$ is a $p$-group. Suppose that we then consider a pair $(T_j,\mu_j(H))$ where $2\le j\le r$. By changing the labelling, we can replace $(T_1,M_1)$ by $(T_j,\mu_j(H))$, but in doing this we change the overgroup $M$ of $H$, say to $\wt{M}$. It may be possible to use Lemmas \ref{lem:factora} and \ref{lem:divisora} to show that $a(G,\wt{M},H)$ is a power of a prime $s$, with $s\neq p$, whence by Lemma \ref{lem:divisora}(ii), $F(X)$ is an $s$-group, implying that $F(X)=1$. 
			This strategy proves to be very useful. We emphasise that, when we change the labelling, even though the subgroup $M$ may change, the group $X=G^{(2),\Omega}$ and its subgroup $H$ do not change. }
	\end{remark}

	We are now ready to prove Theorem \ref{thm:classification}.
	
	\begin{proof}[Proof of Theorem $\ref{thm:classification}$]
		Given our discussion at the beginning of Section~\ref{DirectProductsSection}, 
		to complete the proof of Theorem \ref{thm:classification}, it is sufficient to prove that each of the groups $G=\prod_{i=1}^rT_i$ satisfying (i) or (ii) of Theorem $\ref{thm:classification}$ is totally $2$-closed.
		
		In pursuit of a contradiction, we assume that one of these groups $G$ is not totally $2$-closed. We then fix $G, H, \Omega, \Sigma, M, R, L, \Delta, s_1,s_2, \rho_{(i,j),(k,\ell)}$, etc., and also $X:=G^{(2), \Omega}$ and $Y=X^{\Delta}_{\Delta}$,  as in
		Notation~$\ref{not:dirmain}$.
		Recall that  $L=L^{(2),\Sigma}$, and also that $X=N\rtimes G$ with $N$ a subdirect subgroup of a base group $B_A=A^s$ for some nontrivial $A\unlhd Y$, so all $A$-orbits in $\Delta$ have the same length $a=a(G,M,H)$. As in the statement of Lemma \ref{lem:factora}, we extend the notation $g(T,M)$ from Lemma \ref{Base2Lemma} to all pairs $(T,M)$ with $T$ a finite simple group and $M< T$, by defining $g(T,M):=1$ if $(T,M)$ is not in Table \ref{tab:basesize}, and  of course $g(T,M)$ is equal to the appropriate entry in the third column of Table \ref{tab:basesize}  if $(T,M)$ does occur. By Lemma~\ref{lem:factora}, $a=a(G,M,H)$ divides 
		$\prod_{i=1}^rg(T_i,\mu_i(H))$.
		
		Relabelling the $T_i$ if necessary, we may without loss of generality assume, by Lemma \ref{lem:Step2}, that the pair $(T_1,\mu_1(H))$ lies in Table \ref{tab:basesize}, and by Lemma \ref{NoPairs}, that $(T_i,\mu_i(H))\neq (\mathbb{M}, 2.\mathbb{B})$ for any $i$. We consider each of the other pairs in Table \ref{tab:basesize}, and show that none of them has the conditions required for $(T_1,\mu_1(H))$. Note that $H$ is a subdirect subgroup of $M=M_1\times M_2$ and that $H_i\unlhd M_i$ for each $i$ (Hypothesis~\ref{hyp:dir}(e)); moreover, by Lemmas \ref{ProductAction}(i) and \ref{lem:subdirect}(i), $M_1/H_1$ is non-trivial.  After some general comments in the next paragraph, we subdivide our arguments according to the structure of $M_1/H_1$, recalling that $M_1=\mu_1(H)>H_1$.
		
		For each  $i\leq r$, since $(T_i,\mu_i(H))$ is not $(\mathbb{M}, 2.\mathbb{B})$, it follows from our comments in the second paragraph of the proof, and from Table \ref{tab:basesize}, that $g(T_i, \mu_i(H))$ has the form $2^{k}3^{\ell}5^m7^n$. Hence by Lemma \ref{lem:factora}, $a$ is a $\{2,3,5,7\}$-number, and then by Lemma \ref{lem:divisora}(i),  $a=d_1d_2$ where $d_1, d_2$ is the order of a normal subgroup of $(M_1/H_1)/Z(M_1/H_1)$ or $Z(M_1/H_1)$ respectively. In particular, since $a>1$, at least one of $(M_1/H_1)/Z(M_1/H_1)$ or $Z(M_1/H_1)$ has a nontrivial normal $\{2,3,5,7\}$-subgroup, say $S$, with $|S|$ dividing $a$.  Furthermore, if $|S|$ has order divisible by $7$ then $7$ divides $a$ and hence, by Lemma~\ref{lem:factora}, $7$ divides $g(T_i, \mu_i(H))$ for some $i$, and this in turn implies, by Table \ref{tab:basesize} that the pair $(T_i,\mu_i(H))=(\mathrm{J}_4,2^{10}.\mathrm{L}_5(2))$.
		
		\vspace{0.25cm}
		\noindent\textit{Claim $1$. $M_1/H_1$ is not almost simple.}\quad Suppose to the contrary that $M_1/H_1$ is almost simple. Inspecting the almost simple quotients $M_1/H_1$ of the groups in the second column of Table \ref{tab:basesize}, we see that the only examples with socle a  $\{2,3,5,7\}$-group have socle $\mathrm{L}_3(4)$, and that $M_1/H_1= \mathrm{L}_3(4)$ or $\mathrm{L}_3(4).2$. In particular $M_1/H_1$ has trivial centre, and so by Lemma~\ref{lem:divisora}(i),  $a = \delta|\mathrm{L}_3(4)|$ with $\delta=1$ or $2$. 
		Since $|\mathrm{L}_3(4)|$ has order divisible by $7$, it follows from the previous paragraph that $(T_i,\mu_i(H))=(\mathrm{J}_4,2^{10}.\mathrm{L}_5(2))$ for some $i\geq 2$. Interchanging $T_1$ and $T_i$ as discussed in Remark~\ref{rem1}, we obtain a new group $\wt{M}=\wt{M}_1\times \wt{M}_2$ with $\wt{M}_1=2^{10}.\mathrm{L}_5(2)$. Lemma~\ref{lem:divisora}(i) now implies that $a = \delta|\mathrm{L}_3(4)|$ is equal to a product of the orders of some composition factors of $\wt{M}_1$, which is a contradiction. This proves Claim 1.

		\vspace{0.25cm}
		\noindent\textit{Claim $2$. $|M_1/H_1|>2$.}\quad Suppose to the contrary that $M_1/H_1\cong C_2$, and 
		recall that $M_2/H_2\cong M_1/H_1$. Thus (see Hypothesis~\ref{hyp:dir} and Lemma~\ref{ProductAction}(i)) $H_1\times H_2\leq H<M=M_1\times M_2$ with 
		$M/(H_1\times H_2)\cong C_2^2$, and $\pi_1(H)=M_1$, $\pi_2(H)=M_2$. It follows that $H$ is the unique index $2$ subgroup of $M$ containing  $H_1\times H_2$ and projecting onto each $M_i$, and in particular $|\Delta|=|M:H|=2$. Moreover, for $i=1, 2$, $G_i$ acts faithfully and imprimitively on $[G_i:H_i]$ preserving a nontrivial partition corresponding to $[G_i:M_i]$ with blocks of size $2=|M_i/H_i|$. Thus the conditions of Notation~\ref{not2} hold for $G_i$. Moreover $G_i$ is totally $2$-closed by Hypothesis~\ref{hyp:dir}(e), and in particular $G_i^{[G_i:H_i]}$ is $2$-closed, and also $G_i^{[G_i:M_i]}$ is $2$-closed (since $M_i$ is core-free in $G_i$ by Lemma~\ref{ProductAction}(i)). Thus the conditions of Proposition~\ref{MainCor1}(1) hold for $G_i$,   and it follows that there exists $k_i$ with $2\le k_i\le s_i$ such that $(M_i\cap M_i^{(k_i)}) H_i\ne M_i$, and hence $M_i\cap M_i^{(k_i)}\le H_i$ (since $|M_i:H_i|=2$). It follows that $M_{(k_1,k_2)}=(M_1\cap M_1^{(k_1)})\times (M_2\cap M_2^{(k_2)})\le H_1\times H_2\le H$, and this implies that $M_{(k_1,k_2)}H=H\neq M$. Since $|\Delta|=2$, this contradicts Proposition \ref{MainCor1}(1) applied to $G$, and Claim 2 is proved. 
		
		
		\vspace{0.25cm}
		In the rest of the proof we deal in turn with each of the remaining possibilities $(T,M)$ in Table~\ref{tab:basesize} for $(T_1, M_1)$. Note that, by Claims 1 and 2, and in the light of Remark~\ref{rem:explain},  if  $(T_i, \mu_i(H))$ occurs in  Table~\ref{tab:basesize}, then $|\mu_i(H)/(\widehat{T}_i\cap H)|>2$ and $\mu_i(H)/(\widehat{T}_i\cap H)$ is not almost simple; and in particular $T_i\ne \mathrm{J}_1$ or $\mathrm{J}_3$, and we have already seen that $T_i\ne\mathbb{M}$.

		\vspace{0.25cm}
		\noindent\textit{Case: $(T_1,M_1)$ with $T_1=\mathrm{Ly}$ and $M_1\in\{G_2(5), 3.\mathrm{McL}, 3.\mathrm{McL}:2\}$.}\quad By the comments above and Table~\ref{tab:basesize},  it follows that $M_1/H_1$ is $3.\mathrm{McL}$ or $3.\mathrm{McL}:2$. Since  $a$ is a $\{2,3,5,7\}$-number and is a product of the orders of some composition factors of $M_1/H_1$ it follows that $a=3$ so $A\le \Sym(3)$. Hence,  by Lemma \ref{lem:divisora}(ii), $F(X)$ is a $3$-group.  Moreover, since $N\unlhd X$ and $N\leq B_A=A^s$ it follows that $F(X)\ne 1$.
		
		Next we note, by Lemma \ref{ProductAction}(ii), that $\mu_k(H)$ has a composition factor isomorphic to $\mathrm{McL}$, for some $k\geq 2$. However the only finite simple group in $\{\mathrm{J}_1,\mathrm{J}_3,\mathrm{J}_4,\mathrm{Th},\mathrm{Ly},\mathbb{M}\}$, apart from $\mathrm{Ly}$, with $\mathrm{McL}$ as a section is $\mathbb{M}$ (see \cite{Atlas}), so $T_k=\mathbb{M}$. Moreover,  inspecting the maximal subgroups of $\mathbb{M}$ shows that the only one with $\mathrm{McL}$ as a section is $2^{1+24}.\mathrm{Co}_1$, and hence  $\mu_k(H)$ is contained in a maximal subgroup $2^{1+24}.\mathrm{Co}_1$ of $\mathbb{M}$. Further, inspecting the subgroups of $\mathrm{Co}_1$ we find that there are exactly two conjugacy classes of subgroups with $\mathrm{McL}$ as a composition factor, and both are second maximal subgroups of $\mathrm{Co}_1$: the classes are represented by the subgroups $\mathrm{McL}<\mathrm{Co}_2<\mathrm{Co}_1$ and $\mathrm{McL}:2<\mathrm{Co}_3<\mathrm{Co}_1$. Thus $\mu_k(H)$ has shape $I.S$, where $I$ is a $2$-group, and $S\in\{\mathrm{McL},\mathrm{McL}:2\}$. 
		If we now relabel the $T_i$ as discussed in Remark \ref{rem:explain} so that $T_1=\mathbb{M}$ and $M_1=I.S$, then by Lemma \ref{lem:divisora}, $a=3$ is a product of the orders of some composition factors of (the new) $M_1/H_1$, which is not the case. This contradiction completes  our analysis of all cases with $T_1=\mathrm{Ly}$.
		
		\vspace{0.25cm}
		Thus, in the light of Remark~\ref{rem:explain},  if  $(T_i, \mu_i(H))$ occurs in  Table~\ref{tab:basesize}, then $T_i\ne \mathbb{M},\mathrm{Ly}, \mathrm{J}_3, \mathrm{J}_1$, and hence $T_i$ is $\mathrm{J}_4$ or $\mathrm{Th}$. Also $\mu_i(H)/(\widehat{T}_i\cap H)$ is not almost simple, and has order at least $3$.

		\vspace{0.25cm}
		\noindent\textit{Case: $(T_1,M_1)=(\mathrm{J}_4,2^{11}:\mathrm{M}_{24})$ or $(\mathrm{J}_4,2^{10}:\mathrm{L}_5(2))$ or $(\mathrm{Th},2^{5}.\mathrm{L}_5(2))$.}\quad Suppose first that $T_1=\mathrm{J}_4$, so $M_1=O_2(M_1).S$ with $S=\mathrm{M}_{24}$ or $\mathrm{L}_5(2)$. Then $g(T_1,M_1)$ divides $2^4.3^2.5.7$ by Table~\ref{tab:basesize}.
		Now, for each $i\geq2$, if $(T_i, \mu_i(H))$ does not occur in  Table~\ref{tab:basesize}, then $g(T_i,\mu_i(H))=1$, while if it does occur, then, as we noted above, $T_i=\mathrm{Th}$ (since $T_1$ is the only occurrence of $\mathrm{J}_4$), and so by  Table~\ref{tab:basesize}, $g(T_i,\mu_i(H))$ divides $6$. Thus  
		by Lemma \ref{lem:factora} the integer $a$ divides $2^5.3^3.5.7$. However, since $M_1=O_2(M_1).S$ acts irreducibly on the elementary abelian group $O_2(M_1)$, it follows from Lemma \ref{lem:divisora}(i) that $a$ is divisible by $|O_2(M_1)|$ or $|S|$, which is a contradiction. Thus $(T_i,\mu_i(H))$ is not $(\mathrm{J}_4,2^{11}:\mathrm{M}_{24})$ or $(\mathrm{J}_4,2^{10}:\mathrm{L}_5(2))$ for any $i$.
		
		In the light of the results above, an analogous argument with  $T_1=\mathrm{Th}$ and $M_1=O_2(M_1).\mathrm{L}_5(2)=2^5.\mathrm{L}_5(2)$ gives $g(T_1,M_1)=1$ from  Table~\ref{tab:basesize}, and for each $i\geq2$, if $g(T_i,\mu_i(H))>1$ then, by  Table~\ref{tab:basesize}, $g(T_i,\mu_i(H))=2$ and $T_i=\mathrm{J}_4$ (and there is at most one such $i$). Hence by Lemma \ref{lem:factora}, the integer $a$ divides $2$, so $a=2$. However, since $M_1$ acts irreducibly on $O_2(M_1)$, it follows from Lemma \ref{lem:divisora}(i) that $a$ is divisible by $|\mathrm{L}_5(2)|$ or $2^5$, which is a contradiction. Thus, for each $i\geq 1$, $(T_i,\mu_i(H))$ is not one of the three pairs in this case.

		\vspace{0.25cm}
		\noindent\textit{Case: $(T_1,M_1)=(\mathrm{Th},{}^3D_4(2))$ or $(\mathrm{Th},{}^3D_4(2).3)$.}\quad By the previous case and Claim 1, the only possibility is $M_1={}^3D_4(2).3$ and $H_1={}^3D_4(2)$.  Thus (see Hypothesis~\ref{hyp:dir} and Lemma~\ref{ProductAction}(i)),  $M_1/H_1\cong M_2/H_2\cong C_3$ and $H_1\times H_2\leq H<M=M_1\times M_2$ with 
		$M/(H_1\times H_2)\cong C_3^2$, $\pi_1(H)=M_1$, and $\pi_2(H)=M_2$. Hence $|\Delta|=|M:H|=3$, $H\unlhd M$, and also $a=3$ (see Proposition~\ref{MainCor1}(2)). 
		
		Suppose that, for some $i\geq2$,  $(T_i, \mu_i(H))$ occurs in  Table~\ref{tab:basesize}, so as observed above, $T_i=\mathrm{J}_4$ (since $T_1$ is the only occurrence of $\mathrm{Th}$). Then by the discussion in Remark~\ref{rem:explain} and Lemma~\ref{lem:divisora}(i), $a=3$ is a product of the orders of some composition factors of $\mu_i(H)/(\widehat{T}_i\cap H)$, and it follows from Table~\ref{tab:basesize} that $\mu_i(H)=2^{1+12}.3.R$ for some $R\in\{ \mathrm{M}_{22}, \mathrm{M}_{22}.2,\mathrm{L}_3(4), \mathrm{L}_3(4).2\}$.   Moreover in this case $g(T_i,\mu_i(H))=2$ by Table~\ref{tab:basesize}. Now at most one of the $T_i$ is equal to $\mathrm{J}_4$, and for all other $i\geq 2$, $T_i^{[T_i:\mu_i(H)]}$ has base size $2$.  
		
		By \cite[Lemma 2.20.1]{Ivanov95}, there exists $t_1\in T_1$ such that $M_1\cap M_1^{t_1}\cong\Sym(3)$. For each $i\geq 2$, if $T_i^{[T_i:\mu_i(H)]}$ has base size $2$, then choose $t_i\in T_i$ such that $\mu_i(H)\cap\mu_i(H)^{t_i}=1$; while if $T_i=\mathrm{J}_4$ and 
		$\mu_i(H)=2^{1+12}.3.R$ with $R\in\{ \mathrm{M}_{22}, \mathrm{M}_{22}.2,\mathrm{L}_3(4), \mathrm{L}_3(4).2\}$, then we may choose $t_i\in T_i$ such that $\mu_i(H)\cap\mu_i(H)^{t_i}$ has order dividing $16$ (see \cite[last line of Table 2]{BrayCameron}). Let $g_1=t_1\in G_1$, and $g_2=(t_2,\hdots,t_r)\in G_2$, and set $g:=(g_1,g_2)\in G$.  
		Then since $M=M_1\times M_2$, we see that $M\cap M^g=(M_1\cap M_1^{g_1})\times (M_2\cap M_2^{g_2})$. From the definition of the $t_i$ we see that $M_2\cap M_2^{g_2}$ is a $2$-group, and $M_1\cap M_1^{g_1}\cong \Sym(3)$. It follows that $M\cap M^g$ has no quotient of order $3$. Thus, $(M\cap M^g)H\neq M$, since $H\unlhd M$ and $M/H\cong C_3$. This however contradicts Proposition \ref{MainCor1}(4). 
		
		\vspace{0.25cm}
		Thus we now know that,  if  $(T_i, \mu_i(H))$ occurs in  Table~\ref{tab:basesize}, then $T_i= \mathrm{J}_4$ and $\mu_i(H)=2^{1+12}.3.R$ for some $R\in\{ \mathrm{M}_{22}, \mathrm{M}_{22}.2,\mathrm{L}_3(4), \mathrm{L}_3(4).2\}$. In particular these are the only possibilities for $(T_1, M_1)$, and we complete the proof by showing that none of these four pairs is possible.

		\vspace{0.25cm}
		\noindent\textit{Case: $(T_1,M_1)=(\mathrm{J}_4,2^{1+12}.3.R)$ with $R\in\{ \mathrm{M}_{22}, \mathrm{M}_{22}.2,\mathrm{L}_3(4), \mathrm{L}_3(4).2\}$.}\quad  In each of these cases, $g(T_1,M_1)=2$, by Table~\ref{tab:basesize}. Moreover, for each $i\geq2$,  $T_i^{[T_i:\mu_i(H)]}$ has base size $2$ and $g(T_i,\mu_i(H))=1$ (since now the only case where $(T_j, \mu_j(H))$ occurs in  Table~\ref{tab:basesize} is for $j=1$). Thus $a=2$ by Lemma \ref{lem:factora}. 
		
		We claim that $H_1=1$.  
		Note that, by Claim 2, $|M_1/H_1|>2$. Moreover, by Lemma~\ref{lem:divisora}(i), $a=|U|\cdot |V|$ for some $U\unlhd (M_1/H_1)/Z(M_1/H_1)$ and $V\leq Z(M_1/H_1)$. Since $a=2$, inspecting the possibilities for $M_1/H_1$ from the groups $M_1$ listed,  we see that the only possibility for $H_1$ in each case is $H_1=1$ (with $U=1$ and $V\cong Z(M_1)$), proving the claim.    
		
		Since $H\leq M_1\times M_2=M$, the subgroup $H$ centralises $Z(M_1)$. Consider the subgroup  $K:=HZ(M_1)$ of $M$. The condition $H_1=1$ implies that $K$ properly contains $H$. Moreover since, by Hypothesis~\ref{hyp:dir}(c), $H$ is maximal by inclusion such that $G^{[G:H]}$ is not $2$-closed, it follows that  $G^{[G:K]}$ is $2$-closed.
		Since $M_1$ is core-free in $T_1$ there exists $t_1\in T_1$ such that $M_1\cap M_1^{t_1}$ does not contain $Z(M_1)$. Also, for each $i\ge 2$,  $T_i^{[T_i:\mu_i(H)]}$ has base size $2$, and so there exists  $t_i\in T_i$ such that $\mu_i(H)\cap\mu_i(H)^{t_i}=1$. Set $g_1:=t_1$, $g_2:=(t_2,\hdots,t_r)$, and $g:=(g_1,g_2)\in G$. Then $\mu_i(K\cap K^{g})=1$ for all $i\geq 2$, and hence $K\cap K^g \le K\cap \widehat{T}_1=Z(M_1)\cong 2$. On the other hand, $\pi_1(K\cap K^g)\le \pi_1(M\cap M^g)=\pi_1(M_1\cap M_1^{t_1})=M_1\cap M_1^{t_1}$, and $M_1\cap M_1^{t_1}$ does not contain $Z(M_1)$. Thus $K\cap K^g=1$, so $G^{[G:K]}$ has base size $2$. It now follows from Proposition \ref{MainCor1}(3) that  $G^{[G:H]}$ is $2$-closed. This final contradiction completes the proof of Theorem~\ref{thm:classification}.
	\end{proof}

\section{Concluding remarks: the general case}\label{sec:concluding}
As a result of the work in this paper, the task of classifying all finite totally $2$-closed groups $G$ can now be reduced to the case where $G$ is insoluble and $F(G)>1$ (see Question 1 in Section \ref{s:graph}). We conclude the paper with some remarks about this case, and a reduction theorem.

We note first that a crucial step in our classification was the essential reduction to classifying those finite groups (with trivial Fitting subgroup) which are $2$-closed in all of their transitive permutation representations (see Proposition \ref{PreTransRed}). 
Although we weren't able to reduce our study of arbitrary finite totally $2$-closed groups to transitive actions, the
following theorem (and subsequent corollary) significantly reduces the types of faithful actions that need to be examined. In order to state it, we need some additional terminology: Permutation representations of a group $G$ on sets $\Gamma$ and $\Gamma'$ are said to be \emph{permutationally equivalent} if there exists a bijection $f:\Gamma\to \Gamma'$ such that, for all $\gamma\in\Gamma$ and all $g\in G$, the image $(\gamma^g)f=(\gamma f)^g$. Note that $f$ induces a bijection $\Gamma\times\Gamma\to\Gamma'\times\Gamma'$, namely $f:(\alpha,\beta)\to (\alpha f,\beta f)$, and that this induced map sends $G$-orbits in $\Gamma\times\Gamma$ to $G$-orbits in $\Gamma'\times\Gamma'$.

Our general reduction theorem can now be stated as follows.
\begin{theorem}\label{thm:permeq}
Let $G\leq \Sym(\Omega)$, and suppose that $\Gamma, \Gamma'$ are distinct $G$-orbits on which the $G$-actions are permutationally equivalent relative to a bijection $f:\Gamma\to\Gamma'$. Let $\Omega':=\Omega\setminus\Gamma$, so $\Omega=\Omega'\cup\Gamma$. Then
\begin{enumerate}
\item[\upshape(a)] both $G$ and  $G^{(2),\Omega}$ act faithfully on $\Omega'$, and
\item[\upshape(b)] the restriction  $G^{(2),\Omega}|_{\Omega'} = G^{(2),\Omega'}$. 
\item[\upshape(c)] If $G$ is finite, then  $G=G^{(2),\Omega}$ if and only if $G^{\Omega'}
=G^{(2),\Omega'}$.
\end{enumerate}
\end{theorem}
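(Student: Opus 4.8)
\textbf{Proof plan for Theorem~\ref{thm:permeq}.}

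The plan is to exploit Wielandt's criterion (Theorem~\ref{thm:W}) together with the permutational equivalence $f\colon\Gamma\to\Gamma'$, which identifies the $G$-orbits on $\Gamma\times\Gamma$, $\Gamma'\times\Gamma'$, $\Gamma\times\Gamma'$, $\Gamma'\times\Gamma$ with the $G$-orbits on $\Gamma\times\Gamma$ in a way compatible with both projections. First I would record the elementary observation underlying (a): since $\Gamma'\subseteq\Omega'$ and $G$ acts on $\Gamma'$ in a way permutationally equivalent to its action on $\Gamma$, the kernel of the $G$-action on $\Omega'$ already kills $\Gamma'$, hence (being transported by $f^{-1}$) kills $\Gamma$, hence kills all of $\Omega$; as $G\le\Sym(\Omega)$ is faithful this kernel is trivial, so $G^{\Omega'}$ is faithful. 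The same argument applies verbatim to any subgroup of $\Sym(\Omega)$ containing $G$ and preserving the partition $\Omega=\Omega'\cup\Gamma$ into $G$-orbits, and $X:=G^{(2),\Omega}$ is such a subgroup (it contains $G$ and permutes the $G$-orbits among themselves, indeed it fixes each of them since it fixes each $G$-orbit on $\Omega\times\Omega$, in particular each diagonal-type orbit $\{(\alpha,\alpha):\alpha\in\Gamma\}$). Thus $X$ acts faithfully on $\Omega'$ as well, giving (a), and we may regard $X|_{\Omega'}$ as a subgroup of $\Sym(\Omega')$ containing $G^{\Omega'}$.

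For part (b) the containment $X|_{\Omega'}\le G^{(2),\Omega'}$ is immediate: $X$ preserves every $G$-orbit on $\Omega\times\Omega$, in particular every $G$-orbit contained in $\Omega'\times\Omega'$, so its restriction to $\Omega'$ preserves every $G^{\Omega'}$-orbit on $\Omega'\times\Omega'$. The substantive direction is the reverse inclusion $G^{(2),\Omega'}\le X|_{\Omega'}$, equivalently: every $y\in G^{(2),\Omega'}$ extends to an element of $X=G^{(2),\Omega}$. Here I would use the faithful identification $y\leftrightarrow \hat y$, where $\hat y\in\Sym(\Omega)$ acts as $y$ on $\Omega'$ and acts on $\Gamma$ by transport through $f$, i.e. $\hat y|_{\Gamma}=f\circ(y|_{\Gamma'})\circ f^{-1}$ (this is well-defined since $y$ preserves the $G$-orbit $\Gamma'$, as $G^{(2),\Omega'}$ preserves $G^{\Omega'}$-orbits). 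One then verifies $\hat y\in G^{(2),\Omega}$ via Theorem~\ref{thm:W}: given $\alpha,\beta\in\Omega$, one must produce $g\in G$ with $\alpha^{\hat y}=\alpha^g$, $\beta^{\hat y}=\beta^g$. The key point is that pairs with a coordinate in $\Gamma$ can be replaced, using $f^{-1}$, by pairs with that coordinate moved into $\Gamma'\subseteq\Omega'$, in a $G$-equivariant way, because the permutational equivalence says precisely that the bijection $\Omega\to\Omega'$ sending $\Gamma$ to $\Gamma'$ via $f$ and fixing $\Omega'$ is compatible with the $G$-action and hence with the $\hat y$-action on the relevant coordinates. After this replacement, both coordinates lie in $\Omega'$, and $y\in G^{(2),\Omega'}$ supplies the required $g\in G$ by Theorem~\ref{thm:W} applied to the $G$-action on $\Omega'$; the same $g$ then works for the original pair by equivariance of $f$. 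Thus $\hat y\in X$ and $\hat y|_{\Omega'}=y$, proving (b).

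Part (c) is then a formal consequence of (a) and (b) for finite $G$: by (a), $|G|=|G^{\Omega'}|$ and $|X|=|X|_{\Omega'}|$, and by (b), $X|_{\Omega'}=G^{(2),\Omega'}$; hence $G=X$ (i.e. $G=G^{(2),\Omega}$) if and only if $|G|=|X|$ if and only if $|G^{\Omega'}|=|G^{(2),\Omega'}|$, which (since $G^{\Omega'}\le G^{(2),\Omega'}$ always) is equivalent to $G^{\Omega'}=G^{(2),\Omega'}$. I expect the main obstacle to be bookkeeping in the extension argument for (b): one must treat carefully the four cases according to whether each of $\alpha,\beta$ lies in $\Gamma$ or in $\Omega'$, and check in each case that moving a $\Gamma$-coordinate to $\Gamma'$ via $f^{-1}$ genuinely preserves the value of the coordinate under $\hat y$ (this is where the precise definition of $\hat y$ on $\Gamma$, and the $G$-equivariance of $f$, are used), so that Wielandt's two-point criterion on $\Omega'$ can be invoked. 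Everything else is routine.
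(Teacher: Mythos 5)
Your overall route is the same as the paper's: prove faithfulness on $\Omega'$ first, extend an element of $G^{(2),\Omega'}$ to $\Omega$ by transporting its action on $\Gamma'$ back to $\Gamma$ via $f$, verify membership in $G^{(2),\Omega}$ by a case analysis on which of the two coordinates lies in $\Gamma$, and deduce (c) formally from (a) and (b) using finiteness. Parts (b) and (c) of your plan are correct as sketched (modulo the harmless slip of writing $f^{-1}$ where you mean $f$ when moving a $\Gamma$-coordinate into $\Gamma'$), and your use of Wielandt's two-point criterion (Theorem~\ref{thm:W}) in place of the paper's explicit construction of the transported orbits $\Delta'$ is an equivalent packaging of the same argument.

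There is, however, a genuine gap in your proof of (a) for $X:=G^{(2),\Omega}$. You argue that the kernel of the action on $\Omega'$ kills $\Gamma'$ and hence, ``being transported by $f$,'' kills $\Gamma$, and you assert that this ``applies verbatim to any subgroup of $\Sym(\Omega)$ containing $G$ and preserving the partition $\Omega=\Omega'\cup\Gamma$.'' That assertion is false: the transport step needs $f$ to intertwine the action of the subgroup in question, not merely that the subgroup preserves the partition. For instance, the full stabiliser $\Sym(\Gamma)\times\Sym(\Omega')$ contains $G$ and preserves the partition, yet acts unfaithfully on $\Omega'$. So for $X$ you must supply an extra argument showing that $f$ is $X$-equivariant (or bypass equivariance altogether). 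Two short fixes are available: (i) observe that the graph $\{(\gamma,\gamma f):\gamma\in\Gamma\}$ is a single $G$-orbit in $\Omega\times\Omega$, hence is $X$-invariant, which forces $(\gamma f)^x=(\gamma^x)f$ for all $x\in X$; or (ii) argue as the paper does, applying Theorem~\ref{thm:W} directly to an $x\in X$ fixing $\Omega'$ pointwise and the pair $(\gamma,\gamma f)$ to produce $g\in G$ agreeing with $x$ on both points, whence $g$ fixes $\gamma f$, so by $G$-equivariance of $f$ it fixes $\gamma$, and therefore so does $x$. With either repair your proof goes through.
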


\begin{proof}
(a) Note first that since $G^{(2),\Omega}$ has the same orbits as $G$ in $\Omega$, the set $\Omega'$ is $G^{(2),\Omega}$-invariant. Also, $G\leq G^{(2),\Omega}$. To prove (a), it is therefore sufficient to show that $G^{(2),\Omega}$ is faithful on $\Omega'$. So assume that $x\in G^{(2),\Omega}$ such that $x$ fixes $\Omega'$ pointwise, and let $\gamma\in\Gamma$. Then $\gamma f\in \Gamma'\subseteq \Omega'$, so $(\gamma f)^x=\gamma f$. 
By Theorem~\ref{thm:W}, there exists $g\in G$ such that $\gamma^x=\gamma^g$ and $(\gamma f)^x=(\gamma f)^g$. Thus $\gamma f=(\gamma f)^g$, and by the defining property of $f$, $(\gamma f)^g=(\gamma^g)f$. This implies that $\gamma f=(\gamma^g)f = (\gamma^x)f$, and since $f$ is a bijection we deduce that $\gamma = \gamma^x$. It follows that $x$ fixes $\Gamma$ pointwise, and hence that $x=1$. This proves part (a).

(b) Let $g\in G^{(2),\Omega}$. Then $g$ leaves invariant each $G$-orbit in $\Omega\times \Omega$. In particular, $g^{\Omega'}$ leaves invariant each $G$-orbit in $\Omega'\times \Omega'$ and hence (see \cite[Remark 4.2]{W}) $g^{\Omega'}\in G^{(2),\Omega'}$. Thus 
 $G^{(2),\Omega}|_{\Omega'} \leq  G^{(2),\Omega'}$.
 To prove the reverse inclusion let $x\in G^{(2),\Omega'}$. We extend the definition of $x$ to a permutation $y\in\Sym(\Omega)$ by defining $\gamma^y=\gamma^x$ if $\gamma\in\Omega'$, and  $\gamma^y=((\gamma f)^x)f^{-1}$ if $\gamma\in\Gamma$. Since $f: \Gamma\to\Gamma'$ is a bijection, $y$ is a permutation of $\Omega$. 
 
 We claim that $y\in  G^{(2),\Omega}$. Note that this claim implies that each element of $G^{(2),\Omega'}$ lies in
 $G^{(2),\Omega}|_{\Omega'}$, and hence a proof of the claim completes the proof of part (b). By definition, $y$ leaves invariant all the $G$-orbits in $\Omega'\times \Omega'$, so we need to consider $G$-orbits $\Delta$ contained in $\Gamma_1\times \Gamma_2$ where the $\Gamma_i$ are $G$-orbits in $\Omega$ and at least one is equal to $\Gamma$. 
 
Suppose first that $\Gamma_1=\Gamma_2=\Gamma$ and define $\Delta'=\{(\alpha f, \beta f) \mid (\alpha,\beta)\in\Delta\}$.   To see that $\Delta'$ is $G$-invariant let $g\in G$ and 
$(\alpha f, \beta f)\in\Delta'$. Then, using the defining property of $f$,
\[
(\alpha f, \beta f)^g = ((\alpha f)^g, (\beta f)^g) = (\alpha^g f, \beta^g f)
\]
which lies in $\Delta'$ since $\Delta$ is $G$-invariant. A similar argument shows that $\Delta'$ is a $G$-orbit in $\Gamma'\times\Gamma'\subseteq \Omega'\times\Omega'$. 
Thus $y$ leaves $\Delta'$ invariant (since $y^{\Omega'}=x\in G^{(2),\Omega'}$). 
Now take an arbitrary element $(\alpha,\beta)\in \Delta$. Then by the definition of $y$, $(\alpha,\beta)^y=((\alpha f)^xf^{-1}, (\beta f)^xf^{-1})$ and we have to prove that this lies in $\Delta$. 
By the definition of $\Delta'$, $(\alpha f,\beta f)\in\Delta'$, and since $y$ leaves $\Delta'$ invariant and $y^{\Omega'}=x$, $\Delta'$ contains $(\alpha f,\beta f)^y=((\alpha f)^x,(\beta f)^x)$. 
Applying Theorem~\ref{thm:W} to $x\in G^{(2),\Omega'}$, we see that there exists $h\in G^{\Omega'}$ such
that  $(\alpha f)^x = (\alpha f)^h$ and $(\beta f)^x=(\beta f)^h$, and by part (a), there exists a unique element $g\in G$ such that $g^{\Omega'}=h$, and so 
$(\alpha f)^x = (\alpha f)^g$ and $(\beta f)^x=(\beta f)^g$.
Thus we have
\[
(\alpha f)^xf^{-1} = (\alpha f)^g f^{-1} = (\alpha^g)f f^{-1}=\alpha^g,
\]
and similarly $(\beta f)^xf^{-1}=\beta^g$. Thus  
\[
(\alpha,\beta)^y=((\alpha f)^xf^{-1}, (\beta f)^xf^{-1})=(\alpha^g,\beta^g)=(\alpha,\beta)^g 
\]
which lies in $\Delta$ since $\Delta$ is $G$-invariant. It follows that $y$ leaves invariant each $G$-orbit $\Delta\subseteq \Gamma\times\Gamma$.

To deal with the remaining $G$-orbits we assume that $\Gamma_1\ne \Gamma_2=\Gamma$ (since the case 
 $\Gamma_1=\Gamma\ne \Gamma_2$ is similar). This time we define $\Delta'=\{(\alpha, \beta f) \mid (\alpha,\beta)\in\Delta\}$. A similar argument shows that $\Delta'$ is a $G$-orbit in $\Omega'\times\Omega'$ and hence is left invariant by $y^{\Omega'}=x$. For  an arbitrary pair $(\alpha,\beta)\in \Delta$, we have $(\alpha,\beta)^y=(\alpha^x, (\beta f)^xf^{-1})$ and we have to prove that this lies in $\Delta$. 
 By  Theorem~\ref{thm:W} applied to $x\in G^{(2),\Omega'}$, there exists $g\in G$ such
that  $\alpha^x = \alpha^g$ and $(\beta f)^x=(\beta f)^g$, and by the property of $f$, 
$(\beta f)^g=(\beta^g)f$. Thus
\[
(\alpha,\beta)^y=(\alpha^x, (\beta f)^xf^{-1})=(\alpha^g, (\beta^g)ff^{-1}) = (\alpha^g, \beta^g)=(\alpha, \beta
)^g\in \Delta,
\]
since $\Delta$ is $G$-invariant. Thus $y$ leaves $\Delta$ invariant in this case also. It follows that $y$ leaves invariant each $G$-orbit in $\Omega\times\Omega$ and hence $y\in  G^{(2),\Omega}$. Part (b) follows.

(c) Now assume that $G$ is finite. Suppose that $G=G^{(2),\Omega}$. By part (a), 
$G\cong G^{\Omega'}$ and  $G^{(2),\Omega}\cong G^{(2),\Omega}|_{\Omega'}$, and by part (b) the latter group is  $G^{(2),\Omega'}$. Thus, $G^{\Omega'}\cong G^{(2),\Omega'}$, and since $G$ is finite it follows that $G^{\Omega'}=G^{(2),\Omega'}$.
Conversely suppose that $G^{\Omega'}=G^{(2),\Omega'}$. By part (b), $G^{\Omega'}=G^{(2),\Omega}|_{\Omega'}$, and by part (a),  we have $G\cong G^{(2),\Omega}$, and since $G$ is finite equality follows: $G= G^{(2),\Omega}$.
\end{proof}

\begin{corollary}\label{cor:permeq}
Let $G$ be a finite group. Then $G$ is totally $2$-closed if and only if $G= G^{(2),\Omega}$, for each faithful $G$-action on a set $\Omega$ such that, for distinct $G$-orbits $\Gamma, \Gamma'$ in $\Omega$, the $G$-actions on $\Gamma, \Gamma'$ are not permutationally equivalent.
\end{corollary}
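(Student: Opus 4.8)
The plan is to deduce Corollary~\ref{cor:permeq} directly from Theorem~\ref{thm:permeq} by an induction on the number of $G$-orbits, peeling off one member of a permutationally equivalent pair at a time. Concretely, I would argue as follows. If $G$ is totally $2$-closed, then by definition $G=G^{(2),\Omega}$ for \emph{every} faithful action, in particular for those with no repeated orbit type, so one direction is immediate. For the converse, suppose $G$ is \emph{not} totally $2$-closed; then there is a faithful action of $G$ on some finite set $\Omega$ with $G\ne G^{(2),\Omega}$. Among all such actions, choose one for which the number $m$ of $G$-orbits is minimal.

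The key step is then to show that in this minimal counterexample no two distinct $G$-orbits carry permutationally equivalent $G$-actions; this immediately contradicts the hypothesis of the corollary and finishes the proof. Suppose for contradiction that $\Gamma, \Gamma'$ are distinct $G$-orbits in $\Omega$ with the $G$-actions permutationally equivalent via a bijection $f:\Gamma\to\Gamma'$. Set $\Omega':=\Omega\setminus\Gamma$. By Theorem~\ref{thm:permeq}(a), $G$ acts faithfully on $\Omega'$, so $G^{\Omega'}$ is a faithful permutation representation of $G$ with only $m-1$ orbits. By Theorem~\ref{thm:permeq}(c), since $G\ne G^{(2),\Omega}$ we have $G^{\Omega'}\ne G^{(2),\Omega'}$, so the action on $\Omega'$ is another faithful non-$2$-closed action of $G$, but with fewer orbits than $\Omega$. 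This contradicts the minimality of $m$. Hence in the minimal counterexample the $G$-actions on any two distinct orbits are pairwise permutationally inequivalent, and yet $G\ne G^{(2),\Omega}$, contradicting the assumed property. Therefore no counterexample exists, and $G$ is totally $2$-closed.

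There is essentially no hard step here: the entire content of the corollary is packaged in Theorem~\ref{thm:permeq}, and the only thing to be careful about is the bookkeeping of the induction (ensuring the reduced action is still faithful, which is exactly part~(a), and still non-$2$-closed, which is exactly part~(c)), together with noting that the process terminates because $G$ has only finitely many orbits. One small point worth spelling out: a faithful action with a single orbit trivially has no pair of distinct orbits, so the base case $m=1$ of the induction is vacuously covered by the corollary's hypothesis, and there the corollary is asserting precisely that transitive faithful actions of $G$ must be $2$-closed — consistent with the minimal counterexample having $m\ge 1$ and the reduction strictly decreasing $m$. I would present the argument in the two-paragraph form above, invoking Theorem~\ref{thm:permeq} by its three parts and emphasising that the minimality of the orbit count is what drives the contradiction.
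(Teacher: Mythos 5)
Your proof is correct and follows essentially the same route as the paper: both reduce a faithful action with a pair of permutationally equivalent orbits to the faithful action on the complement of one of them via Theorem~\ref{thm:permeq}(a) and (c), the only cosmetic difference being that you organise the iteration as a minimal-counterexample argument on the number of orbits where the paper leaves the induction implicit.
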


\begin{proof}
By definition, $G$ is  totally $2$-closed if and only if $G= G^{(2),\Omega}$, for each faithful $G$-action on a set $\Omega$. If there exist distinct $G$-orbits $\Gamma, \Gamma'$ in $\Omega$ such that the $G$-actions on $\Gamma, \Gamma'$ are permutationally equivalent, then by Theorem~\ref{thm:permeq}, $G$ acts faithfully on $\Omega':=\Omega\setminus\Gamma$ and the $G^{(2),\Omega}$ is isomorphic to $G^{(2),\Omega'}$. Since $G$ is finite, it follows that $G^\Omega$ is $2$-closed if and only if $G^{\Omega'}$ is $2$-closed. Thus to check that $G$ is totally $2$-closed we only need to check $2$-closure for faithful actions in which the $G$-actions on distinct orbits are not permutationally equivalent. 
\end{proof}

The significance of Corollary~\ref{cor:permeq} is that in order to decide whether or not a given finite group $G$ is totally $2$-closed, we no longer have to concern ourselves with studying all faithful permutation representations of $G$ (of which there are infinitely many). We can now determine the total $2$-closure or non total $2$-closure of $G$ by deciding whether or not $G$ is $2$-closed in all of its faithful permutation representations in which no two distinct $G$-orbits admit permutationally equivalent actions.  Since the number of permutational equivalence classes of transitive $G$-actions is equal to the number $c(G)$ of $G$-conjugacy classes of subgroups, there are finitely many (at most $2^{c(G)}-1$) such representations. 

We conclude the paper by making a few remarks about how to check the total $2$-closure of a given ``small" finite insoluble group $G$ with non-trivial Fitting subgroup. One of the quickest ways to prove that such a group $G$ is \textbf{not} totally $2$-closed is to use Lemma \ref{lem:AAT31}: if $G$ has a factorization $G=HK$ with $\core_G(H)\cap \core_G(K)=1$ and $G\neq \core_G(H)\times\core_G(K)$, then $G$ is not totally $2$-closed. As an example, if $G=3^.\Alt(6)$ is a triple cover of $\Alt(6)$, then $G$ has subgroups $H$ and $K$ with $H\cong \Alt(5)$ and $K\cong \mathrm{GL}_2(4)$ satisfying the above properties. (We remark that Lemma \ref{lem:AAT31} is deduced from (the much stronger) Theorem \ref{thm:W2}, so Theorem \ref{thm:W2} can also be a highly useful tool in proving that a group cannot be totally $2$-closed.)

If no such factorization exists (or more generally, if Theorem \ref{thm:W2} is not available), then the question of whether or not $G$ is totally $2$-closed becomes more difficult. Although one can use direct computation and Corollary \ref{cor:permeq} when $|G|$ is small enough, the number of faithful permutation representations coming from the corollary, and the degrees involved, get large quite quickly. Thus, checking the $2$-closure of the relevant permutation representations becomes computationally heavy, even for relatively small $|G|$. For instance, if $G$ is a finite insoluble group with non-trivial Fitting subgroup, and if either $|G|<1440$, or $|G|=1440$ and $G$ is not isomorphic to the group $\texttt{SmallGroup}(1440,4594)$ (which is a double cover $2^.\mathrm{PGL}_2(9)$ of $\mathrm{PGL}_2(9)$) in the GAP small groups library \cite{GAP}, then one can directly compute that $G$ has a factorization $G=HK$ with $\core_G(H)\cap\core_G(K)=1$, but $G\neq \core_G(H)\times\core_G(K)$. 
It then follows from Lemma \ref{lem:AAT31} that no such group is totally $2$-closed. The group $G:=\texttt{SmallGroup}(1440,4594)$ has no factorization $G=HK$ with $\core_G(H)\cap\core_G(K)=1$, however, and so, although one can check that $G$ is $2$-closed in all of its faithful transitive permutation representations, the question of whether or not $G$ 
is totally $2$-closed remains open. (There are $30$ conjugacy classes of subgroups of $G$ in this case, with subgroups in $26$ of them containing $Z(G)\cong 2$, and subgroups in the other four classes being core-free. Thus, if $G$ is totally $2$-closed, then proving this fact using Corollary \ref{cor:permeq} would require one to check the total $2$-closure of $2^{30}-2^{26}=1006632960$ permutation representations, of increasingly large degree.)

\end{document}